\newcommand{\specialcell}[1]{\ifmeasuring@#1\else\omit$\displaystyle#1$\ignorespaces\fi}
 \DeclareFontFamily{OT1}{pzc}{}
 \DeclareFontShape{OT1}{pzc}{m}{it}%
               {<-> s * [1.15] pzcmi7t}{} %  1.15 Vergrößerungsfaktor
 \DeclareMathAlphabet{\mathpzc}{OT1}{pzc}%
                                  {m}{it}
\newcommand{\Rrrightarrow}{%
~
\begingroup
\tikzset{every path/.style={}}%
\tikz \draw (0,1.5pt) -- ++(.6em-1pt,0) (0,.5pt) -- ++(.6em+.5pt,0)
(0,-.5pt) -- ++(.6em+.5pt,0) (0,-1.5pt) -- ++(.6em-1pt,0)
(.6em-2pt,2.5pt) to (.6em+1pt,0)  to (.6em-2pt,-2.5pt)
(.6em-2pt,2.5pt) to (.6em+.5pt,0)  to (.6em-2pt,-2.5pt)
;
\endgroup
~
}
\newcommand\sec@label[1]{%
\@bsphack
  \protected@write\@auxout{}%
         {\string\newlabel{sec@#1}{{\thesubsection}{\thepage}}}%
  \@esphack}
\def\label@in@display#1{%
    \ifx\df@label\@empty\else
        \@amsmath@err{Multiple \string\label's:
            label '\df@label' will be lost}\@eha
    \fi
    \sec@label{#1}\gdef\df@label{#1}%
}
\def\secref#1{\expandafter\@setref\csname r@sec@#1\endcsname\@firstoftwo{#1}}
\def\secpageref#1{\expandafter\@setref\csname
  r@sec@#1\endcsname\@secondoftwo{#1}}
\let\origlabel\label 
\def\label#1{\origlabel{#1}\sec@label{#1}} 
\sodef\kapsperrt{}{.02em}{.6em plus1em}{1em plus.1em minus.1em}% User-defined letter spacing % benutzt Paket "soul"
 \def\endamsmathequationenvironment{
   \endmathdisplay{equation}%
   \mathdisplay@pop
   \ignorespacesafterend
 }
 \def\endamsmathequationenvironmentunnumbered{
 \endmathdisplay{equation*}%
 \mathdisplay@pop
 \ignorespacesafterend
 }
 \def\beginamsmathequationenvironment{
   \incr@eqnum
   \mathdisplay@push
   \st@rredfalse \global\@eqnswtrue
   \mathdisplay{equation}%
 }
 \renewenvironment{equation}{%
   \incr@eqnum
   \mathdisplay@push
   \st@rredfalse \global\@eqnswtrue
   \mathdisplay{equation}%
 }{%
 \endamsmathequationenvironment
 }
 \renewenvironment{equation*}{%
   \mathdisplay@push
   \st@rredtrue \global\@eqnswfalse
   \mathdisplay{equation*}%
 }{%
 \endamsmathequationenvironmentunnumbered
 }
 \def\puncteq#1{~#1}
 \def\puncttikz[#1]#2{
 \pgfnodealias{final_node}{#1};
 \node (punctuation) [base right=of final_node,xshift=-3em]{\,#2};
 }
 \def\addtopunct#1{\expandafter\let\csname punct@\meaning#1\endcsname\let}
 \let\seveendformula\endamsmathequationenvironment
 \def\PunctAndEndFormula #1{~~#1\seveendformula}
 \def\endamsmathequationenvironment{\futurelet\punctlet\checkpunct@i}
 \def\checkpunct@i{\expandafter\ifx\csname punct@\meaning\punctlet\endcsname\let  
        \expandafter\PunctAndEndFormula 
        \else \expandafter\seveendformula\fi}
\sodef\titelsperrt{}{.07em}{.7em plus1em}{1em plus.1em minus.1em}% User-defined letter spacing % benutzt Paket \"soul\"
\sodef\namsperrt{}{}{.3em plus1em}{1em plus.1em minus.1em}% User-defined letter spacing % benutzt Paket \"soul\"
\begin{document}

 % colon for functions
 \def\co{\colon\thinspace}
 % Doppelpunkt für Funktionen von rechts nach links
 \def\oc{\thinspace\colon\mkern-5.5mu}
\def\op{^{\mathrm{op}}}
\def\astpar{$(^\ast\hspace{-.55pt})$}

 \newtheorem{theorem}{Theorem}
 \newtheorem{lemma}{Lemma}
 \newtheorem{proposition}{Proposition}
 \newtheorem{corollary}{Corollary}
 \theoremstyle{definition}
 \newtheorem{definition}{Definition}
 \newtheorem{example}{Example}
\newtheorem*{example*}{Example}
 \newtheorem{remark}{Remark}

 \thispagestyle{empty}

\begin{tikzpicture}[remember picture, overlay]  
\node[yshift=-3.42cm] at (current page.north)  (preprintNummern){%
    \begin{minipage}{\textwidth}
        \begin{flushright}
   { {\footnotesize  ZMP-HH/14-13} \\ {\footnotesize  Hamburger Beitr\"age  zur Mathematik Nr. 511}
   }\\
{ {\footnotesize August 2014}}
 \end{flushright}
    \end{minipage}
};
   \end{tikzpicture}

\vspace{-0.5cm}
 \begin{center}
 
\begin{Large}{\sc{Homomorphisms of Gray-categories as pseudo algebras}}\end{Large}

 \vspace{0.5cm}

    \normalsize{\sc{Lukas Buhn\'e}}

\vspace{0.5cm}
\footnotesize{\textit{Fachbereich Mathematik, Universit\"at Hamburg,
    }}

\footnotesize{\textit{Bereich Algebra und Zahlentheorie,
    }}

\footnotesize{\textit{
Bundesstraße 55, D-20146 Hamburg, Germany}}
 \end{center}

\vspace{0.2cm}
\noindent\rule{\textwidth}{0.4pt}
 \begin{abstract}\noindent {\sc Abstract.}
\small{
Given $\mathpzc{Gray}$-categories $\mathpzc{P}$ and $\mathpzc{L}$, 
there is a $\mathpzc{Gray}$-category
$\mathpzc{Tricat}_{\mathrm{ls}}(\mathpzc{P},\mathpzc{L})$ of locally strict
trihomomorphisms with domain
$\mathpzc{P}$ and codomain $\mathpzc{L}$, tritransformations,
trimodifications,
and perturbations. If the domain $\mathpzc{P}$ is small and the
codomain  $\mathpzc{L}$ is cocomplete, we show
that this
$\mathpzc{Gray}$-category is isomorphic as a
$\mathpzc{Gray}$-category to the $\mathpzc{Gray}$-category $\mathrm{Ps}\text{-}T\text{-}\mathrm{Alg}$ of pseudo
algebras, pseudo functors, transformations, and 
modifications for a $\mathpzc{Gray}$-monad $T$ derived from left Kan
extension. 

Inspired by a similar situation in two-dimensional monad theory, we
apply the coherence theory of three-dimensional monad theory and prove
 that the the inclusion of the functor category in the enriched sense into this $\mathpzc{Gray}$-category
of locally strict trihomomorphisms has a left adjoint such that the components of the unit
of the adjunction are internal biequivalences. 
This proves that any
locally strict trihomomorphism
between $\mathpzc{Gray}$-categories with small domain and cocomplete
codomain is biequivalent to a $\mathpzc{Gray}$-functor. Moreover, the
hom $\mathpzc{Gray}$-adjunction gives an isomorphism
of the hom $2$-categories of
tritransformations between a locally strict trihomomorphism and a
$\mathpzc{Gray}$-functor with the corresponding hom $2$-categories in the functor $\mathpzc{Gray}$-category.
A notable example is given by locally strict $\mathpzc{Gray}$-valued presheafs with
small domain.
Our results have applications in three-dimensional descent theory and point into the direction of a Yoneda lemma for tricategories.
}
 \end{abstract}

 \vspace{-0.3cm}

 \noindent\rule{\textwidth}{0.4pt}
                         
 \selectlanguage{english}

%%%%%%%%%%%%%%%%%%%%%%%%%%%%%%%%%%%%%%%%%%
%%%%%%%%%%%%%%%%%%%%%

\vspace{0.5cm}

\section{Introduction}
Three-dimensional monad theory is the study of $\mathpzc{Gray}$-monads
and their different kinds of algebras.
While three-dimensional monad theory is by now
well-developed, see \cite[Part III]{gurskicoherencein} and
\cite{powerthreedimensional}, there 
are only few examples. 
This paper is based on the insight that one example from
two-dimensional monad theory can be transferred to the
three-dimensional context. This
is in fact nontrivial since the amount of computation is considerably
higher than in the two-dimensional context. On the other hand, we have been
in need of exactly this three-dimensional
result for applications in three-dimensional descent theory. 

We here provide the details of the reformulation.
We
show how under suitable conditions on domain and codomain the locally
strict trihomomorphisms between
$\mathpzc{Gray}$-categories $\mathpzc{P}$ and $\mathpzc{L}$ correspond to pseudo algebras for a
$\mathpzc{Gray}$-monad $T$ on the functor $\mathpzc{Gray}$-category $[\mathrm{ob}\mathpzc{P},\mathpzc{L}]$
derived from left Kan extension, where $\mathrm{ob}\mathpzc{P}$ is the underlying
set of objects of $\mathpzc{P}$ considered as a discrete
$\mathpzc{Gray}$-category.  The conditions are that the domain
$\mathpzc{P}$ is a small and that the codomain
$\mathpzc{L}$ is a cocomplete $\mathpzc{Gray}$-category. 
In fact, we prove that the $\mathpzc{Gray}$-categories 
$\mathrm{Ps}\text{-}T\text{-}\mathrm{Alg}$ and
$\mathpzc{Tricat}_{\mathrm{ls}}(\mathpzc{P},\mathpzc{L})$ are isomorphic as
$\mathpzc{Gray}$-categories, which extends the local result mentioned in
\cite[Ex. 3.5]{powerthreedimensional}.
 On the other hand, the Eilenberg-Moore object
 $[\mathrm{ob}\mathpzc{P},\mathpzc{L}]^T$ for this monad 
 is given by the functor
$\mathpzc{Gray}$-category $[\mathpzc{P},\mathpzc{L}]$, and there is an
obvious inclusion of $[\mathpzc{P},\mathpzc{L}]$ into
$\mathrm{Ps}\text{-}T\text{-}\mathrm{Alg}$.
The relation of these two $\mathpzc{Gray}$-categories was studied 
locally by Power \cite{powerthreedimensional} and by Gurski using codescent objects
\cite{gurskicoherencein}. This mimics the situation one categorical
dimension below, which started with Blackwell et al.'s paper
\cite{blackwell} and was later refined by Lack using codescent objects
\cite{lackcodescent2002}.
We readily show that a corollary of Gurski's central coherence theorem
\cite[Th. 15.13]{gurskicoherencein} applies to the
$\mathpzc{Gray}$-monad on $[\mathrm{ob}\mathpzc{P},\mathpzc{L}]$:
The inclusion of the Eilenberg-Moore object
$[\mathrm{ob}\mathpzc{P},\mathpzc{L}]^T$ into the
$\mathpzc{Gray}$-category $\mathrm{Ps}\text{-}T\text{-}\mathrm{Alg}$
of pseudo $T$-algebras has a left adjoint such that the components of
the unit of this adjunction are internal biequivalences.

This corresponds to the fact from two-dimensional monad theory that
given a small $2$-category $P$ and a cocomplete $2$-category $L$, the
inclusion of the functor category $[P,L]$ (in the sense of
$\mathpzc{Cat}$-enriched category theory) into the $2$-category $\mathpzc{Bicat}(P,L)$ of functors, pseudonatural
transformations and modifications has a left adjoint such that the
components of the unit of this adjunction are internal equivalences.
While this is not explicitly stated, it follows from Lack's coherence
theorem \cite[Th. 4.10]{lackcodescent2002}. Explicit partial results may be found in 
\cite[Ex. 4.2]{powergeneral} and  
\cite[Ex. 6.6]{blackwell}.

We now go on to expand on the monad $T$, its properties, and the
identification of $\mathrm{Ps}\text{-}T\text{-}\mathrm{Alg}$.
For this purpose we will need the unique $\mathpzc{Gray}$-functor
$H\co\mathrm{ob}\mathpzc{P}\rightarrow \mathpzc{P}$ which is the
identity on objects, and the $\mathpzc{Gray}$-functor $[H,1]\co
[\mathpzc{P},\mathpzc{L}]\rightarrow
[\mathrm{ob}\mathpzc{P},\mathpzc{L}]$ from enriched category theory,
which is given on objects by
precomposition with $H$. This functor sends any
cell of $[\mathpzc{P},\mathpzc{L}]$ such as a $\mathpzc{Gray}$-functor or a $\mathpzc{Gray}$-natural
transformation to its family of values and components in $\mathpzc{L}$ respectively.
By
the theorem of Kan adjoints, left Kan extension $\mathrm{Lan}_H$ along
$H$ provides a left adjoint to $[H,1]$, and $T$ is the
$\mathpzc{Gray}$-monad corresponding to this adjunction.
This is all as in the $2$-dimensional context, and the story is then usually told as follows: The enriched Beck's
monadicity theorem shows that $[H,1]\co [\mathpzc{P},\mathpzc{L}]\rightarrow
[\mathrm{ob}\mathpzc{P},\mathpzc{L}]$ is strictly monadic. That is,
the Eilenberg-Moore object $[\mathrm{ob}\mathpzc{P},\mathpzc{L}]^T$ is
isomorphic to the functor category $[\mathpzc{P},\mathpzc{L}]$ such that the forgetful
functor factorizes through this isomorphism and $[H,1]$.
On the other hand, the monad has an obvious explicit description.
In fact, by the description of the left Kan extension in terms of
tensor products and coends we must have:
\begin{align}\label{introexlicitLan}
  (TA)Q=
 \int^{P\in\mathrm{ob}\mathpzc{P}}\mathpzc{P}(P,Q)\otimes AP
\puncteq{}
\end{align}
where $A$ is a $\mathpzc{Gray}$-functor
$\mathrm{ob}\mathpzc{P}\rightarrow \mathpzc{L}$ and where
$Q$ is an object of $\mathpzc{P}$.
Recall that the tensor product is a special indexed colimit. For enrichment in a general symmetric monoidal closed category $\mathpzc{V}$, it is
characterized by an appropriately natural
isomorphism in $\mathpzc{V}$:
\begin{align}\label{introtensor}
  \mathpzc{L}(\mathpzc{P}(P,Q)\otimes AP,AQ)\cong
  [\mathpzc{P}(P,Q),\mathpzc{L}(AP,AQ)]
\puncteq{,}
\end{align}
where $[-,-]$ denotes the internal hom of $\mathpzc{V}$. Thus, in the
case of enrichment in $\mathpzc{Gray}$, \eqref{introtensor} is an
isomorphism of $2$-categories. In fact, the tensor product gives rise
to a $\mathpzc{Gray}$-adjunction, and its
hom $\mathpzc{Gray}$-adjunction is given by
\eqref{introtensor}.

To achieve the promised identification of
$\mathrm{Ps}\text{-}T\text{-}\mathrm{Alg}$ with
$\mathpzc{Tricat}_{\mathrm{ls}}(\mathpzc{P},\mathpzc{L})$, we have to determine
how the data and $\mathpzc{Gray}$-category structure of
$\mathrm{Ps}\text{-}T\text{-}\mathrm{Alg}$ transforms under the
adjunction of the tensor product. Indeed, one can also identify the
Eilenberg-Moore object with the functor $\mathpzc{Gray}$-category in
this fashion.
For example, an object of $[\mathrm{ob}\mathpzc{P},\mathpzc{L}]^T$ is
an algebra for the monad $T$. The definition of such an algebra is just as
for an ordinary monad. Thus, it consists of a $1$-cell $a\co TA\rightarrow
A$ subject to two algebra axioms. According to equation
\eqref{introexlicitLan}, the $\mathpzc{Gray}$-natural transformation $a$ is determined by components \mbox{$a_{PQ}\co \mathpzc{P}(P,Q)\otimes AP\rightarrow
AQ$}. These are objects in the $2$-category
\mbox{$\mathpzc{L}(\mathpzc{P}(P,Q)\otimes AP,AQ)$} . The internal hom of
$\mathpzc{Gray}$ is given by the $2$-category of strict functors, pseudonatural
transformations, and modifications.
Thus $a_{PQ}$ corresponds under the hom adjunction \eqref{introtensor} to a strict functor \mbox{$A_{PQ}\co
\mathpzc{P}(P,Q)\rightarrow \mathpzc{L}(AP,AQ)$}, and the axioms of an algebra imply that this gives $A$ the structure of a
$\mathpzc{Gray}$-functor $\mathpzc{P}\rightarrow \mathpzc{L}$.

Given a $\mathpzc{Gray}$-monad on a $\mathpzc{Gray}$-category
$\mathpzc{K}$, the notions of pseudo algebras, pseudo functors, 
transformations, and modifications are all given by cell data
of the $\mathpzc{Gray}$-category $\mathpzc{K}$. In the case that
$\mathpzc{K}=[\mathrm{ob}\mathpzc{P},\mathpzc{L}]$, this means that the
data consists of families of cells in the target $\mathpzc{L}$. Parts
of these data transform under
the adjunction of the tensor product into families of cells in
the internal hom, that is, families of strict functors of
$2$-categories, pseudonatural transformations of those, and
modifications of those.
This already shows that we only have a chance to recover locally
strict trihomomorphisms from pseudo algebras because a general
trihomomorphism might consist of nonstrict functors of
$2$-categories. This is in contrast to the two-dimensional context
where pseudo algebras correspond precisely to possibly nonstrict
functors of $2$-categories.

We now give a short overview of how this paper is organized.
In Section \ref{preliminaries}, we describe the symmetric monoidal closed
category $\mathpzc{Gray}$ and extend some elementary results on the correspondence of 
cubical functors and strict functors on Gray products.

In Section \ref{pseudoalgebras}, we reproduce Gurski's definition of
$\mathrm{Ps}\text{-}T\text{-}\mathrm{Alg}$ and prove that two lax
algebra axioms are redundant for a pseudo algebra.

In Section \ref{sectionmonad}, we introduce the monad $T$ on
$[\mathrm{ob}\mathpzc{P},\mathpzc{L}]$ in \ref{subsectionmonad} and
describe it explicitly in \ref{subsectionexplicit}. In
\ref{someproperties} we expand on tensor products and derive the two
rather involved Lemmata \ref{asabovebynatofML} and \ref{noticethesymmetry},
which play a critical role in the bulk of our technical calculations.

In Section \ref{sectionstrict}, we explicitly identify the Eilenberg-Moore object
$[\mathrm{ob}\mathpzc{P},\mathpzc{L}]^T$ in the general situation
where $\mathpzc{V}$ is a complete and cocomplete locally small
symmetric monoidal closed category.

In Section \ref{sectionGrayhom}, we establish the identification of
$\mathrm{Ps}\text{-}T\text{-}\mathrm{Alg}$ and
$\mathpzc{Tricat}_{\mathrm{ls}}(\mathpzc{P},\mathpzc{L})$, on
which we will now comment in more detail.
To characterize how  $\mathrm{Ps}\text{-}T\text{-}\mathrm{Alg}$ transforms under the adjunction of the tensor
product, in \ref{homomorphismsofgraycategories} we introduce the
notion of homomorphisms of $\mathpzc{Gray}$-categories, Gray transformations,
Gray modifications, and Gray perturbations. With the help of Lemmata
\ref{asabovebynatofML} and \ref{noticethesymmetry} from \secref{noticethesymmetry}, these are seen to
be exactly the transforms of pseudo algebras, pseudo functors, 
transformations, and modifications respectively. This also equips the
Gray data with the structure of a $\mathpzc{Gray}$-category.

Elementary observations in \ref{subsectionls} then give that a homomorphism of $\mathpzc{Gray}$-categories is the same thing as a locally strict trihomomorphism,
much like a $\mathpzc{Gray}$-category is the same thing as a strict,
cubical tricategory. Similarly, the notion of a Gray transformation
corresponds exactly to a tritransformation between locally strict
functors, and Gray modifications and perturbations correspond exactly
to trimodifications and perturbations of those. 
This follows from the general correspondence, mediated by Theorem
\ref{internal2isobicatcmulti} from \secref{internal2isobicatcmulti}, of data for the
cubical composition functor and the cartesian product on the hand and data for
the composition law of the $\mathpzc{Gray}$-category and the Gray
product on the other hand.
The only thing left to check is that the axioms correspond to
each other. Namely, the Gray notions being the transforms of the
pseudo notions of three-dimensional monad theory, the axioms are
equations of modifications, while the axioms for the tricategorical
constructions are equations involving the components of modifications. That these
coincide is mostly straightforward, less transparent is only the
comparison of interchange cells.
Gurski's coherence theorem then gives
that the inclusion of the full sub-$\mathpzc{Gray}$-category of
$\mathpzc{Tricat}(\mathpzc{P},\mathpzc{L})$ determined by the locally
strict functors, denoted by
$\mathpzc{Tricat}_{\mathrm{ls}}(\mathpzc{P},\mathpzc{L})$, into the
functor $\mathpzc{Gray}$-category $[\mathpzc{P},\mathpzc{L}]$ has a
left adjoint and the components of the unit of this adjunction are
internal biequivalences. 

\section*{Acknowledgements} 
The author thanks Christoph Schweigert for his constant encouragement
and help with the draft
and Nick Gurski for very valuable discussions and comments on the draft.
Thanks also to Richard Garner for discussions on an extended version of this paper. Support by the Research Training Group 1670
''Mathematics inspired by string theory and quantum field theory`` is
gratefully acknowledged.

\section{Preliminaries}\label{preliminaries}
We assume familiarity with enriched category theory. Regarding enriched category theory, we stay
notationally close to Kelly's book \cite{kelly}, from which we shall
cite freely. We also assume a fair amount of bicategory theory, see
for example \cite{benabou67} or the short
\cite{leinsterbasic98}. Since the notions of tricategory theory are
intrinsically involved, and since we consider various slight variations of
these, we refrain from supplying all of them. The appropriate references are
the original paper by Gordon, Power, and Street
\cite{gordonpowerstreet}, Gurski's thesis \cite{gurskialgebraic}, and
his later book \cite{gurskicoherencein}, which includes much of the
material first presented in \cite{gurskialgebraic}. Since this is also
our primary reference for three-dimensional monad theory, we will
usually cite from \cite{gurskicoherencein}.
 In fact, the tricategories considered are all $\mathpzc{Gray}$-categories, and we will describe them in terms of enriched
notions to the extent possible. We do supply definitions in terms of enriched notions that correspond precisely  to locally strict trihomomorphisms,
tritransformations, trimodifications, and perturbations, but in describing this correspondence we assume knowledge of
the tricategorical definitions. As a matter of fact the sheer amount of
notational
translation between the $\mathpzc{Gray}$-enriched and the
tricategorical context can be challenging at times.

Only basic knowledge of the general theory of
monads in a $2$-category is required cf. \cite{streetformal72}. For
monads in enriched category theory see also \cite{dubuc}.
\subsection{Conventions}
Horizontal composition in a bicategory is generally denoted by the
symbol $\ast$, while vertical
composition is denoted by the symbol $\diamond$.
We use the term functor for what is elsewhere called  pseudofunctor or
weak functor or homomorphism of bicategories and shall indicate
whether the functor is strict where it is not clear from context. By
an isomorphism we always mean an honest isomorphism, e.g. an
isomorphism on objects and hom objects in enriched category theory.
The symbol $\otimes$ is reserved both for a monoidal structure and
tensor products in the sense of enriched category theory. If not
otherwise stated, $\mathpzc{V}$ denotes a locally small symmetric
monoidal closed category with monoidal structure $\otimes$;
associators and unitors $a,l,$ and $r$;
internal hom $[-,-]$; unit $d$ and counit or evaluation $e$.
We shall usually use the prefix $\mathpzc{V}$- to emphasize when the
$\mathpzc{V}$-enriched notions are meant, although this is occasionally
dropped
where it would otherwise seem overly redundant. The composition law of
a $\mathpzc{V}$-category $\mathpzc{K}$ is denoted by $M_{\mathpzc{K}}$. The unit
at the object $K\in \mathpzc{K}$ is denoted by $j_K$ or occasionally
$1_K$, for example when it shall be emphasized that it is also the
unit at $K$ in the underlying category $\mathpzc{K}_0$. The
identification of $\mathpzc{V}_0(I,[X,Y])$ and $\mathpzc{V}_0(X,Y)$ induced from the closed structure for
objects $X,Y\in\mathpzc{V}$
 of $\mathpzc{V}$ is to
be understood and usually implicit.
 We use the terms
indexed limit and indexed colimit for what is elsewhere also called
weighted limit and weighted colimit. The concepts of
ordinary and extraordinary $\mathpzc{V}$-naturality
cf. \cite[Ch. 1]{kelly} and the corresponding composition calculus are to be understood, and we freely use the
underlying ordinary and extraordinary naturality too. 

 Composition in a monoidal category is generally denoted by
juxtaposition.
Composition of $\mathpzc{V}$-functors is in general also denoted
by juxtaposition.
For cells of a $\mathpzc{Gray}$-category, juxtaposition is used as
shorthand for the application of its composition law.

\subsection{The category $\mathpzc{2Cat}$}\label{2cat}
Let $\mathpzc{Cat}$ denote the category of small categories and
functors. It is well-known that $\mathpzc{Cat}$ is complete and
cocomplete: it clearly has products and equalizers, thus is
complete. Coproducts are given by disjoint union, and there is a
construction for coequalizers in 
\cite[I,1.3, p. 25]{gray} (due to Wolff).
In fact, the same strategy applies to the category of
$\mathpzc{V}$-enriched categories and \mbox{$\mathpzc{V}$-functors} in
general, where $\mathpzc{V}$ is a complete and cocomplete symmetric
monoidal closed category. 
 Products are given by the cartesian product
of the object sets and the cartesian product of the hom
objects. Equalizers of $\mathpzc{V}$-functors are given by the
equalizer of the maps on objects and the equalizer of the hom
morphisms.
Coproducts are given by the coproduct of the object sets i.e. the
disjoint union and by the coproduct of the hom objects.
The construction of coequalizers in  \cite[I,1.3, p. 25]{gray} 
can easily be transferred to this context.
In particular, the category  $\mathpzc{2Cat}$ of small $2$-categories
and strict functors is  complete and cocomplete.

\subsection{The symmetric monoidal closed category $\mathpzc{Gray}$}
We now describe the symmetric monoidal closed category in which we will usually enrich. Its underlying category is $\mathpzc{2Cat}$, which has a symmetric monoidal closed structure given by the Gray product. We can only provide a brief description of the Gray product here. For
 details, the
reader is referred to \cite[3.1, p. 36ff.]{gurskicoherencein} and to
\cite[I,4.9, p. 73ff.]{gray} for a lax variant.

 The Gray product  of $2$-categories $X$
and $Y$ is a $2$-category denoted by
$X\otimes Y$.
Rather than giving a complete explicit description of the Gray
product, we mention the following characterization (see
\cite[Ch. 3]{gurskicoherencein}).
Considering the sets of objects $\mathrm{ob}X$ and $\mathrm{ob}Y$ as discrete
$2$-categories, we denote by  $X\oblong Y$ the pushout  in
$2\mathpzc{Cat}$ of the diagram below, where $\times$ denotes the cartesian product of $2$-categories, and
where the morphisms are given by products of the inclusions $\mathrm{ob}
X\rightarrow X$ and $\mathrm{ob}Y\rightarrow Y$ and identity functors respectively.
  \begin{equation}\label{funnytensor}
    \begin{tikzpicture}
      \matrix (a) [matrix of math nodes, row sep=4em, column sep=6em,
      text height=1.5ex, text depth=0.25ex]{  \mathrm{ob}X\times
  \mathrm{ob}Y &  X\times \mathrm{ob}Y \\
        \mathrm{ob}X\times Y  & \\};
      
 \path[->] (a-1-1) edge
      node[right]{$\scriptstyle  $} (a-2-1); 
\path[->] (a-1-1)
      edge node[above]{$\scriptstyle$}
      (a-1-2); 
\puncttikz[a-2-1]{}
    \end{tikzpicture}
  \end{equation}
By the
universal property of the pushout, the products of the inclusions and
identity functors, $\mathrm{ob} X\times
Y\rightarrow X\times Y$ and
$X\times \mathrm{ob}Y\rightarrow X\times Y$, induce a strict functor
$j\co X\oblong Y\rightarrow X\times Y$.

It is well-known that there is an orthogonal
  factorization system on $\mathpzc{2Cat}$ with left class the
  strict functors which are bijective on objects and $1$-cells and
  right class the strict functors which are locally fully faithful,
  see for example\cite[Corr. 3.20, p. 51]{gurskicoherencein}.
The Gray product $X\otimes Y$ may be characterized by factorizing $j$ with respect
to this factorization system. More precisely, $X\otimes Y$ is uniquely characterized (up to unique isomorphism
in $\mathpzc{2Cat}$)
by the fact that there is a 
strict functor $m\co X\oblong Y \rightarrow X\otimes Y$ which is an isomorphism on the underlying categories i.e. bijective on objects
and $1$-cells and a strict functor $i\co X\otimes Y\rightarrow X\times
Y$ which is locally fully faithful such that $j= im$.

There is an obvious explicit description of $X\otimes Y$ in terms of
generators and relations, which can be used to construct a functor $\otimes \co \mathpzc{2Cat}\times
\mathpzc{2Cat}\rightarrow \mathpzc{2Cat}$.
Clearly, $X\otimes Y$ has
the same objects as $X\times Y$, and we have the images of the
$1$-cells and $2$-cells from
$X\times \mathrm{ob}Y$ and $\mathrm{ob}X\times Y$, for which we use
the same name in $X\otimes Y$. That is, there are $1$-cells $(f,1)\co (A,B)\rightarrow (A',B)$
for $1$-cells $f\co A\rightarrow A'$ in $X$ and objects $B$ in
$Y$, and there are $1$-cells $(1,g)\co (A,B)\rightarrow (A,B')$ for objects $A$ in $X$
and $1$-cells $g\co B\rightarrow B'$ in $Y$. All $1$-cells in
$X\otimes Y$ are up to the obvious relations generated by horizontal
strings of those $1$-cells, the identity $1$-cell being $(1,1)$.
Apart from the obvious $2$-cells $(\alpha,1)\co (f,1)\Rightarrow (f',1)\co
(A,B)\rightarrow (A',B)$ and $(1,\beta)\co (1,g)\Rightarrow (1,g')\co
(A,B)\rightarrow (A,B')$, there must be unique invertible interchange $2$-cells 
$\Sigma_{f,g}\co (f,1)\ast(1,g)\Rightarrow (1,g)\ast(f,1)$ mapping to the
identity of $(f,g)$ under $j$ because the latter is fully faithful---domain and codomain cleary both map to $(f,g)$ under $j$. In
particular, by uniqueness 
i.e. because $j$ is locally fully faithful, these must be the
identity if either $f$ or $g$ is the identity.
There are various relations on horizontal and vertical composites of
those cells, all rather obvious from the characterization above. We omit
those as well as the details how equivalence classes and horizontal
and vertical composition are defined.

For functors $F\co X\rightarrow X'$ and $G\co Y\rightarrow Y'$, it is
not hard to give a functorial definition of the functor $F\otimes G\co X\otimes
Y\rightarrow X'\otimes Y'$. We confine ourselves with the
observation that on interchange $2$-cells,
\begin{align}\label{FGSigma}
  (F\otimes
  G)_{(A,B),(A'B')}(\Sigma_{f,g})=\Sigma_{F_{A,A'}(f),G_{B,B'}(g)}
\puncteq{.}
\end{align}

From the characterization above it is then clear how to define
associators and unitors for $\otimes$. We only mention here that
\begin{align}
  a(\Sigma_{f,g},1)=\Sigma_{f,(g,1)}\label{aSigmafg1}
\end{align}
and
\begin{align}
  a(\Sigma_{(f,1),h})=\Sigma_{f,(1,h)}\label{aSigmaf1h}
\end{align}
and
\begin{align}\label{aSigma1gh}
  a(\Sigma_{(1,g),h})=(1,\Sigma_{g,h})
\puncteq{.}
\end{align}

We omit the details that this gives a monoidal
structure on $\mathpzc{2Cat}$ (pentagon and triangle identity follow
from pentagon and triangle identity for the cartesian product and
$\oblong$).

There is an obvious symmetry $c$ for the Gray product, which on
interchange cells is given by
\begin{align}\label{cSigma}
  c(\Sigma_{f,g})=\Sigma^{-1}_{g,f}
\puncteq{.}
\end{align}

As for any two bicategories (by all means for small domain), there is a functor
bicategory $\mathpzc{Bicat}(X,Y)$
given by functors of bicategories, pseudonatural
transformations, and modifications. As the codomain $Y$ is a $2$-category,
this is in fact again a $2$-category. We denote by $[X,Y]$ the full
sub-$2$-category of $\mathpzc{Bicat}(X,Y)$ given by the strict functors.
One can show that this gives $\mathpzc{2Cat}$ the structure of
a symmetric monoidal closed category with internal hom $[X,Y]$:

\begin{theorem}\cite[Th. 3.16]{gurskicoherencein}
  The category $\mathpzc{2Cat}$ of small $2$-categories and strict functors
  has the structure of a symmetric monoidal closed category. As such,
  it is referred
  to as $\mathpzc{Gray}$. The monoidal structure is given by the Gray
  product and the terminal $2$-category as the unit object, the
  internal hom is given by the functor $2$-category of strict
  functors, pseudonatural transformations, and modifications.
\end{theorem}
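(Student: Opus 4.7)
The plan is to assemble the symmetric monoidal closed structure piece by piece, using the factorization $j = im$ of the canonical map $X \oblong Y \to X \times Y$ to reduce most checks to the corresponding assertions for the cartesian product on $\mathpzc{2Cat}$.

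First, I would extend $\otimes$ to a bifunctor $\mathpzc{2Cat} \times \mathpzc{2Cat} \to \mathpzc{2Cat}$. On objects and on the generating $1$-cells and $2$-cells other than the interchangers, $F \otimes G$ is forced to agree with $F \times G$; on interchangers it is given by \eqref{FGSigma}. That this respects the defining relations can be verified by factoring each relation either through the image of $X \times \mathrm{ob}Y$ and $\mathrm{ob}X \times Y$, where it is a relation in the cartesian product, or through the interchange cells, where it is compatible with \eqref{FGSigma} since $i$ is locally fully faithful. Functoriality of $F,G \mapsto F \otimes G$ is then immediate.

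Second, to build the monoidal structure I would define the associator $a$ and unitors $l,r$ on generators, in particular on interchangers via \eqref{aSigmafg1}, \eqref{aSigmaf1h}, \eqref{aSigma1gh}, and observe that they are well defined and invertible by the characterization of $\otimes$ via the factorization system: both $(X \otimes Y) \otimes Z$ and $X \otimes (Y \otimes Z)$ sit over $X \times Y \times Z$ through locally fully faithful strict functors that are bijective on objects and $1$-cells, so a bijection of generators respecting the defining relations extends uniquely to a strict isomorphism of $2$-categories. The pentagon and triangle axioms then reduce to the corresponding axioms for $\times$ together with the universal property of the pushout $\oblong$, since both sides of each axiom map to the same strict functor into $X \times Y \times Z \times W$. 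The symmetry $c$ is handled in the same fashion from \eqref{cSigma}; its axioms follow from the symmetry of $\times$ together with the fact that $\Sigma_{f,g}$ is forced to be the identity whenever $f$ or $g$ is an identity.

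Third, for closedness I would exhibit a natural isomorphism
\begin{equation*}
\mathpzc{2Cat}(X \otimes Y, Z) \;\cong\; \mathpzc{2Cat}(X, [Y,Z])
\end{equation*}
as follows. Given a strict functor $F \co X \otimes Y \to Z$, define $\hat F \co X \to [Y,Z]$ to send an object $x$ to the strict functor $F(x,-) \co Y \to Z$, to send a $1$-cell $f \co x \to x'$ to the pseudonatural transformation with components $F(f,1)_{-}$ and naturality $2$-cells $F(\Sigma_{f,g})$, and to send a $2$-cell $\alpha$ to the modification with components $F(\alpha,1)_{-}$. The pseudonaturality and modification axioms for $\hat F(f)$ and $\hat F(\alpha)$ translate exactly into the defining relations on horizontal and vertical composites of the $\Sigma$-cells in $X \otimes Y$, so $\hat F$ is well defined and strict. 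Conversely, given $G \co X \to [Y,Z]$, the generators-and-relations presentation of $X \otimes Y$ determines a unique strict functor $\check G \co X \otimes Y \to Z$ sending $(f,1) \mapsto G(f)_{-}$, $(1,g) \mapsto G(x)(g)$, and $\Sigma_{f,g}$ to the naturality $2$-cell of $G(f)$ at $g$; well-definedness is precisely the pseudonaturality of the $G(f)$ together with the modification axiom for $G(\alpha)$.

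The main obstacle is this last correspondence: one must match, on the nose, the relations among interchange cells $\Sigma_{f,g}$ imposed by the construction of $X \otimes Y$ with the axioms for a pseudonatural transformation and a modification into $[Y,Z]$. Once this is checked, that $F \mapsto \hat F$ and $G \mapsto \check G$ are mutually inverse is automatic on generators, and naturality in $X$, $Y$, $Z$ follows from the functoriality of $\otimes$ and $[-,-]$ together with the uniqueness of extensions from generators. This produces the required closed structure and completes the verification that $\mathpzc{2Cat}$ carries the symmetric monoidal closed structure $\mathpzc{Gray}$.
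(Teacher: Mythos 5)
The paper does not actually prove this statement: it is quoted from Gurski (\cite[Th.\ 3.16]{gurskicoherencein}), and the surrounding text only sketches the ingredients --- the characterization of $X\otimes Y$ by factorizing $j\co X\oblong Y\rightarrow X\times Y$ through a bijective-on-objects-and-$1$-cells functor followed by a locally fully faithful one, the action of $F\otimes G$, $a$, and $c$ on interchange cells, and the remark that the pentagon and triangle identities follow from those for $\times$ and $\oblong$. Your proposal fills in precisely this standard argument, and the key step (matching the Gray-product relations on the $\Sigma_{f,g}$ with the axioms for pseudonatural transformations and modifications into $[Y,Z]$) is exactly the content of the paper's Proposition \ref{internal2isobicatc}, curried into the form $\mathpzc{2Cat}(X\otimes Y,Z)\cong\mathpzc{2Cat}(X,[Y,Z])$; so your route is essentially the same as the one the paper points to. One inaccuracy worth correcting: the comparison functors $(X\otimes Y)\otimes Z\rightarrow X\times Y\times Z$ and $X\otimes(Y\otimes Z)\rightarrow X\times Y\times Z$ are bijective on objects and locally fully faithful, but they are \emph{not} bijective on $1$-cells (both $(1,g)\ast(f,1)$ and $(f,1)\ast(1,g)$ map to $(f,g)$). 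This does not break your argument --- uniqueness of the extension of $a$ from generators is just the universal property of the presentation, and well-definedness on the relations follows from local full faithfulness, since every relation among (whiskered) interchange cells is an equation of $2$-cells with common source and target lying over an identity (or over a fixed $2$-cell) in the cartesian product --- but the property you should invoke is local full faithfulness, not bijectivity on $1$-cells.
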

\begin{remark}
  In fact, we will not have to specify the closed structure of
  $\mathpzc{Gray}$ apart from the fact that its evaluation is (partly)
  given by taking components. 
  This is because our ultimate goal is to compare definitions from
  three-dimensional i.e. $\mathpzc{Gray}$-enriched monad theory to
  definitions from the theory of tricategories, and we do so in the case where all
  tricategories are in fact $\mathpzc{Gray}$-categories, that is,
  equivalently, strict, cubical tricategories. These definitions
  will only formally involve the cubical composition functor, which
  relates to the composition law of the $\mathpzc{Gray}$-category --
  we will usually not have to specify the composition. Of course,
  one can explicitly identify the enriched notions, and then there are
  alternative explicit arguments. 
 However, we think that the formal
  argumentation is more adequate.
The closed structure is worked out in \cite[3.3]{gurskicoherencein},
and the enriched notions usually turn out to be just as one would
expect.
We spell out a few explicit prescriptions below the following lemma,
but in fact we just need a few consequences of these, for example equation
\eqref{TenSigma} below. 

\end{remark}

Next recall that a locally small symmetric monoidal closed category $\mathpzc{V}$ can be considered as
 a category enriched in itself i.e. as a $\mathpzc{V}$-category. Also
 recall that if the underlying category $\mathpzc{V}_0$ of $\mathpzc{V}$ is complete
 and cocomplete, $\mathpzc{V}$ is complete and cocomplete considered as
 a $\mathpzc{V}$-category. This means it has any small indexed limit
 and any small indexed colimit. For the concept of an indexed limit
 see \cite[Ch. 3]{kelly}. In fact, completeness follows from the fact
 that a limit is given by an end, and if the limit is small, this end
 exists and is given by an equalizer in $\mathpzc{\mathpzc{V}_0}$, see
 \cite[(2.2)]{kelly}.
 It is
 cocomplete because, $\mathpzc{V}$ being complete,
 $\mathpzc{V}\op$ is tensored and thus also admits small conical limits
 because $\mathpzc{V}_0$ is cocomplete, hence $\mathpzc{V}$ admits small coends because it is also tensored,  but then since by
 \cite[(3.70)]{kelly} any small colimit is given by a small coend over
 tensor products, it is cocomplete.

Recall that the underlying category $\mathpzc{2Cat}$ of $\mathpzc{Gray}$ is complete and cocomplete cf. \ref{2cat}.
Thus in particular, we have the following
:
 \begin{lemma}\label{graycocomplete}
   The $\mathpzc{Gray}$-category $\mathpzc{Gray}$ is complete and
   cocomplete. \qed
 \end{lemma}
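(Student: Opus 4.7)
The statement is an immediate application of the general principle recalled in the paragraph immediately preceding the lemma: any locally small symmetric monoidal closed category $\mathpzc{V}$ whose underlying ordinary category $\mathpzc{V}_0$ is both complete and cocomplete is, when viewed as a $\mathpzc{V}$-category, complete and cocomplete. The plan is simply to verify these hypotheses for $\mathpzc{V}=\mathpzc{Gray}$ and invoke that principle.

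First I would cite the preceding theorem, which establishes that $\mathpzc{Gray}$ is a locally small symmetric monoidal closed category (its internal hom is the $2$-category of strict functors, pseudonatural transformations, and modifications between small $2$-categories, which is itself a small $2$-category). Second, I would invoke subsection \ref{2cat}, in which it is observed that $\mathpzc{2Cat}$ is complete (products and equalizers constructed pointwise) and cocomplete (coproducts given by disjoint union, coequalizers by the construction of Wolff from \cite[I,1.3]{gray}). Since $\mathpzc{2Cat}$ is by definition the underlying category of $\mathpzc{Gray}$, the hypotheses of the general principle are met.

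For completeness, this gives that every small indexed limit in $\mathpzc{Gray}$ exists, since such a limit is computed as an end which, being small, is realised as an equalizer in $\mathpzc{2Cat}$ via \cite[(2.2)]{kelly}, and $\mathpzc{2Cat}$ has all such equalizers. For cocompleteness, one argues dually: since $\mathpzc{Gray}$ is complete, $\mathpzc{Gray}\op$ is tensored; since $\mathpzc{2Cat}$ is cocomplete, $\mathpzc{Gray}\op$ admits small conical limits; hence $\mathpzc{Gray}$ admits small coends, and then by \cite[(3.70)]{kelly} every small indexed colimit in $\mathpzc{Gray}$ is expressed as a small coend over tensor products and therefore exists.

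There is no real obstacle here; the only thing to be careful about is not to conflate the two senses of ``complete and cocomplete'' (the ordinary sense for $\mathpzc{2Cat}=\mathpzc{Gray}_0$ versus the enriched sense for $\mathpzc{Gray}$ as a $\mathpzc{Gray}$-category), and to note that the general principle recalled above the lemma handles precisely the passage from the former to the latter. \qed
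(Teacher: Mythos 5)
Your proof is correct and follows exactly the route the paper takes: the lemma is stated with an immediate \qed precisely because it is the specialization to $\mathpzc{V}=\mathpzc{Gray}$ of the general principle recalled in the preceding paragraph (completeness via ends as equalizers, cocompleteness via tensoring of $\mathpzc{V}\op$ and coends), combined with the completeness and cocompleteness of $\mathpzc{2Cat}$ from \ref{2cat}. Your care in distinguishing the ordinary (co)completeness of $\mathpzc{Gray}_0=\mathpzc{2Cat}$ from the enriched (co)completeness of $\mathpzc{Gray}$ is exactly the right point to flag.
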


The composition law of the $\mathpzc{Gray}$-category $\mathpzc{Gray}$ is given by strict functors $[Y,Z]\otimes [X,Y]\rightarrow
[X,Y]$, where $X,Y$ and $Z$ are $2$-categories . It is given on objects by 
composition of strict functors. On $1$-cells of the form $(\theta,1)\co
(F,G)\rightarrow (F',G)$  it is given by the pseudonatural
transformation denoted $G^\ast\theta $ with components $\theta_{Gx}$
and naturality $2$-cells $\theta_{Gf}$. On $1$-cells of the form
$(1,\sigma)\co (F,G)\rightarrow (F,G')$ it is given by the
pseudonatural transformation denoted $F_\ast \sigma$ with components
$F_{Gx,G'x}(\sigma_x)$ and naturality $2$-cells
$F_{Gx,G'x'}(\sigma_f)$. Similarly, on $2$-cells of the form
$(\Gamma,1)\co (\theta,1)\Rightarrow (\theta',1)\co (F,G)\rightarrow
(F',G)$ it is given by the modification denoted $G^\ast\Gamma$ with
components $\Gamma_{Gx}$, and on $2$-cells of the form $(1,\Delta)\co
(1,\sigma)\Rightarrow (1,\sigma')\co (F,G)\rightarrow (F,G')$ it is
given by the modification denoted $F^\ast \Delta$ with components
$F_{Gx,G'x}(\Delta_X)$.
Finally, on interchange cells of the form $\Sigma_{\theta,\sigma}$, it
is given by the naturality $2$-cell $\theta_{\sigma_x}$ of $\theta$ at
$\sigma_x$, hence,
\begin{align}\label{MGraySigma}
  (M_{\mathpzc{Gray}}(\Sigma_{\theta,\sigma}))_x= \theta_{\sigma_x}  \co \theta_{G'x}\ast
  F_{Gx,G'x}(\sigma_x)\Rightarrow F'_{Gx,G'x}(\sigma_x)\ast \theta_{Gx}
\puncteq{.}
\end{align}
This follows from the general form of $M_{\mathpzc{V}}$ in enriched
category theory by inspection of the closed structure of $\mathpzc{Gray}$
cf. \cite[Prop. 3.10]{gurskicoherencein}.

Also recall that there  is a functor $\mathrm{Ten}\co \mathpzc{V}\otimes
\mathpzc{V}\rightarrow \mathpzc{V}$ which is given on objects by the
monoidal structure.
For $\mathpzc{V}=\mathpzc{Gray}$, its strict hom functor
\begin{align*}
  \mathrm{Ten}_{(X,X'),(Y,Y')}\co [X,X']\otimes [Y,Y']\rightarrow
  [X\otimes Y,X'\otimes Y']
\end{align*}
sends an object $(F,G)$ to the functor $F\otimes G$. It sends a transformation $(\theta,1_G)\co
(F,G)\Rightarrow (F',G)$ to the
transformation with component the $1$-cell $(\theta_x,1_{Gy})$ in 
$X'\otimes Y'$ at the object $(x,y)$ in  $X\otimes
Y$; and naturality $2$-cells $(\sigma_f,1_{Gg})$ and $\Sigma_{\theta_{x},Gg}$ at $1$-cells $(f,1_y)$ and $(1_x,g)$ respectively.
%KOMMENT  by equation \eqref{Tenhom} 
Its effect on a transformation $(1_F,\iota)\co (F,G)\Rightarrow
(F,G')$ is analogous. It sends a
modification $(\Gamma,1_{1_G})\co (\theta,1_G)\Rrightarrow
(\theta',1_G)$ to the modification with component the $2$-cell
$(\Gamma_X,1_{1_{Gy}})$ in $X'\otimes Y'$ at $(x,y)$ in $X\otimes
Y$. Its effect on a modification $(1_{1_F},\Delta)\co (1_F,\iota)\Rrightarrow
(1_F,\iota')$ is analogous. Finally, it sends the interchange $2$-cell
$\Sigma_{\theta,\iota}$ to the modification with component the
interchange $2$-cell $\Sigma_{\theta_x,\iota_y}$, hence,
\begin{align}\label{TenSigma}
  (\mathrm{Ten}_{(X,X'),(Y,Y')}(\Sigma_{\theta,\iota}))_{x,y}=\Sigma_{\theta_x,\iota_y}
\puncteq{.}
\end{align}
All of this again follows from
inspection of the closed structure of $\mathpzc{Gray}$,
cf. \cite[Prop. 3.10]{gurskicoherencein}. See also equation \eqref{Tenhom} below.

\subsection{Cubical functors}\label{subsectioncubical}

Given $2$-categories $X,Y,Z$, recall that a cubical functor in two
variables is a functor $\hat F\co
X\times Y\rightarrow Z$ 
such that for all $1$-cells
$(f,g)$ in $X\times Y$, the composition constraint
\begin{align*}
  \hat F_{(1,g),(f,1)}\co  \hat F(1,g)\ast \hat F(f,1)\Rightarrow
  \hat F(f,g)
\puncteq{,}
\end{align*}
is the identity $2$-cell,
and such that for all composable 
$1$-cells $(f',1)$, $(f,1)$ in $X\times Y$, 
\begin{align*}
  \hat F_{(f',1),(f,1)}\co \hat F(f',1)\ast \hat
  F(f,1)\Rightarrow \hat F(f'\ast f,1)
\puncteq{,}
\end{align*}
is the identity $2$-cell, 
and such that for all composable $1$-cells  $(1,g')$, $(1,g)$ in
$X\times Y$,
\begin{align*}
  \hat F_{(1,g'),(1,g)}\co \hat F(1,g')\ast \hat F(1,g)\Rightarrow  \hat F(1,g'\ast g)
\puncteq{,}
\end{align*}
is the identity $2$-cell.  For composable $(1,g'),(f,g)$ and $(f',g'),(f,1)$, the
constraint cells are then automatically identities by compatibility of
$\hat F$
with associators i.e. a functor axiom for $\hat F$; it also automatically
preserves identity $1$-cells.

We start with the following elementary result, which extends the
natural $\mathpzc{Set}$-isomorphism in
\cite[Th. 3.7]{gurskicoherencein} to a $\mathpzc{Cat}$-isomorphism.
\begin{proposition}\label{internal2isobicatc}
Given $2$-categories $X,Y,Z$, there is a universal cubical functor $C\co
X\times Y\rightarrow X\otimes Y$ natural in $X$ and $Y$ such that
precomposition with $C$ induces a natural isomorphism of $2$-categories
(i.e. a $\mathpzc{Cat}$-isomorphism)
\begin{align*}
 [X\otimes Y, Z]\cong \mathpzc{Bicat}_c(X,Y;Z)
\puncteq{,}
\end{align*}
where $\mathpzc{Bicat}_c(X,Y;Z)$ denotes the full sub-$2$-category of
$\mathpzc{Bicat}(X\times Y,Z)$ determined by the cubical functors.
\end{proposition}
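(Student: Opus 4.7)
The plan is to construct $C$ directly from the generators-and-relations presentation of $X\otimes Y$ given earlier, and then verify the claim layer by layer: on functors, on transformations, and on modifications. I would define $C\co X\times Y\rightarrow X\otimes Y$ to be the identity on objects, to send a $1$-cell $(f,g)$ to the composite $(1,g)\ast(f,1)$ in $X\otimes Y$, and to send the generating $2$-cells $(\alpha,1)$ and $(1,\beta)$ to the cells of the same name. The constraint $C_{(f,1),(1,g)}\co C(f,1)\ast C(1,g)\Rightarrow C(f,g)$ is then forced to be $\Sigma_{f,g}$, while the other three cubical-functor constraint families (the one for $(1,g)$ then $(f,1)$, and those for composable pairs along each single axis) are identities because the corresponding composites in $X\otimes Y$ are strict by the presentation. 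That $C$ is functorial and cubical follows by inspection of the Gray-product relations; naturality in $X$ and $Y$ follows from \eqref{FGSigma}, which governs the action of $F\otimes G$ on interchangers.

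The universal property on objects---that precomposition with $C$ induces a bijection between strict functors $X\otimes Y\rightarrow Z$ and cubical functors $X\times Y\rightarrow Z$---is \cite[Th.~3.7]{gurskicoherencein}: a cubical $\hat F$ is determined by its restrictions to the two axes together with its interchangers $\hat F_{(f,1),(1,g)}\co \hat F(f,1)\ast \hat F(1,g)\Rightarrow \hat F(f,g)=\hat F(1,g)\ast \hat F(f,1)$, and these data supply exactly what is needed to define a strict $F$ out of the presentation of $X\otimes Y$ with $F\Sigma_{f,g}=\hat F_{(f,1),(1,g)}$.

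I would then upgrade this to an isomorphism of $2$-categories by running the same generator-and-relations analysis at higher dimensions. A pseudonatural transformation $\alpha\co F\Rightarrow F'$ between strict functors on $X\otimes Y$ is uniquely determined by its components $\alpha_{(x,y)}$ together with naturality $2$-cells $\alpha_{(f,1)}$ and $\alpha_{(1,g)}$ at the generating $1$-cells, subject to three families of pseudonaturality axioms coming from the generating $2$-cells of $X\otimes Y$: those of the form $(\alpha,1)$, of the form $(1,\beta)$, and of the form $\Sigma_{f,g}$. Under $C^\ast$ the first two families translate into the pseudonaturality of $\alpha C$ at $1$-cells of the two axes, while the third translates into precisely the equation which identifies the naturality square of a pseudonatural transformation of cubical functors at $(f,g)$ in terms of those at $(f,1)$ and $(1,g)$ via the cubical interchangers of $\hat F$ and $\hat F'$. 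A modification is determined by its components together with one axiom per $1$-cell of $X\otimes Y$, and the same generator analysis shows that $C^\ast$ is bijective on $2$-cells.

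The main obstacle is this third translation: one must carefully unwrap the pseudonaturality axiom for $\alpha$ at $F(\Sigma_{f,g})$ and verify on the nose that it matches the equation a pseudonatural transformation between cubical functors must satisfy when passing between $\hat F(f,1)\ast \hat F(1,g)$ and $\hat F(1,g)\ast \hat F(f,1)$ via the respective interchangers $\hat F_{(f,1),(1,g)}$ and $\hat F'_{(f,1),(1,g)}$. Once this bookkeeping is settled, $2$-functoriality of $C^\ast$---preservation of vertical composition and of horizontal pasting of transformations and modifications---is automatic because $C^\ast$ is just precomposition by a fixed functor, and the asserted naturality in $X$ and $Y$ of the resulting isomorphism is inherited from the naturality of $C$ itself.
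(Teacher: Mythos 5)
Your proposal is correct and follows essentially the same route as the paper: the same construction of $C$ (identity on objects, $(f,g)\mapsto(1,g)\ast(f,1)$, constraint $\Sigma_{f,g}$), the object-level bijection via the generators-and-relations presentation, and the key observation that pseudonaturality at the interchange cells $\Sigma_{f,g}$ corresponds exactly to the respect-for-composition data of a transformation of cubical functors, with modifications handled by the same generator analysis. The only cosmetic difference is that you outsource the object-level bijection to Gurski's Theorem 3.7, which the paper instead re-derives while checking well-definedness on the Gray-product relations.
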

\begin{proof}
The functor $C$ is determined by the requirements that it be the identity
on objects, that $C(f,1)=(f,1)$, $C(\alpha,1)=(\alpha,1)$,
$C(1,g)=(1,g)$, $C(1,\beta)=(1,\beta)$, and that it be a cubical
functor.
In particular, observe that this means that $C(f,g)=(1,g)\ast(f,1)$
and that  the constraint $C_{(f,1),(1,g)}$ is given by the interchange
cell $\Sigma_{f,g}$.

As for an arbitrary functor of bicategories, precomposition with $C$ induces a strict functor
\begin{align*}
C^\ast\co \mathpzc{Bicat}(X\otimes Y,Z)\rightarrow \mathpzc{Bicat}(X\times Y,Z)
\puncteq{.}
\end{align*}
It sends a functor $G\co X\otimes Y\rightarrow Z$ to the composite functor $GC\co X\times Y\rightarrow C$. In fact, if $F$ is a strict functor $X\otimes Y\rightarrow Z$,
recalling the
  definition of the composite of two functors of bicategories, a
  moment's reflection affirms that $\hat F\coloneqq F C$ is a
  cubical functor with constraint $\hat F_{(f,1),(1,g)}=F(\Sigma_{f,g})$. Thus by restriction, $C^\ast$ gives
  rise to  a functor $ [X\otimes Y, Z]\rightarrow
  \mathpzc{Bicat}_c(X,Y;Z)$ which we also denote by
  $C^\ast$.

If $\sigma\co F\Rightarrow G\co X\otimes Y\rightarrow Z$ is a pseudonatural transformation,  $ C^\ast \sigma\co FC\Rightarrow GC$ is the pseudonatural transformation with component
\begin{align*}
  (C^\ast \sigma)_{(A,B)}=\sigma_{C(A,B)}=\sigma_{(A,B)}
\end{align*}
 at an object $(A,B)\in X\times Y$, and naturality $2$-cell
\begin{align*}
  (C^\ast \sigma)_{(f,g)}=\sigma_{C(f,g)}=\sigma_{(f,1)\ast
    (1,g)}=(\sigma_{(f,1)}\ast 1)\diamond(1\ast\sigma_{(1,g)})
\end{align*}
 at  a $1$-cell $(f,g)\in
X\times Y$, where the last equation is by respect for composition of $\sigma$. 
If $\sigma$ is the identity pseudonatural transformation, it is
immediate that the same applies to $C^\ast \sigma$.
Given another
pseudonatural transformation of strict functors $\tau\co G\Rightarrow
H$, we maintain that \mbox{$(C^\ast \tau) \ast (C^\ast \sigma) = C^\ast(\tau\ast
\sigma)$}. It is manifest that the components coincide: both are given by
$\tau_{(A,B)}\ast\sigma_{(A,B)}$ at the object $(A,B)\in X\times
Y$. That the naturality $2$-cells at a $1$-cell $(f,g)\in X\times Y$
coincide,
\begin{align*}
  (\tau_{C(f,g)}\ast 1)\diamond (1\ast \sigma_{C(f,g)})=(\tau\ast \sigma)_{C(f,g)}
\end{align*}
is simply the defining equation for the naturality $2$-cell of the
horizontal composite $\tau\ast \sigma$. 

If $\Delta\co \sigma\Rrightarrow \pi$ is a modification of
pseudonatural transformations $F\Rightarrow G$ of strict functors $X\otimes
Y\rightarrow Z$, then $ C^\ast \Delta $ is 
the modification
$ C^\ast\sigma\Rrightarrow C^\ast \pi $ with component 
\begin{align*}
  (C^\ast \Delta)_{(A,B)}=\Delta_{C(A,B)}=\Delta_{(A,B)}
\end{align*}
at  an object $(A,B)\in X\times Y$,
and this prescription clearly strictly
preserves identities and vertical composition of modifications.
Given another modification $\Lambda\co \tau\Rrightarrow \rho\co
G\Rightarrow H$ where $H$ is strict, one
readily checks that $(C^\ast\Lambda)\ast (C^\ast
\Delta)=C^\ast(\Lambda\ast \Delta)$ both having component
$\Lambda_{(A,B)}\ast \Delta_{(A,B)}$ at an object $(A,B)\in X\times
Y$.
Thus, we have shown that $C^\ast$ is indeed a strict
functor.

As a side note, we remark that because we only consider $2$-categories, $C^\ast$ is the same as the functor $\mathpzc{Bicat}(C,Z)$ induced by the composition of the tricategory $\mathpzc{Tricat}$ of bicategories, functors, pseudonatural transformations, and modifications.

Let $\hat F\co X\times Y\rightarrow Z$ be an arbitrary cubical
functor, then the prescriptions $F(f,1)=\hat F(f,1)$,
$F(\alpha,1)=\hat F(\alpha,1)$, $F(1,g)=\hat
F(1,g)$, $F(1,\beta)=\hat F(1,\beta)$, and $F(\Sigma_{f,g})=\hat 
F_{(f,1),(1,g)}$, provide a strict functor $F\co X\otimes Y\rightarrow
Z$ such that $FC= \hat F$. The latter equation and the requirement
that $F$ be strict, clearly determine
$F$ uniquely. That this is well-defined e.g. that it
respects the various relations for the interchange cells is by compatibility of
$\hat F$ with associators and
naturality of
\begin{align}\label{functoriality,,}
  \hat F_{(A,B),(A',B'),(A'',B'')}\co \ast_Z (\hat
  F_{(A',B'),(A'',B'')}\times \hat F_{(A,B),(A',B')} )\Rightarrow
 \hat F_{(A,B),(A'',B'')}\ast_{X\times Y}
\puncteq{,}
\end{align}
where $\ast$ denotes the corresponding horizontal composition functors. For example, for the relation
\begin{align}\label{Sigmaf'fg}
  \Sigma_{f'\ast f,g}\sim (\Sigma_{f',g}\ast (1_f,1))\diamond
  ((1_{f'},1)\ast \Sigma_{f,g})
\end{align}
one has to use that axiom twice giving
\begin{align*}
  \hat F_{(f'\ast f,1),(1,g)}=\hat F_{(f',1)\ast(f,1),(1,g)}= \hat
  F_{(f',1),(f,g)} \diamond (\hat F(1_{f'},1)\ast \hat F_{(f,1),(1,g)})
\end{align*}
and $\hat F_{(f',1),(f,g)}=
\hat F_{(f',1),(1,g)\ast(f,1)}=\hat F_{(f',1),(g,1)}\ast \hat 
F(1_f,1)$. Another way to see this, is to use coherence for the
functor $\hat F$---then any relation in the Gray product must clearly
be mapped to an identity in $Z$ because the constraints in
$\mathpzc{F}_{\hat F}Z$ are mapped to
identities in $\mathpzc{F}_{2C}Z$, where these are the corresponding free constructions on the underlying category-enriched graphs cf. \cite[2.]{gurskicoherencein}.

Now let $\hat \sigma\co \hat F\Rightarrow \hat G$ be an arbitrary pseudonatural
transformation of cubical functors. We have already shown that $\hat F$ and $\hat G$ have the form $FC$ and $GC$ respectively, where $F$ and $G$ were determined above. We maintain that there is a unique pseudonatural transformation
$\sigma\co F\Rightarrow G$ such that $\hat \sigma =C^\ast \sigma$. By
the above, the latter equation uniquely determines both the components,
$\sigma_{(A,B)}=\hat\sigma_{(A,B)}$, and the
naturality $2$-cells of $\sigma$, namely
$\sigma_{(f,1)}=\hat\sigma_{(f,1)}$ and
$\sigma_{(1,g)}=\hat\sigma_{(1,g)}$, and thus $\sigma$ is uniquely
determined by respect for composition. That this is
 compatible with 
 the relations $(f',1)\ast (f,1)\sim (f'\ast f,1)$ and $(1,g')\ast
 (1,g)\sim (1,g'\ast g)$ in the Gray
 product follows from the fact that respect for composition is in
 this case tantamount to respect for composition of $\hat\sigma$
 because the constraints are identities here due to the axioms
 for cubical functors.
Hence, what
is left to prove is that 
this is indeed a pseudonatural transformation. First observe that the
prescriptions for $\sigma$ have been determined by the requirement
that it respects composition, and respect for units is tantamount to
respect for units of $\hat\sigma$.
 Naturality with respect to $2$-cells of the form $(\alpha,1)$ and
 $(1,\beta)$ is tantamount to the corresponding naturality condition
 for $\hat \sigma$. 
 Naturality with respect to an interchange cell
 $\Sigma_{f,g}$, i.e.
 \begin{align*}
(G(\Sigma_{f,g})  \ast 1_{\sigma_{(A,B)}})\diamond  \sigma_{(f,1)\ast (1,g)}= \sigma_{(1,g)\ast (f,1)}\diamond (1_{\sigma_{(A',B')}}\ast F(\Sigma_{f,g}))
 \end{align*}
is---by the requirement that $\sigma$ respects composition:
\begin{align*}
  \sigma_{(f,1)\ast (1,g)}=(1_{G(f,1)}\ast
  \sigma_{(1,g)})\diamond(\sigma_{(f,1)}\ast 1_{F(1,g)})= (1_{\hat G(f,1)}\ast
\hat\sigma_{(1,g)})\diamond(\hat\sigma_{(f,1)}\ast 1_{\hat F(1,g)})
\end{align*}
and by respect for composition of $\hat\sigma$:
\begin{align*}
  \sigma_{(1,g)\ast (f,1)}= (1_{G(1,g)}\ast
  \sigma_{(f,1)})\diamond(\sigma_{(1,g)}\ast 1_{F(f,1)})= (1_{\hat
    G(1,g)}\ast \hat\sigma_{(f,1)})\diamond(\hat\sigma_{(1,g)}\ast
  1_{\hat F(f,1)}) = \hat \sigma_{(f,g)}
\end{align*}
(the constraints are trivial here)---tantamount to respect for composition of $\hat \sigma$:
\begin{align*}
(\hat G_{(f,1),(1,g)}  \ast 1_{\hat \sigma_{(A,B)}})\diamond
((1_{\hat G(f,1)}\ast
\hat\sigma_{(1,g)})\diamond(\hat\sigma_{(f,1)}\ast 1_{\hat F(1,g)}))= \hat \sigma_{(f,g)}
\diamond (1_{\hat\sigma_{(A',B')}}\ast \hat F_{(f,1),(1,g)})
\puncteq{.}
 \end{align*}
Notice that in general, naturality with respect to a vertical composite is
implied by naturality with respect to the individual factors. Similarly,
naturality with respect to a horizontal composite is implied by functoriality
of $F_{,,}$ and $G_{,,}$ (cf. \eqref{functoriality,,}), respect for composition, and naturality
with respect to the individual factors. % (this holds for functors and
% pseudonatural transformations, not necessarily for their lax
% versions).

Finally, let $\hat \Delta \co C^\ast\sigma  \Rrightarrow C^\ast\pi \co FC\Rightarrow GC$
be an arbitrary modification. Then we maintain that there is a unique modification
$\Delta\co \sigma\Rightarrow \pi$ such that $\hat \Delta =C^\ast\Delta
$. By the above, the latter equation uniquely determines
$\Delta$'s components, $\Delta_{(A,B)}=\hat \Delta_{(A,B)}$ and thus $\Delta$ itself, but we have to show that $\Delta$ exists i.e. that this gives $\Delta$ the structure of a modification.  
The modification axiom for $1$-cells of the form $(f,1)$  is tantamount to the modification axiom for $\hat
\Delta$ and the corresponding $1$-cell in $X\times Y$ of the same
name. The same applies to the modification axiom for $1$-cells of the
form $(1,g)$. This proves that $\sigma$ is a modification because the
modification axiom for a horizontal composite is implied by  respect for composition of $\sigma$ and $\pi$, and the
modification axiom for the individual factors. % (again, one uses
% invertibility of the constraints here, so this applies to functors).

\end{proof}

Given $2$-categories $X_1,X_2,X_3$, it is an easy observation that
\begin{align}\label{aCC1}
  a(C(C\times 1))=C(1\times C)a_{\times}\co X_1\times X_2\times X_3\rightarrow
  X_1\otimes (X_2\otimes X_3)
\puncteq{,}
\end{align}
where $a_{\times}$ is the associator of the cartesian product.

It is well-known that a strict, cubical tricategory is the same thing as a $\mathpzc{Gray}$-category. To prove this, one has to replace the cubical composition functor by the composition law
of a $\mathpzc{Gray}$-category. This
uses the underlying $\mathpzc{Set}$-isomorphism of
Proposition \ref{internal2isobicatc}. 
In the same fashion, in order to compare locally strict trihomomorphisms between
$\mathpzc{Gray}$-categories with Gray homomorphisms as it is done in
Theorem \ref{graygleichls} in \secref{graygleichls} below,
we need the following
many-variable version of Proposition \ref{internal2isobicatc} to
replace adjoint equivalences and modifications of cubical functors by
adjoint equivalences and modifications of the corresponding strict functors on Gray products.
 
\begin{theorem}\label{internal2isobicatcmulti}
Given a natural number $n$ and $2$-categories $Z,X_1,X_2,...,X_n$, 
composition with
\begin{align*}
  C(C(C(...)\times 1_{X_{n-1}})\times 1_{X_n})\co X_1\times X_2\times
  ...\times X_n\rightarrow (...((X_1\otimes X_2)\otimes X_3)\otimes
  ...)\otimes X_n
\puncteq{,}
\end{align*}
where $C$ is the universal cubical functor,  induces a natural isomorphism of $2$-categories
(i.e. a $\mathpzc{Cat}$-isomorphism)
\begin{align*}
  [(...((X_1\otimes
X_2)\otimes X_3)\otimes ...)\otimes X_n, Z]\cong \mathpzc{Bicat}_c(X_1,X_2,...,X_n;Z)
\puncteq{,}
\end{align*}
where $\mathpzc{Bicat}_c(X_1,X_2,...,X_n;Z)$ denotes the full sub-$2$-category of
$\mathpzc{Bicat}(X_1\times X_2\times ...\times X_n,Z)$ determined by
the cubical functors in $n$ variables. The same is of course true for
any other combination of universal cubical functors (mediated by the
unique isomorphism 
 in terms of associators for the Gray product).
\end{theorem}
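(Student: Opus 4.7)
The plan is to proceed by induction on $n$, using Proposition \ref{internal2isobicatc} at each step. The base case $n=1$ is trivial, and $n=2$ is precisely Proposition \ref{internal2isobicatc}. For the inductive step I would set $Y_{n-1} = (\ldots((X_1\otimes X_2)\otimes X_3)\otimes\ldots)\otimes X_{n-1}$ and let $C_{n-1}\co X_1\times\cdots\times X_{n-1}\rightarrow Y_{n-1}$ denote the iterated cubical functor on the first $n-1$ variables, which by the inductive hypothesis is universal among cubical $(n-1)$-variable functors. Writing $C\co Y_{n-1}\times X_n\rightarrow Y_{n-1}\otimes X_n$ for the universal cubical functor of Proposition \ref{internal2isobicatc}, the iterated cubical functor appearing in the theorem equals $C\circ(C_{n-1}\times 1_{X_n})$, and precomposition accordingly factors as $(C_{n-1}\times 1_{X_n})^\ast \circ C^\ast$.

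By Proposition \ref{internal2isobicatc} the factor $C^\ast$ is already a $\mathpzc{Cat}$-isomorphism $[Y_{n-1}\otimes X_n,Z]\cong \mathpzc{Bicat}_c(Y_{n-1},X_n;Z)$, so it remains to show that precomposition with $C_{n-1}\times 1_{X_n}$ restricts to a $\mathpzc{Cat}$-isomorphism
\begin{align*}
  \mathpzc{Bicat}_c(Y_{n-1},X_n;Z)\;\cong\;\mathpzc{Bicat}_c(X_1,\ldots,X_n;Z).
\end{align*}
For the forward direction, given a cubical two-variable $F\co Y_{n-1}\times X_n\rightarrow Z$ I would verify cubicality of $F(C_{n-1}\times 1_{X_n})$ in $n$ variables by case analysis on composable pairs of $1$-cells in $X_1\times\cdots\times X_n$: constraints coming from a composable pair in $X_n$ with identities in the other slots vanish by cubicality of $F$, while constraints coming from a composable pair in some $X_i$ with $i<n$ and identities elsewhere vanish because $C_{n-1}$ is itself cubical by the inductive hypothesis. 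The restriction of pseudonatural transformations and modifications is handled analogously.

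For the inverse direction, given a cubical $n$-variable $\hat H\co X_1\times\cdots\times X_n\rightarrow Z$, I would proceed slot-by-slot in the last argument: for each object $x_n\in X_n$ the restriction $\hat H(-,x_n)$ is a cubical $(n-1)$-variable functor, which by the inductive hypothesis factors uniquely as $H_{x_n}\,C_{n-1}$ for a strict $H_{x_n}\co Y_{n-1}\rightarrow Z$; the remaining data of $\hat H$ at $1$- and $2$-cells of $X_n$ are used, again via the inductive hypothesis but now applied to cubical $(n-1)$-variable functors valued in pseudonatural transformations and in modifications respectively, to extend $H_{(-)}$ to a cubical two-variable $H\co Y_{n-1}\times X_n\rightarrow Z$ with $H(C_{n-1}\times 1_{X_n}) = \hat H$, and uniquely so determined. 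Pseudonatural transformations and modifications of cubical $n$-variable functors are treated in exactly the same fashion.

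The hard part will be verifying that the data extracted from $\hat H$ actually assembles into a cubical two-variable functor on $Y_{n-1}\times X_n$, i.e.\ that the composition constraints, the pseudonaturality squares in the last variable, and the cubicality conditions at mixed $1$-cells are consistently defined. This amounts to tracking how the interchange $2$-cells $\Sigma$ of $Y_{n-1}$ are built up from the interchange cells coming from the individual Gray products, which is precisely the information the inductive hypothesis packages via the universal property of $C_{n-1}$. The addendum concerning other combinations of universal cubical functors will follow because any two bracketings of the iterated Gray product differ by an associator coherence isomorphism in $\mathpzc{2Cat}$, and precomposition with these intertwines the corresponding $\mathpzc{Cat}$-isomorphisms just constructed.
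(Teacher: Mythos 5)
Your inductive factorization $C(C(C(\ldots)\times 1)\times 1) = C\circ(C_{n-1}\times 1_{X_n})$ is sound and leads to a correct proof, but it is organized differently from the paper's. The paper does not induct: it recalls once that composites of cubical functors are cubical (which gives the forward direction for all $n$ simultaneously), and then runs the generators-and-relations argument of Proposition \ref{internal2isobicatc} directly on the iterated Gray product, disposing of the relations among combinations of interchange cells by the single observation that every diagram of interchange cells commutes, since these map to identities in the cartesian product; the three-variable compatibility of partial cubical functors is exhibited explicitly as an instance. Your route instead isolates the genuinely new content of the inductive step as the claim that $(C_{n-1}\times 1_{X_n})^\ast$ restricts to a $\mathpzc{Cat}$-isomorphism $\mathpzc{Bicat}_c(Y_{n-1},X_n;Z)\cong\mathpzc{Bicat}_c(X_1,\ldots,X_n;Z)$ -- essentially a relative form of Gurski's decomposition of an $n$-variable cubical functor into compatible partial cubical functors. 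What this buys you is that Proposition \ref{internal2isobicatc} is reused verbatim as the engine and the bookkeeping with iterated interchange cells is hidden inside the universal property of $C_{n-1}$; what it costs is that the intermediate isomorphism still requires the same coherence input (well-definedness on the interchange relations of $Y_{n-1}$), so no work is actually avoided, only repackaged. Two points you should tighten when writing this up. First, in the forward direction your case analysis only treats composable pairs with identities in all but one slot; the cubicality condition in $n$ variables also demands identity constraints for pairs such as $(f_1,\ldots,f_n)$ following $(g_1,1,\ldots,1)$, which do not have that form. These follow from the generating cases by the cocycle (associativity) axiom for the constraints, exactly as the paper notes in the two-variable case -- or, more cleanly, by citing the general fact that a composite of cubical functors is cubical. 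Second, your appeal to the inductive hypothesis ``valued in pseudonatural transformations and modifications'' to extend $H_{(-)}$ along $1$- and $2$-cells of $X_n$ should be made precise: the hypothesis is an isomorphism of hom-$2$-categories, so it does transport the $1$- and $2$-cell data, but the naturality squares in the last variable and the mixed constraint cells still have to be checked against the interchange relations of $Y_{n-1}$, which is where the coherence argument re-enters.
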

\begin{proof}
Recall that the composition ($F\circ(F_1\times ...\times F_k)$) of
cubical functors is again a cubical functor. This shows that
the restriction of $(C(C(C(...)\times 1_{X_{n-1}})\times 1_{X_n}))^\ast$
to \mbox{$[(...((X_1\otimes
X_2)\otimes X_3)\otimes ...)\otimes X_n, Z]$} does indeed factorize through
$\mathpzc{Bicat}_c(X_1,X_2,...,X_n;Z)$. The proof that this gives an
isomorphism as wanted is then a straightforward extension of the
two-variable case. There are Gray product relations on combinations of
interchange cells, which correspond to relations for
the constraints holding by coherence. Note that indeed any
diagram of interchange cells commutes because these map to identities
in the cartesian product.

 For example, a cubical functor in three variables is determined by compatible
 partial cubical functors in two variables and a relation on their
 constraints cf. the diagram in \cite[Prop. 3.3, p. 42]{gurskicoherencein}.
This 
corresponds to a combination of 
the Gray product relation
\begin{align}\label{Sigmafg'g}
  \Sigma_{f,g'\ast g}\sim((1_{g'},1)\ast
  \Sigma_{f,g})\diamond (\Sigma_{f,g'}\ast
  (1,1_{g})) 
\end{align}
for $f=f_1$ and $g'= (f_2,1)$ and $g=(1,f_3)$ and $g'= (1,f_3)$ and
$g=(f_2,1)$ respectively, 
and the Gray product relation
\begin{align*}
  ((1,\beta)\ast (\alpha,1))\diamond \Sigma_{f,g}\sim
  \Sigma_{f',g'}\diamond ((\alpha,1)\ast (1,\beta))
\end{align*}
for $f=f_1=f'$, $\alpha=1_{f_1}$,
$g=(f_2,1)\ast(1,f_3)$, $g'=(1,f_3)\ast (f_2,1)$,
 and $\beta=\Sigma_{f_2,f_3}$, which reads
\begin{align*}
 & ((1,\Sigma_{f_2,f_3})\ast (1_{f_1},1))\diamond ((1_{(f_2,1)},1)\ast
  \Sigma_{f_1,(1,f_3)})\diamond (\Sigma_{f_1,(f_2,1)}\ast
  (1,1_{(1,f_3)}))\\
\sim &
 ((1_{(1,f_3)},1)\ast
  \Sigma_{f_1,(f_2,1)})\diamond (\Sigma_{f_1,(1,f_3)}\ast
  (1,1_{(f_2,1)})) \diamond ((1_{f_1},1)\ast
  (1,\Sigma_{f_2,f_3}))
\puncteq{.}
\end{align*}

For pseudonatural transformations and modifications, the arguments are
entirely analogous to the two-variable case.
\end{proof}

\subsection{$\mathpzc{V}$-enriched monad theory}

Recall from enriched category theory that there is a
$\mathpzc{V}$-functor $\mathrm{Hom}_{\mathpzc{L}}\co
\mathpzc{L}\op\otimes \mathpzc{L}\rightarrow \mathpzc{V}$, which sends
and object $(M,N)$ in the tensor product of $\mathpzc{V}$-categories
$\mathpzc{L}\op\otimes \mathpzc{L}$ to the hom object
$\mathpzc{L}(M,N)$ in $\mathpzc{V}$.
As is common, the corresponding partial  $\mathpzc{V}$-functors are
denoted $\mathpzc{L}(M,-)$ and $\mathpzc{L}(-,N)$ with hom morphisms
determined by the equations
\begin{align}\label{covapartialhom}
  e^{\mathpzc{L}(M,N)}_{\mathpzc{L}(M,N')}(\mathpzc{L}(M,-)_{N,N'}\otimes
  1) = M_{\mathpzc{L}}
\end{align}
and
\begin{align}\label{contrapartialhom}
  e^{\mathpzc{L}(M',N)}_{\mathpzc{L}(M,N)}(\mathpzc{L}(-,N)_{M,M'}\otimes
  1) = M_{\mathpzc{L}}c
\puncteq{.}
\end{align}

For an element
$f\co N\rightarrow N'$ in $\mathpzc{L}(N,N')$ i.e. a morphism in the
underlying category of $\mathpzc{L}$, we usually denote by
$\mathpzc{L}(M,f)\co \mathpzc{L}(M,N)\rightarrow \mathpzc{L}(M,N')$
the morphism in $\mathpzc{V}$ corresponding to the image of $f$ under
the underlying functor of $\mathpzc{L}(M,-)$ with respect to the identification
$\mathpzc{V}_0(I,[X,Y])\cong \mathpzc{V}_0(X,Y)$ induced from the
closed structure of $\mathpzc{V}$ for objects $X,Y\in\mathpzc{V}$.
Also note that we will occasionally write $\mathpzc{L}(1,f)$ instead e.g. if the functoriality of
\begin{align}
  \mathrm{hom}_{\mathpzc{L}}\co \mathpzc{L}\op_{0}\times
  \mathpzc{L}_{0}\xrightarrow{\phantom{(\mathrm{Hom}_{\mathpzc{L}})_0}}(\mathpzc{L}\op\otimes
  \mathpzc{L})_0\xrightarrow{(\mathrm{Hom}_{\mathpzc{L}})_0} \mathpzc{V}_0
\end{align}
is to be emphasized, where the first arrow is the canonical comparison functor \cite{kelly}.
This notation is obviously extended in the case that
$\mathpzc{V}=\mathpzc{Gray}$, e.g. given a $1$-cell $\alpha\co
f\rightarrow g$ in the $2$-category $\mathpzc{L}(N,N')\cong
[I,\mathpzc{L}(N,N')]$, then $\mathpzc{L}(M,\alpha)$ denotes the
pseudonatural transformation $\mathpzc{L}(M,f)\Rightarrow
\mathpzc{L}(M,g)$ given by $\mathpzc{L}(M,-)_{N,N'}(\alpha)$ i.e. the
$1$-cell in $[\mathpzc{L}(M,N),\mathpzc{L}(M,N')]$.

Recall that the $2$-category $\mathpzc{V}\text{-}\mathpzc{CAT}$ of
$\mathpzc{V}$-categories, $\mathpzc{V}$-functors, and
$\mathpzc{V}$-natural transformations is a symmetric monoidal
$2$-category with monoidal structure the
tensor product of $\mathpzc{V}$-categories  and unit object
the unit $\mathpzc{V}$-category $\mathcal{I}$ with a
single object $0$ and hom object $I$.
Recall that the 
$2$-functor
$(-)_0=\mathpzc{V}\text{-}\mathpzc{CAT}(\mathcal{I},-)\co
\mathpzc{V}\text{-}\mathpzc{CAT} \rightarrow \mathpzc{CAT}$ sends a
$\mathpzc{V}$-category to its underlying category, a
$\mathpzc{V}$-functor to its underlying functor, and a
$\mathpzc{V}$-natural transformation to its underlying natural transformation.

Let $T$ be a $\mathpzc{V}$-monad on a $\mathpzc{V}$-category
$\mathpzc{M}$. Recall that this means that $T$ is a monad in the
$2$-category $\mathpzc{V}\text{-}\mathpzc{CAT}$.
Thus $T$ is a $\mathpzc{V}$-functor $\mathpzc{M}\rightarrow
\mathpzc{M}$, and its multiplication and unit are \mbox{$\mathpzc{V}$-natural}
transformations $\mu\co TT\Rightarrow T$ and $\eta\co
1_{\mathpzc{M}}\Rightarrow T$ respectively such that 
\begin{align}\label{monadaxioms}
  \mu (\mu T)=\mu (T\mu)\quad\quad \mbox{and}\quad \quad\mu(\eta T)=  1_T=\mu(T\eta)\puncteq{,}
\end{align}
where $\mu T$ and $T\mu$ are as usual the $\mathpzc{V}$-natural transformations with component
\begin{align*}
  \mu_{TM}\co I\rightarrow \mathpzc{M}(TTTM,TM) \quad\mbox{and}\quad
  T_{TTM,M}\mu_M\co I\rightarrow
  \mathpzc{M}(TTTM,TM)
\end{align*}
respectively at the object
$M\in\mathpzc{M}$, and similarly for $\eta T$ and $T\eta$.

Under the assumption that $\mathpzc{V}$ has equalizers e.g. if
$\mathpzc{V}$ is complete, the Eilenberg-Moore object $\mathpzc{M}^T$
exists and has an explicit description,  on which we expand below.
For now, recall that the Eilenberg-Moore object is formally characterized by the
existence of an isomorphism
\begin{align}
  \mathpzc{V}\text{-}\mathpzc{Cat}(\mathpzc{K},\mathpzc{M}^T)\cong \mathpzc{V}\text{-}\mathpzc{Cat}(\mathpzc{K},\mathpzc{M})^{T_\ast}
\end{align}
of categories which is $\mathpzc{Cat}$-natural in $\mathpzc{K}$ and where $T_\ast$
is the ordinary monad induced by composition with $T$.

In particular, putting $\mathpzc{K}=\mathcal{I}$ shows that the
underlying category $\mathpzc{M}^T$ of the Eilenberg-Moore object in
$\mathpzc{V}\text{-}\mathpzc{CAT}$ is isomorphic to the
Eilenberg-Moore object for the underlying monad $T_0$ on the
underlying category $\mathpzc{M}_0$ of $\mathpzc{M}$.

Thus an object of $\mathpzc{M}^T$ i.e. a $T$-algebra is the same thing
as a $T_0$ algebra. This means, that it is given by a pair $(A,a)$ where $A$ is an object of
$\mathpzc{M}$ and $a$ is an element $I\rightarrow \mathpzc{M}(TA,A)$
such that the two algebra axioms hold true:
\begin{align}\label{algebraaxioms}
  M_{\mathpzc{M}}(a, T_{TA,A}a)= M_{\mathpzc{M}}(a,\mu_{A})
  \quad\quad\mbox{and}\quad\quad 1_A = M_{\mathpzc{M}}(a,\eta_A)\puncteq{.}
\end{align}
Here, the notation is already suggestive for the situation for
$\mathpzc{V}=\mathpzc{Gray}$. Namely, $(a,T_{TA,A}a)$ is considered as an
element of the underlying set $V(\mathpzc{M}(TA,A)\otimes \mathpzc{M}(TTA,TA))$, and we apply
the underlying function $VM_{\mathpzc{M}}$ to this, where
$V$ is usually dropped because for
$\mathpzc{V}=\mathpzc{Gray}$ the equations in \eqref{algebraaxioms}
make sense as equations of the values of strict functors on objects in
the Gray product.

Given $T$-algebras $(A,a)$ and $(B,b)$, the hom object of
$\mathpzc{M}^T$ is given by the following equalizer
\begin{equation}\label{equalizerhomEM}
\begin{tikzpicture}[baseline=(current  bounding  box.center)]% equation-number centered
\matrix (a) [matrix of math nodes, row sep=3em, column sep=4.5em, text
height=1.5ex, text depth=0.25ex]{
  \mathpzc{M}^T((A,a),(B,b)) & \mathpzc{M}(A,B) & 
\mathpzc{M}(TA,B) \\ };
\draw[->]
($ (a-1-1.east) + (0,.0) $)
-- node [above] {$\scriptstyle (U^T)_{(A,a),(B,b)}$}($ (a-1-2.west) + (0,.0) $);

\draw[->]
($ (a-1-2.east) + (0,.075) $)
-- node [above] {$\scriptstyle \mathpzc{M}(a,1)$}($ (a-1-3.west) + (0,.075) $);
\draw[->]
($ (a-1-2.east) + (0,-.075) $)
-- node [below] {$\scriptstyle \mathpzc{M}(1,b)T_{A,B}$}($ (a-1-3.west) + (0,-.075) $);	
\puncttikz[a-1-3]{.}		
\end{tikzpicture}
\end{equation}
In fact, it is not hard to show that the composition law
$M_{\mathpzc{M}}$ and the
units $j_A$ of $\mathpzc{M}$ induce a $\mathpzc{V}$-category structure
on $\mathpzc{M}^T$ such that $U^T$ is a faithful
$\mathpzc{V}$-functor $\mathpzc{M}^T\rightarrow \mathpzc{M}$, which we call the forgetful functor.
The explicit arguments may be found in \cite{lintonrelative69}.

In the case that $\mathpzc{V}=\mathpzc{Gray}$ and $\mathpzc{K}$ is a
$\mathpzc{Gray}$-category with a $\mathpzc{V}$-monad $T$ on it, Gurski identifies $\mathpzc{K}^T$ explicitly in
\cite[13.1]{gurskicoherencein}. This is also
what the equalizer description gives when it is
spelled out:
%($\mathpzc{Gray}$ is complete cf. \ref{2cat}):

\begin{proposition}\label{propstrictalgebras}
  The $\mathpzc{Gray}$-category of algebras for a
  $\mathpzc{Gray}$-monad $T$ on  a $\mathpzc{Gray}$-category $K$,
  i.e. the Eilenberg-Moore object $\mathpzc{K}^T$, can
  be described in the following way.
Objects are $T$-algebras: they are given by an object $X$ in
$\mathpzc{K}$ and a $1$-cell $x\co TX\rightarrow X$ i.e. an object in
$\mathpzc{K}(TX,X)$ satisfying $
M_{\mathpzc{K}}(x,Tx)=M_{\mathpzc{K}}(x,\mu_X)$ and $1_X=M_{\mathpzc{K}}(x,\eta_X)$.
These algebra axioms are abbreviated by $xTx=x\mu_X$ and $1_X=x\eta_X$ respectively.

An algebra $1$-cell $f\co(X,x)\rightarrow (Y,y)$ is given by a
$1$-cell $f\co X\rightarrow Y$ i.e. an object in $\mathpzc{K}(X,Y)$
such that $M_{\mathpzc{K}}(f,x)=M_{\mathpzc{K}}(y,Tf)$, which is
abbreviated by
$fx=yTf$. An algebra $2$-cell $\alpha\co f\Rightarrow g\co
(X,x)\rightarrow (Y,y)$ is given by a $2$-cell $\alpha\co f\rightarrow
g$ i.e. a $1$-cell in $\mathpzc{K}(X,Y)$ such that
$M_{\mathpzc{K}}(1_y,T\alpha)=M_{\mathpzc{K}}(\alpha,1_x)$, which is abbreviated
by $1_y T\alpha=\alpha 1_x$.
An algebra $3$-cell $\Gamma\co \alpha\Rrightarrow \beta\co
f\Rightarrow g\co (X,x)\rightarrow (Y,y)$ is given by a $3$-cell
$\Gamma\co \alpha\Rrightarrow \beta$ i.e. a $2$-cell in
$\mathpzc{K}(X,Y)$ such that
$M_{\mathpzc{K}}(1_{1_y},T\Gamma)=M_{\mathpzc{K}}(\Gamma,1_{1_x})$,
which is
abbreviated $1_{1_y}T\Gamma=\Gamma 1_{1_x}$.
The compositions are induced from the $\mathpzc{Gray}$-category
structure of $\mathpzc{K}$. \qed
\end{proposition}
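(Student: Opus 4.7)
The plan is to specialize the general enriched equalizer description \eqref{equalizerhomEM} to the case $\mathpzc{V}=\mathpzc{Gray}$ and then unpack what the equalizer in $\mathpzc{2Cat}$ selects at the level of objects, $1$-cells, and $2$-cells of the hom $2$-category. Since equalizers in $\mathpzc{2Cat}$ are constructed (as sketched in \ref{2cat}) by taking the equalizer of the underlying maps on objects, $1$-cells and $2$-cells, the hom $2$-category $\mathpzc{K}^T((X,x),(Y,y))$ is the sub-$2$-category of $\mathpzc{K}(X,Y)$ determined by those cells on which the two parallel strict functors $\mathpzc{K}(x,1)$ and $\mathpzc{K}(1,y)T_{X,Y}$ agree. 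Spelling this out dimension by dimension gives the three families of equations $fx = yTf$, $1_y\,T\alpha = \alpha\,1_x$, and $1_{1_y}\,T\Gamma = \Gamma\,1_{1_x}$ in the abbreviated notation of the statement; each is just the image under $M_{\mathpzc{K}}$ of the corresponding defining equation of the equalizer, using that $(U^T)_{(X,x),(Y,y)}$ is faithful (in fact the inclusion of the equalizer).

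For objects, I would start from the general definition of a $T$-algebra as a pair $(X,x)$ with $x\co I\rightarrow \mathpzc{K}(TX,X)$ subject to \eqref{algebraaxioms}, and reinterpret the two axioms as the equations $xTx=x\mu_X$ and $1_X=x\eta_X$ between $1$-cells in $\mathpzc{K}$, using the identification $\mathpzc{V}_0(I,[X,Y])\cong\mathpzc{V}_0(X,Y)$ described at the start of the Conventions and the fact that $M_{\mathpzc{K}}$ now acts on pairs of cells in the Gray product of hom $2$-categories. This is exactly Gurski's description in \cite[13.1]{gurskicoherencein}, and I would cite that to dispatch the object level quickly.

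For the higher cells, the same principle applies: the two strict functors defining the equalizer act by ``composing with $x$ on the right'' and ``composing with $y\,T(-)$ on the left'', and requiring equality on an object $f$, $1$-cell $\alpha$, or $2$-cell $\Gamma$ of $\mathpzc{K}(X,Y)$ yields precisely the algebra $1$-/$2$-/$3$-cell conditions stated. Compositions and units of $\mathpzc{K}^T$ are induced from $M_{\mathpzc{K}}$ and the $j_X$ by the universal property of the equalizer; that this is well-defined (i.e. that composable algebra cells satisfy the algebra condition again and that identities do) is the content of Linton's construction in \cite{lintonrelative69}, transported across the enrichment, and I would simply invoke it.

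The only step requiring a little care is the bookkeeping in translating between the formal enriched equations of \eqref{algebraaxioms} and the cellwise equations written in the statement, since the latter use juxtaposition both for the action of $M_{\mathpzc{K}}$ and for whiskering by identities such as $1_x$, $1_{1_y}$. Here the convention introduced after \eqref{algebraaxioms}, whereby juxtaposition stands for the composition law of the $\mathpzc{Gray}$-category applied to a tuple in the Gray product of hom $2$-categories, makes the identification immediate: naturality of $\mu$ and $\eta$ at the relevant cells produces the left-hand sides, while the equalizer condition forces the right-hand sides. I expect no genuine obstacle beyond this notational translation, as the substantive content is already contained in the enriched equalizer description and the general theory of Eilenberg--Moore objects in $\mathpzc{V}\text{-}\mathpzc{CAT}$.
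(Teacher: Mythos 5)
Your proposal is correct and follows essentially the same route as the paper, which likewise obtains the description by spelling out the general equalizer \eqref{equalizerhomEM} for $\mathpzc{V}=\mathpzc{Gray}$ (using that equalizers in $\mathpzc{2Cat}$ are computed on objects and hom objects) and notes agreement with Gurski's explicit identification of $\mathpzc{K}^T$. The paper treats this as immediate and gives no further argument, so your dimension-by-dimension unpacking and the appeal to Linton for the induced composition are exactly the details it leaves implicit.
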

Observe here that the common notation $xTx=x\mu_X$ for equations of
(composites of) morphisms in the underlying categories has been
obviously extended for $\mathpzc{V}=\mathpzc{Gray}$ to
$2$-cells and $3$-cells i.e. $1$- and $2$-cells in the hom
$2$-categories, where juxtaposition now denotes application of the
composition law of $\mathpzc{K}$, and the axioms for algebra $2$-
and $3$-cells are whiskered equations with respect to this composition on
$2$-cells and $3$-cells in $\mathpzc{K}$.

\section{The $\mathpzc{Gray}$-category $\mathrm{Ps}\text{-}T\text{-}\mathrm{Alg}$ 
  of pseudo algebras}\label{pseudoalgebras}

Let again $T$ be a $\mathpzc{Gray}$-monad on a
$\mathpzc{Gray}$-category $\mathpzc{K}$.
Since the underlying category $\mathpzc{2Cat}$ of the $\mathpzc{Gray}$-category
$\mathpzc{Gray}$ is complete, it has equalizers in particular, so we
have a convenient description of the
$\mathpzc{Gray}$-category $\mathpzc{K}^T$ of $T$-algebras in terms of
equalizers as in \secref{equalizerhomEM}.

Recall that for enrichment in $\mathpzc{Cat}$, there is a pseudo and a
lax version of the $2$-category of algebras with obvious
inclusions of the stricter into the laxer ones respectively. 
 Under suitable
conditions on the monad and its (co)domain, there are two coherence
results relating those different kinds of algebras.  First, each of the inclusions has a left
adjoint. Second, each component of the unit of the adjunction is an
internal equivalence.
The primary references for these results are \cite{blackwell} and
\cite{lackcodescent2002}. In particular, in the second, Lack provides
an analysis of
the coherence problem by use of codescent objects.
In the case of enrichment in $\mathpzc{Gray}$, there are partial results along these lines
by Power \cite{powerthreedimensional}, and a local version of the
identification of pseudo notions for the monad
\eqref{introexlicitLan} from the Introduction with tricategorical structures is mentioned in
\cite[Ex. 3.5, p. 319]{powerthreedimensional}.
A perspective similar to Lack's treatment is given by Gurski in
\cite[Part III]{gurskicoherencein}.

For a $\mathpzc{Gray}$-monad $T$ on a $\mathpzc{Gray}$-category $\mathpzc{K}$, Gurski gives a definition of lax algebras, lax functors of lax
algebras, transformations of lax functors, and modifications of those,
and shows that these assemble into a $\mathpzc{Gray}$-category $\mathrm{Lax}\text{-}T\text{-}\mathrm{Alg}$.
Further, he defines pseudo algebras, pseudo functors of
pseudo algebras, and shows that these, together with transformations of
pseudo functors and modifications of those, form a
$\mathpzc{Gray}$-category $\mathrm{Ps}\text{-}T\text{-}\mathrm{Alg}$, which embeds as a locally full
sub-$\mathpzc{Gray}$-category 
in the $\mathpzc{Gray}$-category $\mathrm{Lax}\text{-}T\text{-}\mathrm{Alg}$ of lax
algebras. Finally, there is an obvious $2$-locally full inclusion of
the
$\mathpzc{Gray}$-category   $\mathpzc{K}^T$  of  algebras into  $\mathrm{Ps}\text{-}T\text{-}\mathrm{Alg}$ and $\mathrm{Lax}\text{-}T\text{-}\mathrm{Alg}$.

\subsection{Definitions and two identities }
We reproduce here Gurski's definition of
$\mathrm{Ps}\text{-}T\text{-}\mathrm{Alg}$ in equational form. In
Section \ref{sectionGrayhom} we will identify this
$\mathpzc{Gray}$-category for a particular monad on the functor
$\mathpzc{Gray}$-category $[\mathrm{ob}\mathpzc{P},\mathpzc{L}]$
where $\mathpzc{P}$ is a small and $\mathpzc{L}$ is a cocomplete
$\mathpzc{Gray}$-category. Namely, we show that it is isomorphic as a
$\mathpzc{Gray}$-category to the full sub-$\mathpzc{Gray}$-category
$\mathpzc{Tricat}_{\mathrm{ls}}(\mathpzc{P},\mathpzc{L})$ determined by the
locally strict trihomomorphisms.

\begin{definition}\cite[Def. 13.4, Def. 13.8]{gurskicoherencein}
   A pseudo $T$-algebra consists of
   \begin{itemize}
   \item an object $X$ of $\mathpzc{K}$;
   \item a $1$-cell $x\co TX\rightarrow X$ i.e. an object in
     $\mathpzc{K}(TX,X)$;
   \item $2$-cell adjoint equivalences\footnote{For adjunctions in a
       $2$-category see \cite[\S2.]{kellystreet74}.} $(m,m^\bullet)\co M_{\mathpzc{K}}(x,
     Tx)\rightarrow M_{\mathpzc{K}}(x, \mu_X)$ or abbreviated $(m,m^\bullet)\co x
     Tx\rightarrow x\mu_X$
     and $(i,i^\bullet)\co 1_X\rightarrow M_{\mathpzc{K}}(x, \eta_X)$ or abbreviated $(i,i^\bullet)\co
     1\rightarrow x\eta_X$ i.e. $1$-cells in
     $\mathpzc{K}(T^2X,X)$ and $\mathpzc{K}(X,X)$ respectively which
     are adjoint equivalences;
   \item and three invertible $3$-cells $\pi,\lambda,\mu$ as in \textbf{(PSA1)}-\textbf{(PSA3)}  subject to
     the 
     four axioms  \textbf{(LAA1)}-\textbf{(LAA4)} of a lax $T$-algebra:
   \end{itemize} 
\noindent\textbf{(PSA1)}\; An invertible $3$-cell $\pi$ given by an
invertible $2$-cell in $\mathpzc{K}(T^3X,X)$:
     \begin{align*}
       \pi\co (m1_{\mu_{TX}})\ast(m1_{T^2x})\Rightarrow (m
       1_{T\mu_X})\ast (1_xTm)
\puncteq{,}
     \end{align*}
which is shorthand for
     \begin{align*}
       \pi\co M_{\mathpzc{K}}(m,1_{\mu_{TX}})\ast
       M_{\mathpzc{K}}(m,1_{T^2x}) \Rightarrow
       M_{\mathpzc{K}}(m,1_{T\mu_X})\ast M_{\mathpzc{K}}(1_x,Tm)
\puncteq{;} 
     \end{align*}
where the horizontal factors on the left compose due to
$\mathpzc{Gray}$-naturality of $\mu$ and the codomains match by the  monad axiom $\mu
(\mu T)=\mu (T\mu)$.
%%%%

%%%
\noindent\textbf{(PSA2)}\; An invertible $3$-cell $\lambda$ given by an
invertible $2$-cell in  $\mathpzc{K}(TX,X)$:
\begin{align*}
  \lambda\co (m 1_{\eta_{TX}})\ast (i1_x)\Rightarrow 1_x
\puncteq{,}
\end{align*}
which is shorthand for
     \begin{align*}
       \lambda\co M_{\mathpzc{K}}(m,1_{\eta_{TX}})\ast
       M_{\mathpzc{K}}(i,1_{x})\Rightarrow 1_x
\puncteq{,}
     \end{align*}
where the horizontal factors compose due to
$\mathpzc{Gray}$-naturality of $\eta$ and the codomains match by the monad axiom $\mu
(\eta T)=1_T$.
%%%%

%%%%
\noindent\textbf{(PSA3)}\; An invertible $3$-cell $\rho$ given by an
invertible $2$-cell in  $\mathpzc{K}(TX,X)$:
\begin{align*}
  \rho\co (m1_{T\eta_X})\ast (1_x Ti)\Rightarrow 1_x
\puncteq{,}
\end{align*}
which is shorthand for
     \begin{align*}
       \rho \co M_{\mathpzc{K}}(m,1_{T\eta_X})\ast
       M_{\mathpzc{K}}(1_x,Ti)\Rightarrow 1_x
\puncteq{,}
     \end{align*}
where the codomains match by the monad axiom $\mu (T\eta)=1_T$.

The four lax algebra axioms are:
%%%

%%%
\noindent\textbf{(LAA1)}\; The following equation in $\mathpzc{K}(T^4X,X)$ of vertical
composites of whiskered $3$-cells is required:
\begin{align*}
  &((\pi 1)\ast 1_{11T^2m})\diamond (1_{m11}\ast \Sigma^{-1}_{m,T^2m})\diamond ((\pi 1)\ast 1_{m11})
  \\ & = (1_{m11}\ast (1T\pi ))\diamond ((\pi 1) \ast 1_{1Tm1}) \diamond
  (1_{m11}\ast (\pi 1))
\puncteq{,}
\end{align*}
where  $\Sigma^{-1}_{m,T^2m}$ is shorthand for
$M_{\mathpzc{K}}(\Sigma^{-1}_{m,T^2m})$. A careful inspection shows
that the horizontal and vertical factors do indeed compose. Note that any mention of the object
$X$ has been omitted, e.g. $T\mu_T$ stands for $T\mu_{TX}$. We refer
to this axiom as the \emph{pentagon-like axiom} for $\pi$.
%%%

%%%
\noindent\textbf{(LAA2)}\; The following equation in $\mathpzc{K}(T^2X,X)$ of vertical
composites of whiskered $3$-cells is required:
\begin{align*}
  ((\rho 1)\ast 1_{m1})\diamond (1_{m11}\ast
  \Sigma_{m,T^2i})= (1_{m11}\ast (1_{x}T\rho))\diamond ((\pi
  1)\ast 1_{11T^2i})
\puncteq{.}
\end{align*}
%%%

%%%
\noindent\textbf{(LAA3)}\; The following equation in $\mathpzc{K}(T^2X,X)$ of vertical
composites of whiskered $3$-cells is required:
\begin{align*}
  1_{m11}\ast (\lambda 1)=((\lambda 1)\ast
  1_{1m})\diamond(1_{m11}\ast\Sigma^{-1}_{i,m})  \diamond((\pi 1)\ast 1_{i11})
\puncteq{.}
\end{align*}
%%%

%%%
\noindent\textbf{(LAA4)}\; The following equation in $\mathpzc{K}(T^4X,X)$ of vertical
composites of whiskered $3$-cells is required:
\begin{align*}
  ( 1_{m11}\ast (1T\lambda))\diamond ((\pi 1)\ast  1_{1Ti1}) =  1_{m11}\ast
  (\rho 1)
\puncteq{.}
\end{align*}
We refer to this as the \emph{triangle-like axiom} for $\lambda$, $\rho$, and
$\pi$. 
Diagrams for these axioms may be found in Gurski's definition.
\end{definition}

   \begin{remark}
     In the shorthand notation juxtaposition stands for an application
     of $M_{\mathpzc{K}}$, an instance of a power of $T$ in an index
     refers to its effect on the object $X$, any other
     instance of a power of $T$ is shorthand for a hom $2$-functor
     and only
     applies to the cell directly following it. Notice that this
     notation is possible due to the functor axiom for $T$.
   \end{remark}
This definition of a pseudo algebra is derived from the
definition of a lax algebra by requiring the $2$-cells $m$ and $i$ to
be adjoint equivalences 
and the $3$-cells $\pi,\lambda$, $\rho$ to be invertible. In fact,
under these circumstances we do not need all of the axioms. This is proved in the
following proposition, which is central for the comparison of
trihomomorphisms of $\mathpzc{Gray}$-categories with pseudo algebras. Namely, there are only two axioms for a
trihomomorphism, while there are four in the definition of a lax
algebra. Proposition \ref{kelly2axioms} shows that, in general, two of
the axioms suffice for a pseudo algebra.
 
   \begin{proposition}\label{kelly2axioms}
     Given a pseudo $T$-algebra, the pentagon-like axiom
     \textup{\textbf{(LAA1})} and the
     triangle-like axiom \textup{\textbf{(LAA4})} imply the other two
     axioms \textup{\textbf{(LAA2)}}-\textup{\textbf{(LAA3)}}, i.e. these are redundant.
   \end{proposition}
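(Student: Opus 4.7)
The strategy I would adopt is the three-dimensional analogue of Kelly's classical argument that, in a monoidal category, the pentagon together with a single triangle axiom implies all remaining coherence triangles. The invertibility of the $3$-cells $\pi$, $\lambda$, $\rho$ and the fact that $m$ and $i$ are adjoint equivalences are used exactly as in Kelly's argument: they allow us to cancel any occurrence of these data on either side of a $3$-cell equation, so it suffices to derive the required identities up to pre- and post-composition with invertible pieces already present.

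First I would derive \textbf{(LAA3)}, the ``left triangle'' involving $\lambda$. The plan is to specialize the pentagon-like axiom \textbf{(LAA1)} by inserting a unit into the outermost slot: concretely, one precomposes (LAA1) with $\eta_{T^3X}$, so that $\mu_{T^2X}$ occurring in the source of the outermost $m$ reduces to the identity by the monad unit axiom $\mu\circ(\eta T)=1_T$. The $\mathpzc{Gray}$-naturality of $m$ and $M_{\mathpzc{K}}$ then replaces this outermost $m$-cell by $i$ in each of the six terms. After using the triangle-like axiom \textbf{(LAA4)} to rewrite the residual composite containing $1T\lambda$ and $\pi$ in terms of $\rho$, four of the resulting terms cancel in pairs, and what remains is, after whiskering with the inverse of a $\pi$-instance already present, precisely \textbf{(LAA3)}.

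For \textbf{(LAA2)} the plan is entirely symmetric: one specializes \textbf{(LAA1)} by inserting $T\eta$ at an inner slot, so that this time $Tm$ interacts with $T^2 i$ via the unit axiom $\mu\circ(T\eta)=1_T$ after applying $T$ and naturality. The triangle axiom \textbf{(LAA4)}, now used in its form comparing $\rho$ with the $\pi$-composite of $T\lambda$, reduces the specialized pentagon to an equation between $\rho$, a single $\pi$, and the interchange cell $\Sigma_{m,T^2i}$; cancelling the invertible $\pi$ leaves exactly \textbf{(LAA2)}.

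The central technical difficulty is not the logical skeleton but the bookkeeping of the Gray interchange cells $\Sigma$. In Kelly's $\mathpzc{Cat}$-enriched setting these are all identities, and the cancellations are immediate; here, every time two whiskered $3$-cells are interchanged one picks up a $\Sigma$-term governed by equation \eqref{MGraySigma}, and one must check both that the $\Sigma$-cells generated in the course of the derivation eventually match (up to sign) the $\Sigma$-term already present on the right-hand side of \textbf{(LAA2)} or \textbf{(LAA3)}, and that no extraneous interchange cell survives. This is where the bulk of the computational work lies; once it is carried out the underlying Kelly-style cancellation is routine, and in particular only invertibility of $m$, $i$, $\pi$, $\lambda$, $\rho$ (not the full adjoint-equivalence data) is needed.
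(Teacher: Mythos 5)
Your high-level plan --- transporting Kelly's redundancy argument, deriving each unit axiom from the pentagon \textbf{(LAA1)} and the middle triangle \textbf{(LAA4)} while bookkeeping the Gray interchange cells --- is indeed the strategy of the paper's proof. But the concrete specialization you propose would not go through. Precomposing \textbf{(LAA1)} with $\eta_{T^3X}$ inserts the unit into the \emph{outermost} of the four slots, and in that position the middle triangle \textbf{(LAA4)} (whose unit enters as $Ti$, i.e.\ via $T\eta$) never becomes applicable, so neither $\lambda$ nor $i$ can enter the computation at all. The correct analogue of Kelly's derivation of the left unit law is the pentagon at $(A,I,B,C)$: one first ``tensors \textbf{(LAA3)} on the left'', i.e.\ applies $T$ to it and whiskers with $x$ and $m$, obtaining an equation at the level of $\mathpzc{K}(T^3X,X)$, and then compares it with the pentagon precomposed with $T\eta_{T^2X}$ --- unit in the \emph{second} slot --- which is exactly what the paper does. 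Relatedly, your step ``Gray-naturality of $m$ and $M_{\mathpzc{K}}$ replaces the outermost $m$-cell by $i$'' is false: $m$ and $i$ are independent data and no naturality converts one into the other, just as specializing an associator of a monoidal category to $a_{X,I,Y}$ does not turn it into a unitor. The cell $i$ enters the derivation only through \textbf{(LAA4)}.

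There is also an arity gap you do not address: a pentagon specialized by a single $\eta$-insertion lives in $\mathpzc{K}(T^3X,X)$, whereas \textbf{(LAA3)} lives in $\mathpzc{K}(T^2X,X)$, and whiskering with inverses of $\pi$ or with $m^\bullet$ cannot bridge this. The missing device --- the transport of ``$I\otimes{-}$ is an equivalence, hence faithful'' --- is that the $2$-functor obtained by applying the hom morphism of $T$, precomposing with $\eta_{T^2X}$ and postcomposing with $x$ is $2$-locally fully faithful because $i\co 1_X\to x\eta_X$ is an equivalence; hence \textbf{(LAA3)} is equivalent to its image under this functor, and it suffices to verify that image. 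This is what lets one descend from the $T^3X$-level identity back to the axiom; mere invertibility of the $3$-cells does not. (Your closing remark is also slightly off on this point: $m$ and $i$ are adjoint equivalences, not invertible $2$-cells, and it is precisely the equivalence property --- not invertibility --- that the cancellations require, though you are right that the adjunction data itself is never used.) With these corrections the remainder of your outline --- two applications of \textbf{(LAA4)}, cancellation of the invertible $\pi$'s, and matching of the $\Sigma$-cells via the Gray-product relations --- does agree with the paper's proof, and the case of \textbf{(LAA2)} is the symmetric one with $T^2\eta$ in place of $T\eta$ and $\rho$ in place of $\lambda$.
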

   \begin{proof}  
We proceed analogous to Kelly's classical proof that the two
corresponding axioms in
MacLane's original definition of a monoidal category are redundant
\cite{kellyonmaclanes64}. The associators and unitors in Kelly's proof here correspond
to $\pi,\lambda$, and $\rho$. Commuting naturality squares for
associators have to be replaced by
instances of the middle four interchange law, and there is an
additional complication due to the appearance of interchange cells --
these have no counterpart in Kelly's proof, so that it is gratifying that the
strategy of the proof can still be applied. 
We only show here the proof for the axiom \textbf{(LAA3)} involving $\pi$ and $\lambda$, the
one for the axiom \textbf{(LAA2)} involving $\pi$ and $\rho$ is entirely analogous.

The general idea of the proof is to transform the equation of the
axiom \textbf{(LAA3)} into an equivalent form, namely
\eqref{lefthandrighthand} below, which we can manipulate
by use of the pentagon-like and triangle-like axiom. This is probably
easier to see from the diagrammatic form of the axioms, where it means
that we adjoin 
\begin{equation*}
\begin{tikzpicture}[remember picture, every node/.style={scale=0.77}]]
\matrix (a) [matrix of math nodes, row sep=3em, column sep=1em, text
height=1.5ex, text depth=0.25ex]{ x1Tx T^2x &
xTx T\eta Tx T^2x  \\ 
x1Tx T\mu & xTx T\eta Tx T\mu   \\
 xTxT\mu T\eta_{T}T\mu & xTxT\mu T^2\mu T\eta_{T^2} \\
 };
\path[->] (a-1-1) edge 	node[above]{$\scriptstyle 1Ti11$} 
						  	(a-1-2);
%%%%%%%%%%
\path[->] (a-1-1) edge 	node[left]{$\scriptstyle 11Tm$} 
						  	(a-2-1);
\path[->] (a-1-2) edge 	node[left]{$\scriptstyle 111Tm$} 
										(a-2-2);
%%%%%%%%%%%%%%%%%%%%
\path[->] (a-2-1) edge 	node[above]{$\scriptstyle 1Ti11$} 
						  	(a-2-2);
\path[->] (a-2-2) edge 	node[left]{$\scriptstyle 1Tm11$} 
										(a-3-2);
%%%%%%
\draw[-]
($ (a-2-1.south) + (.04,.0) $)
-- node [above] {}($ (a-3-1.north) + (.04,.0) $);
\draw[-]
($ (a-2-1.south) + (-.04,.0) $)
-- node [above] {}($ (a-3-1.north) + (-.04,.0) $);
%%%
%%%%%%%%%%%%
\draw[-]
($ (a-3-1.east) + (.0,.04) $)
-- node [above] {}($ (a-3-2.west) + (.,.04) $);
\draw[-]
($ (a-3-1.east) + (.0,-.04) $)
-- node [above] {}($ (a-3-2.west) + (.,-.04) $);
%%%
\node at ($ 1/2*(a-2-1) + 1/2*(a-1-2) + (-.4,0) $) {$ \Downarrow
  \Sigma^{-1}_{1Ti,Tm} $};
\node at ($ 1/2*(a-3-1) + 1/2*(a-2-2) + (-.4,0) $) {$ \Downarrow
  1 T\lambda 1 $};
\puncttikz[a-2-2]{}
\end{tikzpicture}
\end{equation*}
%\vspace{0.5cm}
to the right hand side of some image of the pentagon-like axiom
\textbf{(LAA1)}, and then a diagram equivalent to the right hand side
of \textbf{(LAA3)} can be
identified as a subdiagram of this.

Since $i\co 1_X\rightarrow x\eta_X$ is an equivalence in
$\mathpzc{L}(X,X)$, and since
\begin{align*}
  \mathpzc{L}(X',1_X)\co \mathpzc{L}(X',X)\rightarrow
  \mathpzc{L}(X',X)
\end{align*}
is the identity for arbitrary $X'\in\mathrm{ob}\mathpzc{L}$, we
have that $\mathpzc{L}(X',x\eta_X)$ is
equivalent to the identity functor. 
In particular, it is $2$-locally fully faithful i.e. a bijection on the
sets of $2$-cells. On the other hand, by naturality of $\eta$ we have:
\begin{align*}
  \mathpzc{L}(X',x\eta_X)=\mathpzc{L}(X',x)\mathpzc{L}(X',\eta_X)=\mathpzc{L}(X',x)\mathpzc{L}(\eta_{X'},TX)T_,
\puncteq{,}
\end{align*}
where the subscript of $T$ on the
right indicates a hom morphism of $T$. 
This means that the equation of \textbf{(LAA3)}  is equivalent to its image
under
\begin{align*}
  \mathpzc{L}(T^2X,x)\mathpzc{L}(\eta_{T^2X},TX)T_,=\mathpzc{L}(\eta_{T^2
    X},X)\mathpzc{L}(T^3X,x)T_,
\puncteq{,}
\end{align*}
where we have used the underlying
functoriality of $\mathrm{hom}_{\mathpzc{L}}$.
We will actually show that the image of the equation under
$\mathpzc{L}(T^3X,x)T_,$ holds, which of course implies that the image of the
equation under $\mathpzc{L}(\eta_{T^2
  X},X)\mathpzc{L}(T^3X,x)T_,$ holds.

Applying $\mathpzc{L}(T^3X,x)T_,$ to the lax algebra axiom \textbf{(LAA3)} gives
\begin{align}\label{zwischen0}
  1_{1Tm11}\ast  (1 T\lambda 1)=((1 T\lambda 1)\ast
  1_{11Tm})\diamond(1_{1Tm11}\ast (1T\Sigma^{-1}_{i,m}))
  \diamond((1T\pi 1)\ast 1_{1Ti11})
\puncteq{.}
\end{align}
Observe that since $\Sigma^{-1}_{i,m}$ is shorthand for
$M_{\mathpzc{L}}(\Sigma^{-1}_{i,m})$, we have
$1T\Sigma^{-1}_{i,m}=1\Sigma^{-1}_{Ti,Tm}$ by the
functor axiom for $T$ and equation \eqref{FGSigma} from \secref{FGSigma}, which is in fact shorthand for
\begin{align*}
  M_{\mathpzc{L}}((1,M_{\mathpzc{L}}(\Sigma^{-1}_{Ti,Tm}))) & =
  M_{\mathpzc{L}} (M_{\mathpzc{L}}\otimes
  1)(a^{-1}(1,\Sigma^{-1}_{Ti,Tm})) & \quad \mbox{(by a
    $\mathpzc{Gray}$-category axiom)}\\
& =M_{\mathpzc{L}}(M_{\mathpzc{L}}\otimes
  1)(\Sigma^{-1}_{(1,Ti),Tm}) & \quad \mbox{(by eq. \eqref{aSigma1gh} from \secref{aSigma1gh})} \\
& =M_{\mathpzc{L}}(\Sigma_{1Ti,Tm}^{-1})\puncteq{,} & \quad \mbox{(by
  eq. \eqref{FGSigma} from \secref{FGSigma})}
\end{align*}
for which the corresponding shorthand is just $\Sigma^{-1}_{1Ti,Tm}$. 

Next, equation \eqref{zwischen0} is clearly equivalent to the one whiskered with
$m111$ on the
left  because $m111$  is an (adjoint) equivalence
by the definition of $m$, i.e. to
\begin{align}\label{zwischen1}
  & 1_{m111}\ast  1_{1Tm11}\ast(1 T\lambda 1) \nonumber\\
  & =(1_{m111}\ast(1 T\lambda 1)\ast
  1_{11Tm})\diamond(1_{m111}\ast1_{1Tm11}\ast
  (\Sigma^{-1}_{1Ti,Tm}))  \diamond(1_{m111}\ast (1T\pi 1)\ast
  1_{1Ti11}) 
\puncteq{.}
\end{align}
Here we have used functoriality of $\ast$ i.e. the middle four
interchange law, and it is understood that because horizontal composition is
associative, we can drop parentheses.

Now observe that $1_{m111}\ast (1T\pi 1)$ is the image under
$\mathpzc{L}(T\eta_{T^2X},X)$ of the leftmost vertical factor
in the right hand side of the pentagon-like axiom \textbf{(LAA1)} for
$\pi$.
Namely, the image of \textbf{(LAA1)} under
$\mathpzc{L}(T\eta_{T^2X},X)$ is
\begin{align*}
  &((\pi 1 1)\ast 1_{11T^2m 1})\diamond (1_{m111}\ast (\Sigma^{-1}_{m,T^2m}1))\diamond ((\pi 11)\ast 1_{m111})
  \\ & = (1_{m111}\ast (1T\pi 1))\diamond ((\pi 11) \ast 1_{1Tm11}) \diamond
  (1_{m111}\ast (\pi 11))
\puncteq{,}
\end{align*}
where  $\Sigma^{-1}_{m,T^2m}1$
 is shorthand for
\begin{align*}
  M_{\mathpzc{L}}(M_{\mathpzc{L}}(\Sigma^{-1}_{m,T^2m}),1) & =M_{\mathpzc{L}}(M_{\mathpzc{L}}\otimes
  1)(\Sigma^{-1}_{m,T^2m},1) & \quad \mbox{(by definition of
    $(M_{\mathpzc{L}}\otimes 1)$)}\\
& = M_{\mathpzc{L}}(1\otimes
  M_{\mathpzc{L}})a (\Sigma^{-1}_{m,T^2m},1) & \quad \mbox{(by a
    $\mathpzc{Gray}$-category axiom)}\\
& = M_{\mathpzc{L}}(1\otimes
  M_{\mathpzc{L}}) (\Sigma^{-1}_{m,(T^{2}m,1)}) & \quad \mbox{(by
    eq. \eqref{aSigmafg1} from \secref{aSigmafg1})} \\
& =M_{\mathpzc{L}}(\Sigma^{-1}_{m,T^{2}m1}) \puncteq{,} & \quad \mbox{(by
    eq. \eqref{FGSigma} from \secref{FGSigma})} 
\end{align*}
for which the corresponding shorthand is just $\Sigma^{-1}_{m,T^2m 1}$.

In fact, we have that $T^2m 1= 1 Tm$ where on the left the identity is
$1_{T\eta_{T^2X}}$ and on the right it is $1_{T\eta_X}$ , thus this is simply
naturality of $T\eta$.
Hence,  $\Sigma^{-1}_{m,T^2m 1}=\Sigma^{-1}_{m,1Tm}$. In turn, this is shorthand for
\begin{align*}
  M_{\mathpzc{L}}(1\otimes M_{\mathpzc{L}})(\Sigma^{-1}_{m,(1,Tm)})
   & =M_{\mathpzc{L}}(M_{\mathpzc{L}}\otimes
  1)a^{-1}(\Sigma^{-1}_{(m,(1,Tm))}) & \quad \mbox{(by a
    $\mathpzc{Gray}$-category axiom)}\\
& =M_{\mathpzc{L}}(M_{\mathpzc{L}}\otimes
  1)(\Sigma^{-1}_{(m,1),Tm})& \quad \mbox{(by eq. \eqref{aSigmaf1h}
    from \secref{aSigmaf1h})}\\
 & =M_{\mathpzc{L}}(\Sigma^{-1}_{m1,Tm})  \puncteq{,} & \quad \mbox{(by
    eq. \eqref{FGSigma} from \secref{FGSigma})} 
\end{align*}
for which the corresponding shorthand is just $\Sigma^{-1}_{m1,Tm}$.

Implementing these transformations, the image of the pentagon-like axiom \textbf{(LAA1)} then has the form
\begin{align}\label{zwischen2}
  &((\pi 1 1)\ast 1_{111Tm})\diamond (1_{m111}\ast \Sigma^{-1}_{m1,Tm})\diamond ((\pi 11)\ast 1_{m111})
  \nonumber \\ & = (1_{m111}\ast (1T\pi 1))\diamond ((\pi 11) \ast 1_{1Tm11}) \diamond
  (1_{m111}\ast (\pi 11))
\puncteq{.}
\end{align}
Since they are invertible, composing  equation \eqref{zwischen1} with
the other two factors from the pentagon-like axiom for $\pi$ whiskered
with the (adjoint) equivalence $1Ti11$ on the right, gives an
equivalent equation.
Thus, our goal is now to prove the following equation:
\begin{align}  \label{lefthandrighthand}
  & (1_{m111}\ast  1_{1Tm11}\ast(1 T\lambda 1)) \diamond ((\pi 11) \ast
  1_{1Tm11} \ast
  1_{1Ti11}) \diamond
  ( 1_{m111}\ast (\pi 11)\ast  1_{1Ti11}) \nonumber \\ & =( 1_{m111}\ast(1 T\lambda 1)\ast
  1_{11Tm})\diamond( 1_{m111}\ast 1_{1Tm11}\ast (\Sigma^{-1}_{1Ti,Tm}))
\nonumber \\ &\quad
\diamond\Big(\Big(( 1_{m111}\ast (1T\pi 1)) \diamond((\pi 11) \ast  1_{1Tm11}) \diamond
( 1_{m111}\ast (\pi 11))\Big)\ast  1_{1Ti11}\Big)
\puncteq{.}
\end{align}

This is proved by transforming the right hand side by use of the
pentagon-like and triangle-like axiom until we finally obtain the left
hand side.

Namely, using the image of the pentagon-like axiom for $\pi$ in the
form \eqref{zwischen2} above, the right hand side of
\eqref{lefthandrighthand} is equal to
\begin{align*}
  & ( 1_{m111}\ast(1 T\lambda 1)\ast
  1_{11Tm})\diamond( 1_{m111}\ast 1_{1Tm11}\ast (\Sigma^{-1}_{1Ti,Tm}))
\\ &\quad
\diamond\Big(\Big( ((\pi 1 1)\ast  1_{111Tm})\diamond (
1_{m111}\ast \Sigma^{-1}_{m1,Tm})\diamond ((\pi 11)\ast
1_{m111})\Big)\ast  1_{1Ti11}\Big)\puncteq{.}
\end{align*}
The diagrammatic form of this is drawn below.
%%%%%
%%%% 3:
%%%%
\begin{equation*}
\begin{tikzpicture}[remember picture, every node/.style={scale=0.77}]
\matrix (a) [matrix of math nodes, row sep=3em, column sep=1em, text
height=1.5ex, text depth=0.25ex]{ x1Tx T^2x &
xTx T\eta Tx T^2x & x\mu T \eta Tx T^2x & xTx\mu_T T^3xT\eta_{T^2} &
x\mu \mu_T T^3xT\eta_{T^2} & x\mu T\mu T^3x T\eta_{T^2}\\ 
x1Tx T\mu & xTx T\eta Tx T\mu  & x\mu T\eta TxT\mu&
xTxT^2x\mu_{T^2}T\eta_{T^2} & x\mu T^2\mu_{T^2}T\eta_{T^2} & x\mu
T^2xT\mu_T T\eta_{T^2} \\
 xTxT\mu T\eta_{T}T\mu & xTxT\mu T^2\mu T\eta_{T^2} & xTx\mu_T T^2\mu T\eta_{T^2} & xTxT\mu\mu_{T^2}T\eta_{T^2} & xTx\mu_T\mu_{T^2}T\eta_{T^2}& xTx\mu_TT\mu_T T\eta_{T^2}\\
& x\mu T\mu T^2\mu T\eta_{T^2} & x\mu\mu_TT^2\mu T\eta_{T^2}& x\mu T\mu\mu_{T^2}T\eta_{T^2} & x\mu
\mu_T\mu_{T^2}T\eta_{T^2} & x\mu\mu_T T\mu_T T\eta_{T^2}
 \\ };
\path[->] (a-1-1) edge 	node[above]{$\scriptstyle 1Ti11$} 
						  	(a-1-2);
\path[->] (a-1-2) edge 	node[above]{$\scriptstyle m111$} 
						  	(a-1-3);
%%%
\draw[-]
($ (a-1-3.east) + (.0,.04) $)
-- node [above] {}($ (a-1-4.west) + (.,.04) $);
\draw[-]
($ (a-1-3.east) + (.0,-.04) $)
-- node [above] {}($ (a-1-4.west) + (.,-.04) $);
%%%
\path[->] (a-1-4) edge 	node[above]{$\scriptstyle m111$} 
	       					  	(a-1-5);
\draw[-]
($ (a-1-5.east) + (.0,.04) $)
-- node [above] {}($ (a-1-6.west) + (.,.04) $);
\draw[-]
($ (a-1-5.east) + (.0,-.04) $)
-- node  [above] {}($ (a-1-6.west) + (.,-.04) $);
%%%%%%%%%%
\path[->] (a-1-1) edge 	node[left]{$\scriptstyle 11Tm$} 
						  	(a-2-1);
\path[->] (a-1-2) edge 	node[left]{$\scriptstyle 111Tm$} 
										(a-2-2);
\path[->] (a-1-3) edge 	node[right]{$\scriptstyle 111Tm$} 
										(a-2-3);
\draw[-]
($ (a-1-4.south) + (.04,.0) $)
-- node [above] {}($ (a-2-4.north) + (.04,.0) $);
\draw[-]
($ (a-1-4.south) + (-.04,.0) $)
-- node [above] {}($ (a-2-4.north) + (-.04,.0) $);
\draw[-]
($ (a-1-5.south) + (.04,.0) $)
-- node [above] {}($ (a-2-5.north) + (.04,.0) $);
\draw[-]
($ (a-1-5.south) + (-.04,.0) $)
-- node [above] {}($ (a-2-5.north) + (-.04,.0) $);
\draw[-]
($ (a-1-6.south) + (.04,.0) $)
-- node [above] {}($ (a-2-6.north) + (.04,.0) $);
\draw[-]
($ (a-1-6.south) + (-.04,.0) $)
-- node [above] {}($ (a-2-6.north) + (-.04,.0) $);
%%%%%%%%%%%%%%%%%%%%
\path[->] (a-2-1) edge 	node[above]{$\scriptstyle 1Ti11$} 
						  	(a-2-2);
\path[->] (a-2-2) edge 	node[below]{$\scriptstyle m111$} coordinate[midway] (doleftpi1) (a-2-3);
\path[->] (a-2-4) edge 	node[below]{$\scriptstyle m111$} (a-2-5);
%%%%%%%%%%%%%%
\draw[-]
($ (a-2-1.south) + (.04,.0) $)
-- node [above] {}($ (a-3-1.north) + (.04,.0) $);
\draw[-]
($ (a-2-1.south) + (-.04,.0) $)
-- node [above] {}($ (a-3-1.north) + (-.04,.0) $);
\path[->] (a-2-2) edge 	node[left]{$\scriptstyle 1Tm11$} 
										(a-3-2);
%%%
\draw[-]
($ (a-2-3.south) + (.04,.0) $)
-- node [above] {}($ (a-3-3.north) + (.04,.0) $);
\draw[-]
($ (a-2-3.south) + (-.04,.0) $)
-- node [above] {}($ (a-3-3.north) + (-.04,.0) $);
%%%%%
\path[->] (a-2-4) edge 	node[left]{$\scriptstyle 1Tm11 $} 
										(a-3-4);
%%%
\draw[-]
($ (a-2-5.south) + (.04,.0) $)
-- node [above] {}($ (a-3-5.north) + (.04,.0) $);
\draw[-]
($ (a-2-5.south) + (-.04,.0) $)
-- node [above] {}($ (a-3-5.north) + (-.04,.0) $);
%%%%%
\draw[-]
($ (a-2-6.south) + (.04,.0) $)
-- node [above] {}($ (a-3-6.north) + (.04,.0) $);
\draw[-]
($ (a-2-6.south) + (-.04,.0) $)
-- node [above] {}($ (a-3-6.north) + (-.04,.0) $);
%%%%%%%%%%%%
\draw[-]
($ (a-3-1.east) + (.0,.04) $)
-- node [above] {}($ (a-3-2.west) + (.,.04) $);
\draw[-]
($ (a-3-1.east) + (.0,-.04) $)
-- node [above] {}($ (a-3-2.west) + (.,-.04) $);
%%%
\draw[-]
($ (a-3-3.east) + (.0,.04) $)
-- node [above] {}($ (a-3-4.west) + (.,.04) $);
\draw[-]
($ (a-3-3.east) + (.0,-.04) $)
-- node [above] {}($ (a-3-4.west) + (.,-.04) $);
%%%
\draw[-]
($ (a-3-5.east) + (.0,.04) $)
-- node [above] {}($ (a-3-6.west) + (.,.04) $);
\draw[-]
($ (a-3-5.east) + (.0,-.04) $)
-- node [above] {}($ (a-3-6.west) + (.,-.04) $);
%%%%%%%%%%%%%
\path[->] (a-3-2) edge 	node[left]{$\scriptstyle m111 $} 
										(a-4-2);
\path[->] (a-3-3) edge 	node[left]{$\scriptstyle m111 $} 
	       								(a-4-3);
\path[->] (a-3-4) edge 	node[right]{$\scriptstyle m111 $} 
	       								(a-4-4);
\path[->] (a-3-5) edge 	node[left]{$\scriptstyle m111 $} 
	       								(a-4-5);
\path[->] (a-3-6) edge 	node[right]{$\scriptstyle m111 $} 
										(a-4-6);
%%%%%%%%%%%%%%%
\draw[-]
($ (a-4-2.east) + (.0,.04) $)
-- node [above] {}($ (a-4-3.west) + (.,.04) $);
\draw[-]
($ (a-4-2.east) + (.0,-.04) $)
-- node [above] {}($ (a-4-3.west) + (.,-.04) $);
%%%
\draw[-]
($ (a-4-3.east) + (.0,.04) $)
-- node [above] {}($ (a-4-4.west) + (.,.04) $);
\draw[-]
($ (a-4-3.east) + (.0,-.04) $)
-- node [above] {}($ (a-4-4.west) + (.,-.04) $);
%%%
\draw[-]
($ (a-4-4.east) + (.0,.04) $)
-- node [above] {}($ (a-4-5.west) + (.,.04) $);
\draw[-]
($ (a-4-4.east) + (.0,-.04) $)
-- node [above] {}($ (a-4-5.west) + (.,-.04) $);
%%%
\draw[-]
($ (a-4-5.east) + (.0,.04) $)
-- node [above] {}($ (a-4-6.west) + (.,.04) $);
\draw[-]
($ (a-4-5.east) + (.0,-.04) $)
-- node [above] {}($ (a-4-6.west) + (.,-.04) $);
%%%%%%%
\node at ($ 1/2*(a-3-2) + 1/2*(a-3-3) + (0,.3) $) {$ \Downarrow
  \pi 11$};
\node at ($ 1/2*(a-3-4) + 1/2*(a-3-5) + (0,.3) $) {$\Downarrow
  \pi 11$};
\node at ($ 1/2*(a-1-3) + 1/2*(a-2-2) $) {$ \Downarrow
  \Sigma^{-1}_{m1,Tm} $};
\node at ($ 1/2*(a-3-3) + 1/2*(a-1-4) + (0,.3) $) {$
  =$};
\node at ($ 1/2*(a-3-5) + 1/2*(a-1-6) + (0,.3)$) {$
  =$};
\node at ($ 1/2*(a-4-5) + 1/2*(a-3-6) $) {$
  =$};
\node at ($ 1/2*(a-2-4) + 1/2*(a-1-5) $) {$
  =$};
\node at ($ 1/2*(a-4-3) + 1/2*(a-3-4) $) {$
  =$};
\node at ($ 1/2*(a-2-1) + 1/2*(a-1-2) + (-.4,0) $) {$ \Downarrow
  \Sigma^{-1}_{1Ti,Tm} $};
\node at ($ 1/2*(a-3-1) + 1/2*(a-2-2) + (-.4,0) $) {$ \Downarrow
  1 T\lambda 1 $};
\puncttikz[a-2-2]{}
\end{tikzpicture}
\end{equation*}

The rectangle composed of the two interchange cells is shorthand for
\begin{align*}
  M_{\mathpzc{L}}((1_{m1},1)\ast\Sigma^{-1}_{1Ti,Tm}) \diamond
  M_{\mathpzc{L}}(\Sigma^{-1}_{m1,Tm} \ast
  (1_{1Ti},1))=M_{\mathpzc{L}}(\Sigma_{(m1)\ast(1Ti),Tm})
\puncteq{,}
\end{align*}
for which the corresponding shorthand is just $\Sigma_{(m1)\ast(1Ti),Tm}$.
Notice that we made use here of the image under $M_{\mathpzc{L}}$ of the Gray product relation 
\begin{align*}
(\Sigma_{f',g}\ast (1_f,1))\diamond ((1_{f'},1)\ast \Sigma_{f,g}) \sim
\Sigma_{f'\ast f,g}
\puncteq{.}
  \end{align*}
% and local functoriality of $M_{\mathpzc{L}}$

Next, the subdiagram formed by $1T\lambda 1$ and $\pi11$ may
be transformed by use of the image under $\mathpzc{L}(T\mu,X)$ of the
triangle-like axiom \textbf{(LAA3)}:
\begin{align*}
  ( 1_{m111}\ast (1T\lambda1))\diamond ((\pi 11)\ast  1_{1Ti1}) =
  1_{m111}\ast
  (\rho 11)
\puncteq{.}
\end{align*}
Implementing these transformations in the diagram, gives the one drawn below.
%%%%
%%%% 4:
%%%%
\begin{equation*}
\begin{tikzpicture}[remember picture, every node/.style={scale=0.77}]
\matrix (a) [matrix of math nodes, row sep=3em, column sep=1em, text
height=1.5ex, text depth=0.25ex]{ x1Tx T^2x &
xTx T\eta Tx T^2x & x\mu T \eta Tx T^2x & xTx\mu_T T^3xT\eta_{T^2} &
x\mu \mu_T T^3xT\eta_{T^2} & x\mu T\mu T^3x T\eta_{T^2} \\ 
x1Tx T\mu & xTx T\eta Tx T\mu  & x\mu T\eta TxT\mu&
xTxT^2x\mu_{T^2}T\eta_{T^2} & x\mu T^2\mu_{T^2}T\eta_{T^2} & x\mu
T^2xT\mu_T T\eta_{T^2} \\
 xTxT\mu T\eta_{T}T\mu & xTxT\mu T^2\mu T\eta_{T^2} & xTx\mu_T T^2\mu T\eta_{T^2} & xTxT\mu\mu_{T^2}T\eta_{T^2} & xTx\mu_T\mu_{T^2}T\eta_{T^2}& xTx\mu_TT\mu_T T\eta_{T^2}\\
& x\mu T\mu T^2\mu T\eta_{T^2} & x\mu\mu_TT^2\mu T\eta_{T^2}& x\mu T\mu\mu_{T^2}T\eta_{T^2} & x\mu
\mu_T\mu_{T^2}T\eta_{T^2} & x\mu\mu_T T\mu_T T\eta_{T^2}
 \\ };
\path[->] (a-1-1) edge 	node[above]{$\scriptstyle 1Ti11$} 
						  	(a-1-2);
\path[->] (a-1-2) edge 	node[above]{$\scriptstyle m111$} 
						  	(a-1-3);
%%%
\draw[-]
($ (a-1-3.east) + (.0,.04) $)
-- node [above] {}($ (a-1-4.west) + (.,.04) $);
\draw[-]
($ (a-1-3.east) + (.0,-.04) $)
-- node [above] {}($ (a-1-4.west) + (.,-.04) $);
%%%
\path[->] (a-1-4) edge 	node[above]{$\scriptstyle m111$} 
	       					  	(a-1-5);
\draw[-]
($ (a-1-5.east) + (.0,.04) $)
-- node [above] {}($ (a-1-6.west) + (.,.04) $);
\draw[-]
($ (a-1-5.east) + (.0,-.04) $)
-- node  [above] {}($ (a-1-6.west) + (.,-.04) $);
%%%%%%%%%%
\path[->] (a-1-1) edge 	node[left]{$\scriptstyle 11Tm$} 
						  	(a-2-1);
\path[->] (a-1-3) edge 	node[right]{$\scriptstyle 111Tm$} 
										(a-2-3);
\draw[-]
($ (a-1-4.south) + (.04,.0) $)
-- node [above] {}($ (a-2-4.north) + (.04,.0) $);
\draw[-]
($ (a-1-4.south) + (-.04,.0) $)
-- node [above] {}($ (a-2-4.north) + (-.04,.0) $);
\draw[-]
($ (a-1-5.south) + (.04,.0) $)
-- node [above] {}($ (a-2-5.north) + (.04,.0) $);
\draw[-]
($ (a-1-5.south) + (-.04,.0) $)
-- node [above] {}($ (a-2-5.north) + (-.04,.0) $);
\draw[-]
($ (a-1-6.south) + (.04,.0) $)
-- node [above] {}($ (a-2-6.north) + (.04,.0) $);
\draw[-]
($ (a-1-6.south) + (-.04,.0) $)
-- node [above] {}($ (a-2-6.north) + (-.04,.0) $);
%%%%%%%%%%%%%%%%%%%%
\path[->] (a-2-1) edge 	node[above]{$\scriptstyle 1Ti11$} 
						  	(a-2-2);
\path[->] (a-2-2) edge 	node[below]{$\scriptstyle m111$} coordinate[midway] (doleftpi1) (a-2-3);
\path[->] (a-2-4) edge 	node[below]{$\scriptstyle m111$} (a-2-5);
%%%%%%%%%%%%%%
\draw[-]
($ (a-2-1.south) + (.04,.0) $)
-- node [above] {}($ (a-3-1.north) + (.04,.0) $);
\draw[-]
($ (a-2-1.south) + (-.04,.0) $)
-- node [above] {}($ (a-3-1.north) + (-.04,.0) $);
%%%
\draw[-]
($ (a-2-3.south) + (.04,.0) $)
-- node [above] {}($ (a-3-3.north) + (.04,.0) $);
\draw[-]
($ (a-2-3.south) + (-.04,.0) $)
-- node [above] {}($ (a-3-3.north) + (-.04,.0) $);
%%%%%
\path[->] (a-2-4) edge 	node[left]{$\scriptstyle 1Tm11 $} 
										(a-3-4);
%%%
\draw[-]
($ (a-2-5.south) + (.04,.0) $)
-- node [above] {}($ (a-3-5.north) + (.04,.0) $);
\draw[-]
($ (a-2-5.south) + (-.04,.0) $)
-- node [above] {}($ (a-3-5.north) + (-.04,.0) $);
%%%%%
\draw[-]
($ (a-2-6.south) + (.04,.0) $)
-- node [above] {}($ (a-3-6.north) + (.04,.0) $);
\draw[-]
($ (a-2-6.south) + (-.04,.0) $)
-- node [above] {}($ (a-3-6.north) + (-.04,.0) $);
%%%%%%%%%%%%
\draw[-]
($ (a-3-1.east) + (.0,.04) $)
-- node [above] {}($ (a-3-2.west) + (.,.04) $);
\draw[-]
($ (a-3-1.east) + (.0,-.04) $)
-- node [above] {}($ (a-3-2.west) + (.,-.04) $);
%%%
\draw[-]
($ (a-3-2.east) + (.0,.04) $)
-- node [above] {}($ (a-3-3.west) + (.,.04) $);
\draw[-]
($ (a-3-2.east) + (.0,-.04) $)
-- node [above] {}($ (a-3-3.west) + (.,-.04) $);
%%%
\draw[-]
($ (a-3-3.east) + (.0,.04) $)
-- node [above] {}($ (a-3-4.west) + (.,.04) $);
\draw[-]
($ (a-3-3.east) + (.0,-.04) $)
-- node [above] {}($ (a-3-4.west) + (.,-.04) $);
%%%
\draw[-]
($ (a-3-5.east) + (.0,.04) $)
-- node [above] {}($ (a-3-6.west) + (.,.04) $);
\draw[-]
($ (a-3-5.east) + (.0,-.04) $)
-- node [above] {}($ (a-3-6.west) + (.,-.04) $);
%%%%%%%%%%%%%
\path[->] (a-3-2) edge 	node[left]{$\scriptstyle m111 $} 
										(a-4-2);
\path[->] (a-3-3) edge 	node[left]{$\scriptstyle m111 $} 
	       								(a-4-3);
\path[->] (a-3-4) edge 	node[right]{$\scriptstyle m111 $} 
	       								(a-4-4);
\path[->] (a-3-5) edge 	node[left]{$\scriptstyle m111 $} 
	       								(a-4-5);
\path[->] (a-3-6) edge 	node[right]{$\scriptstyle m111 $} 
										(a-4-6);
%%%%%%%%%%%%%%%
\draw[-]
($ (a-4-2.east) + (.0,.04) $)
-- node [above] {}($ (a-4-3.west) + (.,.04) $);
\draw[-]
($ (a-4-2.east) + (.0,-.04) $)
-- node [above] {}($ (a-4-3.west) + (.,-.04) $);
%%%
\draw[-]
($ (a-4-3.east) + (.0,.04) $)
-- node [above] {}($ (a-4-4.west) + (.,.04) $);
\draw[-]
($ (a-4-3.east) + (.0,-.04) $)
-- node [above] {}($ (a-4-4.west) + (.,-.04) $);
%%%
\draw[-]
($ (a-4-4.east) + (.0,.04) $)
-- node [above] {}($ (a-4-5.west) + (.,.04) $);
\draw[-]
($ (a-4-4.east) + (.0,-.04) $)
-- node [above] {}($ (a-4-5.west) + (.,-.04) $);
%%%
\draw[-]
($ (a-4-5.east) + (.0,.04) $)
-- node [above] {}($ (a-4-6.west) + (.,.04) $);
\draw[-]
($ (a-4-5.east) + (.0,-.04) $)
-- node [above] {}($ (a-4-6.west) + (.,-.04) $);
%%%%%%%
\node at ($ 1/2*(a-3-4) + 1/2*(a-3-5) + (0,.3)$) {$\Downarrow
  \pi 11$};
\node at ($ 1/2*(a-3-3) + 1/2*(a-1-4) + (0,.3) $) {$
  =$};
\node at ($ 1/2*(a-3-5) + 1/2*(a-1-6)+ (0,.3) $) {$
  =$};
\node at ($ 1/2*(a-4-5) + 1/2*(a-3-6) $) {$
  =$};
\node at ($ 1/2*(a-2-4) + 1/2*(a-1-5) $) {$
  =$};
\node at ($ 1/2*(a-4-3) + 1/2*(a-3-4) $) {$
  =$};
\node at ($ 1/2*(a-2-1) + 1/2*(a-1-3)  $) {$\Downarrow
  \Sigma^{-1}_{(m1)\ast (1Ti),Tm} $};
\node at ($ 1/2*(a-3-1) + 1/2*(a-2-3) $) {$ \Downarrow
  \rho 11 $};
\node at ($ 1/2*(a-4-2) + 1/2*(a-3-3) $) {$
  =$};
\puncttikz[a-2-2]{}
\end{tikzpicture}
\end{equation*}
%overfull hbox

The subdiagram formed by the interchange cell and $\rho11$ is
\begin{align*}
 ((\rho 11)\ast 1_{11Tm})\diamond  \Sigma^{-1}_{(m1)\ast (1Ti),Tm}
\puncteq{.}
\end{align*}
This is shorthand for
\begin{align*}
 M_{\mathpzc{L}}(((\rho,1)\ast (1,1_{Tm}))\diamond  \Sigma^{-1}_{(m1)\ast (1Ti),Tm})
  =M_{\mathpzc{L}}(\Sigma^{-1}_{1, Tm}\diamond ((1,1_{Tm})\ast
 (\rho,1))) 
  = M_{\mathpzc{L}}((1,1_{Tm})\ast (\rho,1)) 
\end{align*}
or $1_{111Tm}\ast (\rho 11)$,
where we have used the relation
  \begin{align}\label{interchangealphabetafg}
((1,\beta)\ast (\alpha,1))\diamond \Sigma_{f,g}\sim
\Sigma_{f',g'}\diamond ((\alpha,1)\ast (1,\beta))
  \end{align}
for interchange
cells in the Gray product 
% (where  $\alpha\co f\Rightarrow f'\co A\rightarrow A'$ and
% $\beta\co g\Rightarrow g'\co B\rightarrow B'$)
 and the fact that
$\Sigma^{-1}_{1,Tm}$ is the identity $2$-cell cf. \secref{FGSigma}.
This means we now have the following diagram.
%\newpage
%%%%
%%%% 5:
%%%%
\begin{tikzpicture}[remember picture,overlay, 
every  node/.style={scale=0.77}]
 \tikzset{shift={(current page.center)},xshift=0cm, yshift=-7.5cm}
\matrix (a) [matrix of math nodes, row sep=3em, column sep=1em, text
height=1.5ex, text depth=0.25ex]{ x1Tx T^2x &
xTx T\eta Tx T^2x & x\mu T \eta Tx T^2x & xTx\mu_T T^3xT\eta_{T^2} &
x\mu \mu_T T^3xT\eta_{T^2} & x\mu T\mu T^3x T\eta_{T^2}\\ 
 x1Tx T^2x & 
& x\mu T \eta Tx T^2x &
xTxT^2x\mu_{T^2}T\eta_{T^2} & x\mu T^2\mu_{T^2}T\eta_{T^2} & x\mu
T^2xT\mu_T T\eta_{T^2} \\
 x1Tx T\mu &   & x\mu T\eta TxT\mu & xTxT\mu\mu_{T^2}T\eta_{T^2} &
 xTx\mu_T\mu_{T^2}T\eta_{T^2}& xTx\mu_TT\mu_T T\eta_{T^2}\\
 xTx T\mu
&  x\mu T\mu & x\mu\mu_TT^2\mu T\eta_{T^2}& x\mu T\mu\mu_{T^2}T\eta_{T^2} & x\mu
\mu_T\mu_{T^2}T\eta_{T^2} & x\mu\mu_T T\mu_T T\eta_{T^2}
 \\ };
\path[->] (a-1-1) edge 	node[above]{$\scriptstyle 1Ti11$} 
						  	(a-1-2);
\path[->] (a-1-2) edge 	node[above]{$\scriptstyle m111$} 
						  	(a-1-3);
%%%
\draw[-]
($ (a-1-3.east) + (.0,.04) $)
-- node [above] {}($ (a-1-4.west) + (.,.04) $);
\draw[-]
($ (a-1-3.east) + (.0,-.04) $)
-- node [above] {}($ (a-1-4.west) + (.,-.04) $);
%%%
\path[->] (a-1-4) edge 	node[above]{$\scriptstyle m111$} 
	       					  	(a-1-5);
\draw[-]
($ (a-1-5.east) + (.0,.04) $)
-- node [above] {}($ (a-1-6.west) + (.,.04) $);
\draw[-]
($ (a-1-5.east) + (.0,-.04) $)
-- node  [above] {}($ (a-1-6.west) + (.,-.04) $);
%%%%%%%%%%
\draw[-]
($ (a-1-1.south) + (.04,.0) $)
-- node [above] {}($ (a-2-1.north) + (.04,.0) $);
\draw[-]
($ (a-1-1.south) + (-.04,.0) $)
-- node [above] {}($ (a-2-1.north) + (-.04,.0) $);
%%%
\draw[-]
($ (a-1-3.south) + (.04,.0) $)
-- node [above] {}($ (a-2-3.north) + (.04,.0) $);
\draw[-]
($ (a-1-3.south) + (-.04,.0) $)
-- node [above] {}($ (a-2-3.north) + (-.04,.0) $);
%%%%%
\draw[-]
($ (a-1-4.south) + (.04,.0) $)
-- node [above] {}($ (a-2-4.north) + (.04,.0) $);
\draw[-]
($ (a-1-4.south) + (-.04,.0) $)
-- node [above] {}($ (a-2-4.north) + (-.04,.0) $);
\draw[-]
($ (a-1-5.south) + (.04,.0) $)
-- node [above] {}($ (a-2-5.north) + (.04,.0) $);
\draw[-]
($ (a-1-5.south) + (-.04,.0) $)
-- node [above] {}($ (a-2-5.north) + (-.04,.0) $);
\draw[-]
($ (a-1-6.south) + (.04,.0) $)
-- node [above] {}($ (a-2-6.north) + (.04,.0) $);
\draw[-]
($ (a-1-6.south) + (-.04,.0) $)
-- node [above] {}($ (a-2-6.north) + (-.04,.0) $);
%%%%%%%%%%%%%%%%%%%%
\draw[-]
($ (a-2-1.east) + (.0,.04) $)
-- node [above] {}($ (a-2-3.west) + (.,.04) $);
\draw[-]
($ (a-2-1.east) + (.0,-.04) $)
-- node [above] {}($ (a-2-3.west) + (.,-.04) $);
%%%
\path[->] (a-2-4) edge 	node[below]{$\scriptstyle m111$} (a-2-5);
%%%%%%%%%%%%%%
\path[->] (a-2-1) edge 	node[left]{$\scriptstyle 11Tm$} 
						  	(a-3-1);
\path[->] (a-2-3) edge 	node[right]{$\scriptstyle 111Tm$} 
										(a-3-3);
\path[->] (a-2-4) edge 	node[left]{$\scriptstyle 1Tm11 $} 
										(a-3-4);
%%%
\draw[-]
($ (a-2-5.south) + (.04,.0) $)
-- node [above] {}($ (a-3-5.north) + (.04,.0) $);
\draw[-]
($ (a-2-5.south) + (-.04,.0) $)
-- node [above] {}($ (a-3-5.north) + (-.04,.0) $);
%%%%%
\draw[-]
($ (a-2-6.south) + (.04,.0) $)
-- node [above] {}($ (a-3-6.north) + (.04,.0) $);
\draw[-]
($ (a-2-6.south) + (-.04,.0) $)
-- node [above] {}($ (a-3-6.north) + (-.04,.0) $);
%%%%%%%%%%%%
\draw[-]
($ (a-3-1.east) + (.0,.04) $)
-- node [above] {}($ (a-3-3.west) + (.,.04) $);
\draw[-]
($ (a-3-1.east) + (.0,-.04) $)
-- node [above] {}($ (a-3-3.west) + (.,-.04) $);
%%%
\draw[-]
($ (a-3-3.east) + (.0,.04) $)
-- node [above] {}($ (a-3-4.west) + (.,.04) $);
\draw[-]
($ (a-3-3.east) + (.0,-.04) $)
-- node [above] {}($ (a-3-4.west) + (.,-.04) $);
%%%
\draw[-]
($ (a-3-5.east) + (.0,.04) $)
-- node [above] {}($ (a-3-6.west) + (.,.04) $);
\draw[-]
($ (a-3-5.east) + (.0,-.04) $)
-- node [above] {}($ (a-3-6.west) + (.,-.04) $);
%%%%%%%%%%%%%
\draw[-]
($ (a-3-1.south) + (.04,.0) $)
-- node [above] {}($ (a-4-1.north) + (.04,.0) $);
\draw[-]
($ (a-3-1.south) + (-.04,.0) $)
-- node [above] {}($ (a-4-1.north) + (-.04,.0) $);
%%%%
\path[->] (a-3-4) edge 	node[right]{$\scriptstyle m111 $} 
	       								(a-4-4);
\path[->] (a-3-5) edge 	node[left]{$\scriptstyle m111 $} 
	       								(a-4-5);
\path[->] (a-3-6) edge 	node[right]{$\scriptstyle m111 $} 
										(a-4-6);
%%%%%%%%%%%%%%%
\path[->] (a-4-1) edge 	node[above]{$\scriptstyle m1 $} 
										(a-4-2);
\draw[-]
($ (a-4-2.east) + (.0,.04) $)
-- node [above] {}($ (a-4-3.west) + (.,.04) $);
\draw[-]
($ (a-4-2.east) + (.0,-.04) $)
-- node [above] {}($ (a-4-3.west) + (.,-.04) $);
%%%
\draw[-]
($ (a-4-3.east) + (.0,.04) $)
-- node [above] {}($ (a-4-4.west) + (.,.04) $);
\draw[-]
($ (a-4-3.east) + (.0,-.04) $)
-- node [above] {}($ (a-4-4.west) + (.,-.04) $);
%%%
\draw[-]
($ (a-4-4.east) + (.0,.04) $)
-- node [above] {}($ (a-4-5.west) + (.,.04) $);
\draw[-]
($ (a-4-4.east) + (.0,-.04) $)
-- node [above] {}($ (a-4-5.west) + (.,-.04) $);
%%%
\draw[-]
($ (a-4-5.east) + (.0,.04) $)
-- node [above] {}($ (a-4-6.west) + (.,.04) $);
\draw[-]
($ (a-4-5.east) + (.0,-.04) $)
-- node [above] {}($ (a-4-6.west) + (.,-.04) $);
%%%%%%%
\node at ($ 1/2*(a-3-4) + 1/2*(a-3-5)+ (0,.3) $) {$\Downarrow
  \pi 11$};
\node at ($ 1/2*(a-3-3) + 1/2*(a-1-4) + (0,.3) $) {$
  =$};
\node at ($ 1/2*(a-3-5) + 1/2*(a-1-6) + (0,.3) $) {$
  =$};
\node at ($ 1/2*(a-4-5) + 1/2*(a-3-6) $) {$
  =$};
\node at ($ 1/2*(a-2-4) + 1/2*(a-1-5) $) {$
  =$};
\node at ($ 1/2*(a-4-1) + 1/2*(a-3-4) $) {$
  =$};
\node at ($ 1/2*(a-3-1) + 1/2*(a-2-3)  $) {$ = 
  \ (\Sigma^{-1}_{1,Tm}= 1)$};
\node at ($ 1/2*(a-2-1) + 1/2*(a-1-3) $) {$ \Downarrow
  \rho 11 $};
\puncttikz[a-2-2]{}
\end{tikzpicture}

\vfill
%\vspace{7cm}
\newpage
\noindent Slightly rewritten, this is the same as the diagram below.
%%%%
%%%% 6:
%%%%
  \begin{tikzpicture}[overlay, remember picture, 
every  node/.style={scale=0.77}]
\tikzset{shift={(current page.center)},xshift=0cm, yshift=6.75cm}
\matrix (a) [matrix of math nodes, row sep=3em, column sep=1em, text
height=1.5ex, text depth=0.25ex]{ x1Tx T^2x &
xTx T\eta Tx T^2x & x\mu T \eta Tx T^2x & xTx\mu_T T^3xT\eta_{T^2} &
x\mu \mu_T T^3xT\eta_{T^2} & x\mu  T^2x \\ 
 x1Tx T^2x & 
& x\mu T \eta Tx T^2x &
xTxT^2x & xTxT^2x T\mu_T T\eta_{T^2}  & x\mu
T^2xT\mu_T T\eta_{T^2} \\
  &   & &  &
x Tx T\mu T\mu_T T\eta_{T^2} & xTx\mu_TT\mu_T T\eta_{T^2}\\
&  & &  & x\mu
T\mu T\mu_T T\eta_{T^2} & x\mu\mu_T T\mu_T T\eta_{T^2}
 \\ };
\path[->] (a-1-1) edge 	node[above]{$\scriptstyle 1Ti11$} 
						  	(a-1-2);
\path[->] (a-1-2) edge 	node[above]{$\scriptstyle m111$} 
						  	(a-1-3);
%%%
\draw[-]
($ (a-1-3.east) + (.0,.04) $)
-- node [above] {}($ (a-1-4.west) + (.,.04) $);
\draw[-]
($ (a-1-3.east) + (.0,-.04) $)
-- node [above] {}($ (a-1-4.west) + (.,-.04) $);
%%%
\path[->] (a-1-4) edge 	node[above]{$\scriptstyle m111$} 
	       					  	(a-1-5);
\draw[-]
($ (a-1-5.east) + (.0,.04) $)
-- node [above] {}($ (a-1-6.west) + (.,.04) $);
\draw[-]
($ (a-1-5.east) + (.0,-.04) $)
-- node  [above] {}($ (a-1-6.west) + (.,-.04) $);
%%%%%%%%%%
\draw[-]
($ (a-1-1.south) + (.04,.0) $)
-- node [above] {}($ (a-2-1.north) + (.04,.0) $);
\draw[-]
($ (a-1-1.south) + (-.04,.0) $)
-- node [above] {}($ (a-2-1.north) + (-.04,.0) $);
%%%
\draw[-]
($ (a-1-3.south) + (.04,.0) $)
-- node [above] {}($ (a-2-3.north) + (.04,.0) $);
\draw[-]
($ (a-1-3.south) + (-.04,.0) $)
-- node [above] {}($ (a-2-3.north) + (-.04,.0) $);
%%%%%
\draw[-]
($ (a-1-4.south) + (.04,.0) $)
-- node [above] {}($ (a-2-4.north) + (.04,.0) $);
\draw[-]
($ (a-1-4.south) + (-.04,.0) $)
-- node [above] {}($ (a-2-4.north) + (-.04,.0) $);
%%%%%
\draw[-]
($ (a-1-6.south) + (.04,.0) $)
-- node [above] {}($ (a-2-6.north) + (.04,.0) $);
\draw[-]
($ (a-1-6.south) + (-.04,.0) $)
-- node [above] {}($ (a-2-6.north) + (-.04,.0) $);
%%%%%%%%%%%%%%%%%%%%
\draw[-]
($ (a-2-1.east) + (.0,.04) $)
-- node [above] {}($ (a-2-3.west) + (.,.04) $);
\draw[-]
($ (a-2-1.east) + (.0,-.04) $)
-- node [above] {}($ (a-2-3.west) + (.,-.04) $);
%%%
\draw[-]
($ (a-2-3.east) + (.0,.04) $)
-- node [above] {}($ (a-2-4.west) + (.,.04) $);
\draw[-]
($ (a-2-3.east) + (.0,-.04) $)
-- node [above] {}($ (a-2-4.west) + (.,-.04) $);
%%%
\draw[-]
($ (a-2-4.east) + (.0,.04) $)
-- node [above] {}($ (a-2-5.west) + (.,.04) $);
\draw[-]
($ (a-2-4.east) + (.0,-.04) $)
-- node [above] {}($ (a-2-5.west) + (.,-.04) $);
%%%
\path[->] (a-2-5) edge 	node[below]{$\scriptstyle m111$} (a-2-6);
%%%%%%%%%%%%%%
\path[->] (a-2-5) edge 	node[left]{$\scriptstyle 1Tm11 $} 
										(a-3-5);
%%%%%
\draw[-]
($ (a-2-6.south) + (.04,.0) $)
-- node [above] {}($ (a-3-6.north) + (.04,.0) $);
\draw[-]
($ (a-2-6.south) + (-.04,.0) $)
-- node [above] {}($ (a-3-6.north) + (-.04,.0) $);
%%%%%%%%%%%%
\path[->] (a-3-5) edge 	node[left]{$\scriptstyle m111 $} 
	       								(a-4-5);
\path[->] (a-3-6) edge 	node[right]{$\scriptstyle m111 $} 
										(a-4-6);
%%%%%%%%%%%%%%%
%%%
\draw[-]
($ (a-4-5.east) + (.0,.04) $)
-- node [above] {}($ (a-4-6.west) + (.,.04) $);
\draw[-]
($ (a-4-5.east) + (.0,-.04) $)
-- node [above] {}($ (a-4-6.west) + (.,-.04) $);
%%%%%%%
\node at ($ 1/2*(a-2-4) + 1/2*(a-1-6) $) {$
  =$};
\node at ($ 1/2*(a-2-3) + 1/2*(a-1-4) $) {$
  =$};
\node at ($ 1/2*(a-4-5) + 1/2*(a-2-6) + (0,.3)$) {$
  \Downarrow \pi 1 1$};
\node at ($ 1/2*(a-2-1) + 1/2*(a-1-3) $) {$ \Downarrow
  \rho 11 $};
\puncttikz[a-2-2]{}
\end{tikzpicture}

\vspace{7.25cm}
%
%\newpage

\noindent For the upper right entry we have used another identity to make
commutativity obvious. Finally, by another instance of the triangle identity---in fact its image under
$\mathpzc{L}(T^2x,X)$---we end up with the diagram below. It is
easily seen to be the diagrammatic form of the left hand side of
\eqref{lefthandrighthand}, so this ends the proof.

%%%%
%%%% 7:
%%%%
\begin{tikzpicture}[
remember picture,overlay, 
every  node/.style={scale=0.77}]]
\tikzset{shift={(current page.center)},xshift=0cm,yshift=-3.25cm}
\matrix (a) [matrix of math nodes, row sep=3em, column sep=1.5em, text
height=1.5ex, text depth=0.25ex]{ x1Tx T^2x &
xTx T\eta Tx T^2x & xTx T^2x T\eta_T T^2x & x\mu T^2x T\eta_T T^2x &
x Tx \mu_T T\eta_T T^2x \\ 
 x1Tx T^2x & 
&  xTx T\mu T\eta_T  T^2x 
   &  x\mu T\mu T\eta_T  T^2x &   x\mu \mu_T T\eta_T  T^2x  \\
 &   & xTxT^2x  & 
xTxT^2x T\mu_T T\eta_{T^2}  & x\mu
T^2xT\mu_T T\eta_{T^2}\\
&   &   & 
x Tx T\mu T\mu_T T\eta_{T^2} & xTx\mu_TT\mu_T T\eta_{T^2}
 \\
& & & x\mu T\mu T\mu_T T\eta_{T^2} & x\mu\mu_T T\mu_T T\eta_{T^2}
 \\ };
\path[->] (a-1-1) edge 	node[above]{$\scriptstyle 1Ti11$} 
						  	(a-1-2);
%%%
\draw[-]
($ (a-1-2.east) + (.0,.04) $)
-- node [above] {}($ (a-1-3.west) + (.,.04) $);
\draw[-]
($ (a-1-2.east) + (.0,-.04) $)
-- node [above] {}($ (a-1-3.west) + (.,-.04) $);
%%%
\path[->] (a-1-3) edge 	node[above]{$\scriptstyle m111$} 
						  	(a-1-4);
%%%
\draw[-]
($ (a-1-4.east) + (.0,.04) $)
-- node [above] {}($ (a-1-5.west) + (.,.04) $);
\draw[-]
($ (a-1-4.east) + (.0,-.04) $)
-- node [above] {}($ (a-1-5.west) + (.,-.04) $);
%%%%%%%%%%
\draw[-]
($ (a-1-1.south) + (.04,.0) $)
-- node [above] {}($ (a-2-1.north) + (.04,.0) $);
\draw[-]
($ (a-1-1.south) + (-.04,.0) $)
-- node [above] {}($ (a-2-1.north) + (-.04,.0) $);				  
%%%
\path[->] (a-1-3) edge 	node[below left]{$\scriptstyle 1Tm11$} 
	(a-2-3);
\path[->] (a-1-5) edge 	node[right]{$\scriptstyle m111$} 
	(a-2-5);
%%%%%%%%%%%%%%%%%%%%
\draw[-]
($ (a-2-1.east) + (.0,.04) $)
-- node [above] {}($ (a-2-3.west) + (.,.04) $);
\draw[-]
($ (a-2-1.east) + (.0,-.04) $)
-- node [above] {}($ (a-2-3.west) + (.,-.04) $);
%%%
\path[->] (a-2-3) edge 	node[below]{$\scriptstyle m111$} (a-2-4);
\draw[-]
($ (a-2-4.east) + (.0,.04) $)
-- node [above] {}($ (a-2-5.west) + (.,.04) $);
\draw[-]
($ (a-2-4.east) + (.0,-.04) $)
-- node [above] {}($ (a-2-5.west) + (.,-.04) $);
%%%
%%%%%%%%%%%%%%
\draw[-]
($ (a-2-3.south) + (.04,.0) $)
-- node [above] {}($ (a-3-3.north) + (.04,.0) $);
\draw[-]
($ (a-2-3.south) + (-.04,.0) $)
-- node [above] {}($ (a-3-3.north) + (-.04,.0) $);
%%%%
%%%%%
\draw[-]
($ (a-2-5.south) + (.04,.0) $)
-- node [above] {}($ (a-3-5.north) + (.04,.0) $);
\draw[-]
($ (a-2-5.south) + (-.04,.0) $)
-- node [above] {}($ (a-3-5.north) + (-.04,.0) $);
%%%%%%%%%%%%
\draw[-]
($ (a-3-3.east) + (.0,.04) $)
-- node [above] {}($ (a-3-4.west) + (.,.04) $);
\draw[-]
($ (a-3-3.east) + (.0,-.04) $)
-- node [above] {}($ (a-3-4.west) + (.,-.04) $);
\path[->] (a-3-4) edge 	node[below]{$\scriptstyle m111$} (a-3-5);
%%%%%%%%%%%%%
\path[->] (a-3-4) edge 	node[left]{$\scriptstyle 1Tm11 $} 
										(a-4-4);
\draw[-]
($ (a-3-5.south) + (.04,.0) $)
-- node [above] {}($ (a-4-5.north) + (.04,.0) $);
\draw[-]
($ (a-3-5.south) + (-.04,.0) $)
-- node [above] {}($ (a-4-5.north) + (-.04,.0) $);
%%%%%%%%%%
\path[->] (a-4-4) edge 	node[right]{$\scriptstyle m111 $} 
	       								(a-5-4);
\path[->] (a-4-5) edge 	node[left]{$\scriptstyle m111 $} 
	       								(a-5-5);
%%%
\draw[-]
($ (a-5-4.east) + (.0,.04) $)
-- node [above] {}($ (a-5-5.west) + (.,.04) $);
\draw[-]
($ (a-5-4.east) + (.0,-.04) $)
-- node [above] {}($ (a-5-5.west) + (.,-.04) $);
%%%%%%%
\node at ($ 1/2*(a-2-3) + 1/2*(a-1-5) $) {$
  \Downarrow \pi 1 1$};
\node at ($ 1/2*(a-5-4) + 1/2*(a-3-5) + (0,.3)$) {$
  \Downarrow \pi 1 1$};
\node at ($ 1/2*(a-3-3) + 1/2*(a-2-5) $) {$
  =$};
\node at ($ 1/2*(a-2-1) + 1/2*(a-1-3) $) {$ \Downarrow
  1T\lambda1 $};
\puncttikz[a-2-2]{}
\end{tikzpicture}

\vspace{8.5cm}

   \end{proof}

\begin{definition}\cite[Def. 13.6 and Def. 13.9]{gurskicoherencein}
  A pseudo $T$-functor
  \begin{align*}
    (f,F,\mathpzc{h},\mathpzc{m})\co(X,x,m^X,i^X,\pi^X,\lambda^X,\rho^X)\rightarrow
    (Y,y,m^Y,i^Y,\pi^Y,\lambda^Y,\rho^Y)
  \end{align*}
consists of
\begin{itemize}
\item a $1$-cell $f\co X\rightarrow Y$ in
  $\mathpzc{K}$ i.e. an object of $\mathpzc{K}(X,Y)$;
\item a $2$-cell adjoint equivalence $(F,F^\bullet)\co fx\rightarrow yTf$ i.e. a
  $1$-cell adjoint equivalence internal to $\mathpzc{K}(TX,Y)$ ;
\item  and two  invertible $3$-cells $\mathpzc{h},\mathpzc{m}$ as in \textbf{(PSF1)}-\textbf{(PSF2)} subject
  to the three axioms \textbf{(LFA1)}-\textbf{(LFA3)} of a lax $T$-functor:
\end{itemize}
%%%

%%%
\noindent\textbf{(PSF1)}\; An invertible $3$-cell $\mathpzc{h}$ given by an
invertible 
  $2$-cell in $\mathpzc{K}(X,Y)$: 
  \begin{align*}
    \mathpzc{h}\co (F1_{\eta_X})\ast (1_f i^X)\Rightarrow (i^Y 1_f)
  \end{align*}
where the codomains match by $\mathpzc{Gray}$-naturality of $\eta$.
%%%

%%%
\noindent\textbf{(PSF2)}\; An invertible $3$-cell $\mathpzc{m}$ given by an
invertible $2$-cell in  $\mathpzc{K}(T^2X,Y)$:
\begin{align*}
  \mathpzc{m}\co (m^Y 1_{T^2f})\ast (1_yTF)\ast (F1_x)\Rightarrow
  (F1_\mu)\ast (1_fm)
\end{align*}
where the codomains match by $\mathpzc{Gray}$-naturality of $\mu$.

The three lax $T$-functor axioms are:
%%%

%%%
\noindent\textbf{(LFA1)}\; The following equation in $\mathpzc{K}(T^3X,Y)$ of vertical
composites of whiskered $3$-cells is required:
\begin{align*}
  & (1_{F11}\ast(1\pi^X))\diamond ((\mathpzc{m}1)\ast 1_{1m^X1})\diamond
  (1_{m^Y11}\ast 1_{11T^2F}\ast (\mathpzc{m}1))\diamond (1_{m^Y11}
  \ast \Sigma_{m^Y,T^2F}\ast 1_{1TF1}\ast 1_{F11}) \\
    & = ((\mathpzc{m}1)\ast 1_{11Tm^X})\diamond
  (1_{m^Y11}\ast1_{1TF1}\ast\Sigma^{-1}_{F,Tm^X})\diamond
  (1T\mathpzc{m})\diamond((\pi^Y 1)\ast 1_{11T^2F}\ast 1_{TF1}\ast
1_{F11})
\puncteq{.}
\end{align*}
%%%

%%%
\noindent\textbf{(LFA2)}\; The following equation in $\mathpzc{K}(TX,Y)$ of vertical
composites of whiskered $3$-cells is required:
%\footnote{In Gurski's diagram there is a typo with a $\cong$ which is
%  in fact a $=$.}:
\begin{align*}
  & ((\lambda^Y 1)\ast 1_{1F})\diamond(1_{m^Y11}\ast
  \Sigma^{-1}_{i^Y,F})\diamond (1_{m^Y11}\ast 1_{11F}\ast
  (\mathpzc{h}1))\\
   & = (1_{F}\ast (1\lambda^Y))\diamond ((\mathpzc{m}1)\ast
  1_{1i^X1})
\puncteq{.}
\end{align*}
%%%

%%%
\noindent\textbf{(LFA3)}\; The following equation in $\mathpzc{K}(TX,Y)$ of vertical
composites of whiskered $3$-cells is required:
\begin{align*}
  &  ((\rho^Y1)\ast 1_{F1})\diamond  (1_{m^Y11}\ast
  (1T\mathpzc{h})\ast 1_{F1}) \\
  &= (1_{F}\ast (1\rho^X))\diamond ((\mathpzc{m}1)\ast
  1_{1Ti^X})\diamond (1_{m^Y1}\ast 1_{1TF1}\ast
\Sigma^{-1}_{F,Ti^X})
\puncteq{.}
\end{align*}
A careful inspection shows that the horizontal and vertical factors do
indeed compose in all of these axioms. Diagrams may be found in
Gurski's definition.
\end{definition}
\begin{definition}\label{Ttransformation}\cite[Def. 13.6 and Def. 13.10]{gurskicoherencein}
  A $T$-transformation
  \begin{align*}
    (\alpha,A)\co (f,F,\mathpzc{h}^f,\mathpzc{m}^f)\Rightarrow
    (g,G,\mathpzc{h}^g,\mathpzc{m}^g)\co
    (X,x,m^X,i^X,\pi^X,\lambda^X,\rho^X)\rightarrow
    (Y,y,m^Y,i^Y,\pi^Y,\lambda^Y,\rho^Y)
  \end{align*}
consists of
\begin{itemize}
\item a $2$-cell $\alpha\co f\Rightarrow g$ i.e. an object of
  $\mathpzc{K}(X,Y)$;
\item an invertible $3$-cell $A$ as in \textbf{(T1)} subject to the two axioms \textbf{(LTA1)}-\textbf{(LTA2)}
  of a lax \mbox{$T$-algebra}:
\end{itemize}
%%%

%%%
\noindent\textbf{(T1)}\; An
 invertible $3$-cell $A$ given by an invertible $2$-cell in $\mathpzc{K}(TX,Y)$:
 \begin{align*}
   A\co (1_y T\alpha)\ast F\Rightarrow G\ast (\alpha 1_x)
\puncteq{.}
 \end{align*}

The two lax $T$-transformation axioms are:

%%%

%%%
\noindent\textbf{(LTA1)}\; The following equation in $\mathpzc{K}(X,Y)$ of vertical
composites of whiskered $3$-cells is required:
\begin{align*}
  & (\mathpzc{h}^g\ast 1_{\alpha 1})\diamond (1_{G1}\ast
  \Sigma_{\alpha,i^X})\diamond ((A1)\ast 1_{1i^X}) 
  = \Sigma^{-1}_{i^Y,\alpha}\diamond (1_{1T\alpha 1}\ast \mathpzc{h}^f)
  \puncteq{.}
\end{align*}
%%%

%%%
\noindent\textbf{(LTA2)}\; The following equation in $\mathpzc{K}(T^2X,X)$ of vertical
composites of whiskered $3$-cells is required:
\begin{align*}
  & (\mathpzc{m}^g\ast 1_{\alpha 1})\diamond (1_{m^Y1}\ast
  1_{1TG}\ast (A1))\diamond (1_{m^Y1}\ast (1TA)\ast
  1_{F1})\diamond (\Sigma^{-1}_{m^Y,T^2\alpha}\ast 1_{1TF}\ast
  1_{F1}) \\
  & = (1_{G1}\ast \Sigma_{\alpha,m^X})\diamond ((A1)\ast
  1_{1m^X})\diamond (1_{1T^2\alpha}\ast \mathpzc{m}^f)
\puncteq{.}
\end{align*}
A careful inspection shows that the horizontal and vertical factors do
indeed compose in the two axioms. Diagrams may be found in
Gurski's definition.
\end{definition}
\begin{definition}
  A $T$-modification $\Gamma\co (\alpha,A)\Rrightarrow (\beta,B)$ of
  $T$-transformations 
  \begin{align*}
(f,F,\mathpzc{h}^f,\mathpzc{m}^f)\Rightarrow
    (g,G,\mathpzc{h}^g,\mathpzc{m}^g)\co
    (X,x,m^X,i^X,\pi^X,\lambda^X,\rho^X)\rightarrow
    (Y,y,m^Y,i^Y,\pi^Y,\lambda^Y,\rho^Y)
  \end{align*}
consists of a 
\begin{itemize}
\item $3$-cell $\Gamma\co \alpha\Rrightarrow \beta$ i.e. a
$2$-cell in $\mathpzc{K}(X,Y)$; 
\item subject to one axiom \textbf{(MA1)}:
\end{itemize}
%%%

%%%
\noindent\textbf{(MA1)}\; The following equation in $\mathpzc{K}(TX,Y)$ of vertical
composites of whiskered $3$-cells is required:
\begin{align*}
  B\diamond ((1 T\Gamma)\ast 1_{F})= (1_{G}\ast (\Gamma
  1))\diamond A
\puncteq{.}
\end{align*}
\end{definition}
Finally, we provide the $\mathpzc{Gray}$-category structure of
$\mathrm{Ps}\text{-}T\text{-}\mathrm{Alg}$. We begin with its hom
\mbox{$2$-categories}.
\begin{definition}\label{localPSTAlg}
Given $T$-algebras $ (X,x,m^X,i^X,\pi^X,\lambda^X,\rho^X)$ and
$(Y,y,m^Y,i^Y,\pi^Y,\lambda^Y,\rho^Y)$, the prescriptions below give
the $2$-globular set  
$\mathrm{Ps}\text{-}T\text{-}\mathrm{Alg}(X,Y)$ whose objects are
pseudo $T$-functors from $X$ to $Y$, whose $1$-cells are
$T$-transformations between pseudo $T$-functors, and whose $2$-cells
are $T$-modifications between those, the structure of a $2$-category \cite[Prop. 13.11]{gurskicoherencein}.

Given $T$-modifications $\Gamma\co (\alpha,A)\Rrightarrow (\beta,B)$
and $\Delta\co (\beta,B)\Rrightarrow (\epsilon,E)$ of
$T$-transformations
\begin{align*}
  (f,F,\mathpzc{h}^f,\mathpzc{m}^f)\Rightarrow
    (g,G,\mathpzc{h}^g,\mathpzc{m}^g)\co
    (X,x,m^X,i^X,\pi^X,\lambda^X,\rho^X)\rightarrow
    (Y,y,m^Y,i^Y,\pi^Y,\lambda^Y,\rho^Y)
\puncteq{,}
\end{align*}
their vertical composite $\Delta\diamond \Gamma$ is defined by the
vertical composite $\Delta\diamond \Gamma$ of $2$-cells in
$\mathpzc{K}(X,Y)$.
The identity $T$-modification of $(\alpha,A)$ as above is defined by
the $2$-cell $1_\alpha$ in $\mathpzc{K}(X,Y)$.

%underfull hbox
Given $T$-transformations $(\alpha,A)\co (f,F,\mathpzc{h}^f,\mathpzc{m}^f)\Rightarrow
    (g,G,\mathpzc{h}^g,\mathpzc{m}^g)$ and $(\beta,B)\co 
    (g,G,\mathpzc{h}^g,\mathpzc{m}^g)\Rightarrow
    (h,H,\mathpzc{h}^h,\mathpzc{m}^h)$ of pseudo $T$-functors 
    \begin{align*}
       (X,x,m^X,i^X,\pi^X,\lambda^X,\rho^X)\rightarrow
    (Y,y,m^Y,i^Y,\pi^Y,\lambda^Y,\rho^Y)
\puncteq{,}
    \end{align*}
their horizontal composite $(\beta,B)\ast (\alpha,A)$ is defined by
\begin{align*}
  (\beta\ast \alpha, (B\ast 1_{\alpha 1})\diamond (1_{1T\beta} A))
\puncteq{.}
\end{align*}
The identity $T$-transformation of $(f,F,\mathpzc{h}^f,\mathpzc{m}^f)$ is defined by $(1_f,1_F)$.

Given $T$-modifications $\Gamma\co (\alpha,A)\Rrightarrow
(\alpha',A')\co (f,F,\mathpzc{h}^f,\mathpzc{m}^f)\Rightarrow
    (g,G,\mathpzc{h}^g,\mathpzc{m}^g)$ and $\Delta\co
    (\beta,B)\Rrightarrow (\beta',B')\co 
    (g,G,\mathpzc{h}^g,\mathpzc{m}^g)\Rightarrow
    (h,H,\mathpzc{h}^h,\mathpzc{m}^h$ of pseudo $T$-functors 
 \begin{align*}
       (X,x,m^X,i^X,\pi^X,\lambda^X,\rho^X)\rightarrow
    (Y,y,m^Y,i^Y,\pi^Y,\lambda^Y,\rho^Y)
\puncteq{,}
    \end{align*}
their horizontal composite is defined by the horizontal composite
$\Delta\ast\Gamma$ of $2$-cells in $\mathpzc{K}(X,Y)$.

We omit the proof that this is indeed a $2$-category.
\end{definition}

\begin{definition}\label{globalPSTAlg}
The prescriptions below give the set of pseudo $T$-algebras with the
hom $2$-categories from the proposition above, the structure of  a
$\mathpzc{Gray}$-category denoted
$\mathrm{Ps}\text{-}T\text{-}\mathrm{Alg}$, see \cite[Prop. 13.12 and Th. 13.13]{gurskicoherencein}.

Given pseudo $T$-algebras $ (X,x,m^X,i^X,\pi^X,\lambda^X,\rho^X)$,
$(Y,y,m^Y,i^Y,\pi^Y,\lambda^Y,\rho^Y)$ and
$(Z,z,m^Z,i^Z,\pi^Z,\lambda^Z,\rho^Z)$, the composition law is defined
by the strict
functor 
\begin{align*}
  \boxtimes\co \mathrm{Ps}\text{-}T\text{-}\mathrm{Alg}(Y,Z)\otimes
  \mathrm{Ps}\text{-}T\text{-}\mathrm{Alg}(X,Y)\rightarrow
  \mathrm{Ps}\text{-}T\text{-}\mathrm{Alg}(X,Z)
\end{align*}
specified as follows.

On an object $(g,f)$ in $\mathrm{Ps}\text{-}T\text{-}\mathrm{Alg}(Y,Z)\otimes
\mathrm{Ps}\text{-}T\text{-}\mathrm{Alg}(X,Y)$ i.e. on functors
$(g,G,\mathpzc{h}^g,\mathpzc{m}^g)$ and $
(f,F,\mathpzc{h}^f,\mathpzc{m}^f)$, $\boxtimes$ is defined by
\begin{align*}
  &  \bigg(gf,(G1_{Tf})\ast(1_gF), (\mathpzc{h}^g 1)\diamond
  (1_{G11}\ast (1\mathpzc{h}^f)), \\ & \quad
  (1_{G11}\ast
  (1\mathpzc{m}^f))\diamond ((\mathpzc{m}^g1)\ast 1_{11TF}\ast
  1_{G11}\ast 1_{1F1}) \diamond
  (1_{m^Y11}\ast 1_{1TG1}\ast\Sigma^{-1}_{G,TF}\ast1_{1F1} )\bigg)
\puncteq{,}
\end{align*}
which we denote by $g\boxtimes f$.

On a generating $1$-cell of the form $((\alpha,A),1)\co
(g,f)\rightarrow (g',f)$ in the Gray product $\mathrm{Ps}\text{-}T\text{-}\mathrm{Alg}(Y,Z)\otimes
\mathrm{Ps}\text{-}T\text{-}\mathrm{Alg}(X,Y)$,
where $(\alpha,A)$ is a $T$-transformation 
\begin{align*}
  (g,G,\mathpzc{h}^g,\mathpzc{m}^g)\Rightarrow (g',G',\mathpzc{h}^{g'} ,\mathpzc{m}^{g'}),
\end{align*}
and $f$ is as above,
$\boxtimes$ is defined by
\begin{align*}
  & ( \alpha1_f,(1_{G'1}\ast\Sigma_{\alpha,F})\diamond ((A1)\ast
  1_{1F})) 
\end{align*}
and denoted $\alpha\boxtimes 1_f$.

Similarly, on a generating $1$-cell of the form $(1,(\beta,B))\co
(g,f)\rightarrow (g,f')$ in the Gray product $\mathrm{Ps}\text{-}T\text{-}\mathrm{Alg}(Y,Z)\otimes
\mathrm{Ps}\text{-}T\text{-}\mathrm{Alg}(X,Y)$,
where $(\beta,B)$ is a $T$-transformation 
\begin{align*}
  (f,F,\mathpzc{h}^f,\mathpzc{m}^f)\Rightarrow (f',F',\mathpzc{h}^{f'} ,\mathpzc{m}^{f'}),
\end{align*}
and $g$ is as above,
$\boxtimes$ is defined by
\begin{align*}
  & ( 1_g\beta,(1_{G1}\ast(1B))\diamond
  (\Sigma^{-1}_{G,\beta}\ast 1_{1F})) 
\end{align*}
and denoted $\alpha\boxtimes 1_f$.

On a generating $2$-cell of the form $(\Gamma,1)\co
((\alpha,A),1)\Rrightarrow ((\alpha',A'),1)$, $\boxtimes$ is defined
by the underlying $2$-cell $\Gamma 1_f$ in $\mathpzc{K}(X,Y)$ and
denoted by $\Gamma\boxtimes 1$, and
similarly for $2$-cells of the form $(1,\Delta)\co
(1,(\beta,B))\Rrightarrow (1,(\beta',B'))$.

Finally, on an interchange cell $\Sigma_{(\alpha,A),(\beta,B)}$ in $\mathrm{Ps}\text{-}T\text{-}\mathrm{Alg}(Y,Z)\otimes
\mathrm{Ps}\text{-}T\text{-}\mathrm{Alg}(X,Y)$, $\boxtimes$ is defined
by
the $2$-cell $M_{\mathpzc{K}}(\Sigma_{\alpha,\beta})$, the shorthand of
which is $\Sigma_{\alpha,\beta}$.

The unit at an object
$(X,x,m,i,\pi,\lambda,\rho)$, that is,
the functor $j_X\co I\rightarrow
\mathrm{Ps}\text{-}T\text{-}\mathrm{Alg}(X,X)$ is determined by strictness and the requirement that
it sends the unique object $\ast$ of $I$ to the $T$-functor $(1_X,1_x,1_{i},1_m)$.

We omit the proof that this is well-defined and that
$\mathrm{Ps}\text{-}T\text{-}\mathrm{Alg}$ is indeed a
$\mathpzc{Gray}$-category.
\end{definition}

\subsection{Coherence via codescent}
Recall from \cite[3.1]{kelly} that given a complete and cocomplete locally small symmetric
monoidal closed category $\mathpzc{V}$, $\mathpzc{V}$-categories
$\mathpzc{K}$ and $\mathpzc{B}$, and $\mathpzc{V}$-functors $F\co
\mathpzc{K}\op\rightarrow \mathpzc{V}$ and $\mathpzc{G}\co
\mathpzc{K}\rightarrow \mathpzc{B}$, the colimit of $G$ indexed by $F$
is a representation $(F\ast G,\nu)$ of the $\mathpzc{V}$-functor $\lbrack
  \mathpzc{K}\op,\mathpzc{V}\rbrack(F-,\mathpzc{B}( G-,  ?))\co
  \mathpzc{B}\rightarrow \mathpzc{V}$ (where
  this is assumed to exist) with representing object $F\ast G$ in
  $\mathpzc{B}$ and unit $\nu\co F\rightarrow \mathpzc{B}(G-,F\ast
  G)$. 
For the concept of representable functors see \cite[1.10]{kelly}. In
particular, there is a $\mathpzc{V}$-natural (in $B$) isomorphism
\begin{align}\label{repcolimit}
   \mathpzc{B}(F\ast G, B ) \xrightarrow{\quad\cong\quad}
  \lbrack \mathpzc{K}\op,\mathpzc{V}\rbrack(F,\mathpzc{B}( G-,  B)) 
\puncteq{,}
\end{align}
and the unit is obtained by Yoneda when this is composed with the unit
$j_{F\ast G}$ of the \mbox{$\mathpzc{V}$-category} $\mathpzc{B}$ at the
object $F\ast G$.

\begin{definition}
  A $\mathpzc{V}$-functor $T\co \mathpzc{B}\rightarrow \mathpzc{C}$
  preserves the  colimit of $G\co
  \mathpzc{K}\rightarrow \mathpzc{B}$  indexed by  $F\co
  \mathpzc{K}\op\rightarrow \mathpzc{V}$ if when $(F\ast G,\nu)$ exists,
  the composite
  \begin{align*}
    T_{G-,B}\nu\co F\rightarrow \mathpzc{B}(G-,F\ast G)\rightarrow
    \mathpzc{L}(TG-,T\{F,G\}) 
  \end{align*}
exhibits $T(F\ast G)$ as
  the colimit of $TG$ indexed by $F$ i.e. the composite corresponds
  under Yoneda to an isomorphism as in \eqref{repcolimit} with $TG$
  instead of $G$.
\end{definition}
Now let again $\mathpzc{K}$ be a $\mathpzc{Gray}$-category and $T$ be
a $\mathpzc{Gray}$-monad on it. Below we will make use of the
following corollary of the central coherence theorem from three-dimensional monad
theory \cite[Corr. 15.14]{gurskicoherencein}.
\begin{theorem}[Gurski's coherence theorem]\label{gurskicoherence}
  Assume that $\mathpzc{K}$ has codescent objects of codescent
  diagrams, and that $T$ preserves them. Then the inclusion $i\co
  \mathpzc{K}^T\hookrightarrow
  \mathrm{Ps}\text{-}T\text{-}\mathrm{Alg}$ has a left adjoint $L\co
  \mathrm{Ps}\text{-}T\text{-}\mathrm{Alg}\rightarrow \mathpzc{K}^T$ and
  each component $\eta_X\co X\rightarrow iLX$ of the unit of this
  adjunction is a biequivalence in $\mathrm{Ps}\text{-}T\text{-}\mathrm{Alg}$.
\end{theorem}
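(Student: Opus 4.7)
The plan is to recognise this statement as a direct application of Gurski's central coherence theorem \cite[Th.~15.13]{gurskicoherencein}, of which it is the corollary \cite[Corr.~15.14]{gurskicoherencein}. The main work consists in constructing the left adjoint $L$ explicitly via codescent objects and identifying the unit; the biequivalence claim is then the genuine coherence content.

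First, I would construct $L$ on objects by assigning to a pseudo $T$-algebra $(X, x, m, i, \pi, \lambda, \rho)$ the codescent object of its canonical \emph{resolution} $\mathrm{Res}(X)$ in $\mathpzc{K}$. The diagram $\mathrm{Res}(X)$ is the three-dimensional analogue of the two-sided bar construction: it has $T^2X$ and $TX$ as underlying objects, the pair $Tx, \mu_X \co T^2X \rightrightarrows TX$ as face $1$-cells (reflexive via $\eta_{TX}$), the adjoint equivalence $m \co x\cdot Tx \Rightarrow x\cdot\mu_X$ together with $i \co 1_X \Rightarrow x\cdot\eta_X$ as codescent $2$-data, and the invertible $3$-cells $\pi$, $\lambda$, $\rho$ satisfying the axioms of Proposition~\ref{kelly2axioms} as codescent $3$-data. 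By hypothesis $\mathpzc{K}$ has codescent objects of such diagrams, so $LX$ exists. Because $T$ preserves them, $T(LX)$ is the codescent object of $T\mathrm{Res}(X)$, and the universal property endows $LX$ with a canonical strict $T$-algebra structure, exactly as coequalisers of reflexive pairs yield algebras in ordinary monad theory.

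Next, I would extend $L$ to a $\mathpzc{Gray}$-functor $\mathrm{Ps}\text{-}T\text{-}\mathrm{Alg} \to \mathpzc{K}^T$: a pseudo $T$-functor, $T$-transformation, or $T$-modification induces a morphism of codescent diagrams $\mathrm{Res}(X) \to \mathrm{Res}(Y)$ at the appropriate cell level, and thus a cell of $\mathpzc{K}^T$ by universality. The adjunction $L \dashv i$ then amounts to the enriched isomorphism
\begin{equation*}
\mathpzc{K}^T(LX, Y) \;\cong\; \mathrm{Ps}\text{-}T\text{-}\mathrm{Alg}(X, iY)
\end{equation*}
natural in $X$ and $Y$, which one obtains by combining the defining universal property of the codescent object with the identification of cocones on $\mathrm{Res}(X)$ landing in a strict algebra $iY$ as pseudo $T$-functors $X \to iY$ together with compatible higher cells -- this identification is essentially tautological once one unpacks Definition of codescent object and the structure of $\mathrm{Res}(X)$. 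The unit $\eta_X \co X \to iLX$ arises from the canonical cocone witnessing $X$ itself as a (lax) cocone over $\mathrm{Res}(X)$ via its own structure data.

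The main obstacle, and the actual coherence content, is showing that every component $\eta_X$ is an internal biequivalence in $\mathrm{Ps}\text{-}T\text{-}\mathrm{Alg}$. Here one cannot merely cite the universal property: one must construct a pseudo inverse to $\eta_X$ together with invertible $2$-cells and $3$-cells exhibiting it as a biequivalence. The construction uses crucially that $m$ and $i$ are adjoint equivalences and that $\pi$, $\lambda$, $\rho$ are invertible -- the pseudo-inverse and the coherence cells witnessing the biequivalence are built from $m^{\bullet}$, $i^{\bullet}$ and the inverses $\pi^{-1}$, $\lambda^{-1}$, $\rho^{-1}$, with the pentagon- and triangle-like axioms (and their redundant partners, by Proposition~\ref{kelly2axioms}) ensuring that the data assemble coherently. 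This is precisely the analysis carried out by Gurski in his proof of \cite[Th.~15.13]{gurskicoherencein}, where the codescent object of the resolution is shown to provide the universal strictification of a pseudo algebra, biequivalent to it in $\mathrm{Ps}\text{-}T\text{-}\mathrm{Alg}$. Once this coherence step is in place, the theorem follows.
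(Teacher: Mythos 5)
The paper offers no proof of this statement: it is imported verbatim as \cite[Corr.~15.14]{gurskicoherencein}, followed only by a remark explaining why the hypotheses are sensible (codescent objects are small indexed colimits built from co-$2$-inserters, co-$3$-inserters and coequifiers, and the forgetful functor $\mathpzc{K}^T\rightarrow\mathpzc{K}$ creates any colimit preserved by $T$ and $TT$, so $\mathpzc{K}^T$ inherits them). Your sketch instead reconstructs Gurski's actual argument, and the overall strategy is the right one, but three details are off. First, in the three-dimensional setting the resolution $\mathrm{Res}(X)$ is a codescent diagram with \emph{three} levels $T^3X$, $T^2X$, $TX$; the level over $T^2X$ is needed to house the $3$-cell data, whereas your description with only $T^2X\rightrightarrows TX$ is the two-dimensional reflexive-coequalizer picture. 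Second, the cells $m$, $i$, $\pi$, $\lambda$, $\rho$ are not internal data of $\mathrm{Res}(X)$: the resolution is a strict diagram of free algebras built entirely from instances of $\mu$, $Tx$, $T^2x$, $\eta$ and identity cells, and the pseudo-algebra structure instead furnishes a codescent \emph{cocone} from $\mathrm{Res}(X)$ with vertex $X$. Third, and relatedly, such a cocone induces a map \emph{out of} the colimit, i.e.\ a comparison $LX\rightarrow X$; the unit $\eta_X\co X\rightarrow iLX$ goes the other way and is obtained by composing the monad unit $X\rightarrow TX$ with the leg $TX\rightarrow LX$ of the universal cocone, after which the coherence work consists in showing these two maps are mutually inverse biequivalences using the invertibility of $\pi,\lambda,\rho$ and the equivalence data of $m,i$. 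With those corrections your outline agrees with Gurski's proof; since the paper itself supplies no argument beyond the citation, there is nothing further to compare.
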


\begin{remark}
  Codescent objects are certain indexed colimits, see
  \cite[12.3]{gurskicoherencein}. In fact, they are
built from co-$2$-inserters, co-$3$-inserters and coequifiers. These
are classes of indexed colimits  where each 
of these classes is determined separately by considering all indexed
colimits $F\ast G$ with a particular fixed  $\mathpzc{Gray}$-functor
$F\co \mathpzc{K}\op\rightarrow \mathpzc{V}$.
 Thus there
 is no other restriction
on $G$ apart from the fact that it must have the same domain as
$F$. In particular, if $T$ is a $\mathpzc{Gray}$-monad and $T$
preserves co-$2$-inserters, co-$3$-inserters and coequifiers, then
also $TT$ preserves co-$2$-inserters, co-$3$-inserters and coequifiers
because $G$ and $TG$ have the same domain. This is used in the proof of the theorem to show that the
Eilenberg-Moore object $\mathpzc{K}^T$ has codescent objects and that they are
preserved by the forgetful $\mathpzc{Gray}$-functor
$\mathpzc{K}^T\rightarrow \mathpzc{K}$. Namely, in enriched monad
theory one can
show that the forgetful functor
$\mathpzc{K}^T\rightarrow \mathpzc{K}$ creates any colimit that is
preserved by $T$ and $TT$, just as in ordinary monad theory.

For any of the classes of indexed colimits above, the domain of $F$ is 
small, so codescent objects are small indexed colimits.
 Hence, if $\mathpzc{K}$ is
  cocomplete, it has codescent objects of codescent
  diagrams in particular. This observation gives the following
  corollary.
\end{remark}
\begin{corollary}\label{corollarycocomplete}
  Let $\mathpzc{K}$ be cocomplete and let  $T$ be a monad on $\mathpzc{K}$ that
  preserves small indexed colimits. Then the inclusion $i\co
  \mathpzc{K}^T\hookrightarrow
  \mathrm{Ps}\text{-}T\text{-}\mathrm{Alg}$ has a left adjoint $L\co
  \mathrm{Ps}\text{-}T\text{-}\mathrm{Alg}\rightarrow \mathpzc{K}^T$ and
  each component $\eta_X\co X\rightarrow iLX$ of the unit of the
  adjunction is an internal biequivalence in
  $\mathrm{Ps}\text{-}T\text{-}\mathrm{Alg}$. \qed
\end{corollary}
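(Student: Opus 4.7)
The plan is to show that the hypotheses of Theorem \ref{gurskicoherence} are satisfied under the assumptions of the corollary, after which the conclusion follows at once by invoking that theorem. So the task reduces to verifying two things: first, that $\mathpzc{K}$ has codescent objects of codescent diagrams; and second, that the $\mathpzc{Gray}$-monad $T$ preserves them.

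First I would recall, as in the preceding remark, that codescent objects are built from co-$2$-inserters, co-$3$-inserters, and coequifiers, each of which is an indexed colimit $F\ast G$ in which the domain $\mathpzc{Gray}$-category of the indexing functor $F\co\mathpzc{K}\op\rightarrow \mathpzc{Gray}$ is small and fixed. Consequently, codescent objects are small indexed colimits. Since $\mathpzc{K}$ is cocomplete, it admits all small indexed colimits, and in particular it has codescent objects of codescent diagrams.

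Next, since $T$ is assumed to preserve small indexed colimits, it preserves in particular each of the three classes assembling a codescent object, and hence preserves codescent objects of codescent diagrams. This supplies both hypotheses of Gurski's coherence theorem (Theorem \ref{gurskicoherence}) for the $\mathpzc{Gray}$-monad $T$ on $\mathpzc{K}$.

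Applying Theorem \ref{gurskicoherence} directly then yields the existence of a left adjoint $L\co \mathrm{Ps}\text{-}T\text{-}\mathrm{Alg}\rightarrow \mathpzc{K}^T$ to the inclusion $i\co \mathpzc{K}^T\hookrightarrow \mathrm{Ps}\text{-}T\text{-}\mathrm{Alg}$, together with the statement that each component $\eta_X\co X\rightarrow iLX$ of the unit is an internal biequivalence in $\mathrm{Ps}\text{-}T\text{-}\mathrm{Alg}$, which is exactly what is to be proved. There is no genuine obstacle here; the only subtlety is the observation in the preceding remark that codescent objects involve a fixed indexing $\mathpzc{Gray}$-category and therefore belong to the class of small indexed colimits, so that cocompleteness of $\mathpzc{K}$ and preservation of small indexed colimits by $T$ together suffice.
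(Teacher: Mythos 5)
Your argument is exactly the paper's: the corollary is stated with \qed precisely because the preceding remark already records that codescent objects are small indexed colimits (being built from co-$2$-inserters, co-$3$-inserters, and coequifiers with fixed small indexing functors), so cocompleteness of $\mathpzc{K}$ and preservation of small indexed colimits by $T$ supply both hypotheses of Theorem \ref{gurskicoherence}. The proposal is correct and takes essentially the same route.
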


\section{The monad of the Kan adjunction}\label{sectionmonad}
\subsection{A $\mathpzc{V}$-monad on  $[\mathrm{ob}\mathpzc{P},\mathpzc{L}]$}\label{subsectionmonad}

Let $\mathpzc{V}$ be a complete and cocomplete symmetric monoidal closed
category such that the underlying category $\mathpzc{V}_0$ is locally
small.
By cocompleteness
we have an initial object which we denote by $\emptyset$.
Let $\mathpzc{P}$ be a small 
$\mathpzc{V}$-category and let $\mathpzc{L}$ be a cocomplete
$\mathpzc{V}$-category. In this general situation, we  now describe in
more detail a $\mathpzc{V}$-monad corresponding to the Kan adjunction with left adjoint left Kan extension $\mathrm{Lan}_H$
along a particular $\mathpzc{V}$-functor $H$ and right adjoint the functor $[H,1]$ from enriched category theory. 
For $\mathpzc{V}=\mathpzc{Gray}$, this is the $\mathpzc{Gray}$-monad
mentioned in the introduction, for which the pseudo algebras shall be compared
to locally strict trihomomorphisms.

First, observe that the set
$\mathrm{ob}\mathpzc{P}$ of the objects of $\mathpzc{P}$ may be considered as a
discrete $\mathpzc{V}$-category. More precisely, there is a 
%(unique if $I$ is terminal)
$\mathpzc{V}$-category structure on  $\mathrm{ob}\mathpzc{P}$ such
that for objects
$P,Q\in\mathpzc{P}$ the hom object $(\mathrm{ob}\mathpzc{P})(P,Q)$ is
given by $I$  if $P=Q$ and by
$\emptyset$ 
otherwise and such that the nontrivial hom morphisms are given by
$l_I=r_I$.
%(this is automatic if $I$ is terminal). 
The  $\mathpzc{V}$-functor $H$ is defined to be the unique
$\mathpzc{V}$-functor $\mathrm{ob}\mathpzc{P}\rightarrow \mathpzc{P}$
such that the underlying map on objects is the identity.

Since $\mathpzc{P}$ is small and since $\mathpzc{V}_0$ is complete, 
the functor category $[\mathpzc{P},\mathpzc{L}]$ exists. 
%(indeed for an arbitrary $\mathpzc{V}$-category $\mathpzc{L}$). 
For two \mbox{$\mathpzc{V}$-functors} $A,B\co \mathpzc{P}\rightarrow \mathpzc{L}$ the
hom object $[\mathpzc{P},\mathpzc{L}](A,B)$ is the end
\begin{align*}
  \int_{P\in\mathrm{ob}\mathpzc{P}}\mathpzc{L}(AP,BP)
\puncteq{,}
\end{align*}
which is given by an equalizer
\begin{equation}\label{functorcatequalizer}
\begin{tikzpicture}
\matrix (a) [matrix of math nodes, row sep=3em, column sep=1em, text
height=1.5ex, text depth=0.25ex]{
  \underset{P\in\mathrm{ob}\mathpzc{P}}{\int} \mathpzc{L}(AP,BP)
    & \underset{P\in\mathrm{ob}\mathpzc{P}}{\prod}
   \mathpzc{L}(AP,BP)
    & \underset{P,Q\in\mathrm{ob}\mathpzc{P}}{\prod}
   [\mathpzc{P}(P,Q),\mathpzc{L}(AP,BQ)]\\ };

\draw[->]
($ (a-1-1.east) + (0,.0) $)
-- node [above] {}($ (a-1-2.west) + (0,.0) $);

\draw[->]
($ (a-1-2.east) + (0,.05) $)
-- node [above] {$\scriptstyle \rho$}($ (a-1-3.west) + (0,.05) $);
\draw[->]
($ (a-1-2.east) + (0,-.05) $)
-- node [below] {$\scriptstyle \sigma$}($ (a-1-3.west) + (0,-.05) $);		
\puncttikz[a-1-3]{}		
\end{tikzpicture}
\end{equation}
in $\mathpzc{V}_0$,
see \cite[(2.2), p. 27]{kelly}, where---if we denote by $\pi$ the cartesian projections---
$\rho$ and $\sigma$ are determined by requiring $\pi_{P,Q}\rho $ and
$\pi_{P,Q}\sigma $ to be $\pi_{P}$  composed with the transform of
$\mathpzc{L}(AP,B-)_{PQ}$ and $\pi_Q$ composed with the transform of
$\mathpzc{L}(A-,BQ)_{QP}$ respectively.

Now let $\mathpzc{M}$ be another small $\mathpzc{V}$-category and
$K\co \mathpzc{M}\rightarrow \mathpzc{P}$ be a $\mathpzc{V}$-functor
, e.g. $K=H$.
The $\mathpzc{V}$-functor
$K$ induces a $\mathpzc{V}$-functor
\begin{align*}
  [K,1]\co [\mathpzc{P},\mathpzc{L}]\rightarrow
  [\mathpzc{M},\mathpzc{L}]
\puncteq{,}
\end{align*}
which sends a $\mathpzc{V}$-functor $A\co \mathpzc{P}\rightarrow
\mathpzc{L}$ to the composite $\mathpzc{V}$-functor $AK$,
cf. \cite[(2.26)]{kelly}, and its hom morphisms are determined by the
universal property of the end and commutativity of the following diagram
\begin{equation}\label{homK1}
\begin{tikzpicture}[baseline=(current  bounding  box.center)]
      \matrix (a) [matrix of math nodes, row sep=4em, column sep=4em,
      text height=1.5ex, text depth=0.25ex]{
          \lbrack\mathpzc{P},\mathpzc{L}\rbrack(A,B) \pgfmatrixnextcell
        \lbrack\mathpzc{M},\mathpzc{L}\rbrack(AK,BK) \\
        \mathpzc{L}(AKM,BKM) \pgfmatrixnextcell \mathpzc{L}(AKM,BKM) \\};
\path[->] (a-1-2) edge node[right]{$\scriptstyle
        E_{M}$} (a-2-2); 
\path[->] (a-1-1) edge
      node[right]{$\scriptstyle E_{KM} $} (a-2-1); 
\path[->] (a-1-1)
      edge node[above]{$\scriptstyle [K,1]_{A,B}$}
      (a-1-2); 
 \draw[-]
 ($ (a-2-1.east) + (0,.05) $)
-- node [above] {}($ (a-2-2.west) + (0,.05) $);
\draw[-]
($ (a-2-1.east) + (0,-.05) $)
-- node [above] {}($ (a-2-2.west) + (0,-.05) $);
\puncttikz[a-2-2]{.}
    \end{tikzpicture}
  \end{equation}

Left Kan extension $\mathrm{Lan}_K\co
[\mathpzc{M},\mathpzc{L}]\rightarrow [\mathpzc{P},
\mathpzc{L}]$ along $K$ provides a left adjoint to $[K,1]$: this is
the usual Theorem of Kan adjoints as given in\cite[Th. 4.50,
p. 67]{kelly}, and it applies since $\mathpzc{M}$ and $\mathpzc{P}$
are small 
and since
$\mathpzc{L}$ is cocomplete.
In particular, we have a hom $\mathpzc{V}$-adjunction
\begin{align}
  \label{Kanadjunction}
  [\mathpzc{P},\mathpzc{L}](\mathrm{Lan}_K A,S)\cong
  [\mathpzc{M},\mathpzc{L}](A,[K,1](S))
\puncteq{,}
\end{align}
cf. \cite[(4.39)]{kelly}, which is
$\mathpzc{V}$-natural in $A\in[\mathpzc{M},
\mathpzc{L}]$ and $S\in[\mathpzc{P},
\mathpzc{L}]$.
Thus we have a monad
\begin{align}\label{monadKan}
  T=[H,1]\mathrm{Lan}_K\co [\mathpzc{M},\mathpzc{L}]\rightarrow
  [\mathpzc{M},\mathpzc{L}]
\end{align}
on $[\mathpzc{M},\mathpzc{L}]$, which we call the monad of the  Kan adjunction.
The unit $\eta\co  1\Rightarrow T$ of $T$ is given by the unit $\eta$ of the
adjunction \eqref{Kanadjunction},
while the multiplication $\mu\co TT\Rightarrow T$, is given by
\begin{align*}
 [H,1]\epsilon \mathrm{Lan}_H\co [H,1]\mathrm{Lan}_H
 [H,1]\mathrm{Lan_H}\Rightarrow [H,1]\mathrm{Lan}_H
\puncteq{}
\end{align*}
where $\epsilon$ is the counit of the adjunction \eqref{Kanadjunction}.

We now come back to the special case that
$\mathpzc{M}=\mathrm{ob}\mathpzc{P}$ and $K=H$.
Since $\mathpzc{P}$ is small, we may identify a functor
$\mathrm{ob}\mathpzc{P}\rightarrow \mathpzc{L}$ with its family of
values in $\mathpzc{L}$ i.e. the set of functors is identified with
the (small) limit in $\mathpzc{Set}$ given by the product
$\prod_{\mathrm{ob}\mathpzc{P}}\mathrm{ob}\mathpzc{L}$. 

 In fact, the equalizer \eqref{functorcatequalizer} is trivial for
$[\mathrm{ob}\mathpzc{P},\mathpzc{L}]$, so for two functors
$A,B\co \mathrm{ob}\mathpzc{P}\rightarrow \mathpzc{L}$, the hom
object $[\mathrm{ob}\mathpzc{P},\mathpzc{L}](A,B)$ is given by
the (small) limit in $\mathpzc{V}_0$ given by the product
$\prod_{P\in\mathrm{ob}\mathpzc{P}}\mathpzc{L}(AP,BP)$.

Namely,
$\rho$ and $\sigma$ are equal in \eqref{functorcatequalizer}: Denoting
by $\pi$ the projections of the cartesian products, $\pi_{P,Q}\rho =
\rho_{P,Q}\pi_P$ equals $\pi_{P,Q}\sigma=\sigma_{P,Q}\pi_Q$ because
for $P\neq Q$, the two morphisms
\begin{align*}
   \underset{P\in\mathrm{ob}\mathpzc{P}}{\prod}
   \mathpzc{L}(AP,BP)\rightarrow [\emptyset,\mathpzc{L}(AP,BQ)]
\end{align*}
must both be the transform of the unique morphism
\begin{align*}
   \emptyset  \rightarrow [\underset{P\in\mathrm{ob}\mathpzc{P}}{\prod}
   \mathpzc{L}(AP,BP),\mathpzc{L}(AP,BQ)]
\puncteq{;}
\end{align*}
and for $P=Q$, we have $\rho_{P,P}=\sigma_{P,P}$ because these are the
transforms of $\mathpzc{L}(AP,B-)_{PP}$ and
$\mathpzc{L}(A-,BP)_{PP}$,
which are both equal to
\begin{align*}
  j_{\mathpzc{L}(AP,BP)}\co
  I=(\mathrm{ob}\mathpzc{P})(P,P)\rightarrow
  [\mathpzc{L}(AP,BP),\mathpzc{L}(AP,BP)]
\end{align*}
by the unit axioms for the $\mathpzc{V}$-functors $A$, $B$,
$\mathpzc{L}(AP,-)$, and $\mathpzc{L}(-,BP)$. To see this, note that
$j_P\co I\rightarrow I$ is the identity functor, so $\mathpzc{L}(AP,B-)_{PP}=\mathpzc{L}(AP,B-)_{PP}j_P$ and
$\mathpzc{L}(A-,BP)_{PP}=\mathpzc{L}(A-,BP)_{PP}j_p$.

From diagram \eqref{homK1}, we see that
\begin{align*}
 [H,1]_{A,B}\co [\mathpzc{P},\mathpzc{L}](A,B)\rightarrow [\mathrm{ob}\mathpzc{P},\mathpzc{L}](AH,BH)
\end{align*}
is given by the strict functor of the equalizer \eqref{functorcatequalizer},
\begin{align*}
  \underset{P\in\mathrm{ob}\mathpzc{P}}{\int} \mathpzc{L}(AP,BP)
    \rightarrow \underset{P\in\mathrm{ob}\mathpzc{P}}{\prod}
   \mathpzc{L}(AP,BP)
\puncteq{,}
\end{align*}
that is, the strict functor into the product induced by the family of evaluation
functors $E_P$ where $P$ runs through the objects of $\mathpzc{P}$.

\begin{lemma}\label{pointwise}
  Let $\lbrace F,G\rbrace$ be a pointwise limit, then any representation is pointwise.
\end{lemma}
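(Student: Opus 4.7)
My plan is to exploit the fact that any two representations of the same $\mathpzc{V}$-functor are related by a unique isomorphism of representing objects, and that the property of being pointwise is stable under such isomorphisms. Recall that in the present context a representation $(R,\nu)$ of the indexed limit $\{F,G\}$ in the functor category $\mathpzc{B}=[\mathpzc{M},\mathpzc{L}]$ is called pointwise when, for every object $M\in\mathpzc{M}$, applying the evaluation $\mathpzc{V}$-functor $E_M\co [\mathpzc{M},\mathpzc{L}]\rightarrow \mathpzc{L}$ yields a representation $(E_M R,E_M\nu)$ of $\{F,E_M G\}$ in $\mathpzc{L}$. Thus what must be shown is that if some representation $(R,\nu)$ of $\{F,G\}$ is pointwise in this sense, then so is any other representation $(R',\nu')$.

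The plan is as follows. Given two representations $(R,\nu)$ and $(R',\nu')$, Yoneda produces a unique isomorphism $\phi\co R\rightarrow R'$ in $\mathpzc{B}$ that is compatible with the representing units in the sense that $\nu'=\mathpzc{B}(G{-},\phi)\circ \nu$. Applying the $\mathpzc{V}$-functor $E_M$ (and using that it preserves composition in $\mathpzc{V}$) yields $E_M\nu'=\mathpzc{L}(E_M G{-},E_M\phi)\circ E_M\nu$, where $E_M\phi\co E_M R\rightarrow E_M R'$ is an isomorphism in $\mathpzc{L}$. Since representations of a fixed $\mathpzc{V}$-functor are closed under postcomposition with an isomorphism of the representing object (a direct consequence of the naturality of representable hom functors in their covariant argument), the pair $(E_M R',E_M\nu')$ is again a representation of the same $\mathpzc{V}$-functor $[\mathpzc{K}\op,\mathpzc{V}](F,\mathpzc{L}(E_M G{-},?))$ that $(E_M R,E_M\nu)$ represented by hypothesis. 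Hence $(E_M R',E_M\nu')$ realises $\{F,E_M G\}$ as well.

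Doing this for every $M\in\mathpzc{M}$ shows that $(R',\nu')$ is pointwise, which completes the proof. The only point that needs a touch of care, and thus is the principal obstacle, is to keep straight the translation between the two equivalent formulations of ``pointwise'' — namely, preservation of the limit by each $E_M$ versus the statement that the evaluated unit $E_M\nu$ exhibits $E_M R$ as the representing object — and to verify that the transport of the representing unit along $\phi$ commutes with $E_M$. Both amount to nothing more than $\mathpzc{V}$-functoriality of $E_M$ together with the standard naturality of the Yoneda isomorphism in the representing object.
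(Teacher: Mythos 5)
Your argument is correct and is essentially the paper's own proof: both transport the representing unit along the unique Yoneda isomorphism between the two representing objects, apply the evaluation $\mathpzc{V}$-functor, and use that being a representation is stable under composition with an isomorphism of the representing object. The only cosmetic slip is that you write the unit in the colimit convention ($\nu\co F\rightarrow \mathpzc{B}(G-,R)$) although the statement concerns the limit $\{F,G\}$, whose unit lives in $\mathpzc{B}(\{F,G\},G-)$; the argument is unaffected.
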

\begin{proof}
  Let $(B,\mu)$ be a pointwise representation and let $(B',\mu')$ be
  any other representation. By Yoneda,
  $\mu'$ has the form
  $[\mathpzc{P},\mathpzc{L}](\alpha,G-)\mu$
  for a unique isomorphism $\alpha\co B'\Rightarrow B$. It follows
  that
 \begin{align*}
   E_P\mu'=  
    E_P[\mathpzc{P},\mathpzc{L}](\alpha,G-)\mu  =
   \mathpzc{L}(\alpha_P,E_PG-) E_P \mu
  \end{align*}
and by extraordinary naturality this induces 
\begin{align*}
  \mathpzc{L}(L,B'P)\xrightarrow{\mathpzc{L}(L,\alpha_P)}\mathpzc{L}(L,BP)\xrightarrow{\beta}[\mathpzc{K},\mathpzc{V}](F,\mathpzc{L}(L,(G-)P))
\end{align*}
where $\beta$ is the isomorphism induced by $E_P\mu$, and this is an
isomorphism that is $\mathpzc{V}$-natural in $L$ and $P$ because
$(B,\mu)$ is pointwise and
$\alpha_P$ is an isomorphism that is $\mathpzc{V}$-natural in $P$.
This proves that $(B',\mu')$ is a pointwise limit.
\end{proof}

The following is the usual non-invariant notion of limit creation as
in MacLane's book
\cite[p. 108]{maclane71} adapted to the enriched context:
\begin{definition}\label{defcreate}
  A $\mathpzc{V}$-functor $T\co \mathpzc{B}\rightarrow \mathpzc{C}$
  creates $F\ast G$
  or creates colimits of $G\co
  \mathpzc{K}\rightarrow \mathpzc{B}$  indexed by  $F\co
  \mathpzc{K}\op\rightarrow \mathpzc{V}$  if
  \begin{enumerate*}
[label=(\roman*),ref= Def. \ref{defcreate}~(\roman*)]
  \item for every $(C,\nu)$ where $\nu\co
    F\rightarrow \mathpzc{C}(TG-,C)$ exhibits the object $C\in
    \mathpzc{C}$ as the colimit
    $F\ast (TG)$, there is a
    unique $(B,\xi)$ consisting of an object $B\in \mathpzc{B}$
    with $TB=C$ and a $\mathpzc{V}$-natural transformation $\xi\co F\rightarrow \mathpzc{B}(G-,B)$ with
    $T_{G-,B}\xi= \nu$, and if, moreover;
  \item $\xi$ exhibits $B$ as the colimit $F\ast G$.

There is a dual notion for creation of limits.
  \end{enumerate*}
\end{definition}
In particular, a colimit $F\ast G$ created by the
$\mathpzc{V}$-functor $T$ is also preserved by $T$.

\begin{lemma}\label{H1creates}
  The functor $[H,1]$ creates arbitrary pointwise (co)limits.
\end{lemma}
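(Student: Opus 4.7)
The plan is to exploit the identity $E_P \circ [H,1] = E_P'$ on objects of $[\mathpzc{P},\mathpzc{L}]$ and $[\mathrm{ob}\mathpzc{P},\mathpzc{L}]$ respectively, which holds because $H$ is the identity on objects, together with the fact that the hom object $[\mathrm{ob}\mathpzc{P},\mathpzc{L}](A,B)$ is a bare product $\prod_P \mathpzc{L}(AP,BP)$, so the family of evaluations $\lbrace E_P'\rbrace_{P\in\mathrm{ob}\mathpzc{P}}$ is jointly a strict isomorphism of $[\mathrm{ob}\mathpzc{P},\mathpzc{L}]$ with the $\mathpzc{V}$-product of copies of $\mathpzc{L}$ indexed by $\mathrm{ob}\mathpzc{P}$. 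I will argue the colimit case; limits are dual.

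Fix $G\co \mathpzc{K}\rightarrow [\mathpzc{P},\mathpzc{L}]$ and $F\co \mathpzc{K}\op\rightarrow \mathpzc{V}$, and suppose $(C,\nu)$ exhibits $C\in[\mathrm{ob}\mathpzc{P},\mathpzc{L}]$ as the pointwise colimit $F\ast ([H,1]G)$. By Lemma \ref{pointwise} any representation is pointwise, so for each $P\in\mathrm{ob}\mathpzc{P}$ the composite $E_P'\nu$ exhibits $CP$ as the colimit $F\ast (E_P'[H,1]G) = F\ast (E_P G)$ in $\mathpzc{L}$, via a $\mathpzc{V}$-natural isomorphism $\mathpzc{L}(CP,L)\cong [\mathpzc{K}\op,\mathpzc{V}](F,\mathpzc{L}(E_P G-,L))$. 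Now I define the lift $B\in[\mathpzc{P},\mathpzc{L}]$ by $BP \coloneqq CP$ on objects, and for each pair $P,Q$ the hom morphism $B_{P,Q}\co \mathpzc{P}(P,Q)\rightarrow \mathpzc{L}(BP,BQ)$ is the unique morphism in $\mathpzc{V}_0$ which, under the universal property of $CP$ as $F\ast(E_P G)$, transforms to the $\mathpzc{V}$-natural family $F(K)\rightarrow [\mathpzc{P}(P,Q),\mathpzc{L}(E_P G(K),CQ)]$ induced by the hom morphisms $G(K)_{P,Q}$ composed with the components of $\nu$ at $Q$. Functoriality of $B$ (composition and unit axioms) follows from the uniqueness clause in the universal property, applied to the two sides of each axiom, in the standard way of Kelly \cite[Ch. 3]{kelly}. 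The same universal property produces a canonical $\mathpzc{V}$-natural transformation $\xi\co F\rightarrow [\mathpzc{P},\mathpzc{L}](G-,B)$ whose composite with each $E_P$ is $E_P'\nu$, so that $[H,1]_{G-,B}\xi = \nu$ by the end description of $[H,1]_{A,B}$ in diagram \eqref{homK1}.

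Uniqueness of the lift is forced: any $(B',\xi')$ with $[H,1]B' = C$ has $B'P=CP$ on objects, and the equation $[H,1]\xi' = \nu$ together with diagram \eqref{homK1} fixes $E_P\xi' = E_P'\nu$ for each $P$, which in turn fixes the hom morphisms $B'_{P,Q}$ by the same universal property that defined $B_{P,Q}$. This establishes clause (i) of Definition \ref{defcreate}. For clause (ii), I need $\xi$ to exhibit $B$ as $F\ast G$. Testing against any $S\in[\mathpzc{P},\mathpzc{L}]$, the induced map
\[
[\mathpzc{P},\mathpzc{L}](B,S)\longrightarrow [\mathpzc{K}\op,\mathpzc{V}](F,[\mathpzc{P},\mathpzc{L}](G-,S))
\]
is an isomorphism because both sides are ends (over $P\in\mathrm{ob}\mathpzc{P}$) of the analogous pointwise maps $\mathpzc{L}(BP,SP)\rightarrow [\mathpzc{K}\op,\mathpzc{V}](F,\mathpzc{L}(E_P G-,SP))$, which are isomorphisms by the pointwise colimit property at $P$; here I use that $[\mathpzc{K}\op,\mathpzc{V}](F,-)$, being a $\mathpzc{V}$-limit, commutes with ends.

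The main obstacle is the $\mathpzc{V}$-functoriality of $B$ and the $\mathpzc{V}$-naturality of $\xi$, which must be extracted from the universal property rather than verified by a direct diagram chase; these are standard arguments in enriched category theory but warrant careful invocation of the uniqueness clause of the representing property, exactly as in the proof of the pointwise-colimit formula in Kelly \cite[Ch. 3]{kelly}.
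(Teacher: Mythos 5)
Your proof is correct and follows essentially the same route as the paper's: both reduce to the pointwise description via the evaluations $E_P$, take $BP=CP$ with the unique $\mathpzc{V}$-functor structure induced by the pointwise representation, and obtain uniqueness of the lift from the fact that a pointwise representation determines that structure uniquely. The only difference is one of exposition — you spell out the standard construction of pointwise colimits in functor categories (Kelly, 3.3) where the paper cites it.
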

\begin{proof}
We only prove the colimit case, the proof for limits is analogous.
  If the colimit $(C,\nu)=(F\ast [H,1]G,\nu)$ exists pointwise, we
  have
  \begin{align*}
    CP=(F\ast [H,1]G)P=F\ast ([H,1](G-)P)=F\ast (G-)HP=F\ast (G-)P
\puncteq{,}
  \end{align*}
  which means that the value of $C$ at $P$ is a colimit $(CP,\nu^P)$ of $(G-)P$
  indexed by $F$. In fact, this determines the  $\mathpzc{V}$-functor $C$ uniquely since 
   the domain $\mathrm{ob}\mathpzc{P}$ is discrete.
  Further, it implies that the colimit $(F\ast
  G,\xi)$ exists pointwise because $F\ast (G-)P$ exists as
  $(CP,\nu^P)$, and this means that the functoriality of $F\ast G$ is
  induced from the pointwise representation and that $E_P\xi=\nu^P$. Now since
  \begin{align*}
    ([H,1](F\ast G))P=(F\ast G)HP=(F\ast G)P=F\ast (G-)P=CP
\puncteq{,}
  \end{align*}
the two functors $[H,1](F\ast G)$ and $F\ast ([H,1]G)$ coincide pointwise, and this means that they must also coincide as functors
$\mathrm{ob}\mathpzc{P}\rightarrow \mathpzc{L}$, i.e.
 $[H,1](F\ast G)= F\ast ([H,1]G)$.
Moreover, since the units coincide pointwise, $E_P\xi=\nu^P$, we must
have $[H,1]_{G-,A}\xi=\nu=\Pi_P\nu^P$.

This proves the existence of a $(B,\xi)$ as in 
  Definition \ref{defcreate}. Suppose there would be another $(B',\xi')$ with $[H,1]B'=C$ and
$[H,1]_{G-,B'}\xi'=\nu$. Then $B$ and $B'$ would coincide pointwise 
i.e. $BP=BP'$ for any object $P\in \mathpzc{P}$, and via $\xi$ and
$\xi'$  would both give rise to the same representation isomorphism---by the fact that $[H,1]_{G-,A}\xi'=\nu=[H,1]_{G-,A}\xi$ and thus $E_P
\xi'=E_p[H,1]_{G-,A}\xi'=E_P\nu=E_P[H,1]_{G-,A}\xi=E_P\xi$--- and this
representation isomomorphism is
$\mathpzc{V}$-natural in $P$ as well as in $L$:
\begin{align}\label{pointwiserep}
  \mathpzc{L}(BP,L)\cong [\mathpzc{K}\op,\mathpzc{V}](F,\mathpzc{L}((G-)P,L))
\puncteq{.}
\end{align}
But for such a representation isomorphism there is a unique way of
making $B$ a $\mathpzc{V}$-functor \mbox{$\mathpzc{P}\rightarrow \mathpzc{L}$}
such that the representation isomorphism is $\mathpzc{V}$-natural in
$P$ as well as in $L$, see for example \cite[1.10]{kelly}, so $B$ and
$B'$ have to coincide as $\mathpzc{V}$-functors. Clearly, by Yoneda, also
$\xi=\xi'$ then as the representations of $(B,\xi)$ and
$(B,\xi')$ coincide because the pointwise representations \eqref{pointwiserep} do, cf. \cite[3.3]{kelly}. Note here that $(B',\xi')$ must be a
pointwise colimit too because by assumption, it is preserved by
$[H,1]$ and $(C,\nu)$ is preserved by any $E_P$, so $(B',\xi')$ is
preserved by any $E_P$ and thus it is a pointwise colimit. On the
other hand, this is just the general fact that if a colimit exists
pointwise, then any representation must in fact be pointwise,  see
Lemma \ref{pointwise} above.

\end{proof}
\begin{corollary}
 The functor $[H,1]$ preserves any limit and any pointwise colimit
 that exists.
\end{corollary}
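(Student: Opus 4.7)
The plan is to deduce the corollary from Lemma \ref{H1creates} by exploiting that creation in the sense of Definition \ref{defcreate} automatically subsumes preservation. For the colimit claim, let $G\co \mathpzc{K}\to [\mathpzc{P},\mathpzc{L}]$ and $F\co \mathpzc{K}\op\to \mathpzc{V}$ be given, and suppose the pointwise colimit $F\ast G$ exists in $[\mathpzc{P},\mathpzc{L}]$. Then $F\ast ([H,1]G)$ exists in $[\mathrm{ob}\mathpzc{P},\mathpzc{L}]$ as the pointwise colimit, and the creation clause of Lemma \ref{H1creates} furnishes a unique $(B,\xi)$ with $[H,1]B=F\ast ([H,1]G)$ and $[H,1]_{G-,B}\xi$ equal to the unit of that colimit. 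The proof of Lemma \ref{H1creates} identifies this $B$ with $F\ast G$, so the equation $[H,1]_{G-,B}\xi = \nu$ is literally the statement that $[H,1]$ preserves $F\ast G$.

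For the limit claim I would argue in two steps. First, the dual of Lemma \ref{H1creates} says that $[H,1]$ creates pointwise limits, and the same bookkeeping as above yields preservation of any pointwise limit that exists. Second, I would invoke the standard fact that any limit existing in the functor $\mathpzc{V}$-category $[\mathpzc{P},\mathpzc{L}]$ is automatically pointwise (cf.~Kelly \S 3.3): the equalizer presentation \eqref{functorcatequalizer} of the hom objects together with the fact that representables in $\mathpzc{V}$ preserve limits forces any limit representation in $[\mathpzc{P},\mathpzc{L}]$ to restrict, via each evaluation $E_P$, to a limit representation in $\mathpzc{L}$. Combining the two steps gives the unconditional statement for limits.

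The main obstacle, and indeed the only genuinely nontrivial input beyond Lemma \ref{H1creates}, is the assertion that limits in the functor $\mathpzc{V}$-category are pointwise whenever they exist; once this is granted, everything else is just unwinding the creation clause of Definition \ref{defcreate} and observing that its defining equation matches the definition of preservation verbatim. The asymmetry of the statement---unconditional for limits but restricted to pointwise ones for colimits---reflects the fact that colimits in $[\mathpzc{P},\mathpzc{L}]$ genuinely need not be pointwise, so no analogous upgrade is available on the colimit side.
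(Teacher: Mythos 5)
Your colimit argument is essentially the paper's: Lemma \ref{H1creates} gives creation of pointwise colimits, and (as noted right after Definition \ref{defcreate}) a created colimit is in particular preserved, since the existing pointwise colimit $F\ast G$ forces $F\ast([H,1]G)$ to exist pointwise and the unique created $(B,\xi)$ must agree with the given representation up to isomorphism. That half is fine.

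The limit half has a genuine gap. Your second step asserts that any limit existing in $[\mathpzc{P},\mathpzc{L}]$ is automatically pointwise, "forced" by the equalizer presentation of the hom objects. This is not true in general: a limit in a functor $\mathpzc{V}$-category may exist without being pointwise (Kelly explicitly flags this possibility in \S 3.3), and Lemma \ref{pointwise} of the paper says only that \emph{if} a limit exists pointwise then every representation of it is pointwise --- it gives no way to promote an arbitrary limit to a pointwise one. In the paper's setting $\mathpzc{L}$ is assumed cocomplete but not complete, so you cannot even argue that the relevant pointwise limits exist in $\mathpzc{L}$; consequently the dual of Lemma \ref{H1creates} need not apply to an arbitrary limit of $[\mathpzc{P},\mathpzc{L}]$, and your two-step argument does not reach the unconditional statement. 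The paper's route is both correct and much shorter: $[H,1]$ has the left adjoint $\mathrm{Lan}_H$ (Theorem of Kan adjoints, using that $\mathpzc{P}$ is small and $\mathpzc{L}$ is cocomplete), and a right adjoint $\mathpzc{V}$-functor preserves every limit that exists, pointwise or not. You should replace your limit argument by this adjointness observation; the creation lemma is only needed on the colimit side, where no adjointness is available.
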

\begin{proof}
  This follows from the lemma above and the fact that $[H,1]$ is a
  right adjoint.
\end{proof}

\begin{remark}
  In case
  that $[H,1]$ is also a left adjoint, it in fact preserves any colimit that
  exists. This is for example the case when the target $\mathpzc{L}$
  is complete,  where the
  right adjoint is given by right Kan extension $\mathrm{Ran}_H$ along
  $H$,
  which exists because $\mathpzc{L}$ and $\mathrm{ob}\mathpzc{P}$ were
  assumed to be complete and small respectively. In particular, this applies in
  the situation that $\mathpzc{L}=\mathpzc{V}$. 
% On the other hand, if $\mathpzc{V}=\mathpzc{Gray}$, the functor $[H,1]$ preserves a colimit if and only if it is pointwise, which shows that for a
% complete $\mathpzc{Gray}$-category $\mathpzc{L}$ and a small
% $\mathpzc{Gray}$-category $\mathpzc{P}$, any colimit in
% $[\mathpzc{P},\mathpzc{L}]$ that exists is a pointwise colimit.
\end{remark}

\begin{corollary}\label{corollarycoherence}
Let $\mathpzc{P}$ be a small and $\mathpzc{L}$ be a cocomplete
$\mathpzc{Gray}$-category, and let  $T$ be the monad
$[H,1]\mathrm{Lan}_H$ on $[\mathrm{ob}\mathpzc{P},\mathpzc{L}]$ given
by the Kan adjunction.
  Then the inclusion $i\co
 [\mathpzc{P},\mathpzc{L}]\hookrightarrow
  \mathrm{Ps}\text{-}T\text{-}\mathrm{Alg}$ has a left adjoint $L\co
  \mathrm{Ps}\text{-}T\text{-}\mathrm{Alg}\rightarrow [\mathpzc{P},\mathpzc{L}]$ and
  each component $\eta_A\co A\rightarrow iLA$ of the unit of the
  adjunction is an internal biequivalence in
  $\mathrm{Ps}\text{-}T\text{-}\mathrm{Alg}$.
\qed
\end{corollary}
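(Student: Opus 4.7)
The plan is to apply Corollary \ref{corollarycocomplete} to the $\mathpzc{Gray}$-monad $T=[H,1]\mathrm{Lan}_H$ on $\mathpzc{K}=[\mathrm{ob}\mathpzc{P},\mathpzc{L}]$. This reduces the corollary to three checks: (i) $\mathpzc{K}$ is cocomplete; (ii) $T$ preserves small indexed colimits; and (iii) the Eilenberg-Moore object $\mathpzc{K}^T$ coincides with $[\mathpzc{P},\mathpzc{L}]$ so that the inclusion $i$ in the statement is the inclusion $\mathpzc{K}^T\hookrightarrow\mathrm{Ps}\text{-}T\text{-}\mathrm{Alg}$ appearing in Corollary \ref{corollarycocomplete}.

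For (i), since $\mathrm{ob}\mathpzc{P}$ is a small discrete $\mathpzc{Gray}$-category and $\mathpzc{L}$ is cocomplete, every small indexed colimit in $[\mathrm{ob}\mathpzc{P},\mathpzc{L}]$ exists and is computed pointwise in $\mathpzc{L}$; in particular $\mathpzc{K}$ is cocomplete. The same pointwise argument shows that every small indexed colimit in $[\mathpzc{P},\mathpzc{L}]$ is pointwise as well, a fact I will need for step (ii).

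For (ii), the left Kan extension $\mathrm{Lan}_H$ is a left adjoint by the theorem of Kan adjoints (which applies since $\mathpzc{P}$ is small and $\mathpzc{L}$ is cocomplete), so it preserves all colimits. By the corollary after Lemma \ref{H1creates}, the right factor $[H,1]$ preserves all pointwise colimits, and by the previous paragraph every small indexed colimit in $[\mathpzc{P},\mathpzc{L}]$ is pointwise. Composing these two preservation statements shows that $T=[H,1]\mathrm{Lan}_H$ preserves small indexed colimits, which is the hypothesis needed in Corollary \ref{corollarycocomplete}.

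For (iii), the identification of the Eilenberg-Moore object $\mathpzc{K}^T$ with the functor $\mathpzc{Gray}$-category $[\mathpzc{P},\mathpzc{L}]$ will be carried out in Section \ref{sectionstrict}: Beck's enriched monadicity theorem applies to the Kan adjunction $\mathrm{Lan}_H\dashv[H,1]$ and yields an isomorphism of $\mathpzc{Gray}$-categories $[\mathpzc{P},\mathpzc{L}]\cong\mathpzc{K}^T$ under which the forgetful $\mathpzc{Gray}$-functor $U^T$ factors as $[H,1]$ composed with this isomorphism. Under this identification, the obvious inclusion $[\mathpzc{P},\mathpzc{L}]\hookrightarrow\mathrm{Ps}\text{-}T\text{-}\mathrm{Alg}$ is precisely the inclusion of the Eilenberg-Moore object, so Corollary \ref{corollarycocomplete} produces the desired left adjoint $L$ together with the fact that the components of the unit are internal biequivalences. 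The only real subtlety is the strictness in (iii) (matching the two inclusions as $\mathpzc{Gray}$-functors rather than merely up to equivalence), but this is exactly what the enriched monadicity theorem delivers, since $\mathpzc{Gray}$ is complete and hence the required equalizer description \eqref{equalizerhomEM} of the hom objects of $\mathpzc{K}^T$ is available.
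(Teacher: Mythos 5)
Your proof is correct and follows essentially the same route as the paper: apply Corollary \ref{corollarycocomplete} after checking that $T=[H,1]\mathrm{Lan}_H$ preserves small colimits, using that $\mathrm{Lan}_H$ is a left adjoint, that small indexed colimits in $[\mathpzc{P},\mathpzc{L}]$ are pointwise because $\mathpzc{L}$ is cocomplete, and that $[H,1]$ preserves pointwise colimits by the corollary of Lemma \ref{H1creates}. The only divergence is your step (iii): the paper identifies the Eilenberg-Moore object with $[\mathpzc{P},\mathpzc{L}]$ later by a direct equalizer comparison (Theorem \ref{strictmonadicitytheorem}) and mentions Beck's monadicity only as an alternative, and if you do invoke Beck the hypothesis to cite is that $[H,1]$ creates pointwise colimits (Lemma \ref{H1creates}), not merely that the equalizer description \eqref{equalizerhomEM} of the hom objects is available.
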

\begin{proof}
  We aim at applying Corollary \ref{corollarycocomplete} of
  Gurski's coherence theorem. Thus, we have to show that
  $T=[H,1]\mathrm{Lan}_H$ preserves small colimits. Since
  $\mathrm{Lan}_H$ is a left adjoint, it preserves any colimit that
  exists. Since this limit is again a small limit and since,
  $\mathpzc{L}$ being complete, small 
  limits are pointwise limits (cf. Lemma \ref{pointwise}), Lemma
  \ref{H1creates} implies that it is preserved by $[H,1]$. This
  proves that any small limit is preserved by $T$.
\end{proof}

\subsection{Explicit description of the
  monad}\label{subsectionexplicit}
In this paragraph, we will give an explicit description of the monad
from \ref{subsectionmonad} in terms of a coend over tensor
products. As a matter of fact, the explicit identification of the monad
structure is involved, and an alternative economical strategy adequate
for the purpose of this paper, would be to take the description in
terms of coends and tensor products as a definition. By functoriality
of the colimit it is then readily shown that this gives a monad on
$[\mathrm{ob}\mathpzc{P},\mathpzc{L}]$ as
required, but one has to show that it preserves pointwise (and thus
small) colimits in
order to apply Corollary \ref{corollarycocomplete} from \secref{corollarycocomplete}. This follows from
an appropriate form of the interchange of colimits theorem. For this
reason, we will be short on proofs below.

First, we recall the notions of tensor products and coends to present
the well-known Kan extension formula \eqref{explicitLan} below. Then we
determine the monad structure \mbox{$\mu\co TT\Rightarrow T$} and \mbox{$\eta\co 1\Rightarrow T$} for the monad from \ref{subsectionmonad}.
Given an object $X\in\mathpzc{V}$ and an object
$L\in\mathpzc{L}$, recall that the tensor product $X\otimes L$ is defined as the
colimit $X\ast L$ where $X$ and $L$ are considered as objects i.e. as
$\mathpzc{V}$-functors in the underlying categories
$\mathpzc{V}_0=\mathpzc{V}\text{-}\mathpzc{CAT}(\mathcal{I}\op,\mathpzc{\mathpzc{V}})$
and
$\mathpzc{L}_0=\mathpzc{V}\text{-}\mathpzc{CAT}(\mathcal{I},\mathpzc{\mathpzc{L}})$
where $\mathcal{I}$ is the unit $\mathpzc{V}$-category. With the
identification $[\mathcal{I},\mathpzc{L}]\cong \mathpzc{L}$, the
corresponding contravariant
representation \eqref{repcolimit} from \secref{repcolimit} has the form
\begin{align}\label{homVadjtensor}
 n\co \mathpzc{L}(X\otimes L,M)\cong [X,\mathpzc{L}(L,M)]
\puncteq{,}
\end{align}
and this is $\mathpzc{V}$-natural in all variables by 
functoriality of the colimit cf. \cite[(3.11)]{kelly}. This means
that tensor products are in fact $\mathpzc{V}$-adjunctions, and we
will dwell on this in the next paragraph \ref{someproperties}.
 Because
$\mathpzc{L}$ is assumed to be cocomplete, tensor products indeed exist.

Next, recall that for a $\mathpzc{V}$-functor $G\co
\mathpzc{A}\op\otimes \mathpzc{A}\rightarrow \mathpzc{L}$, the coend
\begin{align}\label{coendGAA}
  \int^{A}G(A,A)
\end{align}
is defined as the colimit $\mathrm{Hom}\op_{\mathpzc{A}}\ast G$. The
corresponding representation \eqref{repcolimit} from \secref{repcolimit} transforms under the
extra-variable enriched Yoneda lemma cf. \cite[(2.38)]{kelly} into the
following characteristic isomorphism of the coend:
\begin{align}\label{repcoend}
\beta\co \mathpzc{L}(\int^A G(A,A),L)\cong \int_{A}\mathpzc{L}(G(A,A),L)
\puncteq{,}
\end{align}
which is
$\mathpzc{V}$-natural in $L$ and where on the right we have an end in
the ordinary sense cf. \cite[2.1]{kelly}.
The unit of $\mathrm{Hom}\op_{\mathpzc{A}}\ast G$ corresponds to a
$\mathpzc{V}$-natural family
\begin{align}\label{unitcoend}
  \kappa_{A}=\lambda_A\beta j_{\int^A G(A,A)}\co G(A,A)\rightarrow \int^A G(A,A)
\puncteq{,}
\end{align}
where $\lambda_A$ is the counit of the end, 
and \eqref{repcoend} induces the following universal property of
$\kappa_P$:
\begin{align}\label{universalcoend}
 \mathpzc{L}_0(\int^A G(A,A),L)\cong
 \mathpzc{V}\text{-}\mathrm{nat}(G(A,A),L)
\puncteq{.}
\end{align}
This is a bijection of sets and it is given by precomposition with
$\kappa_A$, which proves that $\kappa_A$ is the universal
$\mathpzc{V}$-natural family with domain $G(A,A)$.
Since $\mathpzc{L}$ was assumed to be cocomplete, small coends in
$\mathpzc{L}$ do in fact exist.

We are now ready to present the explicit description of left Kan
extension and thus of the monad from \ref{subsectionmonad}. Since
$\mathpzc{L}$ admits tensor products and since $\mathpzc{P}$ was
assumed to be small, left Kan extension along the functor $H\co
\mathrm{ob}\mathpzc{P}\rightarrow \mathpzc{P}$ from
\ref{subsectionmonad} is given by the following small coend:
\begin{align}\label{explicitLan}
 \mathrm{Lan}_HA\cong
 \int^{P}\mathpzc{P}(P,-)\otimes AP
\puncteq{}
\end{align}
cf. \cite[(4.25)]{kelly}.
\begin{example*}
  In case that $\mathpzc{L}=\mathpzc{V}$, the coend in
  \eqref{coendGAA} is given by a
  coproduct in $\mathpzc{V}_0$: Indeed $\mathrm{ob}\mathpzc{P}$ is the
  free $\mathpzc{V}$-category
  $((\mathrm{ob}\mathpzc{P})_0)_{\mathpzc{V}}$ where the set of objects
  of $\mathpzc{P}$ is considered as the ordinary discrete
  category $(\mathrm{ob}\mathpzc{P})_0$, and
  $\mathrm{Hom}\op_{\mathrm{ob}\mathpzc{P}}=(\mathrm{Hom}\op_{(\mathrm{ob}\mathpzc{P})_0})_{\mathpzc{V}}$
  is the $\mathpzc{V}$-functor corresponding to the ordinary hom
  functor $\mathrm{Hom}\op_{(\mathrm{ob}\mathpzc{P})_0}$
  under the identification
  \begin{align*}
    ((\mathrm{ob}\mathpzc{P})_0\times
    (\mathrm{ob}\mathpzc{P})_0)_{\mathpzc{V}}\cong
    \mathrm{ob}\mathpzc{P}\otimes \mathrm{ob}\mathpzc{P}
\puncteq{,}
  \end{align*}
where we have
  dropped the superfluous superscript $\op$. Thus,
  $\mathrm{Hom}\op_{\mathrm{ob}\mathpzc{P}}\ast G(A,A)$ reduces to a
  conical colimit in $\mathpzc{V}$, which, $\mathpzc{V}$ being
  cotensored, coincides with the ordinary colimit, hence the
  coproduct.

Next one observes that tensor products in $\mathpzc{V}$ are given by the monoidal
structure as is easily seen from \eqref{homVadjtensor}
cf. \eqref{repp} in \secref{repp}.
 Therefore, \eqref{explicitLan} reduces to the coproduct
  \begin{align}
    \mathrm{Lan}_HA\cong
    \sum_{P\in\mathrm{ob}\mathpzc{P}}\mathpzc{P}(P,-)\otimes AP
    \puncteq{.}
  \end{align}
\end{example*}

Now let $\mathpzc{M}$ be another small $\mathpzc{V}$-category and
$K\co \mathpzc{M}\rightarrow \mathpzc{P}$ be a $\mathpzc{V}$-functor.
Then left Kan extension along $K$ exists in the form of
\begin{align}\label{explicitLanK}
 \mathrm{Lan}_KA\cong
 \int^{M}\mathpzc{P}(KM,-)\otimes AM
\puncteq{,}
\end{align}
 the relevant functor categories exist,
%($\mathpzc{P}$,$\mathpzc{M}$ small), 
and we again have the Kan
adjunction $\mathrm{Lan}_K\dashv [K,1]$.

\begin{lemma}\label{counitKanadj}
  The component at $A\in[\mathpzc{P},\mathpzc{L}]$
 of the counit $\epsilon\co \mathrm{Lan}_K[K,1]\Rightarrow
 1_{[\mathpzc{P},\mathpzc{L}]}$ of the adjunction
 $\mathrm{Lan}_K\dashv[K,1]$
%from \ref{subsectionmonad}
 has
 component
 \begin{align*}
  \epsilon_{A,Q}\co \int^{M}\mathpzc{P}(KM,Q)\otimes
   AKM\rightarrow AQ
 \end{align*}
at $Q\in\mathpzc{P}$
induced from the $\mathpzc{V}$-natural transform
\begin{align*}
  \mathpzc{P}(KM,Q)\otimes AKM\rightarrow AQ
\end{align*}
of the hom morphism
\begin{align*}
  A_{KM,Q}\co \mathpzc{P}(KM,Q)\rightarrow \mathpzc{L}(AKM,AQ)
\end{align*}
under the adjunction \eqref{homVadjtensor} of the tensor product.
\end{lemma}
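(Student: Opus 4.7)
The plan is to exploit the standard characterization of the counit of an adjunction: $\epsilon_A$ is the unique morphism in $[\mathpzc{P},\mathpzc{L}]$ corresponding to $1_{[K,1]A}=1_{AK}$ under the hom $\mathpzc{V}$-adjunction \eqref{Kanadjunction}, namely
\begin{align*}
\Phi\co [\mathpzc{P},\mathpzc{L}](\mathrm{Lan}_K[K,1]A,A)\cong [\mathpzc{M},\mathpzc{L}]([K,1]A,[K,1]A).
\end{align*}
Since $\mathrm{Lan}_K[K,1]A\cong\int^{M}\mathpzc{P}(KM,-)\otimes AKM$ by \eqref{explicitLanK}, and since the functor-category hom is an end, the component of $\epsilon_A$ at $Q\in\mathpzc{P}$ is determined by its image in $\mathpzc{L}(\int^{M}\mathpzc{P}(KM,Q)\otimes AKM,AQ)$ under the counit $E_{Q}$ of the end \eqref{functorcatequalizer}. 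The proof then amounts to unpacking $\Phi$ via the universal properties of the coend and the tensor product.

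Concretely, I would factor $\Phi$ (on the component at $Q$) as the composite
\begin{align*}
\mathpzc{L}(\textstyle\int^{M}\mathpzc{P}(KM,Q)\otimes AKM,AQ) \xrightarrow{\beta} \textstyle\int_{M}\mathpzc{L}(\mathpzc{P}(KM,Q)\otimes AKM,AQ) \xrightarrow{\int n} \textstyle\int_{M}[\mathpzc{P}(KM,Q),\mathpzc{L}(AKM,AQ)]
\end{align*}
(using \eqref{repcoend} followed by the tensor isomorphism \eqref{homVadjtensor}), and then further compose with the $\mathpzc{V}$-natural Yoneda-type bijection $\int_Q\int_M[\mathpzc{P}(KM,Q),\mathpzc{L}(AKM,AQ)]\cong \int_M\mathpzc{L}(AKM,AKM)$. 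This is exactly how the adjunction \eqref{Kanadjunction} is built in Kelly \cite[Ch. 4]{kelly}, so it reproduces $\Phi$. Under this composite, $\epsilon_{A,Q}$ corresponds to the family whose $M$-th component, as a morphism $\mathpzc{P}(KM,Q)\to\mathpzc{L}(AKM,AQ)$, is by definition the transform of $\epsilon_{A,Q}\circ(\kappa_M\otimes 1)$ under \eqref{homVadjtensor}.

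It remains to identify the family corresponding to $1_{AK}$. Its component at $M$ is $j_{AKM}\co I\to\mathpzc{L}(AKM,AKM)$. Tracing this backwards through the Yoneda identification, one obtains the family $A_{KM,Q}\co\mathpzc{P}(KM,Q)\to\mathpzc{L}(AKM,AQ)$: indeed, $A_{KM,Q}$ is sent to $j_{AKM}$ by precomposition with $j_{KM}\co I\to\mathpzc{P}(KM,KM)$ in view of the unit axiom $A_{KM,KM}\circ j_{KM}=j_{AKM}$ for the $\mathpzc{V}$-functor $A$, and $\mathpzc{V}$-naturality of the family in $M$ follows from the hom-morphism axiom for $A$. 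Thus $\epsilon_{A,Q}$ is induced (via the coend universal property \eqref{universalcoend} applied to the transform of $A_{KM,Q}$ under \eqref{homVadjtensor}) exactly as claimed.

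The main obstacle is not any single step but the careful bookkeeping: one must verify that the sequence of bijections above is indeed the adjunction \eqref{Kanadjunction} and that the $\mathpzc{V}$-naturality needed at each stage (in $M$ for the end, and in $Q$ for the end over $\mathpzc{P}$) is compatible, so that $A_{KM,Q}$ really assembles into a $\mathpzc{V}$-natural family in $M$ giving a well-defined morphism out of the coend, functorial in $Q$. Both $\mathpzc{V}$-naturalities reduce to the hom-morphism axioms for the $\mathpzc{V}$-functor $A$ together with the unit and associativity laws for $\mathpzc{P}$, so no further ingredients are required.
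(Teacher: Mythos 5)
Your proposal is correct and follows essentially the same route as the paper: both characterize $\epsilon_A$ as the transform of $1_{AK}$ under the inverse of the hom adjunction \eqref{Kanadjunction}, and both trace that identity back through the construction of the Kan adjunction (the coend/tensor representation together with the extra-variable Yoneda isomorphism) to see that the hom morphism $A_{KM,Q}$ is what emerges. Your Yoneda check runs in the forward direction (showing $A_{KM,Q}\mapsto j_{AKM}$ via the unit axiom for $A$ and invoking bijectivity) where the paper points at the inverse map directly, but this is an immaterial difference.
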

\begin{proof}
  The component at $A$ is obtained by composing the unit
  \begin{align*}
   j_{AK}\co I\rightarrow
    [\mathrm{ob}\mathpzc{P},\mathpzc{L}](AK,AK)
  \end{align*}
with the
  inverse of the $\mathpzc{V}$-natural isomorphism of the Kan
  adjunction \eqref{Kanadjunction}. The lemma then follows from inspection of the proof
  of the theorem of Kan adjoints \cite[Th. 4.38]{kelly}. In
  particular, the
   transform of the $\mathpzc{V}$-natural
   $\mathpzc{L}(AKM,-)_{AKM,AQ}A_{KM,Q}$, which gives rise to the
  extra-variable Yoneda isomorphism \cite[(2.33)]{kelly}, enters
  in the inverse of \eqref{Kanadjunction}, and this is the point where the hom morphism
  $A_{KM,Q}$ shows up.

\end{proof}
We will show in the next paragraph \ref{someproperties} that there are
obvious left unitors $\lambda$ and associators $\alpha$ for the tensor
products. These already show up in the following two lemmata, but since
we mostly omit the proofs, it seems more stringent to state the lemmata
here in order to have the explicit description of $T$ at one place.
\begin{lemma}\label{unitmonad}
The component at $A\in[\mathpzc{M},\mathpzc{L}]$ of the unit
  of the adjunction $\mathrm{Lan}_K\dashv [K,1]$ and the corresponding
  monad $T= \mathrm{Lan}_K\dashv [K,1]$ on $[\mathpzc{M},\mathpzc{L}]$, i.e. the $\mathpzc{V}$-natural transformation $\eta\co
  1_{[\mathpzc{M},\mathpzc{L}]}\Rightarrow [K,1]\mathrm{Lan}_K$  has component
  \begin{align*}
    \eta_{A,M}\co AM\xrightarrow{\lambda_{AM}^{-1}} I\otimes
    AM\xrightarrow{j_{KM}\otimes 1} \mathpzc{P}(KM,KM)\otimes AM
    \xrightarrow{\kappa_{M,KM}} \int^{O}\mathpzc{P}(KO,KM)\otimes AO
  \end{align*}
at $M$ in $\mathpzc{M}$ where $\lambda_{AM}^{-1}$ is the
unitor of the tensor product cf. \ref{someproperties}.
\end{lemma}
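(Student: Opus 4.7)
My plan is to derive the formula for $\eta_{A,M}$ by chasing the identity $1_{\mathrm{Lan}_K A}$ through the inverse of the hom $\mathpzc{V}$-adjunction \eqref{Kanadjunction}, specialized at $S=\mathrm{Lan}_K A$. This is the abstract meaning of the unit, and the main work is recording what each of the isomorphisms building \eqref{Kanadjunction} does to the identity when the coend description \eqref{explicitLanK} of $\mathrm{Lan}_K A$ is unpacked.

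First I would factor the Kan adjunction isomorphism as the composite chain
\begin{align*}
[\mathpzc{P},\mathpzc{L}](\mathrm{Lan}_K A, S)
&\cong \int_{Q}\mathpzc{L}\bigl(\textstyle\int^{O}\mathpzc{P}(KO,Q)\otimes AO,\,SQ\bigr)\\
&\cong \int_{Q}\int_{O}\mathpzc{L}\bigl(\mathpzc{P}(KO,Q)\otimes AO,\,SQ\bigr)\\
&\cong \int_{O}\int_{Q}\bigl[\mathpzc{P}(KO,Q),\,\mathpzc{L}(AO,SQ)\bigr]\\
&\cong \int_{O}\mathpzc{L}(AO,SKO)
\;=\;[\mathpzc{M},\mathpzc{L}](A,[K,1]S)\puncteq{,}
\end{align*}
where the first step is \eqref{functorcatequalizer}, the second is the coend representation \eqref{repcoend}, the third is the tensor--hom adjunction \eqref{homVadjtensor}, and the last uses Fubini together with the extra-variable enriched Yoneda lemma as in \cite[Th.~4.38]{kelly}. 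This is exactly the construction of \eqref{Kanadjunction} quoted in the proof of Kan adjoints.

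Next I would trace $1_{\mathrm{Lan}_K A}$ backwards through this chain. Under the first two isomorphisms, the identity corresponds to the universal $\mathpzc{V}$-natural family $\kappa_{O,Q}\co \mathpzc{P}(KO,Q)\otimes AO\to (\mathrm{Lan}_K A)Q$ of the coend (this is essentially the defining property \eqref{unitcoend} of $\kappa$). Applying the inverse of the tensor--hom adjunction \eqref{homVadjtensor} in $O,Q$ turns $\kappa_{O,Q}$ into its transpose $\widetilde{\kappa}_{O,Q}\co \mathpzc{P}(KO,Q)\to \mathpzc{L}(AO,(\mathrm{Lan}_K A)Q)$. Finally, the enriched Yoneda step sends a $\mathpzc{V}$-natural family $\mathpzc{P}(KO,-)\Rightarrow \mathpzc{L}(AO,(\mathrm{Lan}_K A)-)$ to its value on $j_{KO}\co I\to \mathpzc{P}(KO,KO)$, that is, to $\widetilde{\kappa}_{O,KO}\circ j_{KO}\co I\to \mathpzc{L}(AO,(\mathrm{Lan}_K A)KO)$ (cf.\ \cite[(2.33)]{kelly}).

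The resulting morphism $AO\to (\mathrm{Lan}_K A)KO$ in $\mathpzc{L}_0$ is, by the defining property of the tensor--hom adjunction, exactly $\kappa_{O,KO}\circ (j_{KO}\otimes 1_{AO})\circ \lambda_{AO}^{-1}$. Renaming $O$ by $M$, this is the composite claimed for $\eta_{A,M}$. The only subtle point is bookkeeping in the Yoneda step, where one must remember that the transform evaluates a $\mathpzc{V}$-natural family at $Q=KO$ against $j_{KO}$ and that this corresponds, under \eqref{homVadjtensor}, precisely to precomposition with $j_{KM}\otimes 1_{AM}$ and the unitor $\lambda_{AM}^{-1}$; once that identification is made the claim is immediate.
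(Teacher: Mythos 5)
Your proposal is correct and follows essentially the same route as the paper: the paper also obtains $\eta_{A}$ by composing the Kan adjunction isomorphism \eqref{Kanadjunction} with $j_{\mathrm{Lan}_KA}$ (i.e.\ chasing the identity through the adjunction), with the extra-variable Yoneda isomorphism and the unit axiom of a $\mathpzc{V}$-functor doing the work at the last step. Your version merely makes the factorization of \eqref{Kanadjunction} into the end/coend, tensor--hom, Fubini, and Yoneda steps explicit, and the final identification $n(\kappa_{M,KM})\circ j_{KM}\leftrightarrow \kappa_{M,KM}(j_{KM}\otimes 1)\lambda_{AM}^{-1}$ is exactly the relation between $n$ and the unit $\eta^{AM}$ recorded in \eqref{nintermsof}.
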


\begin{proof}
Note that we have stressed in the statement that the unit of the monad
$T$ is exactly given by the unit of the
  adjunction $\mathrm{Lan}_K\dashv [K,1]$. Hence, its component at
  $A\in[\mathpzc{M},\mathpzc{L}]$ is given by composing the
  $\mathpzc{V}$-natural isomorphism of the Kan
  adjunction \eqref{Kanadjunction} with
  the unit
  \begin{align*}
   j_{\mathrm{Lan}_KA} \co I\rightarrow [\mathpzc{P},\mathpzc{L}](\mathrm{Lan}_{K}A,
    \mathrm{Lan}_KA)
\puncteq{.}
  \end{align*}
This
  gives an element $I\rightarrow
  [\mathrm{ob}\mathpzc{P},\mathpzc{L}](A,[K,1](\mathrm{Lan}_KA))$, that is, a
  $\mathpzc{V}$-natural transformation \mbox{$A\Rightarrow
  [K,1](\mathrm{Lan}_KA)=TA$} cf. \eqref{Kanadjunction}.
Since  the inverse of the extra-variable enriched Yoneda
isomorphism \cite[(2.33)]{kelly} takes part in \eqref{Kanadjunction}, this is 
converse to the situation in Lemma \ref{unitmonad}. Correspondingly, one has to consider the
transform of $\mathpzc{L}(AO,(\mathrm{Lan}_KA)-)_{O,KM}$, although one
does not have to determine $((\mathrm{Lan}_KA)-)_{O,KM}$ in the
argument as one only uses the unit axiom for a $\mathpzc{V}$-functor.
\end{proof}

\begin{lemma}\label{homLan}
  The hom morphism of $\mathrm{Lan}_KA=\int^{M}\mathpzc{P}(KM,-)\otimes AM$,
  \begin{align*}
(\mathrm{Lan}_KA)_{Q,R}\co\mathpzc{P}(Q,R)\rightarrow
    \mathpzc{L}(\int^M\mathpzc{P}(KM,Q)\otimes
    AM,\int^M\mathpzc{P}(KM,R)\otimes AM)
 \end{align*}
  corresponds to the $\mathpzc{V}$-natural family 
(in $M\in\mathpzc{M}$ and
  also in $Q\in\mathpzc{P}$ but $Q$ is held constant here)
    \begin{align}
  \label{homKan}
\kappa_{M,R}(M_{\mathpzc{P}}\otimes 1)\alpha^{-1}
\end{align}
under \eqref{homVadjtensor}, exchange of the colimits
$\mathpzc{P}(Q,R)\otimes -$ and $\int^M$, and \eqref{universalcoend}.
\end{lemma}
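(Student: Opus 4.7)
The strategy is to exploit that $\mathrm{Lan}_KA$ is a pointwise Kan extension: its $\mathpzc{V}$-functor structure on $\mathpzc{P}$ is the unique one (in the sense of \cite[\S1.10]{kelly}) making the representation
\[\mathpzc{L}(\mathrm{Lan}_KA(Q), L)\cong \int_M[\mathpzc{P}(KM,Q),\mathpzc{L}(AM,L)],\]
obtained by composing the coend representation \eqref{repcoend} with \eqref{homVadjtensor} on each summand, $\mathpzc{V}$-natural in the free variable $Q$. By Yoneda, to identify $(\mathrm{Lan}_KA)_{Q,R}$ it suffices to identify its image under the stated chain of natural isomorphisms, and then to check that this image does induce the $Q$-naturality of the above representation.

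I would work backwards from the candidate family. Since $X\otimes -$ is part of a $\mathpzc{V}$-adjunction (see \ref{someproperties}), tensoring with $\mathpzc{P}(Q,R)$ on the left preserves the coend, yielding
\[\mathpzc{P}(Q,R)\otimes \mathrm{Lan}_KA(Q)\cong \int^M \mathpzc{P}(Q,R)\otimes(\mathpzc{P}(KM,Q)\otimes AM),\]
whose unit on the $M$-th summand is $\kappa_{M,Q}$ precomposed with $\alpha^{-1}$. The universal property \eqref{universalcoend} then bijects morphisms out of this coend into $\mathrm{Lan}_KA(R)$ with $\mathpzc{V}$-natural families in $M$. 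The family $\kappa_{M,R}(M_{\mathpzc{P}}\otimes 1)\alpha^{-1}$ is such: the factor $(M_{\mathpzc{P}}\otimes 1)\alpha^{-1}$ is precisely the transform under \eqref{homVadjtensor} of the hom morphism of the partial $\mathpzc{V}$-functor $\mathpzc{P}(KM,-)\otimes AM\co \mathpzc{P}\to\mathpzc{L}$, and $\mathpzc{V}$-naturality in $M$ follows by pasting the $\mathpzc{V}$-naturality of $\kappa_{-,R}$ with the associativity axiom for $M_{\mathpzc{P}}$. Thus the family determines a unique morphism $\mathpzc{P}(Q,R)\otimes\mathrm{Lan}_KA(Q)\to\mathrm{Lan}_KA(R)$ and, via \eqref{homVadjtensor}, a candidate for $(\mathrm{Lan}_KA)_{Q,R}$.

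To close the argument, I would verify that this candidate really induces the $Q$-naturality of the representation. Composing in the target with $\mathpzc{L}(-,L)$ and unwinding through \eqref{repcoend} and \eqref{homVadjtensor}, the candidate corresponds to the map $[\mathpzc{P}(KM,R),\mathpzc{L}(AM,L)]\to [\mathpzc{P}(KM,Q),\mathpzc{L}(AM,L)]$ induced by precomposition with $M_{\mathpzc{P}}$ in the first slot, which is exactly the $Q$-naturality of $\int_M[\mathpzc{P}(KM,-),\mathpzc{L}(AM,L)]$; uniqueness of the $\mathpzc{V}$-functor structure on the pointwise Kan extension then yields the identification. The main obstacle will be the bookkeeping --- threading $\kappa$, $M_{\mathpzc{P}}$, and the associators through the chain of natural isomorphisms while keeping the Yoneda argument clean --- but no single step is deep.
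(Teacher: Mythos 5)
Your proposal is correct and shares the paper's overall architecture --- build the candidate morphism $\int^M\mathpzc{P}(Q,R)\otimes(\mathpzc{P}(KM,Q)\otimes AM)\to(\mathrm{Lan}_KA)R$ from the family $\kappa_{M,R}(M_{\mathpzc{P}}\otimes 1)\alpha^{-1}$ via the tensor--coend exchange and \eqref{universalcoend}, then appeal to the uniqueness of the $\mathpzc{V}$-functor structure making the representation $\mathpzc{V}$-natural in $Q$ (Kelly, \S 1.10) --- but the verification step is genuinely different. The paper does not check the $Q$-naturality of the representation directly; instead it invokes the dual of Kelly's Th.~4.6(ii), which says the unit of the colimit representation of $\mathrm{Lan}_KA$ is recovered as $\mathpzc{L}(\eta_{A,M},1)\circ(\mathrm{Lan}_KA)_{KM,R}$, and then runs a concrete chain of equalities showing that the candidate, plugged into this composite, reproduces the independently computed unit $\mathpzc{L}(AM,\kappa_{M,R})\eta^{AM}_{\mathpzc{P}(KM,R)}$. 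That route only requires evaluating at $Q=KM$ and leans on the explicit form of $\eta_{A,M}$ from Lemma~\ref{unitmonad}, at the cost of a longer naturality/triangle-identity computation. Your route checks naturality at general $Q$ by unwinding the candidate through $\mathpzc{L}(-,L)$, \eqref{repcoend}, and \eqref{homVadjtensor} to precomposition with $M_{\mathpzc{P}}$; this is cleaner conceptually but the bookkeeping you defer --- in particular the claim that $(M_{\mathpzc{P}}\otimes 1)\alpha^{-1}$ is the transform of the hom morphism of the partial functor $\mathpzc{P}(KM,-)\otimes AM$, and the identification of the induced map on internal homs --- is of essentially the same weight as the paper's displayed calculation. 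Both arguments are sound; neither has a gap.
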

\begin{proof}
A neat way of proving this is by showing that the prescription in the
statement of the lemma gives rise to the correct unit of the
representation for left
Kan adjunction along $K$ via
\begin{align}\label{prescriptionhomKan}
  \mathpzc{P}(KM,R)\xrightarrow{(\mathrm{Lan}_K)_{KM,R}}\mathpzc{L}((\mathrm{Lan}_KA)KM,(\mathrm{Lan}_KA)R)\xrightarrow{\mathpzc{L}(\eta_{A,M},1)} \mathpzc{L}(AM,(\mathrm{Lan}_KA)R)
\end{align}
cf. \cite[dual of Th. 4.6 (ii)]{kelly}, where $\eta_{A,M}$ was determined
in Lemma \ref{unitmonad}, % (and this is the component of the unit
% $\phi$ of the left Kan extension  by the proof of
% \cite[Th. 4.38]{kelly}---not the unit of the representation
% isomorphism as a colimit, this is the dual of \cite[dual of Th. 4.6
% (ii)]{kelly}),
and the unit of
the representation of the left
Kan extension as a colimit in the form of \eqref{explicitLanK} 
%(dual of \cite[(4.8)]{kelly})
is quickly determined to be
$\mathpzc{L}(1,\kappa_{M,R})\eta^{AM}_{\mathpzc{P}(KM,R)}$. Namely, the
unit of \eqref{repcoend}  is $\kappa_{M,R}$, then $n$ is applied to this, which
by \eqref{nintermsof} in \secref{nintermsof} below gives
\begin{align*}
  n(\kappa_{M,R})=[\eta_{\mathpzc{P}(KM,R)}^{AM},1]
  \mathpzc{L}(AM,-)_{\mathpzc{P}(KM,R)\otimes AM,\int^M
    \mathpzc{P}(KM,R)\otimes AM} (\kappa_{M,R})
\end{align*}
or
$[\eta^{AM}_{\mathpzc{P}(KM,R)},1]\mathpzc{L}(AM,\kappa_{M,R})$,  where $\eta$ is the counit of the
adjunction of the tensor product cf. \eqref{homVadjtensor}. Thus this is the
counit in question and it can be identified with $\mathpzc{L}(AM,\kappa_Q)\eta^{AM}_{\mathpzc{P}(KM,R)}$.
One then proves that \eqref{prescriptionhomKan} in fact has exactly
this form:

Denoting by $x$ exchange of the tensor product and the coend $\int^M$,
the relevant calculation is displayed below.
\begin{align*}
 &  \mathpzc{L}(\eta_{A,M},1)
  \mathpzc{L}(1,(V\beta)^{-1}(\kappa_{M,R}(M^{\mathpzc{P}}_{KM,KM,R}\otimes
  1_{AM})\alpha^{-1})x)\eta_{\mathpzc{P}(KM,R)}^{\int^M\mathpzc{P}(KM,KM)\otimes
    AM} \\
  =& \mathpzc{L}(\kappa_{M,KM}(j_{KM}\otimes
  1)\lambda^{-1}_{AM},1)
  \mathpzc{L}(1,(V\beta)^{-1}(\kappa_{M,R}(M^{\mathpzc{P}}_{KM,KM,R}\otimes
  1_{AM})\alpha^{-1})x)\eta_{\mathpzc{P}(KM,R)}^{\int^M\mathpzc{P}(KM,KM)\otimes
    AM} \\
& \mbox{(by Lemma \ref{unitmonad})} \\
   =& \mathpzc{L}((j_{KM}\otimes
  1)\lambda^{-1}_{AM},1)
  \mathpzc{L}(1,(V\beta)^{-1}(\kappa_{M,R}(M^{\mathpzc{P}}_{KM,KM,R}\otimes
  1_{AM})\alpha^{-1})x)\mathpzc{L}(\kappa_{M,KM},1)\eta_{\mathpzc{P}(KM,R)}^{\int^M\mathpzc{P}(KM,KM)\otimes
    AM} \\
& \mbox{(functoriality of $\mathrm{hom}_{\mathpzc{L}}$)} \\
  =& \mathpzc{L}((j_{KM}\otimes
  1)\lambda^{-1}_{AM},1)
  \mathpzc{L}(1,(V\beta)^{-1}(\kappa_{M,R}(M^{\mathpzc{P}}_{KM,KM,R}\otimes
  1_{AM})\alpha^{-1})x)\mathpzc{L}(1,1\otimes \kappa_{M})\eta_{\mathpzc{P}(KM,R)}^{\mathpzc{P}(KM,KM)\otimes
    AM} \\
& \mbox{(naturality of $\eta$)} \\
 =& \mathpzc{L}((j_{KM}\otimes
  1)\lambda^{-1}_{AM},1)
  \mathpzc{L}(1,(V\beta)^{-1}(\kappa_{M,R}(M^{\mathpzc{P}}_{KM,KM,R}\otimes
  1_{AM})\alpha^{-1})\kappa_{M})\eta_{\mathpzc{P}(KM,R)}^{\mathpzc{P}(KM,KM)\otimes
    AM} \\
& \mbox{(exchange of colimits is induced by an isomorphism of
  represented functors: $x(1\otimes \kappa_M)=\kappa_M$)} \\
 =& \specialcell{\mathpzc{L}((j_{KM}\otimes
  1)\lambda^{-1}_{AM},1)
  \mathpzc{L}(1,\kappa_{M,R}(M^{\mathpzc{P}}_{KM,KM,R}\otimes
  1_{AM})\alpha^{-1})\eta_{\mathpzc{P}(KM,R)}^{\mathpzc{P}(KM,KM)\otimes
    AM} \hfill\mbox{(since $(V\beta)=V[\kappa_M,1]$)}}\\
 =& \specialcell{\mathpzc{L}(\lambda^{-1}_{AM},1)
  \mathpzc{L}(1,\kappa_{M,R}(M^{\mathpzc{P}}_{KM,KM,R}\otimes
  1_{AM})a^{-1}(1\otimes (j_{KM}\otimes
  1)))\eta_{\mathpzc{P}(KM,R)}^{I\otimes
    AM} \hfill\mbox{(functoriality of $\mathrm{hom}_{\mathpzc{L}}$)}} \\
 =& \specialcell{\mathpzc{L}(\lambda^{-1}_{AM},1)
  \mathpzc{L}(1,\kappa_{M,R}(M^{\mathpzc{P}}_{KM,KM,R}(1\otimes j_{KM})\otimes
  1_{AM})\alpha^{-1})\eta_{\mathpzc{P}(KM,R)}^{I\otimes
    AM}\hfill \mbox{(by naturality of $\alpha$)} }\\
 =& \specialcell{\mathpzc{L}(\lambda^{-1}_{AM},1)
  \mathpzc{L}(1,\kappa_{M,R}(r_{\mathpzc{P}(KM,R)}\otimes
  1_{AM})\alpha^{-1})\eta_{\mathpzc{P}(KM,R)}^{I\otimes
    AM} \hfill\mbox{(by a $\mathpzc{V}$-category axiom)}} \\
 =&\specialcell{ \mathpzc{L}(\lambda^{-1}_{AM},1)
  \mathpzc{L}(1,\kappa_{M,R}(1\otimes \lambda_{AM}))\eta_{\mathpzc{P}(KM,R)}^{I\otimes
    AM} \hfill \mbox{(by the triangle identity \eqref{triangleidentity})}}\\
 =&\specialcell{\mathpzc{L}(\lambda_{AM}\lambda^{-1}_{AM},1)
  \mathpzc{L}(1,\kappa_{M,R})\eta_{\mathpzc{P}(KM,R)}^{AM} \hfill
  \mbox{(by naturality of $\eta$)}}\\
 =&
  \mathpzc{L}(1,\kappa_{M,R})\eta_{\mathpzc{P}(KM,R)}^{AM}=[\eta^{AM}_{\mathpzc{P}(KM,R)},1] \mathpzc{L}(AM,\kappa_{M,R}) 
\end{align*}

Thus the prescription \eqref{homKan} leads to the right counit, but this means
that the hom morphism $(\mathrm{Lan}_K)_{Q,R}$  must have precisely the
claimed form  since $\mathrm{Lan}_KA$ is uniquely
functorial such that the representation, which is induced from this
counit, is appropriately natural cf. \cite[1.10]{kelly} (and indeed
this is how the functoriality of \eqref{explicitLanK} is defined).
\end{proof}

\begin{corollary}
  Let $T=[K,1]\mathrm{Lan}_K$ be the monad of the Kan adjunction from
\ref{subsectionmonad}. 
  The component at $A\in[\mathpzc{M},\mathpzc{L}]$ of the $\mathpzc{V}$-natural transformation $\mu\co
 TT\Rightarrow T $ has component 
   corresponding  to the $\mathpzc{V}$-natural (in
   $M,N\in\mathpzc{M}$) family
\begin{align}
  \label{multimonad}
\kappa_{M,R}(M_{\mathpzc{P}}\otimes 1)\alpha^{-1}
\end{align}
  under exchange of colimits, % (of
% $\mathpzc{P}(Q,R)\otimes -$ and $\int^M$, and of $\int^N$ and
% $\int^M$),
 Fubini, and \eqref{universalcoend},
where $\alpha$ is the associator of the tensor product
cf. \ref{someproperties}.
\end{corollary}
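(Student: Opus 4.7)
The plan is to combine the definition $\mu = [K,1]\epsilon\,\mathrm{Lan}_K$ from \secref{subsectionmonad} with Lemma \ref{counitKanadj} (which describes $\epsilon$) and Lemma \ref{homLan} (which describes the hom morphism of $\mathrm{Lan}_K A$). First, I would observe that since $[K,1]$ sends a $\mathpzc{V}$-natural transformation $\sigma\co B\Rightarrow B'$ in $[\mathpzc{P},\mathpzc{L}]$ to the $\mathpzc{V}$-natural transformation $BK \Rightarrow B'K$ whose component at $M\in\mathpzc{M}$ is $\sigma_{KM}$, the component at $M$ of $\mu_A = [K,1]\epsilon_{\mathrm{Lan}_K A}$ is
\[
\mu_{A,M} \;=\; \epsilon_{\mathrm{Lan}_K A,\, KM}\co \int^{N}\mathpzc{P}(KN,KM)\otimes (\mathrm{Lan}_K A)(KN) \rightarrow (\mathrm{Lan}_K A)(KM)\puncteq{.}
\]
Thus the task reduces to identifying this particular component of the counit.

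Next, I would apply Lemma \ref{counitKanadj} to $\mathrm{Lan}_K A \in [\mathpzc{P},\mathpzc{L}]$ with the variable $Q$ there replaced by $KM$. This exhibits $\mu_{A,M}$ as the map induced, via the tensor-hom adjunction \eqref{homVadjtensor} and the universal property \eqref{universalcoend} of the outer coend $\int^{N}$, from the $\mathpzc{V}$-natural transform of the hom morphism $(\mathrm{Lan}_K A)_{KN,\, KM}$. In turn, Lemma \ref{homLan} (with $Q=KN$ and $R=KM$) identifies this hom morphism with the family $\kappa_{O,\,KM}(M_{\mathpzc{P}}\otimes 1)\alpha^{-1}$, where the coend variable $O\in\mathpzc{M}$ is the one running in the inner coend defining $(\mathrm{Lan}_K A)(KN)$ and where $M_{\mathpzc{P}}\co \mathpzc{P}(KN,KM)\otimes\mathpzc{P}(KO,KN)\to \mathpzc{P}(KO,KM)$ is the composition law of $\mathpzc{P}$. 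Splicing these two representations together gives the stated formula.

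The only substantive point is the bookkeeping of the nested representations. The domain of $\mu_{A,M}$ is the iterated coend $\int^{N}\mathpzc{P}(KN,KM)\otimes\int^{O}\mathpzc{P}(KO,KN)\otimes AO$, and to present a single $\mathpzc{V}$-natural family jointly in two indices of $\mathpzc{M}$ one must commute the outer tensor factor $\mathpzc{P}(KN,KM)\otimes(-)$ past the inner coend $\int^{O}$ (which is legitimate because tensoring is a left adjoint, by \eqref{homVadjtensor}) and then collapse the two iterated coends over $\mathpzc{M}$ into a single coend by Fubini. After this canonical repackaging, the resulting joint universal family is exactly $\kappa_{M,R}(M_{\mathpzc{P}}\otimes 1)\alpha^{-1}$ in the notation of the statement, and the identification of $\mu_{A,M}$ under the bijection \eqref{universalcoend} is immediate. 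No genuinely new calculation is required; the content is entirely encoded in the two prior lemmas together with the universal properties of tensor products and coends.
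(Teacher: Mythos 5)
Your proposal is correct and follows essentially the same route as the paper: both reduce $\mu_A=[K,1]\epsilon_{\mathrm{Lan}_KA}$ to the counit component, invoke Lemma \ref{counitKanadj} to express that component as induced from the transform of the hom morphism of $\mathrm{Lan}_KA$, and then apply Lemma \ref{homLan} to identify that transform with $\kappa(M_{\mathpzc{P}}\otimes 1)\alpha^{-1}$, with the remaining work being the exchange-of-colimits and Fubini bookkeeping that the statement itself flags. Your write-up is if anything slightly more careful about the nested-coend repackaging than the paper's own proof.
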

\begin{proof}
The $\mathpzc{V}$-natural transformation $\mu$ of the monad is
determined by the counit $\epsilon$ of
the adjunction $\mathrm{Lan}_H\dashv [H,1]$. Namely, it is given by
the $\mathpzc{V}$-natural transformation denoted
\begin{align*}
 [H,1]\epsilon \mathrm{Lan}_H\co [H,1]\mathrm{Lan}_H
 [H,1]\mathrm{Lan_H}\Rightarrow [H,1]\mathrm{Lan}_H
\puncteq{}
\end{align*}
with component
\begin{align*}
[H,1]_{\mathrm{Lan}_H((\mathrm{Lan}_HA)H),\mathrm{Lan}_HA}\epsilon_{\mathrm{Lan}_H
A}\co I \rightarrow
[\mathrm{ob}\mathpzc{P}, \mathpzc{L}](
(\mathrm{Lan}_H((\mathrm{Lan}_HA)H))H, (\mathrm{Lan}_HA)H)
\end{align*}
at $A\in[\mathrm{ob}\mathpzc{P},\mathpzc{L}]$.
 Since $E_P$ factorizes through  $[H,1]$ and $\pi_P$, the component at
$Q\in\mathpzc{P}$ of $\mu_A$ is  simply given by the component of 
$\epsilon_{\mathrm{Lan}_H A}$ at $Q$. According to Lemma \ref{counitKanadj},
the component of $\epsilon_{\mathrm{Lan}_H A}$ at $Q\in\mathpzc{P}$ is
induced from the transform of $(\mathrm{Lan}_HA)_{P,Q}$, and by Lemma
\ref{homLan}, this transform is precisely given by \eqref{multimonad}.
\end{proof}

\subsection{Some properties of tensor products}\label{someproperties}

It is clear from the defining representation isomorphism
\eqref{homVadjtensor} from \secref{homVadjtensor}
of the tensor
product and its naturality in $X$,$L$, and $M$ 
%(functoriality of the colimit) 
that tensor
products, for any object $L$ in a tensored $\mathpzc{V}$-category $\mathpzc{L}$,
give an adjunction of $\mathpzc{V}$-categories as below
\begin{align}\label{adjunctionofthetensorproduct}
  (-\otimes L\co \mathpzc{V}\rightarrow \mathpzc{L})\quad \dashv \quad
  (\mathpzc{L}(L,-)\co \mathpzc{L}\rightarrow \mathpzc{V})
\puncteq{.}
\end{align}
Because the representation isomorphism is also $\mathpzc{V}$-natural
in $L$, it is a consequence of the extra-variable Yoneda lemma \cite[1.9]{kelly}
that the unit and counit of this adjunction are also extraordinarily
$\mathpzc{V}$-natural in $L$:

\begin{lemma}\label{natcounitten}
  The unit $\eta_X^L\co X\rightarrow \mathpzc{L}(L,X\otimes L)$ and
  counit $\epsilon^L_M\co \mathpzc{L}(L,M)\otimes L\rightarrow M$ of
  these adjunctions are extraordinarily $\mathpzc{V}$-natural in $L$
  (and ordinarily $\mathpzc{V}$-natural in $X$ and $M$). \qed
\end{lemma}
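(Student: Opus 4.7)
The plan is to deduce both the ordinary and the extraordinary $\mathpzc{V}$-naturalities from the $\mathpzc{V}$-naturality of the representation isomorphism $n\co \mathpzc{L}(X\otimes L,M)\cong[X,\mathpzc{L}(L,M)]$ in all three of its variables (noted immediately after \eqref{homVadjtensor}), together with the extra-variable Yoneda lemma \cite[1.9]{kelly}. The idea is the standard one: the unit and counit of an adjunction are obtained by transposing identities, and a sufficiently natural transposition isomorphism transfers the (trivial) naturality of the identity to the nontrivial naturality of unit and counit.

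First I would record how to recover $\eta$ and $\epsilon$ from $n$. Under the closed-structure identification $\mathpzc{V}_0(X,\mathpzc{L}(L,X\otimes L))\cong\mathpzc{V}_0(I,[X,\mathpzc{L}(L,X\otimes L)])$, the unit $\eta^L_X$ is the element whose image under $n$ is (up to these identifications) the identity $j_{X\otimes L}\co I\to\mathpzc{L}(X\otimes L,X\otimes L)$; dually, $\epsilon^L_M$ is the unique morphism whose image under $n$ corresponds to $j_{\mathpzc{L}(L,M)}\co I\to[\mathpzc{L}(L,M),\mathpzc{L}(L,M)]$. Ordinary $\mathpzc{V}$-naturality in $X$ (for $\eta$) and in $M$ (for $\epsilon$) is then immediate: the family $X\mapsto j_{X\otimes L}$ is $\mathpzc{V}$-natural in $X$ by $\mathpzc{V}$-functoriality of $-\otimes L$ and the unit axiom for the hom, and $n$ is $\mathpzc{V}$-natural in $X$; similarly for $M$. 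This is exactly the content of \cite[1.9]{kelly} applied with $L$ held fixed.

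For the extraordinary $\mathpzc{V}$-naturality in $L$, I would invoke the same Yoneda lemma but now allowing $L$ to vary. The family $L\mapsto j_{X\otimes L}$ is extraordinarily $\mathpzc{V}$-natural in $L$ in the wedge sense, because $j$ is the unit of $\mathpzc{L}$ and $-\otimes L$ is $\mathpzc{V}$-functorial in $L$; since $n$ is $\mathpzc{V}$-natural in $L$ (where $L$ occurs contravariantly in $-\otimes L$ on the left and both co- and contravariantly through $\mathpzc{L}(L,-)$ on the right), the extra-variable Yoneda lemma turns this wedge-style naturality into precisely the extraordinary $\mathpzc{V}$-naturality hexagon for $\eta^L_X\co X\to\mathpzc{L}(L,X\otimes L)$. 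The argument for $\epsilon^L_M$ is formally dual, starting from $L\mapsto j_{\mathpzc{L}(L,M)}$. The main obstacle is purely bookkeeping: writing down the correct variance pattern for the hexagon in $L$ and checking that the Yoneda bijection of \cite[1.9]{kelly} indeed produces exactly this hexagon; no genuine calculation is required beyond citing that result.
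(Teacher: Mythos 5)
Your proposal is correct and follows essentially the same route as the paper: the lemma is justified there by exactly the observation that the representation isomorphism \eqref{homVadjtensor} is $\mathpzc{V}$-natural in $L$ as well as in $X$ and $M$, so the extra-variable Yoneda lemma of \cite[1.9]{kelly} transfers this to the unit and counit obtained by transposing identities. Your additional bookkeeping about recovering $\eta^L_X$ and $\epsilon^L_M$ from $j_{X\otimes L}$ and $j_{\mathpzc{L}(L,M)}$ is accurate and just makes explicit what the paper leaves implicit.
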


Recall that there is a natural (in $X,Y,Z\in\mathpzc{V}$) isomorphism
\begin{align}\label{repp}
  p\co[X\otimes Y,Z]\cong [X,[Y,Z]]
\puncteq{,}
\end{align}
which is
induced from the closed structure of $\mathpzc{V}$ via the ordinary
Yoneda lemma cf. \cite[1.5]{kelly}.
From $p$ and the hom $\mathpzc{V}$-adjunction \eqref{homVadjtensor} of the
tensor product, we construct a $\mathpzc{V}$-natural isomorphism
\begin{align}\label{repalpha}
  n^{-1}_{X,Y\otimes L, M}[X,n^{-1}_{Y,L,M}]p n_{X\otimes Y,L,M}\co
  \mathpzc{L}((X\otimes Y)\otimes L,M)\rightarrow \mathpzc{L}(X\otimes
  (Y\otimes L),M)
\puncteq{,}
\end{align}
which, by Yoneda, must be of the form $\mathpzc{L}(\alpha_{X,Y,L}^{-1},1)$ for a unique 
$\mathpzc{V}$-natural (in $X,Y,L$) isomorphism
\begin{align}\label{associatortensor}
  \alpha_{X,Y,L}^{-1} \co X\otimes (Y\otimes L)\cong (X\otimes Y)\otimes L
\puncteq{.}
\end{align}
The natural isomorphism \eqref{associatortensor} is called the associator for the tensor product.
Since tensor products reduce to the monoidal structure if
$\mathpzc{L}=\mathpzc{V}$,  the natural
isomorphism \eqref{associatortensor} is in this special case, by uniqueness, given by the
associator $a^{-1}$ for the monoidal structure of $\mathpzc{V}$.

In fact, there is a pentagon identity in terms of associators $\alpha$
and associators $a$:
\begin{lemma}
  Given objects $W,X,$ and $Y$ in $\mathpzc{V}$, and an object $L$ in a
  tensored $\mathpzc{V}$-category
  $\mathpzc{L}$, the associators $\alpha$ and $a$ satisfy the pentagon
  identity
\begin{align}
    \label{pentagonidentity}
    \alpha_{W,X,Y\otimes L} \alpha_{W\otimes X,Y,L}= (1_W\otimes
    \alpha_{X,Y,L}) \alpha_{W,X\otimes Y,Z} (a_{W,X,Z}\otimes 1_Z)
    \puncteq{,}
  \end{align}
 which is an identity of isomorphisms
  \begin{align*}
    ((W\otimes X)\otimes Y)\otimes L\rightarrow W\otimes (X\otimes
    (Y\otimes L))
\puncteq{.}
  \end{align*}
\end{lemma}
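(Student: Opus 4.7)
The plan is to reduce the pentagon identity for $\alpha$ to the pentagon identity for the associator $a$ of $\mathpzc{V}$ by applying $\mathpzc{L}(-,M)$ and invoking the enriched Yoneda lemma. Both sides of the claimed equation are morphisms $((W\otimes X)\otimes Y)\otimes L \to W\otimes(X\otimes(Y\otimes L))$ in $\mathpzc{L}_0$, so it suffices to show that they induce the same isomorphism
\[
\mathpzc{L}(W\otimes(X\otimes(Y\otimes L)),M) \;\xrightarrow{\;\cong\;}\; \mathpzc{L}(((W\otimes X)\otimes Y)\otimes L,M)
\]
$\mathpzc{V}$-naturally in $M$. Since by definition $\mathpzc{L}(\alpha^{-1}_{X,Y,L},1)$ equals the composite \eqref{repalpha}, that is $n^{-1}_{X,Y\otimes L,M}\,[X,n^{-1}_{Y,L,M}]\,p\,n_{X\otimes Y,L,M}$, each occurrence of an $\alpha^{-1}$ on either side of the pentagon expands into a four-fold composite built from $n^{\pm 1}$, $p$, and the closed-structure bracket $[-,-]$.

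First I would expand the left-hand side of \eqref{pentagonidentity}: apply $\mathpzc{L}(-,M)$ to $\alpha_{W,X,Y\otimes L}\cdot\alpha_{W\otimes X,Y,L}$, replacing each $\alpha$ by its defining composite, and then expand the right-hand side analogously. For the two extra factors $a_{W,X,Y}\otimes 1_L$ and $1_W\otimes \alpha_{X,Y,L}$ appearing on the right, I would use naturality of $n$ to rewrite
\[
\mathpzc{L}(a^{-1}_{W,X,Y}\otimes 1_L,1)\,=\,n^{-1}\,[\,a^{-1}_{W,X,Y},1\,]\,n
\qquad\text{and}\qquad
\mathpzc{L}(1_W\otimes\alpha^{-1}_{X,Y,L},1)\,=\,n^{-1}\,[1,\mathpzc{L}(\alpha^{-1}_{X,Y,L},1)]\,n,
\]
so that after expansion every $\alpha^{-1}$ and every tensoring of an $\alpha^{-1}$ or $a^{-1}$ sits inside an appropriate $[-,-]$ applied between layers of $n$ and $p$.

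Next I would cancel all interior occurrences of $n\,n^{-1}$ that appear when the conjugated blocks meet, leaving an identity of composites in which $n$ and $p$ alone appear on the outside and every $\alpha^{-1}$ inside has been traded, via the definition \eqref{repalpha}, for a block $[1,n^{-1}]\,p\,n$ one categorical level deeper. What remains is an identity of natural transformations between iterated hom objects of the form $[\ldots,[\ldots,[\ldots,[L,M]]]]$, expressing that the two ways of currying $\mathpzc{L}(((W\otimes X)\otimes Y)\otimes L,M)$ down to $[W,[X,[Y,\mathpzc{L}(L,M)]]]$ coincide. The outer layer is governed by the closed-category pentagon for $p$, and this pentagon in turn is the Yoneda image of the pentagon identity for $a$ in $\mathpzc{V}$ (cf.~\cite[1.5]{kelly}); hence the two sides agree.

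The main obstacle is the bookkeeping: the expanded diagrams contain many conjugations by $n$ and by $p$ at different depths, and one must align them with the pentagon for $a$ by repeated use of naturality of $n$ in $X$ and in $L$, of $p$ in all variables, and of Lemma \ref{natcounitten} (extraordinary naturality of unit and counit of \eqref{adjunctionofthetensorproduct} in $L$) to swap the order in which the brackets are peeled off. Once this alignment is carried out, a single application of MacLane's pentagon in $\mathpzc{V}$ closes the argument, and Yoneda yields \eqref{pentagonidentity} at the level of $\mathpzc{L}_0$.
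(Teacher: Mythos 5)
Your proposal is correct and follows essentially the same route as the paper: both pass to the inverses, apply $\mathpzc{L}(-,M)$, expand each $\alpha^{-1}$ via the defining composite \eqref{repalpha}, cancel the interior $n\,n^{-1}$ pairs using naturality of $n$ and $p$, and reduce the claim to the identity $pp=[W,p]\,p\,[a^{-1},\mathpzc{L}(L,M)]$ (your ``closed-category pentagon for $p$''), which then follows from MacLane's pentagon for $a$ via the Yoneda definition $\mathpzc{V}_{0}(W,p)=\pi\pi\mathpzc{V}_{0}(a,1)\pi^{-1}$. The only cosmetic difference is your invocation of Lemma \ref{natcounitten}, which the paper's argument does not actually need at this point.
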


\begin{proof}
 The corresponding identity for the inverses is proved by showing that
 the corresponding  $\mathpzc{V}$-natural isomorphisms
  \begin{align*}
    \mathpzc{L}(((W\otimes X)\otimes Y)\otimes L,M)\cong
    \mathpzc{L}(W\otimes (X\otimes (Y\otimes L)),M) \puncteq{}
  \end{align*}
coincide.
The $\mathpzc{V}$-natural isomorphism corresponding to the inverse of
the left hand side of \eqref{pentagonidentity} is readily seen to be
given by
  \begin{align*}
    n^{-1}_{W,X\otimes (Y\otimes L),M} [W,n^{-1}_{X,Y\otimes
      L,M}[X,n^{-1}_{Y,L,M}]] p p n_{(W\otimes X)\otimes Y,L,M}
  \end{align*}
cf. \eqref{repalpha}, where we have used naturality of $p$ and cancelled
out two factors.
Similarly, the \mbox{$\mathpzc{V}$-natural} isomorphism corresponding
to the inverse of the right hand side of \eqref{pentagonidentity} is
given by
  \begin{align*}
    n^{-1}_{W,X\otimes (Y\otimes L),M}[W,n^{-1}_{X,Y\otimes
      L,M}[X,[n^{-1}_{Y,L,M}]]][W,p]p
    [\alpha^{-1},\mathpzc{L}(L,M)]n_{(W\otimes X)\otimes Y,L,M}
    \puncteq{.}
  \end{align*}
Thus, the identity \eqref{pentagonidentity} is proved as soon as we show that
\begin{align*}
  pp=[W,p]p [a^{-1},\mathpzc{L}(L,M)]
\puncteq{.}
\end{align*}
In fact, this last equation reduces to
  the pentagon identity for $a$ since $p$ is defined via Yoneda by
  \begin{align*}
    \mathpzc{V}_{0}(W, p)=\pi \pi \mathpzc{V}_{0}(a,1) \pi^{-1} 
\puncteq{,}
  \end{align*}
where
  $\pi$ is the hom $\mathpzc{Set}$-adjunction of the closed structure.
Namely, one observes that on the one hand,
\begin{align*}
  \mathpzc{V}_{0}(V,pp)=\mathpzc{V}_{0}(V,p)\mathpzc{V}_{0}(V,p)= \pi
  \pi \mathpzc{V}_{0}(a,1) \pi^{-1}\pi \pi \mathpzc{V}_{0}(a,1)
  \pi^{-1} =\pi \pi \mathpzc{V}_{0}(a,1) \pi
  \mathpzc{V}_{0}(a,1) \pi^{-1} \\
  = \pi \pi \pi \mathpzc{V}_{0}(a\otimes 1,1)
  \mathpzc{V}_{0}(a,1) \pi^{-1}  \\
  = \pi \pi \pi \mathpzc{V}_{0}(a(a\otimes 1),1) \pi^{-1}
\puncteq{,}
\end{align*}
and on the other hand,
\begin{align*}
  \mathpzc{V}_{0}(V, [W,p]p [a^{-1},\mathpzc{L}(L,M)])=
  \mathpzc{V}_{0}(V,[W,p]) \pi \pi \mathpzc{V}_{0}(a^{-1},1) \pi^{-1}
  \pi
  \mathpzc{V}_{0}(1\otimes a^{-1},1) \pi^{-1} \\
  = \mathpzc{V}_{0}(V,[W,p]) \pi \pi \mathpzc{V}_{0}(a,1)
  \mathpzc{V}_{0}(1\otimes a^{-1},1)
  \pi^{-1} \\
  = \pi \mathpzc{V}_0(V\otimes W,p) \pi^{-1}\pi \pi
  \mathpzc{V}_{0}(a,1) \mathpzc{V}_{0}(1\otimes a^{-1},1)
  \pi^{-1} \\
  = \pi \pi \pi \mathpzc{V}_{0}(a,1) \mathpzc{V}_{0}(a,1)
  \mathpzc{V}_{0}(1\otimes a^{-1},1)
  \pi^{-1} \\
  = \pi \pi \pi \mathpzc{V}_{0}((1\otimes a^{-1}) aa,1) \pi^{-1}
  \puncteq{.}
\end{align*}
\end{proof}

Similarly, recall that there is a natural (in $Z\in\mathpzc{V}$) isomorphism
\begin{align}\label{isomorphismi}
  i\co Z\cong [I,Z]
\end{align}
which is defined via Yoneda by
\begin{align}\label{iYoneda}
  [X,i^{-1}]=[r_X^{-1},1]p^{-1}\co [X,[I,Z]]\cong [X\otimes I,Z]\cong
[X,Z]
\puncteq{.}
\end{align}
 Thus from
$i$ and the hom $\mathpzc{V}$-adjunction $n$ of the tensor product, we
construct a $\mathpzc{V}$-natural (in $L,M$) isomorphism
\begin{align}\label{replambda}
  n^{-1}_{I,L,M}i\co \mathpzc{L}(L,M)
\cong [I,\mathpzc{L}(L,M)]\cong
 \mathpzc{L}(I\otimes L,M)
\puncteq{.}
\end{align}
By Yoneda, \eqref{replambda} must be of the
form $\mathpzc{L}(\lambda_L,1)$ for a unique isomorphism
\begin{align}
  \lambda_L\co I\otimes L\rightarrow L 
\puncteq{,}\label{unitortensor}
\end{align}
which must moreover be $\mathpzc{V}$-natural in
$L$. 

For $M=I\otimes L$, composing the inverse of the isomorphism \eqref{replambda} above
with $j_{I\otimes L}$,
\begin{align*}
 I\rightarrow  \mathpzc{L}(I\otimes L,I\otimes L)\cong
 [I,\mathpzc{L}(L,I\otimes L)]\cong \mathpzc{L}(L,I\otimes L)
\puncteq{,}
\end{align*}
must give $\lambda_L^{-1}$---since this is how we get hold of
the counit of such a natural transformation in general---and it is of course also the
map corresponding to $\eta_I^L$ under the isomorphism
$[I,\mathpzc{L}(L,I\otimes L)]\cong \mathpzc{L}(L,I\otimes L)$
---because this is exactly the inverse of the first isomorphism in
\eqref{replambda}.
Conversely, composing \eqref{replambda} for $M=L$ with $j_M$ must
give $\lambda_L$.

For $Y=I$, consider the composition of the representation isomorphism \eqref{repalpha} i.e. 
 \begin{align}
 \mathpzc{L}(\alpha^{-1},1)= n^{-1}_{X,I\otimes L, M}[X,n^{-1}_{I,L,M}]p n_{X\otimes I,L,M}
\puncteq{,}
\end{align}
with $\mathpzc{L}(r_X\otimes 1,1)$.
By naturality of $n$, functoriality of $[-,-]$, and the definition
of $i$ via Yoneda cf. \eqref{iYoneda}, we have the following chain of equations:
 \begin{align*}
\mathpzc{L}((r_X\otimes 1)\alpha^{-1},1) & =
  n^{-1}_{X,I\otimes L, M}[X,n^{-1}_{I,L,M}]p n_{X\otimes
    I,L,M}\mathpzc{L}(r_X\otimes 1,1) \\
& = n^{-1}_{X,I\otimes L,
    M}[X,n^{-1}_{I,L,M}]p [r_X,1]n_{X,L,M} \\
& = n^{-1}_{X,I\otimes L,
    M}[X,n^{-1}_{I,L,M}][X,i] n_{X,L,M} \\
& = n^{-1}_{X,I\otimes L,
    M} [X,\mathpzc{L}(\lambda_L,1)] n_{X,L,M} \\
& = \mathpzc{L}(1\otimes \lambda_L,1)
\puncteq{.}
\end{align*}
Hence, by Yoneda, we have proved the following lemma.
\begin{lemma}
 Given an objects $X$ in $\mathpzc{V}$ and an object $L$ in a
  tensored $\mathpzc{V}$-category
  $\mathpzc{L}$, there is a triangle identity for $r,\lambda$, and $\alpha$,
  \begin{align}
    \label{triangleidentity}
    (r_X \otimes 1_L)\alpha^{-1}_{X,I,L}=1_X\otimes \lambda_L
    \puncteq{,} 
  \end{align} 
which is an identity of isomorphisms
\begin{align*}
  X\otimes (I\otimes L)\rightarrow X\otimes L \puncteq{.}
\end{align*}
\qed
\end{lemma}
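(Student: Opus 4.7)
The plan is a direct Yoneda argument: since both $(r_X\otimes 1_L)\alpha^{-1}_{X,I,L}$ and $1_X\otimes\lambda_L$ are morphisms $X\otimes(I\otimes L)\to X\otimes L$, it suffices to show that the induced $\mathpzc{V}$-natural (in $M$) transformations $\mathpzc{L}(-,M)\to\mathpzc{L}(-,M)$ coincide, and then invoke Yoneda. Indeed, much of the groundwork has already been set up just before the statement, where $\alpha$, $\lambda$, and $i$ are each defined precisely by such representation isomorphisms.

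Concretely, I would begin by substituting $Y=I$ into the defining equation \eqref{repalpha} of $\alpha^{-1}$, giving
\begin{align*}
\mathpzc{L}(\alpha^{-1}_{X,I,L},1)\;=\;n^{-1}_{X,I\otimes L,M}\,[X,n^{-1}_{I,L,M}]\,p\,n_{X\otimes I,L,M}.
\end{align*}
Postcomposing with $\mathpzc{L}(r_X\otimes 1_L,1)$ and using naturality of $n$ in its first variable, the factor $\mathpzc{L}(r_X\otimes 1,1)\, n_{X\otimes I,L,M}$ becomes $[r_X,1]\,n_{X,L,M}$. Next, by the defining equation \eqref{iYoneda} of $i$, we have $p[r_X,1]=[X,i]$, so the middle of the composite becomes $[X,n^{-1}_{I,L,M}][X,i]$.

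At this point the defining equation \eqref{replambda} of $\lambda_L$, namely $\mathpzc{L}(\lambda_L,1)=n^{-1}_{I,L,M}\,i$, enters: combined with functoriality of $[X,-]$ it rewrites the middle factor as $[X,\mathpzc{L}(\lambda_L,1)]$. What remains is
\begin{align*}
\mathpzc{L}((r_X\otimes 1_L)\alpha^{-1}_{X,I,L},1)\;=\;n^{-1}_{X,I\otimes L,M}\,[X,\mathpzc{L}(\lambda_L,1)]\,n_{X,L,M},
\end{align*}
and by naturality of $n$ in its middle variable applied to $\lambda_L\colon I\otimes L\to L$, the right-hand side is exactly $\mathpzc{L}(1_X\otimes\lambda_L,1)$. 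Yoneda then delivers the triangle identity.

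There is no genuine obstacle: the proof is a bookkeeping chain of Yoneda-defined isomorphisms, entirely parallel in spirit to the proof of the pentagon identity \eqref{pentagonidentity} given above. The only care required is to track the interplay of the three different defining representations and to apply naturality of $n$ in the correct variable at each step; conceptually, the triangle identity for $\alpha,\lambda,r$ is forced by the triangle identity for $a,l,r$ in $\mathpzc{V}$ itself, transported through the adjunction $n$.
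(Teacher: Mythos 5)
Your argument is correct and is essentially the paper's own proof: substitute $Y=I$ into the representation \eqref{repalpha} defining $\alpha^{-1}$, precompose with $\mathpzc{L}(r_X\otimes 1,1)$, use naturality of $n$ to replace $n_{X\otimes I,L,M}\mathpzc{L}(r_X\otimes 1,1)$ by $[r_X,1]n_{X,L,M}$, invoke \eqref{iYoneda} to turn $p[r_X,1]$ into $[X,i]$, recognize $[X,n^{-1}_{I,L,M}i]=[X,\mathpzc{L}(\lambda_L,1)]$ via \eqref{replambda}, apply naturality of $n$ once more, and conclude by Yoneda. The only blemish is a harmless ordering slip when you write the factor as ``$\mathpzc{L}(r_X\otimes 1,1)\,n_{X\otimes I,L,M}$'' rather than $n_{X\otimes I,L,M}\,\mathpzc{L}(r_X\otimes 1,1)$; the identity you actually use is the correct one.
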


\begin{lemma}\label{identificationofML}
  Let $L,M,N$ be objects in a  tensored $\mathpzc{V}$-category
  $\mathpzc{L}$. Then
  \begin{align*}
    M_{\mathpzc{L}}\co \mathpzc{L}(M,N)\otimes
    \mathpzc{L}(L,M)\rightarrow \mathpzc{L}(L,N)
  \end{align*}
can be identified in terms of the associator $\alpha$ for tensor
products and units $\eta$ and counits $\epsilon$ of the tensor product
adjunctions:
\begin{align*}
  M_{\mathpzc{L}}= \mathpzc{L}(L, \epsilon^M_N(1\otimes
  \epsilon^L_M)\alpha)\eta^L_{\mathpzc{L}(M,N)\otimes
    \mathpzc{L}(L,M)}
\puncteq{.}
\end{align*}
\end{lemma}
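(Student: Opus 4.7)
The plan is to convert the claim into an equivalent identity in $\mathpzc{L}_0$ by passing through the hom $\mathpzc{V}$-adjunction $n\co \mathpzc{L}(X\otimes L, N)\cong [X,\mathpzc{L}(L,N)]$ of the tensor product. Recall that the counit form of the adjoint bijection sends a morphism $g\co X\otimes L\to N$ to $\mathpzc{L}(L,g)\,\eta^L_X$. Setting $X=\mathpzc{L}(M,N)\otimes \mathpzc{L}(L,M)$, the claimed right-hand side is therefore precisely the $n$-image of
\begin{equation*}
\epsilon^M_N(1\otimes \epsilon^L_M)\,\alpha\co (\mathpzc{L}(M,N)\otimes \mathpzc{L}(L,M))\otimes L\longrightarrow N,
\end{equation*}
while $M_{\mathpzc{L}}$ is the $n$-image of its own counit transform $\epsilon^L_N(M_{\mathpzc{L}}\otimes 1_L)$. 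Since $n$ is a bijection, the lemma reduces to the identity in $\mathpzc{L}_0$:
\begin{equation*}
\epsilon^L_N\,\bigl(M_{\mathpzc{L}}\otimes 1_L\bigr)=\epsilon^M_N\,(1\otimes \epsilon^L_M)\,\alpha_{\mathpzc{L}(M,N),\mathpzc{L}(L,M),L}.
\end{equation*}

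To establish this, I would bring together two ingredients. First, the defining equation \eqref{covapartialhom} expresses $M_{\mathpzc{L}}$ as $e\circ (\mathpzc{L}(L,-)_{M,N}\otimes 1)$, i.e.\ as the closed-structure transform of the hom morphism of the partial $\mathpzc{V}$-functor $\mathpzc{L}(L,-)$. Second, the definition of $\alpha$ in \eqref{repalpha} is precisely the composite $n^{-1}[X,n^{-1}]\,p\,n$ on the hom level, which makes $\alpha$ the unique isomorphism mediating between the two parenthesizations of a triple tensor product inside the hom adjunction. Applying $n$ to the asserted right-hand side and expanding $\alpha$ by its definition, the outer $n^{-1}$ cancels, leaving an expression in $\mathpzc{V}$ involving $p$, $n$, and the closed-structure counit $e$; this can be rewritten using $n^{-1}(f)=\epsilon^L_N(f\otimes 1_L)$ to return to the tensor counit. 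Comparing the result with $M_{\mathpzc{L}}$ via \eqref{covapartialhom} then yields the equality.

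The main obstacle is the bookkeeping: the desired identity sees $\alpha$ on only one side, and translating between the closed-structure counit $e$ (which enters through \eqref{covapartialhom}) and the tensor-product counit $\epsilon^L$ must be done precisely. I expect this to dissolve once one observes that $\alpha$ was constructed exactly so that this discrepancy is absorbed, so that no further naturality input beyond the definitions of $n$, $p$, and $\alpha$ is required.
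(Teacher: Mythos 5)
Your strategy is sound and does lead to a complete proof; in substance it is the paper's argument transposed across the adjunction rather than a genuinely independent one. You pass both sides through $n$ of \eqref{homVadjtensor} and reduce the lemma to
\begin{align*}
\epsilon^L_N\,(M_{\mathpzc{L}}\otimes 1_L)=\epsilon^M_N\,(1\otimes \epsilon^L_M)\,\alpha
\end{align*}
in $\mathpzc{L}_0$; the paper instead stays on the $[X,\mathpzc{L}(L,N)]$ side throughout, starting from $M_{\mathpzc{L}}=e(n\mathpzc{L}(\epsilon^L_M,1)\otimes 1)$ and feeding in the auxiliary identity \astpar, namely $\epsilon^L_N((e(n\otimes 1))\otimes 1)\alpha^{-1}=\epsilon^{\mathpzc{L}(L,M)\otimes L}_N$, whose proof is a Yoneda computation of the unit of the composite \eqref{compositefornLML} defining $\mathpzc{L}(\alpha,N)$. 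Your intermediate identity is the more transparent statement (it says the two double evaluations agree up to the associator), and that is what the reorganization buys. Be aware, though, that the entire substance of the lemma sits inside the ``bookkeeping'' you defer, and that it needs a little more than the bare definitions of $n$, $p$ and $\alpha$: expanding $n\,\mathpzc{L}(\alpha,1)=p^{-1}[X,n]\,n$, one computes $n(\epsilon^M_N(1\otimes\epsilon^L_M))=[X,\mathpzc{L}(\epsilon^L_M,1)](n(\epsilon^M_N))=\mathpzc{L}(\epsilon^L_M,1)$ using naturality of $n$ in the $\mathpzc{L}$-variable and the triangle identity $n(\epsilon^M_N)=1_{\mathpzc{L}(M,N)}$; then $[X,n](\mathpzc{L}(\epsilon^L_M,1))=n\,\mathpzc{L}(\epsilon^L_M,1)=\mathpzc{L}(L,-)_{M,N}$ by Kelly's identity (the first line of the paper's own proof), and applying $p^{-1}$ gives $e(\mathpzc{L}(L,-)_{M,N}\otimes 1)=M_{\mathpzc{L}}$ by \eqref{covapartialhom}. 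So your expectation that no further naturality input is required is slightly optimistic, but the chain does close, and carried out this way the proof comes out somewhat shorter than the one in the paper.
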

\begin{proof}

First, recall that as for any $\mathpzc{V}$-adjunction, Yoneda implies
the identity $\mathpzc{L}(L,-)_{MN}=n
\mathpzc{L}(\epsilon^L_M,1)$ cf.
\cite[(1.53), p. 24]{kelly}, and thus
$M_{\mathpzc{L}}=e^{\mathpzc{L}(L,M)}_{\mathpzc{L}(L,N)}(n\mathpzc{L}(\epsilon^L_M,1)\otimes
1_{\mathpzc{L}(L,M)})$ cf. \eqref{covapartialhom} in \secref{covapartialhom}.

The lemma is now proved by the following chain of equations 
\begin{align*}
  M_{\mathpzc{L}}& =e^{\mathpzc{L}(L,M)}_{\mathpzc{L}(L,N)}(n\mathpzc{L}(\epsilon^L_M,1)\otimes
  1_{\mathpzc{L}(L,M)}) \\ 
& =\mathpzc{L}(1_L,\epsilon^L_N)\eta^L_{\mathpzc{L}(L,N)}e^{\mathpzc{L}(L,M)}_{\mathpzc{L}(L,N)}(n \mathpzc{L}(\epsilon^L_M,1)\otimes
  1_{\mathpzc{L}(L,M)})  \\
  & \quad\mbox{(by a triangle identity for unit and counit of the
    adjunction \eqref{adjunctionofthetensorproduct})} \\
& = \mathpzc{L}(1_L,\epsilon^L_N)\mathpzc{L}(1_L,(e^{\mathpzc{L}(L,M)}_{\mathpzc{L}(L,N)}(n \mathpzc{L}(\epsilon^L_M,1)\otimes
  1_{\mathpzc{L}(L,M)}))\otimes 1_L)\eta^L_{\mathpzc{L}(M,N)\otimes
    \mathpzc{L}(L,M)} \\
  & \quad\mbox{(by ordinary naturality of $\eta^L_X$ in $X$)} \\
& = \mathpzc{L}(1_L, \epsilon^{\mathpzc{L}(L,M)\otimes L}_{N}\alpha((\mathpzc{L}(\epsilon^L_M,1)\otimes
  1_{\mathpzc{L}(L,M)})\otimes 1_L))\eta^L_{\mathpzc{L}(M,N)\otimes
    \mathpzc{L}(L,M)} \\
  & \quad\mbox{(see below \astpar, by functoriality and the identity $\epsilon=\epsilon( (e(n\otimes
    1))\otimes 1)\alpha^{-1}$)} \\
& = \mathpzc{L}(1_L, \epsilon^{\mathpzc{L}(L,M)\otimes L}_{N}(\mathpzc{L}(\epsilon^L_M,1)\otimes
 ( 1_{\mathpzc{L}(L,M)}\otimes 1_L))\alpha)\eta^L_{\mathpzc{L}(M,N)\otimes
    \mathpzc{L}(L,M)} \\
  & \quad\mbox{(by ordinary naturality of $\alpha$)} \\
& = \mathpzc{L}(1_L, \epsilon^{M}_{N}(1\otimes \epsilon^L_M)\alpha)\eta^L_{\mathpzc{L}(M,N)\otimes
    \mathpzc{L}(L,M)} \\
  & \quad\mbox{(by extraordinary naturality of $\epsilon^L_{M}$ in $L$)} 
\end{align*}

To prove the identity used in \astpar\, consider the morphism 
\begin{align*}
 [\mathpzc{L}(\mathpzc{L}(L,M)\otimes
   L,N),\mathpzc{L}(\mathpzc{L}(L,M)\otimes L,N)]  \rightarrow \mathpzc{L}(\mathpzc{L}(\mathpzc{L}(L,M)\otimes
   L,N)\otimes (\mathpzc{L}(L,M)\otimes
   L),N)
\end{align*}
given by the composite
\begin{align}\label{compositefornLML}
  \mathpzc{L}(\alpha, N)  \mathpzc{L}((n^{\vphantom{-1}}_{\mathpzc{L}(L,M),L,N}\otimes 1)\otimes 1,N)
n_{[\mathpzc{L}(L,M),\mathpzc{L}(L,N)]\otimes \mathpzc{L}(L,M),L,N}^{-1} p^{-1}[n_{\mathpzc{L}(M,N),L,N}^{-1},n^{\vphantom{-1}}_{\mathpzc{L}(M,N),L,N}]
\end{align}
 in $\mathpzc{V}_0$,
where  $p\co[X\otimes Y,Z]\cong [X,[Y,Z]]$ is again the natural
isomorphism \eqref{repp} induced from the closed structure of $\mathpzc{V}$, and where we have added subscripts
such that the hom $\mathpzc{V}$-adjunction \eqref{homVadjtensor} from \secref{homVadjtensor}  is
now denoted by $n_{X,L,M}$.
If $\mathpzc{L}(\alpha,N)$ is spelled out in terms of
hom $\mathpzc{V}$-adjunctions $n$ and $p$
according to \eqref{repalpha}, then it is seen by naturality that \eqref{compositefornLML}
is in fact the same as 
\begin{align}\label{nLML}
  n^{-1}_{\mathpzc{L}(\mathpzc{L}(L,M)\otimes L,N),\mathpzc{L}(L,M)\otimes
    L,N}
\puncteq{.}
\end{align}
In particular, it is an isomorphism (although this is also clear
because each factor is an isomorphism) and appropriately
$\mathpzc{V}$-natural (and this follows from the composition calculus
respectively).
We now want to show that the unit of this natural isomorphism is given by
\begin{align*}
  \epsilon^L_N((e^{\mathpzc{L}(L,M)}_{\mathpzc{L}(L,N)}(n_{\mathpzc{L}(M,N),L,N}\otimes
  1))\otimes 1)\alpha^{-1}
\puncteq{}
\end{align*}
because then, by Yoneda, this must be the same as
$\epsilon_N^{\mathpzc{L}(L,M)\otimes L}$ since this is by definition the
unit of \eqref{nLML}.

The unit is obtained by applying \eqref{compositefornLML} (or rather
$V$ of it) to
$1_{\mathpzc{L}(\mathpzc{L}(L,M)\otimes L,N)}$, and we do this
factor-by-factor.
First, note that
\begin{align*}
  [n_{\mathpzc{L}(M,N),L,N}^{-1},n^{\vphantom{-1}}_{\mathpzc{L}(M,N),L,N}](1_{\mathpzc{L}(\mathpzc{L}(L,M)\otimes
    L,N)})= 1_{[\mathpzc{L}(L,M),\mathpzc{L}(L,N)]}
\puncteq{,}
\end{align*}
and
\begin{align*}
  p(1_{[\mathpzc{L}(L,M),\mathpzc{L}(L,N)]})=e^{\mathpzc{L}(L,M)}_{\mathpzc{L}(L,N)}
\end{align*}
because $p$ is the hom $\mathpzc{V}$-adjunction with underlying
adjunction $-\otimes Y\dashv [Y,-]$ given by the closed
structure i.e. $Vp=\pi$.

Now note that by ordinary naturality of $n$, we have
\begin{align*}
  [e^{\mathpzc{L}(L,M)}_{\mathpzc{L}(L,N)},1]=
  n_{[\mathpzc{L}(L,M),\mathpzc{L}(L,N)]\otimes \mathpzc{L}(L,M),L,N}
  \mathpzc{L}(e^{\mathpzc{L}(L,M)}_{\mathpzc{L}(L,N)}\otimes
  1_L,1)n^{-1}_{\mathpzc{L}(L,N),L,N}
\puncteq{.}
\end{align*}
Thus because $[e^{\mathpzc{L}(L,M)}_{\mathpzc{L}(L,N)},1](1_{\mathpzc{L}(L,N)})=e^{\mathpzc{L}(L,M)}_{\mathpzc{L}(L,N)}$, we may compute $n_{[\mathpzc{L}(L,M),\mathpzc{L}(L,N)]\otimes \mathpzc{L}(L,M),L,N}(e^{\mathpzc{L}(L,M)}_{\mathpzc{L}(L,N)})$ by applying
$\mathpzc{L}(e^{\mathpzc{L}(L,M)}_{\mathpzc{L}(L,N)}\otimes
1_L,1)n^{-1}_{\mathpzc{L}(L,N),L,N}$ to $1_{\mathpzc{L}(L,N)}$, the
result of which is
$\epsilon_{N}^{L}(e^{\mathpzc{L}(L,M)}_{\mathpzc{L}(L,N)} \otimes
1)$. Finally, applying the remaining factors $
\mathpzc{L}((n^{\vphantom{-1}}_{\mathpzc{L}(L,M),L,N}\otimes 1)\otimes
1,N)$ and $\mathpzc{L}(\alpha, N)$ indeed gives \eqref{compositefornLML}.
\end{proof}
\begin{remark}
  A different strategy for the proof of the lemma, is to first observe
  that the right hand side just as $M_{\mathpzc{L}}$ is ordinarily
  $\mathpzc{V}$-natural in $L$ and $N$ and extraordinarily 
  $\mathpzc{V}$-natural in $M$ by Lemma \ref{natcounitten}, naturality of
  $\alpha$, and the composition calculus. Then the identity in the
  lemma can be proved variable-by-variable by use of the Yoneda lemma
  where one considers the transforms in the case of the variable $M$.
\end{remark}

%%%%%%%%
%%%%%
%%%%%%%%%
%\newpage

For an object $X\in\mathpzc{V}$ and objects $L,M\in\mathpzc{L}$, recall that the hom $\mathpzc{V}$-adjunction
\eqref{homVadjtensor} from \secref{homVadjtensor}  of the tensor product has the following
description in terms of the unit and the strict hom functor of the right
adjoint $\mathpzc{L}(L,-)$,
\begin{align}\label{nintermsof}
  n= [\eta_{X}^L,1]\mathpzc{L}(L,-)_{X\otimes L,M} \co
  \mathpzc{L}(X\otimes L,M)\rightarrow [X,\mathpzc{L}(L,M)]
\puncteq{.}
\end{align}

With this description of $n$ we are able to derive two
important identities for $n$ stated in the two lemmata below. These
are in fact the main technical tools that we employ to achieve the
promised identification of
$\mathrm{Ps}\text{-}T\text{-}\mathrm{Alg}$.
Recall that there is a $\mathpzc{V}$-functor $\mathrm{Ten}\co \mathpzc{V}\otimes
\mathpzc{V}\rightarrow \mathpzc{V}$ which is given on objects by
sending $(X,Y)\in\mathrm{ob}\mathpzc{V}\times \mathrm{ob}\mathpzc{V}$ to their product $X\otimes
Y\in\mathrm{ob}\mathpzc{V}$ and whose hom morphism
$\mathrm{Ten}_{(X,X'),(Y,Y')}\co [X,X']\otimes [Y,Y']\rightarrow [X\otimes
Y,X'\otimes Y']$  is such that 
\begin{align}\label{Tenhom}
e^{X\otimes Y}_{X'\otimes Y'}(\mathrm{Ten}_{(X,X'),(Y,Y')}\otimes 1_{X\otimes Y}) = (e^X_{X'}\otimes e^{Y}_{Y'})m
\end{align}
where $e$ denotes evaluation i.e. the counits of the adjunctions
comprising the closed structure of $\mathpzc{V}$ and where $m$
denotes interchange in $\mathpzc{V}$.

The two lemmata specify how $n$ behaves with respect to
 $\mathrm{Ten}\co \mathpzc{V}\otimes \mathpzc{V}\rightarrow
\mathpzc{V}$ and $M_{\mathpzc{L}}$ in two specific situations that we
will constantly face below.
\begin{lemma}[First Transformation Lemma]\label{asabovebynatofML}
Given objects $X,Y$ in $\mathpzc{V}$, and objects $L,M,N$ in
$\mathpzc{L}$, the following equality of $\mathpzc{V}$-morphisms
$\mathpzc{L}(X\otimes M, N) \otimes \mathpzc{L}(Y\otimes
L,M) \rightarrow [X\otimes Y, \mathpzc{L}(L,N)]$ holds.
  \begin{align*}
    &[\eta^{L}_{X\otimes Y},1]\mathpzc{L}(L,-)_{(X\otimes Y)\otimes L,
      N} \mathpzc{L}(\alpha,1) M_{\mathpzc{L}}(1_{\mathpzc{L}(X\otimes M, N)}\otimes (X\otimes -)_{Y\otimes L,M}) \\
    &
    =[1,M_{\mathpzc{L}}]\mathrm{Ten}_{(X,Y),(\mathpzc{L}(M,N),\mathpzc{L}(Y\otimes
      L,M))}(([\eta^{M}_{X},1]\mathpzc{L}(M,-)_{X\otimes
      M,N})\otimes
  (
  [\eta^{L}_{Y},1]\mathpzc{L}(L,-)_{Y\otimes L,M}))
  \end{align*}
In terms of the hom $\mathpzc{V}$-adjunction \eqref{homVadjtensor}
from \secref{homVadjtensor}, this means that
\begin{align*}
  n \mathpzc{L}(\alpha,1) M_{\mathpzc{L}}(\mathrm{Ten}(\mathpzc{L}(X\otimes
  M,N),-)_{X\otimes (X\otimes L),X\otimes M} (X\otimes -)_{Y\otimes L,M})=
  [1,M_{\mathpzc{L}}]\mathrm{Ten}_{(X,Y),(\mathpzc{L}(M,N),\mathpzc{L}(L,M))}(n\otimes
  n)
\puncteq{.}
\end{align*}
\end{lemma}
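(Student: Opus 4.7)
The plan is to reduce the identity to a Yoneda/universal property argument. Since both sides are morphisms with codomain the internal hom $[X\otimes Y,\mathpzc{L}(L,N)]$, it suffices by the closed structure of $\mathpzc{V}$ to verify that they agree after composition with the evaluation $e^{X\otimes Y}_{\mathpzc{L}(L,N)}(- \otimes 1_{X\otimes Y})$, and further to tensor on the right with $L$ and post-compose with the counit $\epsilon^L_N$ of the tensor-hom adjunction \eqref{adjunctionofthetensorproduct}. This reduces the statement to an equality of morphisms into $N$, where the universal features $\eta$, $\epsilon$, $\alpha$ should be easier to manipulate directly.

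First I would exploit the explicit description $n = [\eta^L_{(-)},1]\,\mathpzc{L}(L,-)_{(-)\otimes L,(-)}$ from \eqref{nintermsof}, together with the triangle identity $\epsilon^L_M\circ(\eta^L_X\otimes 1_L) = \text{(identity after representation)}$ for the adjunction $(-\otimes L)\dashv \mathpzc{L}(L,-)$. Applied to both sides this collapses each occurrence of $n$ to a partial $\mathpzc{L}(L,-)$ absorbed by a final $\epsilon$, so that the LHS becomes (up to associator rearrangement) $\epsilon^L_N \circ \bigl(M_{\mathpzc{L}}(f \otimes (X\otimes g))\otimes 1_L\bigr) \circ \alpha$ for generalized elements $f,g$, while the RHS becomes $\epsilon^L_N \circ \bigl(M_{\mathpzc{L}}(\tilde f \otimes \tilde g)\otimes 1_L\bigr)$ where $\tilde f,\tilde g$ are the factors produced by the two copies of $n$ on the right combined through $\mathrm{Ten}$.

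Next I would invoke Lemma \ref{identificationofML} to re-express $M_{\mathpzc{L}}$ on both sides in terms of iterated counits $\epsilon^L_M$, $\epsilon^M_N$ and the associator $\alpha$. After this substitution, both sides become composites built solely from $\eta$, $\epsilon$, $\alpha$, the interchange $m$ in $\mathpzc{V}$ (hidden in $\mathrm{Ten}$ via \eqref{Tenhom}), and the original inputs. The LHS naturally displays the composite $\epsilon^M_N \circ (1 \otimes \epsilon^L_M)\circ \alpha$ mediating the composition via $\mathpzc{L}(\alpha,1)$; on the RHS, unpacking $\mathrm{Ten}_{(X,Y),(-,-)}$ through \eqref{Tenhom} as a parallel pair of evaluations yields precisely the same composite, provided one applies extraordinary $\mathpzc{V}$-naturality of $\epsilon^L_M$ in $L$ (Lemma \ref{natcounitten}), ordinary naturality of $\alpha$, and one instance of the pentagon identity \eqref{pentagonidentity} to reconcile the two orders in which $\alpha$ appears.

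The hard part will be the bookkeeping rather than any conceptual difficulty: keeping track of which associator is produced by $\mathpzc{L}(\alpha,1)$ on the LHS versus by Lemma \ref{identificationofML} on the RHS, and verifying that the interchange $m$ lurking inside $\mathrm{Ten}$ distributes the evaluations over the two tensor factors in exactly the pattern demanded by the middle-four interchange implicit in $M_{\mathpzc{L}}$. I expect the cleanest presentation to be a chain of equalities of the form of the computation at the end of Lemma \ref{homLan}, each step justified by a single naturality, unit-counit triangle, or coherence axiom; the crucial such step being the extraordinary $\mathpzc{V}$-naturality of $\epsilon^L_M$ in the middle variable $M$, which is what converts the "two separate transforms" picture of the RHS into the "composite transform" picture of the LHS.
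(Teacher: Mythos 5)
Your strategy is sound and would yield a correct proof, but it is organized differently from the paper's. You propose to transport both sides across the closed-structure adjunction and the tensor--hom adjunction all the way down to morphisms into $N$, and then verify the resulting equality of ``uncurried'' composites of $\eta$, $\epsilon$, $\alpha$ and the interchange $m$. The paper never leaves the curried picture: it expands $n$ via \eqref{nintermsof}, splits $\mathpzc{L}(L,-)_{-,N}M_{\mathpzc{L}}$ into $M_{\mathpzc{V}}(\mathpzc{L}(L,-)\otimes\mathpzc{L}(L,-))$ by the functor axiom, and pushes the resulting prefix $[\mathpzc{L}(L,\alpha)\eta^{L}_{X\otimes Y},1]$ through the two tensor factors using ordinary and, crucially, extraordinary $\mathpzc{V}$-naturality of $M_{\mathpzc{V}}$ in its middle variable; the whole argument pivots on the two identities (A) $\mathpzc{L}(L,\alpha)\eta^L_{X\otimes Y}=\mathpzc{L}(L,(1_X\otimes\epsilon^L_{Y\otimes L})\alpha)\eta^L_{X\otimes\mathpzc{L}(L,Y\otimes L)}(X\otimes\eta^L_Y)$ and (B) $\mathpzc{L}(L,(1_X\otimes\epsilon^L_M)\alpha)\eta^L_{X\otimes\mathpzc{L}(L,M)}=M_{\mathpzc{L}}(\eta^M_X\otimes 1)$, the latter resting on Lemma \ref{identificationofML}. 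Your route buys a more concrete computation (everything becomes evaluation against elements) at the price of heavier re-association bookkeeping, and it is essentially what the paper does one level down, inside the proofs of Lemma \ref{identificationofML} and of (A) and (B); the paper's route keeps the bookkeeping factor-by-factor at the level of internal homs. One imprecision to correct: the step that merges the two separate transforms at the middle object is not ``extraordinary $\mathpzc{V}$-naturality of $\epsilon^L_M$ in the middle variable $M$'' --- by Lemma \ref{natcounitten}, $\epsilon^L_M$ is extraordinarily natural in $L$ and only ordinarily natural in $M$; what does that work is the extraordinary naturality of $M_{\mathpzc{V}}$ (equivalently, in your uncurried picture, the triangle identity $\epsilon^M_{X\otimes M}(\eta^M_X\otimes 1_M)=1_{X\otimes M}$ combined with ordinary naturality of $\epsilon$). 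Note also that the paper's route needs no pentagon identity here, only naturality of $\alpha$ and the unit--counit triangles, so you may find the instance of \eqref{pentagonidentity} you anticipate is avoidable.
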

\begin{proof}
This is proved by the following chain of equations.
%%%%%%%%
%%%% 2:
%%%%%%%
\begin{align*}
     &[\eta^{L}_{X\otimes Y},1]\mathpzc{L}(L,-)_{(X\otimes Y)\otimes L,
      N} \mathpzc{L}(\alpha,1)M_{\mathpzc{L}}(1_{\mathpzc{L}(X\otimes M, N)}\otimes (X\otimes -)_{Y\otimes
      L,M}) \\
& = [\mathpzc{L}(L,\alpha)\eta^{L}_{X\otimes Y},1]\mathpzc{L}(L,-)_{X\otimes (Y\otimes L),
      N} M_{\mathpzc{L}}(1_{\mathpzc{L}(X\otimes M, N)}\otimes (X\otimes -)_{Y\otimes
      L,M})\\
    & \mbox{\quad(by the functor axiom for $\mathpzc{L}(L,-)$ or ordinary $\mathpzc{V}$-naturality of $\mathpzc{L}(L,-)_{M,N}$ in $M$)} \\
& = [\mathpzc{L}(L,\alpha)\eta^{L}_{X\otimes Y},1] M_{\mathpzc{V}}(
\mathpzc{L}(L,-)_{X\otimes M, N}  \otimes (\mathpzc{L}(L,-)_{X\otimes (Y\otimes L),
      X\otimes M}(X\otimes -)_{Y\otimes
      L,M}))\\
    & \mbox{\quad(by the functor axiom for $\mathpzc{L}(L,-)$ )} \\
& =  M_{\mathpzc{V}}(
\mathpzc{L}(L,-)_{X\otimes M, N}  \otimes ([\mathpzc{L}(L,\alpha)\eta^{L}_{X\otimes Y},1]\mathpzc{L}(L,-)_{X\otimes (Y\otimes L),
      X\otimes M}(X\otimes -)_{Y\otimes
      L,M}))\\
    & \mbox{\quad(by ordinary $\mathpzc{V}$-naturality of $M_{\mathpzc{V}}$ or
      a $\mathpzc{V}$-category axiom if spelled out)} \\
& =  M_{\mathpzc{V}}(
\mathpzc{L}(L,-)_{X\otimes M, N}  \otimes ([ \mathpzc{L}(L,
(1_X\otimes \epsilon^L_{Y\otimes L})\alpha) \eta^L_{X\otimes
  \mathpzc{L}(L,Y\otimes L)}(X\otimes \eta^L_{Y}),1]\mathpzc{L}(L,-)_{X\otimes (Y\otimes L),
      X\otimes M}(X\otimes -)_{Y\otimes
      L,M}))\\
    & \mbox{\quad(see below (A), by a triangle identity and naturality)} \\
& =  M_{\mathpzc{V}}(
\mathpzc{L}(L,-)_{X\otimes M, N}  \otimes  ([ (X\otimes \eta^L_{Y}), \mathpzc{L}(L,
(1_X\otimes \epsilon^L_{M})\alpha) \eta^L_{X\otimes
  \mathpzc{L}(L,M)}]
\mathrm{Ten}(X,-)_{\mathpzc{L}(L,Y\otimes L),\mathpzc{L}(L,M)}\mathpzc{L}(L,-)_{Y\otimes L,M}
))\\
& \mbox{\quad(see below (C), by  ordinary $\mathpzc{V}$-naturality of $\mathpzc{L}(L,
  (1_X\otimes \epsilon^L_{K})\alpha) \eta^L_{X\otimes
    \mathpzc{L}(L,K)}$ in $K$)} \\
& =  M_{\mathpzc{V}}(
\mathpzc{L}(L,-)_{X\otimes M, N}  \otimes([ 1, \mathpzc{L}(L,
(1_X\otimes \epsilon^L_{M})\alpha) \eta^L_{X\otimes
  \mathpzc{L}(L,M)}]
(\mathrm{Ten}(X,-)_{Y,\mathpzc{L}(L,M)}[\eta^L_{Y},1]\mathpzc{L}(L,-)_{Y\otimes L,M})
))\\
& \mbox{\quad(ordinary $\mathpzc{V}$-naturality of $\mathrm{Ten}(X,-)_{V,Z}$ in
  $V$)} \\
& =  M_{\mathpzc{V}}(
( [ \mathpzc{L}(L,
(1_X\otimes \epsilon^L_{M})\alpha) \eta^L_{X\otimes
  \mathpzc{L}(L,M)},1]\mathpzc{L}(L,-)_{X\otimes M, N})  \otimes 
(\mathrm{Ten}(X,-)_{Y,\mathpzc{L}(L,M)}[\eta^L_{Y},1]\mathpzc{L}(L,-)_{Y\otimes L,M})
)\\
& \mbox{\quad(extraordinary $\mathpzc{V}$-naturality of $M_{\mathpzc{V}}$)}
\\
& =  M_{\mathpzc{V}}(
( [ M_{\mathpzc{L}}(\eta^M_{X}\otimes 1_{\mathpzc{L}(L,M)}),1]\mathpzc{L}(L,-)_{X\otimes M, N})  \otimes 
(\mathrm{Ten}(X,-)_{Y,\mathpzc{L}(L,M)}[\eta^L_{Y},1]\mathpzc{L}(L,-)_{Y\otimes L,M})
)\\
& \mbox{\quad(see below (B), by a triangle identity, naturality, and Lemma \ref{identificationofML})} \\
& =  M_{\mathpzc{V}}(
( [ (\eta^M_{X}\otimes
1_{\mathpzc{L}(L,M)}),M_{\mathpzc{L}}]\mathrm{Ten}(-,\mathpzc{L}(L,M))_{\mathpzc{L}(M,X\otimes
  M),\mathpzc{L}(M,N)}\mathpzc{L}(M,-)_{X\otimes
  M,N})  \otimes \\
& \quad\quad
(\mathrm{Ten}(X,-)_{Y,\mathpzc{L}(L,M)}[\eta^L_{Y},1]\mathpzc{L}(L,-)_{Y\otimes L,M})
)\\
& \mbox{\quad(see below (D), by ordinary $\mathpzc{V}$-naturality of $M_{\mathpzc{L}}$)}
\\
& =  M_{\mathpzc{V}}( 
([ 1,M_{\mathpzc{L}}] \mathrm{Ten}(-,\mathpzc{L}(L,M))_{X,\mathpzc{L}(M,N)} [ \eta^M_{X},1 ]\mathpzc{L}(M,-)_{X\otimes
M,N})  \otimes   
(\mathrm{Ten}(X,-)_{Y,\mathpzc{L}(L,M)}[\eta^L_{Y},1]\mathpzc{L}(L,-)_{Y\otimes L,M})
)\\
& \mbox{\quad(ordinary $\mathpzc{V}$-naturality of $\mathrm{Ten}(-,Z)_{U,W}$ in
  $U$)} \\
& =  [ 1,M_{\mathpzc{L}}]M_{\mathpzc{V}}( 
(\mathrm{Ten}(-,\mathpzc{L}(L,M))_{X,\mathpzc{L}(M,N)} [ \eta^M_{X},1 ]\mathpzc{L}(M,-)_{X\otimes
  M,N})  \otimes  
(\mathrm{Ten}(X,-)_{Y,\mathpzc{L}(L,M)}[\eta^L_{Y},1]\mathpzc{L}(L,-)_{Y\otimes L,M})
)\\
& \mbox{\quad(ordinary $\mathpzc{V}$-naturality of $M_{\mathpzc{V}}$)} \\
& =  [ 1,M_{\mathpzc{L}}]
\mathrm{Ten}_{(X,Y),(\mathpzc{L}(M,N),\mathpzc{L}(L,M))} ( M_{\mathpzc{V}\otimes \mathpzc{V}} \\
& \quad\quad
( ((([ \eta^M_{X},1 ]\mathpzc{L}(M,-)_{X\otimes
  M,N} )\otimes  j^{\mathpzc{V}}_{\mathpzc{L}(L,M)}) r^{-1}_{\mathpzc{L}(X\otimes M,N)})
 \otimes  
(( j^{\mathpzc{V}}_X\otimes
([\eta^L_{Y},1]\mathpzc{L}(L,-)_{Y\otimes
  L,M}))l^{-1}_{\mathpzc{L}(Y\otimes L,M)}))
)\\
& \mbox{\quad(functor axiom for $\mathrm{Ten}$ (partial functors spelled out))} \\
& =  [ 1,M_{\mathpzc{L}}]
\mathrm{Ten}_{(X,Y),(\mathpzc{L}(M,N),\mathpzc{L}(L,M))} \\
& \quad\quad(( M_{\mathpzc{V}}(([ \eta^M_{X},1 ]\mathpzc{L}(M,-)_{X\otimes
  M,N} )\otimes j^{\mathpzc{V}}_X) r^{-1}_{\mathpzc{L}(X\otimes M,N)})
 \otimes  
(M_{\mathpzc{V}}( j^{\mathpzc{V}}_{\mathpzc{L}(L,M)}\otimes
([\eta^L_{Y},1]\mathpzc{L}(L,-)_{Y\otimes
  L,M}))l^{-1}_{\mathpzc{L}(Y\otimes L,M)})
)\\
& \mbox{\quad(by $M_{\mathpzc{V}\otimes \mathpzc{V}}=(M_{\mathpzc{V}}\otimes
  M_{\mathpzc{V}})m$ where $m$ is interchange, naturality of $m$, and
  $m(r^{-1}\otimes l^{-1})=r^{-1}\otimes l^{-1}$)} \\
    &
    =[1,M_{\mathpzc{L}}]\mathrm{Ten}_{(X,Y),(\mathpzc{L}(M,N),\mathpzc{L}(L,M))}(([\eta^{M}_{X},1]\mathpzc{L}(M,-)_{X\otimes
      M,N})\otimes
  (
  [\eta^{L}_{Y},1]\mathpzc{L}(L,-)_{Y\otimes L,M}))\\
  & \mbox{\quad($\mathpzc{V}$-category axioms for $\mathpzc{V}$)} 
  \end{align*}

In (A) and (B) we have used the following identities of
$\mathpzc{V}$-morphisms.
For (A), observe that
\begin{align*}
  \mathpzc{L}(L,\alpha)\eta^L_{X\otimes Y} & =
  \mathpzc{L}(L,(1_X\otimes (\epsilon^L_{Y\otimes L}(\eta^L_{Y}\otimes
  1_L)))\alpha)\eta^L_{X\otimes Y}\\
  & \mbox{\quad(by a triangle identity)} \\
& =
  \mathpzc{L}(L,(1_X\otimes \epsilon^L_{Y\otimes L}) (1_X\otimes (\eta^L_{Y}\otimes
  1_L))\alpha)\eta^L_{X\otimes Y}\\
  & \quad\mbox{(by functoriality of $X\otimes -$)} \\
& =
  \mathpzc{L}(L,(1_X\otimes \epsilon^L_{Y\otimes L}) \alpha ((1_X\otimes \eta^L_{Y})\otimes
  1_L))\eta^L_{X\otimes Y}\\
  & \mbox{\quad(by ordinary $\mathpzc{V}$-naturality of $\alpha$)} \\
& =
  \mathpzc{L}(L,(1_X\otimes \epsilon^L_{Y\otimes L}) \alpha
  )\eta^L_{X\otimes \mathpzc{L}(L,Y\otimes L)} (1_X\otimes
  \eta^L_{Y}) \\
  & \mbox{\quad(by ordinary $\mathpzc{V}$-naturality of $\eta$)} \\
& = \mathpzc{L}(L,(1_X\otimes \epsilon^L_{Y\otimes L}) \alpha
  )\eta^L_{X\otimes \mathpzc{L}(L,Y\otimes L)} (X\otimes
  \eta^L_{Y} ) \\
& = \mathpzc{L}(L,(1_X\otimes \epsilon^L_{Y\otimes L}) \alpha
  )\eta^L_{X\otimes \mathpzc{L}(L,Y\otimes L)}
  \mathrm{Ten}(X,\eta^L_{Y}) 
\puncteq{.}
\end{align*}
Similarly, for (B) observe that
\begin{align*}
   \mathpzc{L}(L,(1_X\otimes \epsilon^L_{M}) \alpha
  )\eta^L_{X\otimes \mathpzc{L}(L,M)} & =
  \mathpzc{L}(L,\epsilon^M_{X\otimes M}(\eta^M_{X}\otimes
  1_M)(1_X\otimes \epsilon^L_M)\alpha) \eta^L_{X\otimes
    \mathpzc{L}(L,M)}\\
  & \mbox{\quad(by a triangle identity)} \\
& = \mathpzc{L}(L,\epsilon^M_{X\otimes M}(1_{\mathpzc{L}(M,X\otimes M)}\otimes \epsilon^L_M)(\eta^M_{X}\otimes
  1_{\mathpzc{L}(L,M)\otimes L}) \alpha)\eta^L_{X\otimes
    \mathpzc{L}(L,M)}\\
  & \mbox{\quad(by underlying functoriality of the functor
    $\otimes$ )} \\
& = \mathpzc{L}(L,\epsilon^M_{X\otimes M}(1_{\mathpzc{L}(M,X\otimes M)}\otimes \epsilon^L_M)\alpha ((\eta^M_{X}\otimes
  1_{\mathpzc{L}(L,M)})\otimes 1_{L}))\eta^L_{X\otimes
    \mathpzc{L}(L,M)}\\
  & \mbox{\quad(by underlying functoriality of $\otimes$,
    $1_{\mathpzc{L}(L,M)\otimes L}=1_{\mathpzc{L}(L,M)}\otimes 1_L$,} \\
  & \quad \quad\mbox{
    and by ordinary $\mathpzc{V}$-naturality of $\alpha$  )} \\
& = \mathpzc{L}(L,\epsilon^M_{X\otimes M}(1_{\mathpzc{L}(M,X\otimes
  M)}\otimes \epsilon^L_M)\alpha )\eta^L_{\mathpzc{L}(M,X\otimes M)\otimes
    \mathpzc{L}(L,M)}  (\eta^M_{X}\otimes
  1_{\mathpzc{L}(L,M)})\\
  & \mbox{\quad(by ordinary $\mathpzc{V}$-naturality of $\eta$)} \\
& =M_{\mathpzc{L}}(\eta^M_{X}\otimes
  1_{\mathpzc{L}(L,M)})\\
  & \mbox{\quad(by the identification of $M_{\mathpzc{L}}$ in Lemma \ref{identificationofML} above)}
\\
& =M_{\mathpzc{L}}(\eta^M_{X}\otimes
  \mathpzc{L}(L,M))= M_{\mathpzc{L}} \mathrm{Ten}(\eta^M_{X},
  \mathpzc{L}(L,M))
\puncteq{.}
\end{align*}
Finally, we comment on the $\mathpzc{V}$-naturality used in (C) and
(D):

For (C) recall that $\eta^L_{J}$ is $\mathpzc{V}$-natural in
$J$. Then so is $\eta^L_{X\otimes \mathpzc{L}(L,K)}$ because this is
$\eta^L_{PK}$ for $P=\mathpzc{L}(L,-)(X\otimes-)$. Next, recall that
$\alpha$ is ordinarily $\mathpzc{V}$-natural in all of its variables,
and that $\epsilon^L_K$ is ordinarily $\mathpzc{V}$-natural in
$K$. Then so is $1_X\otimes \epsilon^L_K$ because this is $Q_0(\epsilon^L_K)$ for
$Q=(X\otimes-)$, and thus the composite $(1_X\otimes
\epsilon^L_K)\alpha$ is ordinarily $\mathpzc{V}$-natural in $K$. From this it
follows that $\mathpzc{L}(L,(1_X\otimes
\epsilon^L_K)\alpha)$ is ordinarily $\mathpzc{V}$-natural in $K$
because this is $Q_0((1_X\otimes
\epsilon^L_K)\alpha)$ for $Q=\mathpzc{L}(L,-)$. Hence, we conclude
that the composite
 family $\mathpzc{L}(L,
(1_X\otimes \epsilon^L_{K})\alpha) \eta^L_{X\otimes
  \mathpzc{L}(L,K)}$ is ordinarily $\mathpzc{V}$-natural in $K$.
That is, $\mathpzc{L}(L,
(1_X\otimes \epsilon^L_{K})\alpha) \eta^L_{X\otimes
  \mathpzc{L}(L,K)}$ is the component at $K\in\mathpzc{L}$  of a
$\mathpzc{V}$-natural transformation
\begin{align*}
  \mathrm{Ten}(X,-)\mathpzc{L}(L,-)\Rightarrow
  \mathpzc{L}(L,-)(X\otimes -)
\end{align*}
where $X$ and $L$ are held constant, and where $X\otimes -\co \mathpzc{L}\rightarrow \mathpzc{L}$ is the
partial functor  of the tensor
product in contrast to the partial functor $\mathrm{Ten}(X,-)\co
\mathpzc{V}\rightarrow \mathpzc{V}$ induced by the Gray product.

For (D) recall that $M_{\mathpzc{L}}\co \mathpzc{L}(M,K)\otimes
\mathpzc{L}(L,M)\rightarrow \mathpzc{L}(L,K)$ is
$\mathpzc{V}$-natural. Since $\mathpzc{V}$-naturality may be verified
variable-by-variable, $M_{\mathpzc{L}}$ is in particular ordinarily
$\mathpzc{V}$-natural in $K$. That is, it is the component at
$K\in\mathpzc{L}$ of a
$\mathpzc{V}$-natural transformation
\begin{align*}
  \mathrm{Ten}(-,\mathpzc{L}(L,M))\mathpzc{L}(M,-)\Rightarrow \mathpzc{L}(L,-)
\end{align*}
where $L$ and $M$ are held constant.
\end{proof}
%%%%%%%%%%%%%%
%%%%%%
%%%%%%%%%%%%%
\begin{lemma}[Second Transformation Lemma]\label{noticethesymmetry}
Given an object $X$ in $\mathpzc{V}$, and objects $L,M,N$ in
$\mathpzc{L}$, the following equality of $\mathpzc{V}$-morphisms
$\mathpzc{L}(X\otimes M, N) \otimes \mathpzc{L}(
L,M) \rightarrow [X, \mathpzc{L}(L,N)]$ holds, where $c$ is the
symmetry of $\mathpzc{V}$.
\begin{align*}
    &[\eta^{L}_{X},1]\mathpzc{L}(L,-)_{X\otimes L,
      N} M_{\mathpzc{L}}(1_{\mathpzc{L}(X\otimes M,N)}\otimes (X\otimes -)_{L,M}) \\
    &
    =M_{\mathpzc{V}}((\mathpzc{L}(-,N)_{M,N})\otimes
  (
  [\eta^{M}_{X},1]\mathpzc{L}(M,-)_{(X\otimes M,N}))c
  \end{align*}
In terms of the hom $\mathpzc{V}$-adjunction \eqref{homVadjtensor}
from \secref{homVadjtensor},
this means that
\begin{align*}
  n M_{\mathpzc{L}}\mathrm{Ten}(\mathpzc{L}(X\otimes
  M,N),-)_{X\otimes L,X\otimes M} (X\otimes -)_{L,M} =
  M_{\mathpzc{V}}(\mathpzc{L}(-,N)_{L,M}\otimes n)c
\end{align*}

\end{lemma}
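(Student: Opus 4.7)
The plan is to mimic the proof of the First Transformation Lemma \ref{asabovebynatofML}, with the simplification that no associator appears, while paying particular attention to how the symmetry $c$ on the right-hand side arises from the contravariant hom $\mathpzc{L}(-, N)$. I would rely throughout on the description \eqref{nintermsof} of the hom $\mathpzc{V}$-adjunction $n$ as $[\eta^L_X, 1]\mathpzc{L}(L,-)_{X\otimes L, N}$, the identification of $M_{\mathpzc{L}}$ provided by Lemma \ref{identificationofML}, and the partial-hom equations \eqref{covapartialhom}--\eqref{contrapartialhom}.

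First, I would substitute the formula for $n$ on the left-hand side, then apply the functor axiom for the $\mathpzc{V}$-functor $\mathpzc{L}(L, -)$ to expand $\mathpzc{L}(L, -)_{X\otimes L, N} M_{\mathpzc{L}}$ as $M_{\mathpzc{V}}(\mathpzc{L}(L,-)_{X\otimes M, N} \otimes \mathpzc{L}(L,-)_{X\otimes L, X\otimes M})$. Ordinary $\mathpzc{V}$-naturality of $M_{\mathpzc{V}}$ then lets me push the $[\eta^L_X, 1]$ factor onto the second tensorand, producing $M_{\mathpzc{V}}(\mathpzc{L}(L,-)_{X\otimes M, N} \otimes \Phi)$ for a composite $\Phi$ that depends on $\eta^L_X$, $\mathpzc{L}(L, -)_{X\otimes L, X\otimes M}$ and $(X\otimes -)_{L, M}$. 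Next, I would simplify $\Phi$ using the triangle identity for the tensor--hom adjunction together with extraordinary $\mathpzc{V}$-naturality of $\eta^L$ in $L$, in essentially the way that identity (B) was derived in the proof of Lemma \ref{asabovebynatofML}; this rewrites $\Phi$ as $M_{\mathpzc{L}}(\eta^M_X \otimes 1_{\mathpzc{L}(L, M)})$, absorbing the adjunction data into a composition.

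Finally---and here is the main obstacle---I would apply \eqref{contrapartialhom}, namely $e^{\mathpzc{L}(M, N)}_{\mathpzc{L}(L, N)}(\mathpzc{L}(-, N)_{L, M} \otimes 1) = M_{\mathpzc{L}} c$, to introduce the symmetry $c$ and rearrange the resulting expression into the right-hand side $M_{\mathpzc{V}}(\mathpzc{L}(-, N)_{M, L} \otimes n) c$. The delicate point is that $c$ enters precisely because the contravariant partial hom functor $\mathpzc{L}(-, N)$ is defined as $M_{\mathpzc{L}}$ precomposed with the symmetry; carefully tracking this through the intermediate expressions is what makes the identification of the two sides work out. Apart from this bookkeeping, the calculation is strictly simpler than that of Lemma \ref{asabovebynatofML} and uses the same methodology.
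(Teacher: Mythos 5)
Your opening moves coincide with the paper's: substitute \eqref{nintermsof}, use the functor axiom for $\mathpzc{L}(L,-)$ to split the left-hand side as $M_{\mathpzc{V}}(\mathpzc{L}(L,-)_{X\otimes M,N}\otimes \Phi)$ with $\Phi=[\eta^L_X,1]\mathpzc{L}(L,-)_{X\otimes L,X\otimes M}(X\otimes-)_{L,M}$, and push $[\eta^L_X,1]$ onto the second tensorand by naturality of $M_{\mathpzc{V}}$. After that your plan has a concrete problem. The claim that $\Phi$ ``rewrites as $M_{\mathpzc{L}}(\eta^M_X\otimes 1_{\mathpzc{L}(L,M)})$'' does not typecheck: $\Phi$ is a $\mathpzc{V}$-morphism $\mathpzc{L}(L,M)\rightarrow[X,\mathpzc{L}(L,X\otimes M)]$, while $M_{\mathpzc{L}}(\eta^M_X\otimes 1)$ has domain $X\otimes\mathpzc{L}(L,M)$, so relating them costs a transposition under $p$ \emph{and} a symmetry that you would then have to drag through the rest of the argument. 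The identity-(B)/Lemma~\ref{identificationofML} machinery is tailored to Lemma~\ref{asabovebynatofML}, whose target has the shape $[1,M_{\mathpzc{L}}]\mathrm{Ten}(n\otimes n)$; here the target contains the contravariant partial hom $\mathpzc{L}(-,N)_{L,M}$, and the efficient move---the one the paper makes---is to apply the extraordinary $\mathpzc{V}$-naturality of $\eta^L_X$ in $L$ directly, which gives $\Phi=[\eta^M_X,1]\mathpzc{L}(-,X\otimes M)_{L,M}$ in a single step, with no triangle identity, no associator, and no appeal to Lemma~\ref{identificationofML}.

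More seriously, the step you describe as ``rearrange the resulting expression into the right-hand side'' is where the entire content of the lemma sits, and your proposal does not say how it is carried out. After the corrected middle step one needs precisely
\begin{align*}
M_{\mathpzc{V}}\bigl(\mathpzc{L}(L,-)_{X\otimes M,N}\otimes\mathpzc{L}(-,X\otimes M)_{L,M}\bigr)=M_{\mathpzc{V}}\bigl(\mathpzc{L}(-,N)_{L,M}\otimes\mathpzc{L}(M,-)_{X\otimes M,N}\bigr)c
\puncteq{,}
\end{align*}
that is, the extraordinary $\mathpzc{V}$-naturality of the partial hom functors (equivalently, $\mathpzc{V}$-functoriality of $\mathrm{Hom}_{\mathpzc{L}}$ on $\mathpzc{L}\op\otimes\mathpzc{L}$); afterwards $[\eta^M_X,1]$ is pushed back onto the factor $\mathpzc{L}(M,-)_{X\otimes M,N}$ to reconstitute $n$. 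You are right that \eqref{contrapartialhom} is the ultimate source of the symmetry $c$, but deducing the displayed identity from \eqref{covapartialhom} and \eqref{contrapartialhom}---by transposing the extraordinary naturality equation through the closed structure and checking that the two transforms are $M_{\mathpzc{V}}$ and $M_{\mathpzc{V}}c$---is exactly the computation occupying the second half of the paper's proof. As written, your argument leaves a hole at its crucial point.
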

\begin{proof}
  This is proved by the following chain of equations.
  \begin{align*}
      &[\eta^{L}_{X},1]\mathpzc{L}(L,-)_{X\otimes L,
      N} M_{\mathpzc{L}}(1_{\mathpzc{L}(X\otimes M,N)}\otimes
    (X\otimes -)_{L,M}) \\
&  =[\eta^{L}_{X},1] M_{\mathpzc{V}}(\mathpzc{L}(L,-)_{X\otimes M,
      N}\otimes
    (\mathpzc{L}(L,-)_{X\otimes L,
      X\otimes M}(X\otimes -)_{L,M}))\\
    & \quad\mbox{(functor axiom for $\mathpzc{L}(L,-)$)} \\
&  = M_{\mathpzc{V}}(\mathpzc{L}(L,-)_{X\otimes M,
      N}\otimes
    ([\eta^{L}_{X},1]\mathpzc{L}(L,-)_{X\otimes L,
      X\otimes M}(X\otimes -)_{L,M}))\\
    & \quad\mbox{(ordinary $\mathpzc{V}$-naturality of $M_{\mathpzc{V}}$)} \\
        &  = M_{\mathpzc{V}}(\mathpzc{L}(L,-)_{X\otimes M,
      N}\otimes
    ([\eta^{L}_{X},1]\mathpzc{L}(-,X\otimes M)_{L, M}))\\
    & \quad\mbox{(extraordinary $\mathpzc{V}$-naturality of $\eta^L_X$ in $L$)}
\\
        &  = [\eta^{L}_{X},1]M_{\mathpzc{V}}(\mathpzc{L}(L,-)_{X\otimes M,
      N}\otimes
   \mathpzc{L}(-,X\otimes M)_{L, M})\\
   & \quad\mbox{(by ordinary $\mathpzc{V}$-naturality of $M_{\mathpzc{V}}$)} \\
    &
    = [\eta^{M}_{X},1]M_{\mathpzc{V}}((\mathpzc{L}(-,N)_{L,M})\otimes
  \mathpzc{L}(M,-)_{X\otimes M,N})c \\
  & \quad\mbox{(see below, by extraordinary $\mathpzc{V}$-naturality of
    $\mathpzc{L}(L,-)_{M,N}$ in $L$ )} \\
 &   =M_{\mathpzc{V}}((\mathpzc{L}(-,N)_{L,M})\otimes
  (
  [\eta^{M}_{X},1]\mathpzc{L}(M,-)_{X\otimes M,N}))c \\
  & \quad\mbox{(by ordinary $\mathpzc{V}$-naturality of $M_{\mathpzc{V}}$)} 
  \end{align*}

For the last equation, recall that $\mathpzc{L}(L,-)_{M,N}\co
\mathpzc{L}(M,N)\rightarrow [\mathpzc{L}(L,M),\mathpzc{L}(L,N)]$ is
extraordinarily $\mathpzc{V}$-natural in $L$ when $M$ and $N$ are held
constant. That is, $\mathpzc{L}(L,-)_{M,N}$ has the form
$I\rightarrow [\mathpzc{L}(M,N), T(A,A)]$ for the
$\mathpzc{V}$-functor $T=
\mathrm{Hom}_{\mathpzc{V}}(\mathpzc{L}(-,M)\op\otimes
\mathpzc{L}(-,N))\mathpzc{c} \co \mathpzc{L}\op\otimes \mathpzc{L}\rightarrow \mathpzc{V}$,
where $\mathpzc{c}\co \mathpzc{L}\op\otimes \mathpzc{L}\cong
\mathpzc{L}\otimes \mathpzc{L}\op$ is the $\mathpzc{V}$-functor
mediating the symmetry of the $2$-category
$\mathpzc{V}\text{-}\mathpzc{CAT}$ of $\mathpzc{V}$-categories,
which is locally given by $c$, and
the corresponding naturality equation is

\begin{align*}
 &  [\mathpzc{L}(L,-)_{X\otimes M,N},1]
 \mathrm{Hom}_{\mathpzc{V}}(-,\mathpzc{L}(L,N))_{\mathpzc{L}(M,X\otimes
   M),\mathpzc{L}(L,X\otimes
     M)}\mathpzc{L}(-,X\otimes M)_{L,M} \\
   & = [\mathpzc{L}(M,-)_{X\otimes M,N},1]
   \mathrm{Hom}_{\mathpzc{V}}(\mathpzc{L}(M,X\otimes
   M),-)_{\mathpzc{L}(M,N),\mathpzc{L}(L,N)}\mathpzc{L}(-,N)_{L,M}
\puncteq{.}
\end{align*}
This is an equation of $\mathpzc{V}$-morphisms $\mathpzc{L}(L,M)\rightarrow
[\mathpzc{L}(X\otimes M,N),[\mathpzc{L}(M,X\otimes
M),\mathpzc{L}(L,N)]]$, and it corresponds to an equation of $\mathpzc{V}$-morphisms
$\mathpzc{L}(L,M)\otimes \mathpzc{L}(X\otimes M,N)\rightarrow[\mathpzc{L}(M,X\otimes
M),\mathpzc{L}(L,N)] $ under the
adjunction of the closed structure, which in turn corresponds to an equation of \mbox{$\mathpzc{V}$-morphisms}
$\mathpzc{L}(X\otimes M,N)\otimes \mathpzc{L}(L,M)\rightarrow [\mathpzc{L}(M,X\otimes
M),\mathpzc{L}(L,N)]$ by composition with $c$.

Recall that $\mathrm{Hom}_{\mathpzc{V}}(\mathpzc{L}(M,X\otimes
M,-),-)_{\mathpzc{L}(M,N),\mathpzc{L}(L,N)}$ corresponds to
$M_{\mathpzc{V}}$ under the adjunction, while $\mathrm{Hom}_{\mathpzc{V}}(-,\mathpzc{L}(L,N))_{\mathpzc{L}(M,X\otimes
   M),\mathpzc{L}(L,X\otimes
     M)}$ corresponds to $M_{\mathpzc{V}}c$. 
The correspondence is given by application of $-\otimes \mathpzc{L}(X\otimes
M,N)$ and composition with $e^{\mathpzc{L}(X\otimes M,N)}_{[\mathpzc{L}(M,X\otimes
M),\mathpzc{L}(L,N)]}$ (and we also compose with $c$).

Then the  transform of the left hand side of the naturality condition is:
\begin{align*}
& e^{\mathpzc{L}(X\otimes M,N)}_{[\mathpzc{L}(M,X\otimes
M),\mathpzc{L}(L,N)]} \\
& \quad (( [\mathpzc{L}(L,-)_{X\otimes M,N},1]
 \mathrm{Hom}_{\mathpzc{V}}(-,\mathpzc{L}(L,N))_{\mathpzc{L}(M,X\otimes
   M),\mathpzc{L}(L,X\otimes
     M)}\mathpzc{L}(-,X\otimes M)_{L,M} )\otimes 1_{\mathpzc{L}(X\otimes
M,N)}) c \\ 
& = e^{[\mathpzc{L}(L,X\otimes M),\mathpzc{L}(L,N)]}_{[\mathpzc{L}(M,X\otimes
M),\mathpzc{L}(L,N)]}  (( 
 \mathrm{Hom}_{\mathpzc{V}}(-,\mathpzc{L}(L,N))_{\mathpzc{L}(M,X\otimes
   M),\mathpzc{L}(L,X\otimes
     M)}\mathpzc{L}(-,X\otimes M)_{L,M} )\otimes
   \mathpzc{L}(L,-)_{X\otimes M,N}) c \\
   & \quad\mbox{(by extraordinary $\mathpzc{V}$-naturality of $e$)} \\
 & =M_{\mathpzc{V}}c(\mathpzc{L}(-,X\otimes M)_{L,M} \otimes
\mathpzc{L}(L,-)_{X\otimes M,N})c \\
& = M_{\mathpzc{V}}( 
\mathpzc{L}(L,-)_{X\otimes M,N}\otimes\mathpzc{L}(-,X\otimes M)_{L,M})
\\
& \quad\mbox{(by naturality of $c$ and $c^2=1$)}
\puncteq{.}
\end{align*}

Similarly, the transform of the right hand side of the naturality
condition is:
\begin{align*}
     & e^{\mathpzc{L}(X\otimes M,N)}_{[\mathpzc{L}(M,X\otimes
M),\mathpzc{L}(L,N)]} \\ 
& \quad(([\mathpzc{L}(M,-)_{X\otimes M,N},1]
   \mathrm{Hom}_{\mathpzc{V}}(\mathpzc{L}(M,X\otimes
   M),-)_{\mathpzc{L}(M,N),\mathpzc{L}(L,N)}\mathpzc{L}(-,N)_{L,M})\otimes
   1_{\mathpzc{L}(X\otimes
M,N)})c \\
& = e^{[\mathpzc{L}(M,X\otimes M),\mathpzc{L}(M,N)]}_{[\mathpzc{L}(M,X\otimes
M),\mathpzc{L}(L,N)]}  (( 
  \mathrm{Hom}_{\mathpzc{V}}(\mathpzc{L}(M,X\otimes
   M),-)_{\mathpzc{L}(M,N),\mathpzc{L}(L,N)}\mathpzc{L}(-,N)_{L,M}
   )\otimes \mathpzc{L}(M,-)_{X\otimes M,N}) c \\
   & \quad\mbox{(by extraordinary $\mathpzc{V}$-naturality of $e$)} \\
 & =M_{\mathpzc{V}}(\mathpzc{L}(-,N)_{L,M} \otimes
\mathpzc{L}(M,-)_{X\otimes M,N})c 
\puncteq{.}
\end{align*}
 This proves the missing equation used in the chain of equations
 above, and thus ends the proof.
\end{proof}

\section{Identification of the $\mathpzc{V}$-category of
  $T$-algebras}\label{sectionstrict}
Let $\mathpzc{V}$ be complete and cocomplete.
Let $\mathpzc{P}$ be a small and $\mathpzc{L}$ be a cocomplete
$\mathpzc{V}$-category, and denote again by $T$ the $\mathpzc{V}$-monad
$\mathrm{Lan}_H[H,1]\co
[\mathrm{ob}\mathpzc{P},\mathpzc{L}]\rightarrow
[\mathrm{ob}\mathpzc{P},\mathpzc{L}]$ from \ref{subsectionmonad} given by the Kan adjunction
$\mathrm{Lan}_{H}\dashv [H,1]$. Recall that we denote by
$[\mathrm{ob}\mathpzc{P},\mathpzc{L}]^T$ the
Eilenberg-Moore object i.e. the $\mathpzc{V}$-category of $T$-algebras, which we described in Proposition \ref{propstrictalgebras}
from \secref{propstrictalgebras} 
explicitly in the special case that $\mathpzc{V}=\mathpzc{Gray}$.

We are now going to show that
$T\text{-}\mathrm{Alg}$ is isomorphic as a $\mathpzc{V}$-category to the functor
$\mathpzc{V}$-category $[\mathpzc{P},\mathpzc{L}]$ i.e. that
$[H,1]$ is strictly monadic, which is the content of Theorem
\ref{strictmonadicitytheorem} below.

\begin{lemma}
  Let $(A,a)$ be a $T$-algebra cf. \secref{algebraaxioms}. Then $A\co
  \mathrm{ob}\mathpzc{P}\rightarrow \mathpzc{L}$ and the transforms
  $A_{PQ}\coloneqq n(a_{PQ})\co \mathpzc{P}(P,Q)\rightarrow
  \mathpzc{L}$ under the adjunction \eqref{homVadjtensor} from \secref{homVadjtensor} of the components $a_{PQ}$ of $a$ at objects
  $P,Q\in\mathpzc{P}$ have the structure of a
  $\mathpzc{V}$-functor. 
Conversely, if $A\co \mathpzc{P}\rightarrow \mathpzc{L}$ is a
$\mathpzc{V}$-functor, then the function on objects considered as a
functor $A\co \mathrm{ob}\mathpzc{P}\rightarrow \mathpzc{L}$ and the
transformation $a$ induced by the transforms  $n^{-1}(A_{PQ})$ of
the strict hom functors are the underlying data of a $T$-algebra.
\end{lemma}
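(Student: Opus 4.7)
The plan is to unpack both directions of the correspondence by using the explicit description of $a$ in terms of its coend components and then translating the two algebra axioms \eqref{algebraaxioms} into the two $\mathpzc{V}$-functor axioms (unit preservation and compatibility with composition) via the hom $\mathpzc{V}$-adjunction $n$ of the tensor product \eqref{homVadjtensor}.

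First I would observe that, since $TA = \mathrm{Lan}_H A$ is given pointwise by the coend $\int^{P}\mathpzc{P}(P,Q)\otimes AP$ (cf.\ \eqref{explicitLan}), the universal property \eqref{universalcoend} of $\kappa$ implies that specifying a $\mathpzc{V}$-natural transformation $a\co TA\Rightarrow A$ is equivalent to specifying a $\mathpzc{V}$-natural family of components $a_{PQ}\co \mathpzc{P}(P,Q)\otimes AP\to AQ$, and these in turn correspond under $n$ to morphisms $A_{PQ}\co \mathpzc{P}(P,Q)\to \mathpzc{L}(AP,AQ)$. This already gives the bijection of underlying data; what remains is to match the axioms.

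For the unit axiom $M_{\mathpzc{L}}(a,\eta_A)=1_A$, I would use the explicit formula for $\eta_A$ from Lemma \ref{unitmonad} and the identification of $\lambda$ following \eqref{unitortensor}: applying $n$ to both sides and chasing through the definition of $j$, one obtains precisely the unit axiom $A_{PP}\circ j_P = j_{AP}$ for a $\mathpzc{V}$-functor. The computation is straightforward bookkeeping with the triangle identity \eqref{triangleidentity} and the defining property of $\lambda$.

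The main obstacle is the associativity axiom $M_{\mathpzc{L}}(a,T_{TA,A}a)=M_{\mathpzc{L}}(a,\mu_A)$, which must be shown to correspond to the composition axiom $A_{PR}\circ M_{\mathpzc{P}}=M_{\mathpzc{L}}\circ(A_{QR}\otimes A_{PQ})$. Here one has to unpack $\mu_A$ using \eqref{multimonad} (so that it is induced by $\kappa_{P,R}(M_{\mathpzc{P}}\otimes 1)\alpha^{-1}$) and $T_{TA,A}a$ using the hom formula \eqref{homKan} of Lemma \ref{homLan}. Both sides are then families of morphisms out of the iterated tensor $\mathpzc{P}(Q,R)\otimes(\mathpzc{P}(P,Q)\otimes AP)$; transporting them under $n$ to maps into $[\mathpzc{P}(Q,R)\otimes\mathpzc{P}(P,Q),\mathpzc{L}(AP,AR)]$ is exactly the situation handled by the First Transformation Lemma \ref{asabovebynatofML} (for the $\mu_A$ side, which involves an associator $\alpha$) and the Second Transformation Lemma \ref{noticethesymmetry} (for the $T_{TA,A}a$ side, which involves the partial action of $\mathpzc{P}(Q,R)\otimes -$). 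After applying these two lemmata the equation becomes the image under $[1,M_{\mathpzc{L}}]\circ\mathrm{Ten}$ (respectively $M_{\mathpzc{V}}\circ(\mathpzc{L}(-,AR)\otimes-)\circ c$) of $(A_{QR}\otimes A_{PQ})$, matched against $A_{PR}\circ M_{\mathpzc{P}}$; universality of the coend $\kappa$ then reduces both sides to the functor composition axiom.

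Finally, the converse direction is essentially the same computation read backwards: given a $\mathpzc{V}$-functor $A$, define $a_{PQ}=n^{-1}(A_{PQ})$, assemble them into $a$ by the universal property of the coend, and verify that the two $\mathpzc{V}$-functor axioms yield the two algebra axioms by the same applications of Lemmata \ref{asabovebynatofML} and \ref{noticethesymmetry} together with the invertibility of $n$. I would write the two directions in a single computation since each step is reversible.
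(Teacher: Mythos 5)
Your overall architecture is the paper's: identify the data via the universal property of the coend and the hom adjunction $n$, match the unit axiom with $j_{AP}=A_{PP}j_P$ via the unitor $\lambda$, and match the associativity axiom with $M_{\mathpzc{L}}(A_{QR}\otimes A_{PQ})=A_{PR}M_{\mathpzc{P}}$ using the transformation machinery. The data correspondence and the unit-axiom step are fine as you describe them.

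However, your deployment of the two Transformation Lemmata for the associativity axiom is backwards, and as written the step would fail. After rearranging the axiom to
\begin{equation*}
M_{\mathpzc{L}}(a_{QR},(\mathpzc{P}(Q,R)\otimes a_{PQ})\alpha)=M_{\mathpzc{L}}(a_{PR},M_{\mathpzc{P}}\otimes 1_{AP})\puncteq{,}
\end{equation*}
it is the \emph{left-hand} ($Ta$) side that is handled by the First Transformation Lemma \ref{asabovebynatofML}: that lemma is precisely about $n\,\mathpzc{L}(\alpha,1)\,M_{\mathpzc{L}}(g,X\otimes f)$ where $f$ itself has a tensor-product domain $Y\otimes L$, which is the shape of $\mathpzc{P}(Q,R)\otimes a_{PQ}$ with $a_{PQ}\co\mathpzc{P}(P,Q)\otimes AP\to AQ$; both the associator and the whiskering are absorbed there, yielding $[1,M_{\mathpzc{L}}]\mathrm{Ten}(n\otimes n)$ and hence $M_{\mathpzc{L}}(A_{QR}\otimes A_{PQ})$. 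The Second Transformation Lemma \ref{noticethesymmetry} does not apply to this side: it treats whiskering of a plain morphism $f\co L\to M$ (its role is for the algebra $1$-cell condition $fx=yTf$, where $f_P\co AP\to BP$ is whiskered), and even if you force $L=\mathpzc{P}(P,Q)\otimes AP$ it produces a one-variable transform $\mathpzc{P}(Q,R)\to\mathpzc{L}(\mathpzc{P}(P,Q)\otimes AP,AR)$ rather than the two-variable transform into $[\mathpzc{P}(Q,R)\otimes\mathpzc{P}(P,Q),\mathpzc{L}(AP,AR)]$ you need. Conversely, the $\mu_A$ side $M_{\mathpzc{L}}(a_{PR},M_{\mathpzc{P}}\otimes 1_{AP})$ is not of the form $M_{\mathpzc{L}}(g,X\otimes f)$ at all — it is precomposition with the fixed $\mathpzc{V}$-morphism $M_{\mathpzc{P}}\otimes 1$ — so neither lemma is needed there; the paper identifies its transform as $A_{PR}M_{\mathpzc{P}}$ by an elementary chain of naturalities of $\eta$ and $M_{\mathpzc{V}}$ (and even remarks in a footnote that the full strength of Lemma \ref{asabovebynatofML} is not needed for the algebra identification). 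With the lemmata reassigned in this way your argument goes through and coincides with the paper's proof.
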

\begin{proof}
  The $1$-cell $a\co TA\rightarrow A$ has component 
  \begin{align*}
    a_{Q}\co (TA)Q=
 \int^{P\in\mathrm{ob}\mathpzc{P}}\mathpzc{P}(P,Q)\otimes AP
 \rightarrow AQ
  \end{align*}
at the object $Q\in \mathpzc{P}$, and this is in turn induced from the
$\mathpzc{V}$-natural family of components
\begin{align*}
  a_{PQ}\co \mathpzc{P}(P,Q)\otimes AP\rightarrow AQ
\puncteq{.}
\end{align*}
These are elements of  $\mathpzc{L}(\mathpzc{P}(P,Q)\otimes AP,AQ)$.
Under the hom $\mathpzc{V}$-adjunction \eqref{homVadjtensor} from \secref{homVadjtensor} of the
tensor product, these correspond to elements of the internal hom
$[\mathpzc{P}(P,Q),\mathpzc{L}(AP,AQ)]$, i.e. 
\begin{align*}
  A_{PQ}\co \mathpzc{P}(P,Q)\rightarrow
  \mathpzc{L}(AP,AQ)
\puncteq{.}
\end{align*}
We now have to examine how the algebra axiom cf. \secref{algebraaxioms}
\begin{align*}
M_{[\mathrm{ob}\mathpzc{P},\mathpzc{L}]}(a,Ta)=M_{[\mathrm{ob}\mathpzc{P},\mathpzc{L}]}(a,\mu_A)
\puncteq{}
\end{align*}
transforms under the adjunction.

This is an equation of morphisms in
$[\mathrm{ob}\mathpzc{P},\mathpzc{L}]_0$, which is equivalent to
the equations
\begin{align*}
  M_{\mathpzc{L}}(a_{QR},\mathpzc{P}(Q,R)\otimes a_{PQ})=
  M_{\mathpzc{L}}(a_{PR}, (M_{\mathpzc{P}}\otimes 1_{AP})\alpha^{-1})
\end{align*}
of elements in $\mathpzc{L}(\mathpzc{P}(Q,R)\otimes
(\mathpzc{P}(P,Q)\otimes AP),AR)$ where $P,Q,R$ run through the
objects in $\mathpzc{P}$.
To apply the hom $\mathpzc{V}$-adjunction \eqref{homVadjtensor} from \secref{homVadjtensor} for
$X=\mathpzc{P}(Q,R)\otimes \mathpzc{P}(P,Q)$ and $L=AP$ and $M=AQ$, we
consider the equivalent equations
\begin{align}\label{equivalgebraaxiom}
   M_{\mathpzc{L}}(a_{QR},(\mathpzc{P}(Q,R)\otimes a_{PQ})\alpha)=
  M_{\mathpzc{L}}(a_{PR}, M_{\mathpzc{P}}\otimes 1_{AP})
\puncteq{.}
\end{align}
Applying Lemma \ref{asabovebynatofML} from \secref{asabovebynatofML} to the left hand side shows that
its transform is given by\footnote{In fact, we do not need
  the full strength of Lemma \ref{asabovebynatofML} here, and one could
   do with more elementary considerations if one was merely 
   concerned with the identification of algebras.}
\begin{align*}
  M_{\mathpzc{L}}(A_{QR}\otimes A_{PQ})
\puncteq{.}
\end{align*}
On the other hand, the image of the right hand side under
\eqref{nintermsof} from \secref{nintermsof} is determined by the following elementary
transformations \footnote{We will be short on such routine
  transformations below. The computation here should serve as an
  example for basic naturality transformations of the same kind. One
  could subsume this into another more elementary lemma.}. 
\begin{align*}
 &([\eta^{AP}_{\mathpzc{P}(Q,R)\otimes
   \mathpzc{P}(P,Q)},1]\mathpzc{L}(L,-)_{(\mathpzc{P}(Q,R)\otimes
   \mathpzc{P}(P,Q))\otimes
   AP,AR}M_{\mathpzc{L}})(a_{PR},M_{\mathpzc{P}}\otimes 1_{AP}) \\
& = [\eta^{AP}_{\mathpzc{P}(Q,R)\otimes
   \mathpzc{P}(P,Q)},1](M_{\mathpzc{V}}( \mathpzc{L}(AP,-)_{\mathpzc{P}(P,R)\otimes
  AP,AR}(a_{PR}),  \\
& \quad \quad \mathpzc{L}(AP,-)_{(\mathpzc{P}(Q,R)\otimes
  \mathpzc{P}(P,Q))\otimes AP, \mathpzc{P}(P,R)\otimes
  AR}(M_{\mathpzc{P}}\otimes 1_{AP}) ))\\
& \quad\mbox{(by the functor axiom for $\mathpzc{L}(AP,-)$)}\\
& =M_{\mathpzc{V}}( \mathpzc{L}(AP,-)_{\mathpzc{P}(P,R)\otimes
  AP,AR}(a_{PR}), \\
& \quad \quad  [\eta^{AP}_{\mathpzc{P}(Q,R)\otimes
   \mathpzc{P}(P,Q)},1]( \mathpzc{L}(AP,-)_{(\mathpzc{P}(Q,R)\otimes
  \mathpzc{P}(P,Q))\otimes AP, \mathpzc{P}(P,R)\otimes
  AR}(M_{\mathpzc{P}}\otimes 1_{AP})) ) \\
& \quad\mbox{(by ordinary $\mathpzc{V}$-naturality of
  $M_{\mathpzc{V}}$)}\\
& = M_{\mathpzc{V}}( \mathpzc{L}(AP,-)_{\mathpzc{P}(P,R)\otimes
  AP,AR}(a_{PR}), [1,\eta^{AP}_{\mathpzc{P}(P,R)}]( M_{\mathpzc{P}}))
\\
& \quad\mbox{(by ordinary $\mathpzc{V}$-naturality of
  $\eta$)}\\
& = M_{\mathpzc{V}}( ([\eta^{AP}_{\mathpzc{P}(P,R)},1]\mathpzc{L}(AP,-)_{\mathpzc{P}(P,R)\otimes
  AP,AR})(a_{PR}), M_{\mathpzc{P}})
\\
& \quad\mbox{(by extraordinary $\mathpzc{V}$-naturality of
  $M_{\mathpzc{V}}$)}\\
& = M_{\mathpzc{V}}(A_{PR},M_{\mathpzc{P}})=A_{PR}M_{\mathpzc{P}}
\end{align*}

Hence, the algebra axiom is equivalent to the equation
\begin{align}\label{functoraxiom}
  M_{\mathpzc{L}}(A_{QR}\otimes A_{PQ})=A_{PQ}M_{\mathpzc{P}}
\puncteq{,}
\end{align}
and this is exactly one of the two axioms for a
$\mathpzc{V}$-functor.

Now we want to determine the transform of the other axiom of a $T$-algebra:
\begin{align*}
  1_A=M_{[\mathrm{ob}\mathpzc{P},\mathpzc{L}]}(a,\eta_A)
\puncteq{.}
\end{align*}
 First note that this equation is equivalent
to the equations
\begin{align*}
  1_{AP}=M_{\mathpzc{L}}(a_{PP},(j_P\otimes 1)\lambda^{-1}_{AP})
\end{align*}
on objects in $\mathpzc{L}(AP,AP)$ where $P$ runs through the objects
of $\mathpzc{P}$.
In turn, these are equivalent to the equations
\begin{align*}
  \lambda_{AP}=M_{\mathpzc{L}}(a_{PP},j_P\otimes 1)
\puncteq{.}
\end{align*}
By definition of the unitor for the tensor product, the transform of
the left hand side is given by the unit $j_{AP}\co I\rightarrow
\mathpzc{L}(AP,AP)$ of the $\mathpzc{V}$-category at $AP$ (under the
identification of elements of the internal hom and morphisms in $\mathpzc{V}$). On the
other hand, it is routine to identify the transform of the right hand
side as $A_{PP}j_{P}$. Thus the second axiom of a $T$-algebra
is equivalent to
\begin{align}\label{unitfunctoraxiom}
 j_{AP}= A_{PP}j_{P}
\puncteq{,}
\end{align}
and this is exactly the other axiom of a $\mathpzc{V}$-functor.
%%%
\end{proof}

Recall that the hom object of
$[\mathrm{ob}\mathpzc{P},\mathpzc{L}]^T$  at algebras $(A,a)$
and $(B,b)$ is given by
 the equalizer
\begin{equation*}\label{equalizerhomeilenbergmooreT}
\begin{tikzpicture}
\matrix (a) [matrix of math nodes, row sep=3em, column sep=3.9em, text
height=1.5ex, text depth=0.25ex]{
  \lbrack\mathrm{ob}\mathpzc{P},\mathpzc{L} \rbrack^T(A,B) & \lbrack
  \mathrm{ob}\mathpzc{P},\mathpzc{L} \rbrack(A,B) & 
\lbrack \mathrm{ob}\mathpzc{P},\mathpzc{L} \rbrack(TA,B) \\ };
\draw[->]
($ (a-1-1.east) + (0,.0) $)
-- node [above] {}($ (a-1-2.west) + (0,.0) $);

\draw[->]
($ (a-1-2.east) + (0,.075) $)
-- node [above] {$\scriptstyle\lbrack \mathrm{ob}\mathpzc{P},\mathpzc{L}\rbrack(a,1)$}($ (a-1-3.west) + (0,.075) $);
\draw[->]
($ (a-1-2.east) + (0,-.075) $)
-- node [below] {$\scriptstyle \lbrack \mathrm{ob}\mathpzc{P},\mathpzc{L}\rbrack(1,b)T_{A,B}$}($ (a-1-3.west) + (0,-.075) $);	
\puncttikz[a-1-3]{.}		
\end{tikzpicture}
\end{equation*}
Spelling out the hom objects of
$[\mathrm{ob}\mathpzc{P},\mathpzc{L}]$ as in \ref{subsectionmonad},
this is the same as the following equalizer
\begin{equation*}
\begin{tikzpicture}
\matrix (a) [matrix of math nodes, row sep=3em, column sep=4em, text
height=1.5ex, text depth=0.25ex]{
  \lbrack\mathrm{ob}\mathpzc{P},\mathpzc{L} \rbrack^T(A,B) & \Pi_{P\in\mathrm{ob}\mathpzc{P}}\mathpzc{L}(AP,BP) & 
  \Pi_{\mathpzc{P}\in\mathrm{ob}\mathpzc{P}}\mathpzc{L}(\int^{R\in\mathrm{ob}\mathpzc{P}}\mathpzc{P}(R,P)\otimes
  AR,BP) \\ };
\draw[->]
($ (a-1-1.east) + (0,.0) $)
-- node [above] {}($ (a-1-2.west) + (0,.0) $);

\draw[->]
($ (a-1-2.east) + (0,.075) $)
-- node [above] {$\scriptstyle\Pi \mathpzc{L}(a_P,1)$}($ (a-1-3.west) + (0,.075) $);
\draw[->]
($ (a-1-2.east) + (0,-.075) $)
-- node [below] {$\scriptstyle \Pi  \mathpzc{L}(1,b_P)(T_{A,B})_{P}$}($ (a-1-3.west) + (0,-.075) $);			
\end{tikzpicture}
\end{equation*}
where  $E_PT_{A,B}= (T_{A,B})_PE_P$ for a unique $\mathpzc{V}$-morphism
$(T_{A,B})_P\co\mathpzc{L}(AP,BP)\rightarrow \mathpzc{L}(TAP,TBP)$.
By \eqref{repcoend} and the universal property of the end,
this equalizer is the same as the equalizer of the compositions with
$\Pi_{P\in\mathrm{ob}\mathpzc{P}}\mathpzc{L}(\kappa_{RP},1)$ (by
definition \eqref{unitcoend} of $\kappa$ and by Yoneda),
and since 
\begin{align*}
  n\co \mathpzc{L}(\mathpzc{P}(R,P)\otimes
  AR,BP)\cong [\mathpzc{P}(R,P),\mathpzc{L}(AR,BP)]
\puncteq{,}
\end{align*}
this is in turn the same as the following equalizer
\begin{equation*}\label{equalizerhomeilenbergmoore}
\begin{tikzpicture}
\matrix (a) [matrix of math nodes, row sep=3em, column sep=3.9em, text
height=1.5ex, text depth=0.25ex]{
  \lbrack\mathrm{ob}\mathpzc{P},\mathpzc{L} \rbrack^T(A,B)) & \Pi_{P\in\mathrm{ob}\mathpzc{P}}\mathpzc{L}(AP,BP) & 
   \Pi_{R,P\in\mathrm{ob}\mathpzc{P}}\lbrack \mathpzc{P}(R,P),\mathpzc{L}(
  AR,BP)\rbrack \\ };
\draw[->]
($ (a-1-1.east) + (0,.0) $)
-- node [above] {}($ (a-1-2.west) + (0,.0) $);

\draw[->]
($ (a-1-2.east) + (0,.075) $)
-- node [above] {$\scriptstyle \Pi n\mathpzc{L}(a_{RP},1)$}($ (a-1-3.west) + (0,.075) $);
\draw[->]
($ (a-1-2.east) + (0,-.075) $)
-- node [below] {$\scriptstyle \Pi  n \mathpzc{L}(1,b_{RP})
  (\mathpzc{P}(R,P)\otimes -)_{AR,BR}$}($ (a-1-3.west) + (0,-.075) $);	
\puncttikz[a-1-3]{.}		
\end{tikzpicture}
\end{equation*}
(where we have used that $a_P\kappa_{R,P}=a_{RP}$ for the first
morphism of the equalizer, and where we
have used that
$\mathpzc{L}(\kappa_{R,P}^A,1)(E_P)_{TA,TB}T_{A,B}=\mathpzc{L}(1,\kappa_{R,P}^B)(\mathpzc{P}(R,P)\otimes
-)_{AR,BR} (E_R)_{A,B} $ by ordinary \mbox{$\mathpzc{V}$-naturality} of
$\kappa^A_{R,P}$ in $A$ 
and that $b_P\kappa_{R,P}=b_{RP}$ for the second
morphism of the equalizer).

One can now use the Yoneda lemma to show that this is exactly the
equalizer  \eqref{functorcatequalizer} from \secref{functorcatequalizer} which defines the hom object of the functor
$\mathpzc{V}$-category: one checks that both the first  morphism given here
and the transform of  $\mathpzc{L}(A-,AR)_{RP} $
 map the identity at $AP$ to
$A_{RP}$, and both the second here and the transform of $\mathpzc{L}(AP,B-)_{PR} $ 
map the identity at $BR$ to $B_{RP}$.

Finally, note that the composition law of
$[\mathrm{ob}\mathpzc{P},\mathpzc{L}]^T$ is  induced from the
composition law of the functor category $[\mathrm{ob}\mathpzc{P},\mathpzc{L}]$, which in
turn, is induced from the composition law of $\mathpzc{L}$.

Likewise, the composition law of the functor category
$[\mathpzc{P},\mathpzc{L}]$ is induced via the evaluation functors
from the composition law in $\mathpzc{L}$, and since the evaluation 
functors $E_P\co [\mathpzc{P},\mathpzc{L}]\rightarrow \mathpzc{L}$
where $P\in\mathrm{ob}\mathpzc{P}$ factorize through $[H,1]$ and
$E_P\co [\mathrm{ob}\mathpzc{P},\mathpzc{L}]\rightarrow \mathpzc{L}$,
this means that $[H,1]\co \mathrm{ob}[\mathpzc{P},\mathpzc{L}] \rightarrow
\mathrm{ob}[\mathrm{ob}\mathpzc{P},\mathpzc{L}]^T$ and the
$\mathpzc{V}$-isomorphisms on the hom objects induced from the
comparison of equalizers above 
satisfy the $\mathpzc{V}$-functor
axiom.

Similarly, this data is shown to satisfy the unit axiom for a
$\mathpzc{V}$-functor.
This proves the following theorem. 
\begin{theorem}\label{strictmonadicitytheorem}
Given a small $\mathpzc{V}$-category $\mathpzc{P}$ and a cocomplete
$\mathpzc{V}$-category $\mathpzc{L}$, then the functor 
$[H,1]\co [\mathpzc{P},\mathpzc{L}]\rightarrow
[\mathrm{ob}\mathpzc{P},\mathpzc{L}]$ induced by the inclusion $H\co
\mathrm{ob}\mathpzc{P}\rightarrow \mathpzc{L}$ is strictly monadic for
the $\mathpzc{V}$-monad  $T=[H,1]\mathrm{Lan}_H$ given by the Kan
adjunction $\mathrm{Lan}_H\dashv [H,1]$.
In particular, the functor $\mathpzc{V}$-category
$[\mathpzc{P},\mathpzc{L}]$ is isomorphic as a $\mathpzc{V}$-category
to the Eilenberg-Moore object $[\mathrm{ob}\mathpzc{P},\mathpzc{L}]^T$
in $\mathpzc{V}\text{-}\mathpzc{CAT}$.   \qed

\end{theorem}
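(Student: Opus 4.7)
The plan is to exhibit a $\mathpzc{V}$-functor $\Phi\co [\mathpzc{P},\mathpzc{L}]\to [\mathrm{ob}\mathpzc{P},\mathpzc{L}]^T$ that is an isomorphism on objects and on hom objects, and such that the forgetful $\mathpzc{V}$-functor $U^T\co[\mathrm{ob}\mathpzc{P},\mathpzc{L}]^T\to[\mathrm{ob}\mathpzc{P},\mathpzc{L}]$ factors as $[H,1]=U^T\Phi$. Since the excerpt already records the explicit description \eqref{explicitLan} of $\mathrm{Lan}_H$, the explicit descriptions of $\eta$ and $\mu$, and the equalizer description \eqref{equalizerhomEM} of the hom objects of the Eilenberg--Moore object, the work reduces to two bijections: one on objects and one on hom objects.

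First I would handle objects. The lemma preceding the theorem already does the core computation: given a $T$-algebra $(A,a)$, the component $a_Q\co (TA)Q\to AQ$ is determined by a $\mathpzc{V}$-natural family $a_{PQ}\co \mathpzc{P}(P,Q)\otimes AP\to AQ$, and the hom $\mathpzc{V}$-adjunction $n$ of the tensor product \eqref{homVadjtensor} transports this family to morphisms $A_{PQ}\co\mathpzc{P}(P,Q)\to\mathpzc{L}(AP,AQ)$. Using Lemma \ref{asabovebynatofML} on the left-hand side of the associativity algebra axiom \eqref{equivalgebraaxiom} and a direct naturality computation on the right, this axiom becomes exactly the composition axiom \eqref{functoraxiom} for a $\mathpzc{V}$-functor; similarly the unit algebra axiom transforms to \eqref{unitfunctoraxiom}. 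Conversely, given a $\mathpzc{V}$-functor $A\co\mathpzc{P}\to\mathpzc{L}$, the family $n^{-1}(A_{PQ})$ induces $a_Q$ via the universal property \eqref{universalcoend} of the coend and assembles into a $\mathpzc{V}$-natural $a\co TA\to A$ satisfying the algebra axioms. This establishes a bijection between objects of $[\mathpzc{P},\mathpzc{L}]$ and of $[\mathrm{ob}\mathpzc{P},\mathpzc{L}]^T$ compatible with $U^T$ and $[H,1]$.

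Next I would identify the hom objects. Starting from the equalizer \eqref{equalizerhomEM} defining $[\mathrm{ob}\mathpzc{P},\mathpzc{L}]^T(A,B)$ and using that the product description of $[\mathrm{ob}\mathpzc{P},\mathpzc{L}]$ means each hom object equals $\prod_P\mathpzc{L}(AP,BP)$, I would rewrite the two parallel arrows by applying the universal property \eqref{repcoend} of the coend on the target and then the hom adjunction $n$ fibrewise, to obtain an equalizer with codomain $\prod_{R,P}[\mathpzc{P}(R,P),\mathpzc{L}(AR,BP)]$. By Yoneda, the first parallel arrow sends $1_{AP}$ to the transform of $a_{RP}$, i.e.\ to $A_{RP}$, while the second sends $1_{BR}$ to $B_{RP}$; these are precisely the morphisms $\rho$ and $\sigma$ of the defining equalizer \eqref{functorcatequalizer} for $[\mathpzc{P},\mathpzc{L}](A,B)$. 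Hence the two equalizers agree, and the induced comparison is an isomorphism in $\mathpzc{V}_0$. The hardest step will be tracking the multiple applications of extraordinary $\mathpzc{V}$-naturality of $\kappa$ and of $\eta^L_X$ to identify the two equalizer diagrams without collapsing any indices; the Second Transformation Lemma \ref{noticethesymmetry} is what makes the identification clean on one of the two parallel arrows.

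Finally, to promote the object and hom bijections to a $\mathpzc{V}$-functor isomorphism, I would verify compatibility with the $\mathpzc{V}$-category structures. Both composition laws are ultimately induced pointwise by $M_{\mathpzc{L}}$: on $[\mathrm{ob}\mathpzc{P},\mathpzc{L}]^T$ through the forgetful $U^T$ into the product, and on $[\mathpzc{P},\mathpzc{L}]$ through the evaluations $E_P$ that factor through $[H,1]$. Since the hom isomorphism was constructed precisely so that evaluation in $[\mathpzc{P},\mathpzc{L}]$ corresponds under the comparison to the projection $E_P\circ U^T$, the $\mathpzc{V}$-functor and unit axioms for $\Phi$ follow from the corresponding axioms for $\mathpzc{L}$, and the factorization $[H,1]=U^T\Phi$ is immediate from the construction. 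This shows $[H,1]$ is strictly monadic, yielding the stated isomorphism of $\mathpzc{V}$-categories.
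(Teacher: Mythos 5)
Your proposal is correct and follows essentially the same route as the paper: transport the algebra structure and axioms through the hom adjunction of the tensor product (using Lemma \ref{asabovebynatofML}) to get the bijection on objects, identify the Eilenberg--Moore hom equalizer with the end defining $[\mathpzc{P},\mathpzc{L}](A,B)$ via \eqref{repcoend} and $n$, and observe that both composition laws are induced pointwise from $M_{\mathpzc{L}}$. The only cosmetic difference is that where you invoke the Second Transformation Lemma \ref{noticethesymmetry} to identify one of the parallel arrows, the paper settles the comparison by a direct Yoneda check on where the identities $1_{AP}$ and $1_{BR}$ are sent; both are valid and the paper's closing remark even flags your more explicit variant as an alternative.
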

\begin{remark}
  An alternative strategy to achieve this result is to use an enriched
  version of Beck's monadicity theorem (see for example
  \cite[Th. II.2.1]{dubuc}). The Kan adjunction meets the
  conditions of such a theorem because  $[H,1]$ creates pointwise
  colimits, cf. Lemma \ref{H1creates} from \secref{H1creates}.

Yet another strategy for specific $\mathpzc{V}$ where an
identification of the functor $\mathpzc{V}$-category is known, is to be completely
explicit: For example, if $\mathpzc{V}=\mathpzc{Gray}$, the functor
category can be explicitly identified
(cf. \cite[Prop. 12.2]{gurskicoherencein}). With the help of the two
Transformation Lemmata
\ref{asabovebynatofML} and \ref{noticethesymmetry} from \secref{noticethesymmetry} it is then
straightforward to identify the algebra $1$-cells, algebra $2$-cells, and
algebra $3$-cells from Proposition \ref{propstrictalgebras} from
\secref{propstrictalgebras} explicitly as we have done it for algebras above.
\end{remark}
\section{Identification of  the $\mathpzc{Gray}$-category
  $\mathrm{Ps}\text{-}T\text{-}\mathrm{Alg}$ of pseudo $T$-algebras}\label{sectionGrayhom}

The $\mathpzc{Gray}$-category $\mathpzc{Tricat}(\mathpzc{P},\mathpzc{L})$ of
trihomomorphisms $\mathpzc{P}\rightarrow \mathpzc{L}$,
tritransformations, trimodifications, and perturbations
has been described by Gurski \cite[Th. 9.4]{gurskicoherencein}. The basic definitions of the
objects and the $2$-globular data of the local hom $2$-categories, i.e. trihomomorphisms,
tritransformations, trimodifications, and perturbations may be found
in \cite[4.]{gurskicoherencein}. These are of course definitions for
the general case that domain and codomain are honest
tricategories. In our case, they simplify considerably because domain
and codomain are always
$\mathpzc{Gray}$-categories.

Let again $T=[H,1]\mathrm{Lan}_H$ be the $\mathpzc{Gray}$-monad on
$[\mathrm{ob}\mathpzc{P},\mathpzc{L}]$  from
\ref{subsectionmonad} corresponding to the Kan
adjunction $\mathrm{Lan}_H\dashv [H,1]$, where $\mathpzc{P}$ is small and $\mathpzc{L}$ is cocomplete.
The aim of this section is to prove  Theorem \ref{graygleichls} below, which
states that
$\mathrm{Ps}\text{-}T\text{-}\mathrm{Alg}$  is isomorphic to the
  full sub-$\mathpzc{Gray}$-category of
  $\mathpzc{Tricat}(\mathpzc{P},\mathpzc{L})$ determined by the
  locally strict trihomomorphisms.

The general idea of the proof is to identify how the pseudo data and
axioms transform under the adjunction \eqref{homVadjtensor} from \secref{homVadjtensor} of the tensor product. The main technical
tools employed are the two Transformation Lemmata \ref{asabovebynatofML} and
\ref{noticethesymmetry} from \secref{noticethesymmetry}, and elementary identities
involving the associators and unitors $a,l,r,\alpha,\lambda,$ and $\rho$, which are implied by the pentagon and
triangle identity as presented in \ref{someproperties}.

\subsection{Homomorphisms of
  $\mathpzc{Gray}$-categories}\label{homomorphismsofgraycategories}
To characterize how $\mathrm{Ps}\text{-}T\text{-}\mathrm{Alg}$
transforms under the adjunction of the tensor product, we now
introduce the notions of Gray homomorphisms between
$\mathpzc{Gray}$-categories, say $\mathpzc{P}$ and $\mathpzc{L}$, Gray transformations, Gray modifications,
and Gray perturbations in Definitions
\ref{Grayhomomorphism}-\ref{Grayperturbation}. In fact, by reference
to $\mathrm{Ps}\text{-}T\text{-}\mathrm{Alg}$, we show that these form
a $\mathpzc{Gray}$-category
$\mathrm{Gray}(\mathpzc{P},\mathpzc{L})$. On the other hand, we
maintain that this is in fact the natural notion of a (locally
strict) trihomomorphism when domain and target are
$\mathpzc{Gray}$-categories, and when the definitions are to be given
on Gray products and
in terms of their composition laws, say $M_{\mathpzc{P}}$ and
$M_{\mathpzc{L}}$, rather than on cartesian products and in terms of the
corresponding cubical composition functors. Thus, the definitions
below are easily seen to be mild context-related modifications of the
definitions of (locally strict) trihomomorphisms between
$\mathpzc{Gray}$-categories, tritransformations, trimodifications, and
perturbations cf. \cite[4.]{gurskicoherencein}.

\begin{definition}\label{Grayhomomorphism}
  A Gray homomorphism $A\co \mathpzc{P}\rightarrow
  \mathpzc{L}$ consists of
  \begin{itemize}
  \item a function on the objects $P\mapsto AP$ denoted by the same
    letter as the trihomomorphism itself;
  \item for objects $P,Q\in\mathpzc{P}$, a strict functor $A_{PQ}\co
    \mathpzc{P}(P,Q)\rightarrow \mathpzc{L}(AP,AQ)$;
  \item for objects $P,Q,R\in\mathpzc{P}$, an adjoint equivalence
  \begin{align*}
    (\chi,\chi^\bullet)\co M_{\mathpzc{L}}(A_{QR}\otimes
    A_{PQ})\Rightarrow A_{PR}M_{\mathpzc{P}}\co
    \mathpzc{P}(Q,R)\otimes \mathpzc{P}(P,Q) \rightarrow \mathpzc{L}(AP,AR)
\puncteq{;}
  \end{align*}
\item for each object $P\in\mathpzc{P}$, an adjoint equivalence
  \begin{align*}
    (\iota,\iota^\bullet)\co j_{AP}\Rightarrow A_{PP}j_{P}\co
    I\rightarrow \mathpzc{L}(AP,AP)
\puncteq{;}
  \end{align*}
\item and three families of invertible modifications  \textbf{(GHM1)}-\textbf{(GHM3)} which are
    subject to two axioms \textbf{(GHA1)}-\textbf{(GHA2)}:
  \end{itemize}

\noindent\textbf{(GHM1)}\; For objects $P,Q,R,S\in\mathpzc{P}$, an invertible modification
  \begin{align}
    \omega_{PQRS}\co \quad\quad & [(M_{\mathpzc{P}}\otimes
    1)a^{-1},1](\chi_{PQS}) \ast [a^{-1},M_{\mathpzc{L}}]
    (\mathrm{Ten}
    (\chi_{QRS},1_{A_{PQ}})) \nonumber \\
    \Rrightarrow \quad & [1\otimes M_{\mathpzc{P}},1](\chi_{PRS}) \ast
    [1,M_{\mathpzc{L}}](\mathrm{Ten}(1_{A_{RS}},\chi_{PQR})) \nonumber
    \puncteq{}
  \end{align}
  of pseudonatural transformations
  \begin{align*}
    M_{\mathpzc{L}}(1\otimes M_{\mathpzc{L}})(A_{RS}\otimes
    (A_{QR}\otimes A_{PQ}))\Rightarrow A_{PS}(1\otimes
    M_{\mathpzc{P}}(1\otimes M_{\mathpzc{P}}))
  \end{align*}
  of strict functors
  \begin{align*}
    \mathpzc{P}(R,S)\otimes(\mathpzc{P}(Q,R)\otimes
    \mathpzc{P}(P,Q))\rightarrow \mathpzc{L}(AP,AS) \puncteq{.}
  \end{align*}
 
\noindent\textbf{(GHM2)}\; For objects $P,Q\in \mathpzc{P}$, an invertible modification
  \begin{align*}
    \gamma_{PQ}\co & [(j_Q\otimes
    1)l^{-1}_{\mathpzc{P}(P,Q)},1](\chi_{PQQ}) \ast
    [l^{-1}_{\mathpzc{P}(P,Q)},M_{\mathpzc{L}}](\mathrm{Ten}(\iota_Q,1_{A_{PQ}}))
    \Rrightarrow 1_{A_{PQ}}
  \end{align*}
of pseudonatural transformations $A_{PQ}\Rightarrow A_{PQ}\co
  \mathpzc{P}(P,Q)\rightarrow \mathpzc{L}(AP,AQ)$ .

 \noindent\textbf{(GHM3)}\;  For objects $P,Q\in \mathpzc{P}$, an invertible modification
  \begin{align*}
    \delta_{PQ}\co  & 1_{A_{PQ}}\Rrightarrow [(1\otimes
    j_P)r^{-1}_{\mathpzc{P}(P,Q)},1](\chi_{PPQ}) \ast
    [r^{-1}_{\mathpzc{P}(P,Q)},M_{\mathpzc{L}}](\mathrm{Ten}(1_{A_{PQ}},\iota_P))
  \end{align*}
  of pseudonatural transformations $A_{PQ}\Rightarrow A_{PQ}\co
  \mathpzc{P}(P,Q)\rightarrow \mathpzc{L}(AP,AQ)$ .

\noindent\textbf{(GHA1)}\; For objects $P,Q,R,S,T\in \mathpzc{P}$, the
following equation of vertical composites of whiskered modifications is required:
  \begin{align*}
    & [1\otimes (1\otimes M_{\mathpzc{P}}),1](\omega)\ast
    1_{[M_{\mathpzc{L}}(1\otimes M_{\mathpzc{L}})](\mathrm{Ten}(1,\mathrm{Ten}(1,\chi)))} \\
    \diamond\quad & 1_{[(M_{\mathpzc{P}}\otimes
    M_{\mathpzc{P}})a^{-1},1](\chi)}\ast
    [a^{-1},M_{\mathpzc{L}}](\mathrm{Ten}(\Sigma_{\chi,\chi}))\\
    \diamond\quad& [(M_{\mathpzc{P}}\otimes 1) a^{-1},1]
    (\omega)\ast 1_{[a^{-1}a^{-1},M_{\mathpzc{L}}(M_{\mathpzc{L}}\otimes 1)](\mathrm{Ten}(\mathrm{Ten}(\chi,1),1))} \\
    = \quad \quad& 1_{[1\otimes ( M_{\mathpzc{P}}(1\otimes M_{\mathpzc{P}})),1](\chi)}\ast [1,M_{\mathpzc{L}}]
    (\mathrm{Ten}(1,\omega)) \\
    \diamond\quad&[1\otimes ((M_{\mathpzc{P}}\otimes 1)a^{-1}),1]
    (\omega)\ast 1_{[a^{-1}(1\otimes
      a^{-1}),M_{\mathpzc{L}}(M_{\mathpzc{L}}\otimes 1)](\mathrm{Ten}(\mathrm{Ten}(1,\chi),1))} \\
    \diamond\quad& 1_{[(M_{\mathpzc{P}}(1\otimes
    M_{\mathpzc{P}})a)\otimes 1,1](\chi)}\ast [(a\otimes
    1)a^{-1}a^{-1},M_{\mathpzc{L}}]
    (\mathrm{Ten}(\omega,1)) 
  \end{align*}
  To save space, we here employed the notation that vertical
  composition binds less strictly than horizontal composition $\ast$,
  which is also indicated by a line break. Also $\mathrm{Ten}$ always
  denotes the corresponding strict hom functor of the
  $\mathpzc{Gray}$-functor $\mathrm{Ten}\co \mathpzc{Gray}\otimes
  \mathpzc{Gray}\rightarrow \mathpzc{Gray}$ cf. eq. \eqref{Tenhom}
  from \secref{Tenhom} above.
It is to be noted that in each vertical factor there appears only one
nontrivial horizontal factor, and this applies generally to the
following definitions.

 The axiom is an equation of $2$-cells i.e. modifications
  \begin{align*}
    & [((M_{\mathpzc{P}}(M_{\mathpzc{P}}\otimes 1))\otimes
    1)a^{-1}a^{-1},1](\chi_{PQT}) \\
    \ast \quad &[((M_{\mathpzc{P}}\otimes 1)\otimes
    1)a^{-1}a^{-1},M_{\mathpzc{L}}](\mathrm{Ten}(\chi_{QRT},1)) \\
    \ast \quad &
    [a^{-1}a^{-1},M_{\mathpzc{L}}(M_{\mathpzc{L}}\otimes 1)] (\mathrm{Ten}(\mathrm{Ten}(\chi_{RST},1),1)) \\
    \Rrightarrow \quad\quad  & [1\otimes ( M_{\mathpzc{P}}(1\otimes M_{\mathpzc{P}})),1] (\chi_{PST})\\
    \ast \quad & [1\otimes (1\otimes M_{\mathpzc{P}}),M_{\mathpzc{L}}]
    (\mathrm{Ten}(1,\chi_{PRS})) \\
    \ast \quad & [1,M_{\mathpzc{L}}(1\otimes M_{\mathpzc{L}})](\mathrm{Ten}(1,\mathrm{Ten}(1,\chi_{PQR})))
  \end{align*}
  between $1$-cells i.e. pseudonatural transformations
  \begin{align*}
    M_{\mathpzc{L}}(1\otimes (M_{\mathpzc{L}}(1\otimes
    M_{\mathpzc{L}}))) (A_{ST} \otimes(A_{RS} \otimes (A_{QR}\otimes
    A_{PQ})))\Rightarrow A_{PT} M_{\mathpzc{P}}(1\otimes
    (M_{\mathpzc{P}}(1\otimes M_{\mathpzc{P}})))
  \end{align*}
  of strict functors
  \begin{align*}
    \mathpzc{P}(S,T)\otimes
    (\mathpzc{P}(R,S)\otimes(\mathpzc{P}(Q,R)\otimes
    \mathpzc{P}(P,Q)))\rightarrow \mathpzc{L}(AP,AT) \puncteq{.}
  \end{align*}
  We remark that we chose another bracketing than in the
  definition of a trihomomorphism in the references
  \cite{gurskicoherencein} and \cite{gordonpowerstreet}. The
  difference is of course not substantive.

\noindent\textbf{(GHA2)}\; For objects $P,Q,R\in \mathpzc{P}$, the
following equation of modifications is required:
\begin{align*}
  & 1_{\chi}\ast
  [1,M_{\mathpzc{L}}](\mathrm{Ten}(1_{1_{A_{QR}}},\gamma_{PQ}))
\\
  \diamond \quad & [1\otimes
  ((j_Q\otimes 1)l^{-1}_{\mathpzc{P}(P,Q)}),1] (\omega_{PQQR}) \ast
  1_{[r^{-1}_{\mathpzc{P}(Q,R)}\otimes
  1,M_{\mathpzc{L}}(M_{\mathpzc{L}}\otimes 1)](\mathrm{Ten}(\mathrm{Ten}(1,\iota),1))} 
\\
  = \quad \quad & 1_{[((1\otimes j)r^{-1})\otimes 1](\chi)}\ast
  [1,M_{\mathpzc{L}}](\mathrm{Ten}(\delta^{-1}_{QR},1_{1_{A_{PQ}}}))
\end{align*}
This is an equation of $2$-cells i.e. modifications
\begin{align*}
  \chi_{PQR}\Rrightarrow \chi_{PQR}\co M_{\mathpzc{L}}(A_{QR}\otimes
  A_{PQ})\Rightarrow A_{PR}M_{\mathpzc{L}}\co \mathpzc{P}(Q,R)\otimes
  \mathpzc{P}(P,Q)\rightarrow \mathpzc{L}(AP,AR ) \puncteq{.}
\end{align*}
\end{definition}

\begin{definition}\label{Graytransformation}
  Let $A,B\co \mathpzc{P}\rightarrow \mathpzc{L}$ be homomorphisms of
  $\mathpzc{Gray}$-categories. A Gray transformation $f\co
  A\Rightarrow B$ consists of 
  \begin{itemize}
  \item a family $(f_p)_{P\in\mathrm{ob}\mathpzc{P}}$ of objects $f_P\co AP\rightarrow BP$ in
    $\mathpzc{L}(AP,BP)$ ;
  \item for objects $P,Q\in\mathpzc{P}$, an adjoint equivalence
    \begin{align*}
      (f_{PQ},f_{PQ}^\bullet)\co \mathpzc{L}(AP,f_Q)A_{PQ}\Rightarrow
      \mathpzc{L}(f_P,BQ)B_{PQ}\co \mathpzc{P}(P,Q)\rightarrow
      \mathpzc{L}(AP,BQ) 
\puncteq{;}
    \end{align*}
\item and two families of invertible modifications
  \textbf{(GTM1)}-\textbf{(GTM2)} which are subject to three axioms \textbf{(GTA1)}-\textbf{(GTA3)}:
\end{itemize}

\noindent\textbf{(GTM1)}\; For objects $P,Q,R\in \mathpzc{P}$, an invertible modification
    \begin{align*}
      \Pi_{PQR}\co \quad\quad\quad & [1
      ,\mathpzc{L}(f_P,BR)](\chi^B_{PQR}) \ast [1
      ,M_{\mathpzc{L}}](\mathrm{Ten}(1_{B_{QR}}, f_{PQ}))\ast  [1
      ,M_{\mathpzc{L}}](\mathrm{Ten}(f_{QR},1_{A_{PQ}})) \\
\Rrightarrow \quad\quad &   [M_{\mathpzc{P}},1](f_{PR})\ast [1 ,\mathpzc{L}(AP,f_R)](\chi^A_{PQR} )
    \end{align*}
of pseudonatural transformations
$M_{\mathpzc{L}}(( \mathpzc{L}(AQ,f_R)A_{QR})\otimes A_{PQ})\Rightarrow
\mathpzc{L}(f_P,BR) B_{PR}M_{\mathpzc{P}}$ of strict functors $
\mathpzc{P}(Q,R)\otimes \mathpzc{P}(P,Q)\rightarrow
\mathpzc{L}(AP,BR)$ ;

\noindent\textbf{(GTM1)}\; For each object $P\in\mathpzc{P}$, an invertible modification
    \begin{align*}
M_{P}\co  [j_P,1](f_{PP})\ast  [1,\mathpzc{L}(AP,f_P)](\iota^A_P) 
      \Rrightarrow  [1,\mathpzc{L}(f_P,BP)](\iota_P^B)
    \end{align*}
of pseudonatural transformations $f_P\Rightarrow
  \mathpzc{L}(f_p,BP)B_{PP}j_P\co I\rightarrow \mathpzc{L}(AP,BP)$ .

\noindent\textbf{(GTA1)}\; For objects $P,Q,R,S\in\mathpzc{P}$, the
following equation of vertical composites of whiskered  modifications is required:
\begin{align*}
 & 1_{[M_{\mathpzc{P}}(1\otimes M_{\mathpzc{P}}),1](f_{PS})}\ast [1,\mathpzc{L}(AP,f_S)](\omega^A) \\
\diamond \quad & [(M_{\mathpzc{P}}\otimes 1)a^{-1},1](\Pi) \ast
1_{[1\otimes a^{-1},M_{\mathpzc{L}}(1\otimes M_{\mathpzc{L}}
  )](\mathrm{Ten}(\mathrm{Ten}(1,\chi^A%_{QRS}
),1))}\\
\diamond \quad & 1_{[(M_{\mathpzc{P}}\otimes
  1)a^{-1},\mathpzc{L}(f_P,BS)](\chi^B)}\ast
1_{[(M_{\mathpzc{P}}\otimes 1)a^{-1},M_{\mathpzc{L}}]\mathrm{Ten}(1,f_{PQ})}\ast[a^{-1},M_{\mathpzc{L}}]
(\mathrm{Ten}(\Pi,1_{1_{A_{PQ}}})) \\
\diamond \quad & 1_{[(M_{\mathpzc{P}}\otimes
  1)a^{-1},\mathpzc{L}(f_P,BS)](\chi^B)}\ast
[a^{-1},M_{\mathpzc{L}}](\mathrm{Ten}(\Sigma_{\chi^B,f_{PQ}})) \\
& \quad \ast 1_{[a^{-1},M_{\mathpzc{L}}(M_{\mathpzc{L}}\otimes 1)]\mathrm{Ten}(\mathrm{Ten}(1,f_{QR}),1)}\ast 1_{[a^{-1},M_{\mathpzc{L}}(M_{\mathpzc{L}}\otimes 1)]\mathrm{Ten}(\mathrm{Ten}(f_{RS},1),1)} \\
= \quad \quad &   [1\otimes M_{\mathpzc{P}},1](\Pi) \ast 1_{[1,
\mathpzc{L}(AP,f_{S})M_{\mathpzc{L}}]\mathrm{Ten}(1,\chi^A%_{PQR}
)} \\
\diamond \quad &  1_{[1\otimes M_{\mathpzc{P}},\mathpzc{L}(f_P,BS)](\chi^B)} \ast 1_{[1\otimes M_{P},M_{\mathpzc{L}}]\mathrm{Ten}(1,f_{PR})}\ast
[1,M_{\mathpzc{L}}](\mathrm{Ten}(\Sigma^{-1}_{f_{RS},\chi^A})) \\
\diamond \quad &  1_{[1\otimes M_{\mathpzc{P}},\mathpzc{L}(f_P,BS)](\chi^B)} \ast
[1,M_{\mathpzc{L}}](\mathrm{Ten}(1_{1_{B_{RS}}},\Pi))  \ast 1_{[a^{-1},M_{\mathpzc{L}}(M_{\mathpzc{L}}\otimes
1)]\mathrm{Ten}(\mathrm{Ten}(f_{RS},1),1)}\\
\diamond \quad & [1,\mathpzc{L}(f_P,BS)] (\omega^B )\ast 1_{[1,M_{\mathpzc{L}}(M_{\mathpzc{L}}\otimes 1)]
(\mathrm{Ten}(1,\mathrm{Ten}(1,f_{PQ})))} \\
& \quad  \ast
1_{[a^{-1},M_{\mathpzc{L}}(M_{\mathpzc{L}}\otimes 1)]\mathrm{Ten}(\mathrm{Ten}(1,f_{QR}),1)} \ast
1_{[a^{-1},M_{\mathpzc{L}}(M_{\mathpzc{L}}\otimes
1)]\mathrm{Ten}(\mathrm{Ten}(f_{RS},1),1)}
\end{align*}
This is an equation of modifications
\begin{align*}
\ast \quad & [(M_{\mathpzc{P}}\otimes
  1)a^{-1},\mathpzc{L}(f_P,BS)](\chi^B)\\
 \ast \quad & [a^{-1},\mathpzc{L}(f_P,BS)M_{\mathpzc{L}}](\mathrm{Ten}(\chi^B,1)) \\
 \ast \quad & [1,M_{\mathpzc{L}}(M_{\mathpzc{L}}\otimes 1)]
(\mathrm{Ten}(1,\mathrm{Ten}(1,f_{PQ}))) \\
 \ast \quad & [a^{-1},M_{\mathpzc{L}}(M_{\mathpzc{L}}\otimes
 1)]\mathrm{Ten}(\mathrm{Ten}(1,f_{QR}),1) \\
 \ast \quad &  [a^{-1},M_{\mathpzc{L}}(M_{\mathpzc{L}}\otimes
1)]\mathrm{Ten}(\mathrm{Ten}(f_{RS},1),1) \\
\Rrightarrow \quad\quad  & [M_{\mathpzc{P}}(1\otimes
M_{\mathpzc{P}}),1]f_{PS} \\
 \ast \quad & [1\otimes M_{\mathpzc{P}},
\mathpzc{L}(1,f_S)](\chi%_{PRS}
^A)\\
 \ast \quad & [1,
\mathpzc{L}(1,f_S)M_{\mathpzc{L}}]\mathrm{Ten}(1,\chi%_{PQR}
^A)
\end{align*}
of pseudonatural transformations
\begin{align*}
\mathpzc{L}(AR,f_S)M_{\mathpzc{L}}(1\otimes M_{\mathpzc{L}})(A_{RS}\otimes (A_{QR}\otimes A_{PQ}))\Rightarrow \mathpzc{L}(f_P,BS)B_{PS} M_{\mathpzc{P}}  (1\otimes M_{\mathpzc{P}})
\end{align*}
of strict functors
\begin{align*}
  \mathpzc{P}(R,S)\otimes (\mathpzc{P}(Q,R)\otimes
  \mathpzc{P}(P,Q))\rightarrow \mathpzc{L}(AP,BS)
\puncteq{.}
\end{align*}
%%%%%%%%

\noindent\textbf{(GTA2)}\; For objects $P,Q\in\mathpzc{P}$, the
following equation of vertical composites of whiskered  modifications is required:
\begin{align*}
& 1_{f_{PQ}}\ast[1,\mathpzc{L}(AP,f_Q)] (\gamma^A_{PQ} )\\
\diamond \quad & [(j_{Q}\otimes 1)l^{-1}_{\mathpzc{P}(P,Q)},1]
(\Pi_{PQQ}) \ast
1_{[l^{-1}_{\mathpzc{P}(P,Q)},M_{\mathpzc{L}}](\mathrm{Ten}([1,\mathpzc{L}(1,f_Q)](\iota^A_Q),1_{A_{PQ}}))}\\
%(1,f_Q) statt (AP,f_Q)  wegen overfull hbox in der letzten Zeile und
%Konsistenz i.e. 1 in Indizes
= \quad \quad  & [1,\mathpzc{L}(f_P,BQ)] (\gamma^B_{PQ})\ast
1_{f_{PQ}} \\
\diamond \quad & 1_{[(j_Q\otimes
  1)l^{-1}_{\mathpzc{P}(P,Q)},\mathpzc{L}(f_P,1)](\chi^B_{PQQ})}\ast
%(f_p,1) statt (f_P,BQ) wegen overfull hbox in der letzten Zeile und
%Konsistenz i.e. 1 in Indizes
[l^{-1}_{\mathpzc{P}(P,Q)},M_{\mathpzc{L}}](\mathrm{Ten}(\Sigma
^{-1}_{\iota^B_Q,f_{PQ}})) \\
\diamond \quad &
1_{[(j_Q\otimes
  1)l^{-1}_{\mathpzc{P}(P,Q)},\mathpzc{L}(f_P,1)](\chi^B_{PQQ})}\ast
%(f_p,1) statt (f_P,BQ) wegen overfull hbox
1_{[((B_{QQ}j_{Q})\otimes 1)l^{-1}_{\mathpzc{P}(P,Q)},M_{\mathpzc{L}}](\mathrm{Ten}(1,f_{PQ}))}\ast[l^{-1}_{\mathpzc{P}(P,Q)},M_{\mathpzc{L}}](\mathrm{Ten}(M_Q,1_{1_{A_{PQ}}}))
\end{align*}
This is an equation of modifications
\begin{align*}
&   [(j_{Q}\otimes 1)l^{-1}_{\mathpzc{P}(P,Q)},\mathpzc{L}(f_P,BQ)](\chi_{PQQ})\ast
   [((B_{QQ}j_{Q})\otimes
   1)l^{-1}_{\mathpzc{P}(P,Q)},M_{\mathpzc{L}}](\mathrm{Ten}(1,f_{PQ}))\\
\ast \quad & 
   [((j_{Q}\otimes
   1)l^{-1}_{\mathpzc{P}(P,Q)},M_{\mathpzc{L}}](\mathrm{Ten}(f_{QQ},1))
\ast
[l^{-1}_{\mathpzc{P}(P,Q)},\mathpzc{L}(AP,f_Q)](\iota_Q) \\
\Rrightarrow \quad\quad & f_{PQ}
\puncteq{.}
\end{align*}

%%%%%%
\noindent\textbf{(GTA3)}\; For objects $P,Q\in\mathpzc{P}$, the
following equation of vertical composites of whiskered  modifications is required:
\begin{align*}
 & [1,\mathpzc{L}(f_P,BQ)]( (\delta^B)^{-1}_{PQ})\ast 1_{f_{PQ}} \\
\diamond \quad & 1_{[(1\otimes j_P)r^{-1}_{\mathpzc{P}(P,Q)},\mathpzc{L}(f_P,BQ)](\chi^B_{PPQ})}\ast
[r^{-1}_{\mathpzc{P}(P,Q)},M_{\mathpzc{L}}](\mathrm{Ten}(1_{1_{B_{PQ}}},M_P))\ast
1_{f_{PQ}}\\
  = \quad \quad & 1_{f_{PQ}}\ast [1,\mathpzc{L}(AP,f_Q)]( (\delta^A)^{-1}_{PQ}) \\
\diamond \quad & [(1\otimes j_{P})r^{-1}_{\mathpzc{P}(P,Q)},1]
(\Pi_{PPQ})\ast 1_{[r^{-1}_{\mathpzc{P}(P,Q)},M_{\mathpzc{L}}](\mathrm{Ten}(1_{\mathpzc{L}(AP,f_Q)A_{PQ}},\iota^A_{P}))} \\
\diamond \quad & 1_{[(1\otimes j_P)r^{-1}_{\mathpzc{P}(P,Q)},\mathpzc{L}(f_P,BQ)](\chi^B_{PPQ})}\ast
1_{[(1\otimes j_P)r^{-1}_{\mathpzc{P}(P,Q)},M_{\mathpzc{L}}](\mathrm{Ten}(1_{B_{PQ}},f_{PP}))}\ast
[r^{-1}_{\mathpzc{P}(P,Q)},M_{\mathpzc{L}}](\mathrm{Ten}(\Sigma^{-1}_{f_{PQ},\iota^A_P}))
\end{align*}
This is an equation of modifications
\begin{align*}
& [(1\otimes j_P)r^{-1}_{\mathpzc{P}(P,Q)},\mathpzc{L}(f_P,BQ)](\chi^B_{PPQ})\ast [(1\otimes j_P)r^{-1},M_{\mathpzc{L}}](\mathrm{Ten}(1_{B_{PQ}},f_{PP}))\\
 \ast\quad & [r^{-1}_{\mathpzc{P}(P,Q)},M_{\mathpzc{L}}](\mathrm{Ten}(1_{B_{PQ}},[1,\mathpzc{L}(AP,f_P)](\iota_P)))\ast
 f_{PQ}  \\
\Rrightarrow \quad\quad & f_{PQ}
\puncteq{.}
\end{align*}
\end{definition}

\begin{definition}\label{Graymodification}
  Let $f,g\co A\Rightarrow B\co \mathpzc{P}\rightarrow \mathpzc{L}$ be
  Gray transformations. A Gray modification $\alpha\co f\Rrightarrow g $ consists of
  \begin{itemize}
  \item a family $(\alpha_P)_{P\in\mathrm{ob}\mathpzc{P}}$ of
    $1$-cells $\alpha_P\co f_P\rightarrow g_P$ in $\mathpzc{L}(AP,BP)$
    ;
  \item and one family of invertible modifications \textbf{(GMM1)} which is
    subject to two axioms \textbf{(GMA1)}-\textbf{(GMA2)}:
  \end{itemize}
\noindent\textbf{(GMM1)}\; 
For objects $P,Q\in\mathpzc{P}$, an invertible modification
    \begin{align*}
      \alpha_{PQ}\co   [B_{PQ},1](\mathpzc{L}(\alpha_P,BQ))\ast f_{PQ}
      \Rrightarrow g_{PQ}\ast [A_{PQ},1](\mathpzc{L}(AP,\alpha_Q))
\puncteq{}
    \end{align*}
of pseudonatural transformations
\begin{align*}
 \mathpzc{L}(AP,f_Q)A_{PQ}\Rightarrow \mathpzc{L}(g_P,BQ)B_{PQ}\co
  \mathpzc{P}(P,Q)\rightarrow \mathpzc{L}(AP,BQ)
\puncteq{.}
\end{align*}

\noindent\textbf{(GMA1)}\; For objects $P,Q,R\in\mathpzc{P}$, the
following equation of vertical composites of whiskered  modifications is required:
\begin{align*}
& \Pi^g\ast 1_{[1,M_{\mathpzc{L}}](\mathrm{Ten}([A_{QR},1](\mathpzc{L}(AQ,\alpha_R)),1))} \\
\diamond \quad &
1_{M_{\mathpzc{Gray}}(1,\chi^B_{PQR})}\ast 1_{[1,M_{\mathpzc{L}}](\mathrm{Ten}(1,g_{PQ}))}\ast[1,M_{\mathpzc{L}}](\mathrm{Ten}(\alpha_{QR},1_{1_{A_{PQ}}})) \\
\diamond \quad &
1_{M_{\mathpzc{Gray}}(1,\chi^B_{PQR})}\ast[1,M_{\mathpzc{L}}](\mathrm{Ten}(1_{1_{B_{QR}}},\alpha_{PQ}))\ast  1_{[1,M_{\mathpzc{L}}](\mathrm{Ten}(f_{QR},1_{A_{PQ}}))} \\
\diamond \quad &
M_{\mathpzc{Gray}}(\Sigma_{\mathpzc{L}(\alpha_P,BR),\chi^B_{PQR}})\ast
1_{[1,M_{\mathpzc{L}}](\mathrm{Ten}(1_{B_{QR}},f_{PQ}))}\ast 1_{[1,M_{\mathpzc{L}}](\mathrm{Ten}(f_{QR},1_{A_{PQ}}))}\\
  =\quad\quad & 1_{[M_{\mathpzc{P}},1](\alpha_{PR})}\ast
  M_{\mathpzc{Gray}}(\Sigma_{\mathpzc{L}(AP,\alpha_P),\chi^A_{PQR}}) \\
\diamond \quad & [M_{\mathpzc{P}},1](\alpha_{PR})\ast 1_{M_{\mathpzc{Gray}}(1,\chi^A_{PQR})}\\
\diamond \quad &1_{[B_{PR}M_{\mathpzc{P}},1](\mathpzc{L}(\alpha_{P},BR))}\ast \Pi^f 
\end{align*}
This is an equation of modifications
\begin{align*}
& [B_{PR}M_{\mathpzc{P}},1](\mathpzc{L}(\alpha_{P},BR)) \\ 
\ast \quad &1_{M_{\mathpzc{Gray}}(1,\chi^B_{PQR})}\\ 
\ast \quad & [1,M_{\mathpzc{L}}](\mathrm{Ten}(1_{B_{QR}},f_{PQ}))\\ 
\ast \quad &
[1,M_{\mathpzc{L}}](\mathrm{Ten}(f_{QR},1_{A_{PQ}}))\\
\Rrightarrow \quad\quad & [M_{\mathpzc{P}},1](g_{PR}) \\ 
\ast \quad & M_{\mathpzc{Gray}}(1,\chi^A_{PQR})\\ 
\ast \quad &[1,M_{\mathpzc{L}}](\mathrm{Ten}([A_{QR},1](\mathpzc{L}(AQ,\alpha_R)),1))
\end{align*}
of pseudonatural transformations
\begin{align*}
  M_{\mathpzc{L}}((\mathpzc{L}(AQ,f_R)A_{QR})\otimes
  A_{PQ})\Rightarrow \mathpzc{L}(g_P,BR)B_{PR}M_{\mathpzc{P}} 
\puncteq{.}
\end{align*}
%%%

%%%%
\noindent\textbf{(GMA2)}\; For an  object $P\in\mathpzc{P}$, the
following equation of vertical composites of whiskered  modifications is required:
\begin{align*}
&   M^g\ast 1_{\alpha_P}\\
  \diamond \quad & 1_{[j_P,1](g_{PP})}\ast M_{\mathpzc{Gray}}(\Sigma_{\mathpzc{L}(AP,\alpha_P),\iota^A_P}) \\
\diamond \quad & [j_{P},1](\alpha_{PP})\ast 1_{[1,\mathpzc{L}(AP,f_P)](\iota^A_{P})}\\
= \quad\quad &  M_{\mathpzc{Gray}}(\Sigma_{\mathpzc{L}(\alpha_P,BP),\iota^B_P}) \\
\diamond \quad  & 1_{[l^{-1}_I,M_{\mathpzc{L}}](\mathrm{Ten}(1_{G_{PP}j_P},\alpha_P))}\ast M^f
\end{align*}
This is an equation of modifications
\begin{align*}
[l_I^{-1},M_{\mathpzc{L}}](\mathrm{Ten}(1_{G_{PP}j_P},\alpha_P))\ast f_{PP}\ast[l^{-1}_I,M_{\mathpzc{L}}](\mathrm{Ten}(1_{f_P},\iota^A_P))
\Rrightarrow  [l_I^{-1},M_{\mathpzc{L}}](\mathrm{Ten}(\iota^G_P,1_{g_P}))\ast\alpha_P
\end{align*}
of pseudonatural transformations
\begin{align*}
  f_P\Rightarrow M_{\mathpzc{L}}((G_{PP}j_P)\otimes g_P)l_I^{-1}\co
  I\rightarrow \mathpzc{L}(AP,BP)
\puncteq{.}
\end{align*}
%%%
\end{definition}

\begin{definition}\label{Grayperturbation}
Let $\alpha,\beta\co f\Rrightarrow g\co A\Rightarrow B\co
\mathpzc{P}\rightarrow \mathpzc{L}$ be  Gray modifications. A Gray
perturbation $\Gamma\co \alpha\Rrrightarrow \beta$ consists of
\begin{itemize}
\item a family of $2$-cells $\Gamma_P\co
  \alpha_P\Rightarrow \beta_P$ in $\mathpzc{L}(AP,BP)$;
\item subject to one axiom \textbf{(GPA1)}:
\end{itemize}

\noindent\textbf{(GPA1)}\; For objects $P,Q\in\mathpzc{P}$ the
following equation of vertical composites of whiskered modifications is required:
 \begin{align*}
    \alpha_{PQ}\diamond
    ([B_{PQ},1](\mathpzc{L}(\Gamma_P,BP))\ast 1_{f_{PQ}}) = (1_{g_{PQ}}\ast
    [A_{PQ},1](\mathpzc{L}(AP,\Gamma_Q)))\diamond \beta_{PQ}
\puncteq{}
  \end{align*}
This is an equation of modifications
\begin{align*}
[B_{PQ},1](\mathpzc{L}(\alpha_P,BP))\ast  f_{PQ}\Rightarrow 
[A_{PQ},1](\mathpzc{L}(AP,\beta_Q))
\puncteq{.}
\end{align*}
\end{definition}
\subsection{The correspondence of Gray homomorphisms and pseudo
  algebras}
The following theorem is one of the main results, and forms the first
part of the promised correspondence of pseudo algebras and locally
strict trihomomorphisms.
\begin{theorem}\label{graygleichtransform}
 Let  $\mathpzc{P}$ be a small $\mathpzc{Gray}$-category and
  $\mathpzc{L}$ be a cocomplete $\mathpzc{Gray}$-category, and let $T$
  be the monad corresponding to the Kan adjunction.
  Then the notions of Gray homomorphism, Gray transformation, Gray
  modification, and Gray perturbation are precisely the transforms of
  the notions of a pseudo algebra, a pseudo functor, a pseudo
  transformation, and a pseudo modification respectively for the monad
  $T=[H,1]\mathrm{Lan}_{H}$ on $[\mathrm{ob}\mathpzc{P},\mathpzc{L}]$.\qed
\end{theorem}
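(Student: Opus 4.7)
The plan is to transform the data and axioms of $\mathrm{Ps}\text{-}T\text{-}\mathrm{Alg}$ componentwise under the hom $\mathpzc{V}$-adjunction $n$ of the tensor product, using the template already established by Theorem \ref{strictmonadicitytheorem} for the strict case, and the two Transformation Lemmata \ref{asabovebynatofML} and \ref{noticethesymmetry} as the principal technical tools. The strategy amounts to an essentially algorithmic unravelling: since $n$ is an isomorphism, it induces bijections at every level between cells in $[\mathrm{ob}\mathpzc{P},\mathpzc{L}]$ whose components live in $\mathpzc{L}(X\otimes L, M)$ and cells whose components live in $[X,\mathpzc{L}(L,M)]$, and the task is to identify the result of this transfer with the Gray notions.

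I would begin at the level of objects. A pseudo $T$-algebra consists of $A$ in $[\mathrm{ob}\mathpzc{P},\mathpzc{L}]$, a $1$-cell $x\co TA\rightarrow A$, $2$-cell adjoint equivalences $m$ and $i$, and invertible $3$-cells $\pi,\lambda,\rho$. The object and the $1$-cell $x$ transform into the function on objects together with a family of strict hom functors $A_{PQ}$, exactly as in the strict algebra case. Using the explicit formulae for the monad multiplication and unit (equation \eqref{multimonad} and Lemma \ref{unitmonad}) together with Lemma \ref{asabovebynatofML}, the components of $m$ transform into adjoint equivalences $\chi\co M_{\mathpzc{L}}(A_{QR}\otimes A_{PQ})\Rightarrow A_{PR}M_{\mathpzc{P}}$, while $i$ transforms into $\iota\co j_{AP}\Rightarrow A_{PP}j_P$; the invertible $3$-cells $\pi,\lambda,\rho$ similarly transform into the modifications $\omega,\gamma,\delta$ of \textbf{(GHM1)}--\textbf{(GHM3)}. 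By Proposition \ref{kelly2axioms} only two of the four lax algebra axioms need be verified, so it will suffice to check that \textbf{(LAA1)} and \textbf{(LAA4)} transform into \textbf{(GHA1)} and \textbf{(GHA2)} respectively.

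The main obstacle will be the comparison of the interchange cells that appear in both sets of axioms. On the pseudo side, they arise inside $[\mathrm{ob}\mathpzc{P},\mathpzc{L}]$ via $M_{[\mathrm{ob}\mathpzc{P},\mathpzc{L}]}$ together with the $\mathpzc{Gray}$-naturality squares of $\mu$ and $\eta$; on the Gray side, they live inside functor $2$-categories $[\mathpzc{P}(P,Q),\mathpzc{L}(AP,AQ)]$ and in Gray products of such. The Second Transformation Lemma \ref{noticethesymmetry} is precisely the tool for this passage: it records how $n$ intertwines $M_{\mathpzc{L}}$ applied to a tensor-product argument with $M_{\mathpzc{V}}$ composed with the symmetry $c$, which by \eqref{MGraySigma} and \eqref{TenSigma} is exactly what produces the interchange cells in the internal hom of $\mathpzc{Gray}$. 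The remaining bookkeeping will consist of combining this with the associator and unitor identities developed in Section \ref{someproperties}, notably the pentagon identity \eqref{pentagonidentity} and the triangle identity \eqref{triangleidentity}; apart from the considerable length of the resulting diagrammatic calculations, no further idea is required.

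For the higher levels the argument will be entirely analogous. A pseudo $T$-functor, $T$-transformation, and $T$-modification is given by $1$-, $2$-, and $3$-cell data in $[\mathrm{ob}\mathpzc{P},\mathpzc{L}]$ whose components are objects, $1$-cells, and $2$-cells respectively in hom $2$-categories of $\mathpzc{L}$; under $n$ these transfer into cells of the functor $2$-categories $[\mathpzc{P}(P,Q),\mathpzc{L}(AP,BQ)]$, that is, into strict functors, pseudonatural transformations, modifications, and equations of those. Each axiom converts via the same two Transformation Lemmata into the corresponding axiom of Definitions \ref{Graytransformation}--\ref{Grayperturbation}, with the Second Transformation Lemma again mediating the comparison of interchange cells. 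Since $n$ is an isomorphism at every stage, bijectivity of the passage is automatic, and the result is the claimed identification of the two families of notions.
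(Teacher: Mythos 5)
Your overall strategy coincides with the paper's: transform the data componentwise under the hom adjunction $n$ of the tensor product, identify $x$ with the hom functors $A_{PQ}$, $(m,i)$ with $(\chi,\iota)$, and $(\pi,\lambda,\rho)$ with $(\omega,\gamma,\delta)$ after whiskering with the appropriate $\mathpzc{L}((1\otimes\alpha)\alpha,1)$ resp. $\mathpzc{L}(\lambda,1)$; invoke Proposition \ref{kelly2axioms} to reduce to the two axioms \textbf{(LAA1)} and \textbf{(LAA4)}; and control everything with the two Transformation Lemmata plus the pentagon and triangle identities. This is exactly the paper's route, and the analogous treatment of the higher cells is also as in the paper.

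There is, however, one concrete misstep: you have the division of labour between the two Transformation Lemmata backwards at the point you single out as the main obstacle. For the pseudo-algebra axioms the interchange cell $\Sigma^{-1}_{m,T^2m}$ lives in a Gray product in which \emph{both} factors carry a tensor decomposition (both $m$ and $T^2m$ sit in hom objects of the shape $\mathpzc{L}(X\otimes M,N)$ and $\mathpzc{L}(Y\otimes L,M)$), so the relevant tool is the First Transformation Lemma \ref{asabovebynatofML}: its right-hand side $[1,M_{\mathpzc{L}}]\mathrm{Ten}(n\otimes n)$, combined with \eqref{FGSigma} and \eqref{TenSigma}, is what yields the cell $[a^{-1},M_{\mathpzc{L}}]\mathrm{Ten}(\Sigma_{\chi,\chi})$ appearing in \textbf{(GHA1)}. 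Lemma \ref{noticethesymmetry} does not even apply there, since its second argument is a bare hom object $\mathpzc{L}(L,M)$ with no tensor factor and its output involves the contravariant hom functor and the symmetry $c$. It is needed only later, for the axioms of a $T$-transformation, where a genuinely new class of interchange cells such as $\Sigma^{-1}_{i^Y,\alpha}$ appears --- one factor a bare $2$-cell whiskered contravariantly --- and where \eqref{cSigma} converts the resulting $M_{\mathpzc{V}}(\dots)c$ into $M_{\mathpzc{Gray}}(\Sigma_{\mathpzc{L}(\alpha_P,BP),\iota^B_P})$ as in \textbf{(GMA2)} (this is the content of Lemma \ref{secondaxiomGraymod}). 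If you attempt the pentagon-axiom comparison with the Second Lemma as you propose, the domains will not match and the computation stalls; with the First Lemma it goes through exactly as you otherwise describe, so the error is localized and recoverable but should be fixed before the details are written out.
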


 We only present parts of the proof explicitly. The
 proof involves the determination of transforms under the
 hom $\mathpzc{Gray}$-adjunction \eqref{homVadjtensor} from
 \secref{homVadjtensor}. These determinations involve
 the pentagon identity \eqref{pentagonidentity} and triangle identity
 \eqref{triangleidentity} from \secref{pentagonidentity} for associators
 and unitors both of the tensor
 products and of the monoidal category $\mathpzc{Gray}$. We also need naturality of these associators and
 unitors as presented in the same paragraph. On the other hand, there are elementary
 identities due to 
 naturality, which are  similar to the one we displayed for the transform of the right hand side
 of the algebra axiom \eqref{equivalgebraaxiom} above in
 \ref{sectionstrict}., and then there is heavy use
 of the two technical Transformation Lemmata \ref{asabovebynatofML} and
 \ref{noticethesymmetry} from \secref{noticethesymmetry}. In pursuing the proof, one quickly
 notices that many of the determinations of transforms are similar to
 each other. While we cannot display all of the computations, it is
 our aim to at least characterize the arguments needed for these different classes of
 transforms. Thus, in the lemmata below we provide examples that should
 serve as a complete guideline for the rest of the proof.

 \begin{lemma}
  Taking transforms under the adjunction \eqref{homVadjtensor} of the tensor
  product from
  \secref{homVadjtensor}  induces a one-to-one correspondence of Gray homomorphisms $\mathpzc{P}\rightarrow
   \mathpzc{L}$ and pseudo algebras for the monad
   $T=[H,1]\mathrm{Lan}_H$ on $[\mathrm{ob}\mathpzc{P},\mathpzc{L}]$.
 \end{lemma}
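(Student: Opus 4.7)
The plan is to verify the claimed bijection by unpacking each ingredient of a pseudo $T$-algebra and checking that its transform under the hom $\mathpzc{Gray}$-adjunction $n$ of the tensor product is exactly the corresponding piece of data of a Gray homomorphism, with the pseudo algebra axioms translating into the Gray homomorphism axioms. Throughout, the two Transformation Lemmata \ref{asabovebynatofML} and \ref{noticethesymmetry} from \secref{noticethesymmetry} together with the pentagon and triangle identities \eqref{pentagonidentity} and \eqref{triangleidentity} provide the bookkeeping backbone.

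First I would match the underlying data. An object $A$ of $[\mathrm{ob}\mathpzc{P},\mathpzc{L}]$ is just a function $P\mapsto AP$ on objects. The $1$-cell $a\co TA\to A$ decomposes, via the universal property of the coend defining $TA$ and the adjunction $n$, into a family of strict functors $A_{PQ}\co \mathpzc{P}(P,Q)\to \mathpzc{L}(AP,AQ)$; this step is formally the same as the one carried out for strict algebras in Section \ref{sectionstrict}, the only difference being that here no functor axioms are imposed on the $A_{PQ}$. Next, the $2$-cell $m$, being a $1$-cell in $[\mathrm{ob}\mathpzc{P},\mathpzc{L}](T^2A,A)$, corresponds under the product structure of the hom and the iterated coend to a family of $1$-cells in $\mathpzc{L}(\mathpzc{P}(Q,R)\otimes (\mathpzc{P}(P,Q)\otimes AP),AR)$. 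Lemma \ref{asabovebynatofML} identifies the transform of $M_{\mathpzc{L}}(a,Ta)$ with $M_{\mathpzc{L}}(A_{QR}\otimes A_{PQ})$, while a routine naturality argument (analogous to the one displayed in Section \ref{sectionstrict} for the right-hand side of the algebra axiom) identifies the transform of $M_{\mathpzc{L}}(a,\mu_A)$ with $A_{PR}M_{\mathpzc{P}}$. Hence the transform of $m$ is a pseudonatural transformation $\chi$ with the required source and target, and its adjoint equivalence structure transfers because $n$ comes from a $\mathpzc{V}$-adjunction. The treatment of $i$ is analogous, with the triangle identity \eqref{triangleidentity} matching the transform of $M_{\mathpzc{L}}(a,\eta_A)$ with $A_{PP}j_P$ and the transform of $1_A$ with $j_{AP}$, yielding $\iota$.

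Then I would transform the $3$-cells $\pi,\lambda,\rho$ to modifications $\omega,\gamma,\delta$. Here one iterates the First Transformation Lemma to identify the transforms of the pasting composites appearing in (PSA1)--(PSA3), using the pentagon identity \eqref{pentagonidentity} to reassociate tensor products so that the resulting source and target match those prescribed by (GHM1)--(GHM3). The interchange cells $\Sigma^{-1}_{m,T^2m}$, $\Sigma^{-1}_{i,m}$, $\Sigma_{m,T^2i}$ appearing in the lax algebra axioms transform, by the Second Transformation Lemma \ref{noticethesymmetry} together with equation \eqref{TenSigma} from \secref{TenSigma}, into the $\mathrm{Ten}$-components $\Sigma_{\chi,\chi}$, $\Sigma_{\iota,\chi}$ and so on visible in (GHA1)--(GHA2); this is the step where the Second Transformation Lemma becomes essential.

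Finally I would match the axioms. By Proposition \ref{kelly2axioms} only (LAA1) and (LAA4) need be checked, which is precisely why the Gray homomorphism definition has only two axioms (GHA1) and (GHA2). The translation proceeds whiskering-by-whiskering: each whiskered $3$-cell on the pseudo-algebra side is an equation inside some $\mathpzc{L}(T^kAP,AP)$ whose transform under $n$ is computed by combining \ref{asabovebynatofML} and \ref{noticethesymmetry} with the naturality of the adjunction counit, the pentagon and triangle identities, and the Gray product relations \eqref{aSigmafg1}--\eqref{aSigma1gh}. Injectivity and surjectivity of the assignment then follow from the fact that $n$ is a bijection at every level. The main obstacle will be the sheer bookkeeping: each axiom consists of many whiskered modifications involving associators and unitors both for the tensor product and for the Gray product as well as interchange cells, and every factor must be tracked through $n$ while choosing compatible bracketings. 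The two Transformation Lemmata are designed precisely to absorb this bookkeeping, but their repeated application and the careful interleaving of coherence isomorphisms is where the bulk of the work lies.
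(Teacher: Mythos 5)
Your overall strategy is the paper's own: decompose the pseudo-algebra data, push each piece through the hom adjunction $n$ (precomposed with $\mathpzc{L}(\alpha,1)$, $\mathpzc{L}(\lambda,1)$, etc.\ to fix bracketings), invoke Proposition \ref{kelly2axioms} to reduce to the two axioms \textbf{(LAA1)} and \textbf{(LAA4)}, and then compare factor by factor using strictness of $n$. That is correct and is exactly how the paper proceeds.

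One concrete claim in your proposal is wrong, however: you assert that the interchange cells $\Sigma^{-1}_{m,T^2m}$ etc.\ are handled by the Second Transformation Lemma \ref{noticethesymmetry} and that ``this is the step where the Second Transformation Lemma becomes essential.'' In fact the paper explicitly notes that the proof of this lemma uses \emph{only} the First Transformation Lemma \ref{asabovebynatofML}: the transform of $M_{\mathpzc{L}}(\Sigma_{m_{RST},\,\cdot\,})$ is computed by first rewriting it via equation \eqref{FGSigma} and naturality into the form $n\,\mathpzc{L}(\alpha,1)\,M_{\mathpzc{L}}(1\otimes(\mathpzc{P}(R,S)\otimes-)_{,})$ applied to $\Sigma_{m_{RST},m_{PQR}}$, which is precisely the shape covered by Lemma \ref{asabovebynatofML}, yielding $[a^{-1},M_{\mathpzc{L}}]\mathrm{Ten}(\Sigma_{\chi_{RST},\chi_{PQR}})$. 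The Second Transformation Lemma has a different shape of domain ($\mathpzc{L}(X\otimes M,N)\otimes\mathpzc{L}(L,M)$, with no tensoring in the second factor) and produces the contravariant partial hom $\mathpzc{L}(-,N)$ together with the symmetry $c$; it only becomes necessary for the axioms of $T$-transformations (i.e.\ on the Gray-modification side), where interchange cells such as $\Sigma^{-1}_{i^B_P,\alpha_P}$ appear with a cell composed on the ``wrong'' side. Relatedly, the cells $\Sigma^{-1}_{i,m}$ and $\Sigma_{m,T^2i}$ you list live in the redundant axioms \textbf{(LAA2)}--\textbf{(LAA3)} and never need to be transformed, and \textbf{(GHA2)} contains no interchange modification at all; the only interchange cell you must track is the one in the pentagon-like axiom. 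None of this derails the proof---you would discover the correct tool upon attempting the computation---but as stated the proposed route misidentifies which lemma does the work at the one genuinely delicate step.
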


 \begin{proof}

Let $(A,a,m,i,\pi,\lambda,\rho)$ be a pseudo $T$-algebra. From the
identification of algebras we know that the components $a_{PQ}$
of the $1$-cell
$a\co TA\rightarrow A$ transform into strict functors $A_{PQ}\co
\mathpzc{P}(P,Q)\rightarrow \mathpzc{L}(AP,AQ)$ under the
hom $\mathpzc{Gray}$-adjunction \eqref{homVadjtensor} from \secref{homVadjtensor} of the tensor
product.

Since the adjoint equivalences $m$ and $i$ replace the two algebra
axioms, we define adjoint equivalences
$(\chi_{PQR},\chi^\bullet_{PQR})\coloneqq
(n\mathpzc{L}(\alpha,1))(m_{PQR},m_{PQR}^\bullet)$ for objects $P,Q,R\in\mathpzc{P}$
and $(\iota_{P},\iota_{P}^\bullet)\coloneqq
(n\mathpzc{L}(\lambda,1))(i_P,i_P^\bullet)$ for an object
$P\in\mathpzc{P}$. We have already determined domain and codomain of
these transforms in the
identification of the algebra axiom, and the adjoint
equivalences in Definition \ref{Grayhomomorphism} from \secref{Grayhomomorphism} do indeed replace
the axioms of a $\mathpzc{Gray}$-functor cf. \eqref{functoraxiom} and
\eqref{unitfunctoraxiom} in \ref{sectionstrict}. above.

Next we have to show that the transforms of the components of the
invertible $3$-cells $\pi,\lambda$, and $\rho$ correspond to the
invertible modifications $\omega_{PQRS}$, $\gamma_{PQ}$, and
$\delta_{PQ}$ in the definition of a Gray homomorphism.

The components of $\pi$ at objects $P,Q,R,S\in\mathpzc{P}$ are
invertible $3$-cells in $\mathpzc{L}$. We apply the invertible strict functor $\mathpzc{L}((1\otimes
\alpha)\alpha,1)$ to bring them into a form where we can apply the
hom $\mathpzc{Gray}$-adjunction \eqref{homVadjtensor} from \secref{homVadjtensor}. We then obtain
invertible $3$-cells
$\mathpzc{L}((1\otimes \alpha)\alpha,1)(\pi_{PQRS} )$ of the form
\begin{align*}
&
M_{\mathpzc{L}}(m_{PQS}, 1_{\alpha(((M_{\mathpzc{P}}\otimes 1)a^{-1})\otimes 1)})
\ast M_{\mathpzc{L}}(m_{QRS},1_{(1_{\mathpzc{P}(R,S)\otimes
      \mathpzc{P}(Q,R)}\otimes a_{PQ})\alpha (a^{-1}\otimes 1)}) \\
  \Rrightarrow\quad & M_{\mathpzc{L}}(m_{PRS},1_{(\mathpzc{P}(R,S)\otimes (
  (M_{\mathpzc{P}}\otimes 1_{AP})\alpha^{-1}))})\ast
  M_{\mathpzc{L}}(1_{a_{RS}}, \mathpzc{L}(\alpha,1)
  (\mathpzc{P}(R,S)\otimes m_{PQR}))
\puncteq{,}
\end{align*}
where we have already used the pentagon identity
\eqref{pentagonidentity} from \secref{pentagonidentity} for $\alpha$ and $a$.

The computations below then determine the transforms of the horizontal
factors and show that these coincide precisely with the horizontal
factors in the domain and codomain of the invertible modification
$\omega_{PQRS}$ in Definition \ref{Grayhomomorphism} from \secref{Grayhomomorphism}. For brevity we
have suppressed  many indices e.g. those of hom morphisms where we leave a comma as
a subscript to indicate that they are hom morphisms.

Transform of the right hand factor of the domain:
\begin{align*}
 &  ([\eta,1]\mathpzc{L}(AP,-)_{,})(M_{\mathpzc{L}}(m_{QRS},1_{(1_{\mathpzc{P}(R,S)\otimes
      \mathpzc{P}(Q,R)}\otimes a_{PQ}) \alpha (a^{-1}\otimes 1)})) \\
=& ( [a^{-1},1][\eta,1]\mathpzc{L}(AP,-)_{%(\mathpzc{P}(R,S)\otimes(\mathpzc{P}(Q,R)\otimes
                                %\mathpzc{P}(P,Q)))\otimes AP
,
 %   AS
}\mathpzc{L}(\alpha,1) M_{\mathpzc{L}}(1\otimes
((\mathpzc{P}(R,S)\otimes \mathpzc{P}(Q,R))\otimes -)_{,})) (m_{QRS},1_{a_{PQ}}) \\
& \mbox{(by naturality)} \\
 =&  [a^{-1},M_{\mathpzc{L}}] 
 \mathrm{Ten}_{,
}
 (\chi_{QRS},
  1_{
    A_{PQ}})  \\
&
\mbox{(by Lemma \ref{asabovebynatofML} from \secref{asabovebynatofML})}
\end{align*}

Transform of the left hand factor of the domain:

\begin{align*}
 &
 ([\eta,1]\mathpzc{L}(AP,-)_{,})(M_{\mathpzc{L}}(m_{PQS},1_{\alpha(((M_{\mathpzc{P}}\otimes
   1)a^{-1})\otimes 1)})) \\
= & ([(M_{\mathpzc{P}}\otimes
1)a^{-1},1][\eta,1]\mathpzc{L}(AP,-)_{,}\mathpzc{L}(\alpha,1))
(m_{PQS}) \\
& \mbox{(by naturality)} \\
= & [(M_{\mathpzc{P}}\otimes 1)a^{-1},1](\chi_{PQS})
\end{align*}

Transform of the right hand factor of the codomain:
\begin{align*}
 &  ([\eta,1]\mathpzc{L}(AP,-)_{,})(M_{\mathpzc{L}}(1_{a_{RS}}, \mathpzc{L}(a,1) (\mathpzc{P}(R,S)\otimes m_{PQR}))) \\
= & [1,M_{\mathpzc{L}}]\mathrm{Ten}_{,}(1_{A_{RS}},\chi_{PQR}) \\
& \mbox{(by Lemma \ref{asabovebynatofML} from \secref{asabovebynatofML} )}
\end{align*}

Transform of the left hand factor of the codomain:
\begin{align*}
 &  ([\eta,1]\mathpzc{L}(AP,-)_{,})(M_{\mathpzc{L}}(m_{PRS}, 1_{(\mathpzc{P}(R,S)\otimes (
  (M_{\mathpzc{P}}\otimes 1_{AP})\alpha^{-1}))}))) \\
= &  [1\otimes M_{\mathpzc{P}},1](\chi_{PRS}) \\
& \mbox{(by naturality)}
\end{align*}
Thus we may define $\omega_{PQRS}$ as the transform $
(n\mathpzc{L}((1\otimes \alpha)\alpha,1)(\pi_{PQRS})$.

Similarly, it is shown that $\gamma_{PQ}$ and $\delta_{PQ}$ may be
defined as the transforms of $\lambda_{PQ}$ and $\rho_{PQ}$ (where one
has to use the first Transformation Lemma and the triangle identity).

Finally we have to show that the axioms of a Gray homomorphism are
precisely the transforms of the axioms of a pseudo algebra. Observe that
because there are only two axioms in the definition of a pseudo
algebra, 
it is crucial that by Proposition \ref{kelly2axioms} from \secref{kelly2axioms} two of the
lax algebra axioms are redundant for a pseudo algebra\footnote{We show
below that Gray homomorphisms correspond to locally strict
trihomomorphisms. The two redundant axioms correspond to two equations
for a trihomomorphism that hold generally: This can be shown because
they hold for strict trihomomorphisms by the left and right
normalization axiom of a  tricategory, and then they hold for a
general trihomomorphism by coherence.}.

Now consider the pentagon-like axiom. The corresponding Gray
homomorphism axiom and the pseudo algebra axiom are both composed out
of three vertical factors on each side of the axiom. Each of the
vertical factors is the horizontal composition of a nontrivial
$2$-cell and an identity $2$-cell.
Since the hom $\mathpzc{Gray}$-adjunction \eqref{homVadjtensor} of the
tensor product from \secref{homVadjtensor} is given by strict functors, it preserves vertical and
horizontal composition and it preserves identity $2$-cells. It follows
that we only have to show that the nontrivial $2$-cells in each
vertical factor match. In diagrammatic language this means that we
only have to compare the nontrivial subdiagrams.

In fact,  the determination of the transforms of the nontrivial
$2$-cells of the pseudo algebra axiom is perfectly straightforward
and similar to the identification of the transforms of $\pi_{PQRS}$'s
domain and codomain above.
For example, the transform of the interchange cell is:
\begin{align*}
 & ([\eta,1]\mathpzc{L}(AP,-)_{,}
M_{\mathpzc{L}}) (\Sigma_{m_{RST},\mathpzc{L}(\alpha(a^{-1}\otimes 1),1)(\mathpzc{P}(R,S)\otimes
  -)_{,}(m_{PQR})}) \\
= & ([\eta,1]\mathpzc{L}(AP,-)_{,}
M_{\mathpzc{L}} (1\otimes (\mathpzc{L}(\alpha(a^{-1}\otimes 1),1)(\mathpzc{P}(R,S)\otimes
-)_{,}))) (\Sigma_{m_{RST},m_{PQR}}) \\
& \mbox{(by equation \eqref{FGSigma} from \secref{FGSigma})} \\
= & ([a^{-1},1][\eta,1]\mathpzc{L}(AP,-)_{,}
\mathpzc{L}(\alpha,1)M_{\mathpzc{L}}(1\otimes (\mathpzc{P}(R,S)\otimes
-)_{,})) (\Sigma_{m_{RST},m_{PQR}}) \\
& \mbox{(by naturality of $M_{\mathpzc{L}}$, $\mathpzc{L}(AP,-)_{,}$,
  and $\eta$)} \\
= & [a^{-1},M_{\mathpzc{L}}]\mathrm{Ten}_{,}
(\Sigma_{\chi_{RST},\chi_{PQR}}) \\
& \mbox{(by Lemma \ref{asabovebynatofML} from \secref{asabovebynatofML})}
\end{align*}
This is exactly the interchange cell appearing in the pentagon-like
axiom of a Gray homomorphism.
\end{proof}

Notice that the proof above only involved the first Transformation
Lemma from \ref{someproperties}, namely Lemma \ref{asabovebynatofML}.
To show how the second Transformation Lemma i.e. Lemma
\ref{noticethesymmetry} from \secref{noticethesymmetry} enters in the
proof of Theorem \ref{graygleichtransform}, we
provide the following lemma regarding the first axiom of a
$T$-transformation.
In fact, one has to employ Lemma \ref{noticethesymmetry} already in the proof of
the correspondence for pseudo $T$-functors, but only for the axioms of a
$T$-transformation, there appears a new class of interchange cells.
Thus we skip the proof for the correspondence of pseudo $T$-functors and
Gray transformations, and for the correspondence of the data of a
$T$-transformation and the data of a Gray modification.
\begin{lemma}\label{secondaxiomGraymod}
  The transform of the first axiom \textup{\textbf{(LTA1)}} of a $T$-transformation $\alpha\co
  f\rightarrow g\co A\rightarrow B$ is precisely the second axiom \textup{\textbf{(GMA2)}} of a
  Gray modification.
\end{lemma}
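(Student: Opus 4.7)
The plan is to argue in precisely the same style as the proof of the preceding lemma on pseudo algebras: apply to both sides of axiom \textbf{(LTA1)} a suitable invertible strict functor composed of $\mathpzc{L}(u,1)$ (for appropriate unitors/associators $u$ of the tensor product) followed by the hom $\mathpzc{Gray}$-adjunction strict functor $n=[\eta,1]\mathpzc{L}(L,-)_{,}$, and then verify that the image coincides term by term with \textbf{(GMA2)}. Since the composite is a strict functor of $2$-categories, it preserves vertical and horizontal composition and sends identity $2$-cells to identity $2$-cells, so it suffices to transform each nontrivial $2$-cell appearing as a horizontal factor in one of the vertical composites and check that these match. The adjoint equivalence data of the transformation (the $1$-cell $\alpha$ and the $3$-cell $A$) transform into the component $\alpha_P$ and the modification component $\alpha_{PP}$ respectively, just as in the correspondence of $T$-transformation data with Gray modification data, so this part is recorded from the earlier step in the proof of Theorem \ref{graygleichtransform}.

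For the non-interchange horizontal factors, namely $\mathpzc{h}^f$, $\mathpzc{h}^g$ and $(A1)\ast 1_{1i^X}$ (and their mirror on the other side), the transforms are obtained by the First Transformation Lemma \ref{asabovebynatofML}: the occurrences of $M_{\mathpzc{L}}$ combined with tensor factors involving the components of $i^X$, $i^Y$ and $\alpha$ convert into $\mathrm{Ten}$-factors paired with $[l_I^{-1},M_{\mathpzc{L}}]$ whiskerings, which is precisely the shape of $M^f$, $M^g$ and $[j_P,1](\alpha_{PP})\ast 1_{[1,\mathpzc{L}(AP,f_P)](\iota^A_P)}$ appearing in \textbf{(GMA2)}. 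This part is entirely parallel to the identification of $\omega$, $\gamma$, $\delta$ and of $\Pi$, $M$ carried out earlier, so the bookkeeping is mechanical once one uses the pentagon identity \eqref{pentagonidentity} and triangle identity \eqref{triangleidentity} to absorb the associators and unitors that arise.

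The main obstacle, and the place where the Second Transformation Lemma \ref{noticethesymmetry} is indispensable, is the treatment of the two interchange cells $\Sigma_{\alpha,i^X}$ on the left-hand side and $\Sigma^{-1}_{i^Y,\alpha}$ on the right-hand side of \textbf{(LTA1)}. Here the $2$-cell $\alpha$ is paired with a $1$-cell ($i^X$, respectively $i^Y$) that under the hom $\mathpzc{Gray}$-adjunction becomes the adjoint equivalence $\iota^A_P$, respectively $\iota^B_P$; the general formula \eqref{FGSigma} reduces the transform to an interchange cell in $\mathpzc{L}$ of the form $\Sigma_{M_{\mathpzc{L}}(-,-),\ldots}$, and it is precisely Lemma \ref{noticethesymmetry} that identifies
\[
n\,M_{\mathpzc{L}}\mathrm{Ten}(\mathpzc{L}(X\otimes M,N),-)_{,}(X\otimes-)_{,}=M_{\mathpzc{V}}\bigl(\mathpzc{L}(-,N)_{,}\otimes n\bigr)c,
\]
the crucial symmetry $c$ on the right-hand side accounting for the fact that one of the two interchange cells in \textbf{(LTA1)} carries an inverse and the two arguments swap roles. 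After this identification the interchange cells transform into $M_{\mathpzc{Gray}}(\Sigma_{\mathpzc{L}(AP,\alpha_P),\iota^A_P})$ and $M_{\mathpzc{Gray}}(\Sigma_{\mathpzc{L}(\alpha_P,BP),\iota^B_P})$, matching the interchange factors of \textbf{(GMA2)} exactly.

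Finally, one collects the three transformed vertical factors on each side and observes that the order and the whiskerings coincide with those of \textbf{(GMA2)}. Because $n$ (together with the unitor/associator prefactor) is an invertible strict functor, the equivalence of axioms goes both ways: \textbf{(LTA1)} holds if and only if \textbf{(GMA2)} holds, which completes the correspondence for modifications. The analogous, easier axiom \textbf{(LTA2)} for the multiplication $m$ matches \textbf{(GMA1)} by a completely parallel computation with $m$, $\chi^A$, $\chi^B$, $\Pi$ in place of $i$, $\iota^A$, $\iota^B$, $M$, so no separate argument is needed.
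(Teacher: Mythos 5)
Your proposal follows essentially the same route as the paper: apply the invertible strict functor $\mathpzc{L}(\lambda_{AP},1)$ followed by the hom adjunction, use strictness to reduce to the nontrivial horizontal factors (whose transforms $M^f$, $M^g$, $[j_P,1](\alpha_{PP})$ are already known from the data correspondence), and identify the interchange cells as the crux. One small imprecision: only the swapped cell $\Sigma^{-1}_{i^Y,\alpha}$ actually requires Lemma \ref{noticethesymmetry} together with the symmetry $c$ and equation \eqref{cSigma}; the cell $\Sigma_{\alpha,i^X}$ transforms directly via the functor axiom for $\mathpzc{L}(AP,-)$ and equation \eqref{FGSigma}, with no appeal to the Second Transformation Lemma.
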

\begin{proof}

First note that the $T$-transformation axiom \textbf{(LTA1)} cf. Definition
\ref{Ttransformation} from \secref{Ttransformation} is equivalent to the equations
\begin{align*}
& (\mathpzc{h}^g_P\ast 1)\diamond (1\ast
M_{\mathpzc{L}}(\Sigma_{\alpha_P,i^A_P}))\diamond
(M_{\mathpzc{L}}(A_{PP},1_{(j_P\otimes 1)\lambda^{-1}_{AP}})\ast 1)\\ &
= M_{\mathpzc{L}}(\Sigma^{-1}_{i^B_P,\alpha_P})\diamond (1\ast \mathpzc{h}^f_{P})
  \puncteq{.}
\end{align*}
where $P$ runs through the objects of $\mathpzc{P}$.
We apply the invertible strict functor $\mathpzc{L}(\lambda_{AP},1)$
to these equations, which gives the following equivalent equations:
\begin{align*}
& (\mathpzc{L}(\lambda_{AP},1)(\mathpzc{h}^g_P)\ast 1)\diamond (1\ast
M_{\mathpzc{L}}(\Sigma_{\alpha_P,\mathpzc{L}(\lambda_{AP},1)(i^A_P)}))\diamond (M_{\mathpzc{L}}(A_{PP},1_{j_P\otimes 1})\ast 1)\\ &
= M_{\mathpzc{L}}(\Sigma^{-1}_{i^B_P,\mathpzc{L}(\lambda_{AP},1)(\alpha_P)})\diamond (1\ast \mathpzc{L}(\lambda_{AP},1)(\mathpzc{h}^f_{P}))
  \puncteq{.}
\end{align*}
Here we have used naturality of $M_{\mathpzc{L}}$ and equation
\eqref{FGSigma} from \secref{FGSigma} for the manipulation of the interchange cells.
As above we only have to compare the transforms of the nontrivial
$2$-cells. The transforms of
$\mathpzc{L}(\lambda_{AP},1)(\mathpzc{h}^g_P)$ and $
\mathpzc{L}(\lambda_{AP},1)(\mathpzc{h}^f_p)$  are by definition the modifications
$M^g$ and $M^f$ of the Gray transformation corresponding to the pseudo
$T$-functors $f$ and $g$. The transform of
$M_{\mathpzc{L}}(A_{PP},1_{j_P\otimes 1})$ is by naturality
$[j_P,1](\alpha_{PP})$.

\noindent The transform of the interchange cell
$M_{\mathpzc{L}}(\Sigma_{\alpha_P,\mathpzc{L}(\lambda_{AP},1)i^A_P})$
is determined as follows:
\begin{align*}
  &  ([\eta^{AP}_{I},1]\mathpzc{L}(AP,-)_{I\otimes
    AP,BP})(M_{\mathpzc{L}}(\Sigma_{\alpha_P,\mathpzc{L}(\lambda_{AP},1)i^A_P})
  )\\
= & M_{\mathpzc{Gray}}(\mathpzc{L}(AP,-)_{
      AP,BP}\otimes ([\eta^{AP}_{I},1]\mathpzc{L}(AP,-)_{I\otimes
      AP,AP} ))(\Sigma_{
    \alpha_P, (\mathpzc{L}(\lambda_{AP},1)(i^A_P))}) \\
  &  \mbox{(by the functor axiom for $\mathpzc{L}(AP,-)$ and naturality
    of $M_{\mathpzc{Gray}}$)} \\
= & M_{\mathpzc{Gray}}(\Sigma_{
   \mathpzc{L}(AP,
    \alpha_P), [\eta^{AP}_{I},1]\mathpzc{L}(AP,-)_{I\otimes
      AP,AP}(\mathpzc{L}(\lambda_{AP},1)(i^A_P))}) \\
  &  \mbox{(by equation \eqref{FGSigma} from \secref{FGSigma})} \\
=& M_{\mathpzc{Gray}}(\Sigma_{\mathpzc{L}(AP,\alpha_P),\iota^A_P}) 
\end{align*}
The transform of the interchange cell left is:
\begin{align*}
  &  ([\eta^{AP}_{I},1]\mathpzc{L}(AP,-)_{I\otimes
    AP,BP})(M_{\mathpzc{L}}(\Sigma^{-1}_{i^B_P,\mathpzc{L}(\lambda_{AP},1)(\alpha_P)})
  )\\
= & ([\eta^{AP}_{I},1]\mathpzc{L}(AP,-)_{I\otimes
    AP,BP})((M_{\mathpzc{L}}(1\otimes (I\otimes -)_,))(\Sigma^{-1}_{\mathpzc{L}(\lambda_{AP},1)(i^B_P),
  \alpha_P}) )\\ 
& \mbox{(by naturality of $\lambda$, equation \eqref{FGSigma} from \secref{FGSigma}, and
  extraordinary naturality of $M_{\mathpzc{Gray}}$)} \\
= & (M_{\mathpzc{Gray}}((\mathpzc{L}(-,BP)_{AP,BP})\otimes
  (
  [\eta^{AP}_{I},1]\mathpzc{L}(AP,-)_{(I\otimes AP,AP}))c)(\Sigma^{-1}_{\mathpzc{L}(\lambda_{AP},1)(i^B_P),
  \alpha_P}) )\\ 
& \mbox{(by Lemma \ref{noticethesymmetry} from \secref{noticethesymmetry})} \\
= & (M_{\mathpzc{Gray}}((\mathpzc{L}(-,BP)_{AP,BP})\otimes
  (
  [\eta^{AP}_{I},1]\mathpzc{L}(AP,-)_{(I\otimes AP,AP})))(\Sigma_{\alpha_P,\mathpzc{L}(\lambda_{AP},1)(i^B_P)
  }) )\\ 
  & \mbox{(by equation \eqref{cSigma} from \secref{cSigma})} \\
=& M_{\mathpzc{Gray}}(\Sigma_{\mathpzc{L}(\alpha_P,BP),\iota^B_P}) \\
& \mbox{(by equation \eqref{FGSigma} from \secref{FGSigma})} 
\end{align*}

  %%%%%%%
\end{proof}
This finishes our exhibition of the proof of Theorem
\ref{graygleichtransform}, and we end this paragraph with the following trivial
corollary of Theorem \ref{graygleichtransform}:
\begin{corollary}\label{graygleichps}
  Given $\mathpzc{Gray}$-categories $\mathpzc{P}$ and $\mathpzc{L}$,
  there is a $\mathpzc{Gray}$-category $\mathrm{Gray}(\mathpzc{P},\mathpzc{L})$ with objects Gray
  homomorphisms, $1$-cells Gray transformations, $2$-cells Gray
  modifications, and $3$-cells Gray perturbations. If $\mathpzc{P}$ is
  small and $\mathpzc{L}$ is cocomplete,
  $\mathrm{Gray}(\mathpzc{P},\mathpzc{L})$ is uniquely characterized 
  by the requirement that the correspondence from Theorem
  \ref{graygleichtransform} induces
  an isomorphism
  \begin{align*}
   \mathrm{Ps}\text{-}T\text{-}\mathrm{Alg} \cong  \mathrm{Gray}(\mathpzc{P},\mathpzc{L})
  \end{align*}
 of $\mathpzc{Gray}$-categories 
  for
  \mbox{$T=[H,1]\mathrm{Lan}_H\co[\mathrm{ob}\mathpzc{P},\mathpzc{L}]\rightarrow
  [\mathrm{ob}\mathpzc{P},\mathpzc{L}]$}. \qed
\end{corollary}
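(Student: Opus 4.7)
The plan is to derive the corollary from Theorem \ref{graygleichtransform} by a transport of structure. When $\mathpzc{P}$ is small and $\mathpzc{L}$ is cocomplete, that theorem gives a bijection on $4$-globular sets between the cells of $\mathrm{Ps}\text{-}T\text{-}\mathrm{Alg}$ and the Gray cells of Definitions \ref{Grayhomomorphism}--\ref{Grayperturbation}. In this special case I would simply push forward the $\mathpzc{Gray}$-category structure of $\mathrm{Ps}\text{-}T\text{-}\mathrm{Alg}$ (Definitions \ref{localPSTAlg} and \ref{globalPSTAlg}) to the set of Gray cells along this bijection, defining the composition $\boxtimes$, the units and the associators on Gray cells to be the images of the corresponding $\mathrm{Ps}\text{-}T\text{-}\mathrm{Alg}$ structure. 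The $\mathpzc{Gray}$-category axioms then hold tautologically, and the bijection becomes an isomorphism of $\mathpzc{Gray}$-categories by construction. Uniqueness in the small and cocomplete case is immediate: any $\mathpzc{Gray}$-category structure on the Gray cells that makes the bijection respect composition, units and associators must coincide with this transport.

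To extend to arbitrary $\mathpzc{P}$ and $\mathpzc{L}$, the key observation is that the transported structure admits an explicit description in terms of the $\mathpzc{Gray}$-category structure of $\mathpzc{L}$ alone, with no reference to the monad $T$ or to the Kan extension $\mathrm{Lan}_H$. To obtain this description one transports each generating cell of $\boxtimes$ in Definition \ref{globalPSTAlg} --- objects, the two classes of generating $1$-cells, the two classes of generating $2$-cells, and the interchange $2$-cells --- through the hom $\mathpzc{Gray}$-adjunction \eqref{homVadjtensor} of the tensor product, using the two Transformation Lemmata \ref{asabovebynatofML} and \ref{noticethesymmetry} together with the coherence of the associators and unitors $\alpha,\lambda,\rho$, in exactly the manner already displayed in the proof of Theorem \ref{graygleichtransform} and in Lemma \ref{secondaxiomGraymod}. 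The resulting formulas are built solely from $M_{\mathpzc{L}}$, the hom morphisms of $\mathpzc{L}$, interchange cells of Gray products of hom-$2$-categories of $\mathpzc{P}$, and the constraints of $\mathpzc{Gray}$, so they make sense for arbitrary $\mathpzc{P}$ and $\mathpzc{L}$.

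Once these formulas are in hand, I would verify the $\mathpzc{Gray}$-category axioms for the resulting candidate $\mathrm{Gray}(\mathpzc{P},\mathpzc{L})$ in the general case by reduction to the small and cocomplete case. Each axiom is an equation of $2$- or $3$-cells in a single hom-$2$-category of $\mathpzc{L}$ (or in a functor $2$-category $[X,Y]$ whose domain $X$ is a Gray product of hom-$2$-categories of $\mathpzc{P}$), mentions only finitely many objects of $\mathpzc{P}$, and is written in the same local formulas as the transported structure of the special case. Restricting to a small full sub-$\mathpzc{Gray}$-category $\mathpzc{P}_0 \subseteq \mathpzc{P}$ containing those objects, and if necessary enlarging $\mathpzc{L}$ to a cocomplete ambient $\mathpzc{L}'$ via a Yoneda-type embedding, reduces the axiom to one in $\mathrm{Gray}(\mathpzc{P}_0,\mathpzc{L}')$, where it holds by the first paragraph.

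The main obstacle is purely computational: carrying out the transports of each cell of $\boxtimes$ in Definition \ref{globalPSTAlg} is a bookkeeping exercise entirely parallel to the Transformation Lemma arguments already made in the proof of Theorem \ref{graygleichtransform}, but repeated for the composition data rather than for the cell data. The challenge is to organize these calculations compactly, not to overcome any conceptual difficulty; once the translation is effected, the isomorphism $\mathrm{Ps}\text{-}T\text{-}\mathrm{Alg} \cong \mathrm{Gray}(\mathpzc{P},\mathpzc{L})$ and its uniqueness in the small and cocomplete case follow immediately from the construction.
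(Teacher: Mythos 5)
Your first paragraph matches the paper exactly: the corollary is stated with no separate proof precisely because, for $\mathpzc{P}$ small and $\mathpzc{L}$ cocomplete, the $\mathpzc{Gray}$-category structure is obtained by transporting that of $\mathrm{Ps}\text{-}T\text{-}\mathrm{Alg}$ along the bijection of Theorem \ref{graygleichtransform}, and uniqueness is then tautological. Where you genuinely diverge from the paper is in the first sentence of the corollary, the existence of $\mathrm{Gray}(\mathpzc{P},\mathpzc{L})$ for \emph{arbitrary} $\mathpzc{P}$ and $\mathpzc{L}$. The paper disposes of this in the remark following the corollary by observing that the transported prescriptions are local formulas in $M_{\mathpzc{L}}$ and the constraints of $\mathpzc{Gray}$ (your second paragraph says the same), and then points to Theorem \ref{graygleichls}: the data correspond bijectively to locally strict tricategorical data, and $\mathpzc{Tricat}(\mathpzc{P},\mathpzc{L})$ is already known to be a $\mathpzc{Gray}$-category for arbitrary $\mathpzc{P},\mathpzc{L}$ by Gurski, so the axioms in the general case are inherited from there. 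You instead verify the axioms by restricting to a small full sub-$\mathpzc{Gray}$-category $\mathpzc{P}_0$ and embedding (a small full piece of) $\mathpzc{L}$ fully faithfully into a cocomplete $[\mathpzc{L}_0^{\mathrm{op}},\mathpzc{Gray}]$. This works, and it has the merit of staying entirely inside the monad-theoretic framework without presupposing Gurski's construction of $\mathpzc{Tricat}(\mathpzc{P},\mathpzc{L})$; the price is that you must make explicit that every axiom of the candidate structure is verified componentwise (pointwise in $\mathrm{ob}\mathpzc{P}$ and at objects of the relevant Gray products of hom $2$-categories), that restricted data again form Gray homomorphisms, transformations, etc., and that a fully faithful $\mathpzc{Gray}$-functor reflects the equations in question. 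One small slip: a $\mathpzc{Gray}$-category has a strictly associative composition law, so there are no ``associators'' to transport; presumably you mean the associativity axiom, which is indeed forced.
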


\begin{remark}
  Strictly speaking, $\mathrm{Gray}(\mathpzc{P},\mathpzc{L})$ can of
  course only
  inherit the $\mathpzc{Gray}$-category structure from
  $\mathrm{Ps}\text{-}T\text{-}\mathrm{Alg}$ in the situation that the
  left Kan extension $\mathrm{Lan}_H$ along $H\co
  \mathrm{ob}\mathpzc{P}\rightarrow \mathpzc{P}$ exists and has the
  explicit description from \ref{subsectionexplicit}, e.g. if
  $\mathpzc{P}$ is small and $\mathpzc{L}$ is cocomplete, but in fact the
  prescriptions obtained in this case for the local $2$-category
  structure and the $\mathpzc{Gray}$-category structure of
  $\mathrm{Gray}(\mathpzc{P},\mathpzc{L})$ are also valid if we are
  not in this situation i.e. if there are no restrictions on $\mathpzc{P}$
  and $\mathpzc{L}$ cf. Theorem \ref{graygleichls} below.
\end{remark}

\subsection{The correspondence with locally strict
  trihomomorphisms}\label{subsectionls}
The next theorem forms the second part of the promised correspondence of pseudo
algebras and locally strict trihomomorphisms, which is then proved in
Theorem \ref{psgleichls} below.
\begin{theorem}\label{graygleichls}
Given $\mathpzc{Gray}$-categories $\mathpzc{P}$ and $\mathpzc{L}$, the $\mathpzc{Gray}$-category
  $\mathrm{Gray}(\mathpzc{P},\mathpzc{L})$ is isomorphic as a
  $\mathpzc{Gray}$-category to the full sub-$\mathpzc{Gray}$-category
  $\mathpzc{Tricat}_{\mathrm{ls}}(\mathpzc{P},\mathpzc{L})$ of
$\mathpzc{Tricat}(\mathpzc{P},\mathpzc{L})$ determined by the locally
strict trihomomorphisms.\qed
\end{theorem}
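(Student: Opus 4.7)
The plan is to unpack both sides of the claimed isomorphism at the level of data and axioms and exhibit a bijective correspondence, using Theorem \ref{internal2isobicatcmulti} to mediate between strict functors on Gray products and cubical functors on cartesian products. I will do this separately for each layer of cells (objects, $1$-cells, $2$-cells, $3$-cells), then check compatibility with the $\mathpzc{Gray}$-category composition.

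First, I compare objects. A Gray homomorphism $A\co \mathpzc{P}\to\mathpzc{L}$ consists of a function on objects, strict hom-functors $A_{PQ}$, adjoint equivalences $(\chi,\chi^\bullet)$ and $(\iota,\iota^\bullet)$ of pseudonatural transformations of strict functors out of $\mathpzc{P}(Q,R)\otimes\mathpzc{P}(P,Q)$ and $I$ respectively, and modifications $\omega,\gamma,\delta$ satisfying (GHA1)--(GHA2). A locally strict trihomomorphism in $\mathpzc{Tricat}(\mathpzc{P},\mathpzc{L})$ has the same underlying function on objects and the same strict hom-functors (by local strictness), but its composition and unit constraints are adjoint equivalences of pseudonatural transformations between cubical functors on $\mathpzc{P}(Q,R)\times\mathpzc{P}(P,Q)$, together with invertible modifications $\omega,\gamma,\delta$ satisfying the tricategory axioms. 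By Theorem \ref{internal2isobicatcmulti} (for $n=2$ and $n=3$), composition with the universal cubical functors $C$ induces $\mathpzc{Cat}$-isomorphisms identifying adjoint equivalences and modifications on the Gray-product side with those on the cubical side, and this bijection sends the distinguished interchange $2$-cell $\Sigma_{f,g}$ to the constraint $\hat F_{(f,1),(1,g)}$ of the corresponding cubical functor. Thus the underlying data correspond bijectively.

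Next I verify that the two axioms match. Each $n$-variable axiom for a trihomomorphism lives in a $2$-category of cubical functors $\mathpzc{P}(P_{n},P_{n+1})\times\cdots\times\mathpzc{P}(P_0,P_1)\to\mathpzc{L}(AP_0,AP_{n+1})$, whereas the Gray axioms (GHA1)--(GHA2) live in the corresponding functor $2$-category on the iterated Gray product. Under the $\mathpzc{Cat}$-isomorphism of Theorem \ref{internal2isobicatcmulti}, the whiskerings, vertical composites and the instances of $\omega,\gamma,\delta,\chi,\iota$ transport faithfully. The delicate step, and the main obstacle, will be the comparison of the interchange cells $\Sigma_{\chi,\chi}$, $\Sigma_{\chi,f}$, $\Sigma_{f,\chi}$, $\Sigma_{\iota,-}$ etc.\ appearing in the Gray axioms with their counterparts in the tricategorical axioms: on the Gray side these are literally interchange cells in the Gray product, while on the cubical side they appear as naturality $2$-cells of pseudonatural transformations at composite $1$-cells, expanded via respect for composition. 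One must check that the decomposition of a cubical naturality $2$-cell at $(f,g)=(f,1)(1,g)$ as the pasting of the two naturality $2$-cells with the appropriate cubical constraint agrees, under the $C^\ast$-isomorphism, with the Gray interchange cell $\Sigma_{f,g}$; this is precisely the observation made in the proof of Proposition \ref{internal2isobicatc}. Once this identification is in hand, the pentagon-like axiom and the triangle-like axiom on the Gray side translate termwise into the standard tricategory axioms for a trihomomorphism with strict source and target.

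The same template applies to the three higher layers: a Gray transformation vs a tritransformation between locally strict homomorphisms (comparing (GTA1)--(GTA3) with the tritransformation axioms, and the modifications $\Pi,M$ with the pseudonatural composition constraints), a Gray modification vs a trimodification (comparing (GMA1)--(GMA2) with the trimodification axioms, where the modification $\alpha_{PQ}$ on the Gray side corresponds to the naturality $2$-cell data of a trimodification at $1$-cells), and a Gray perturbation vs a perturbation (where (GPA1) corresponds to the single perturbation axiom, modulo the same rewriting of cubical constraints as Gray interchange cells). In each case the passage from the cubical pasting diagram to the Gray-enriched equation is handled by Theorem \ref{internal2isobicatcmulti} plus the interchange-cell identification above. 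Finally, one checks that the composition law $\boxtimes$ of $\mathrm{Gray}(\mathpzc{P},\mathpzc{L})$ inherited from $\mathrm{Ps}\text{-}T\text{-}\mathrm{Alg}$ under Corollary \ref{graygleichps}, and the units, go over to the composition of $\mathpzc{Tricat}_{\mathrm{ls}}(\mathpzc{P},\mathpzc{L})$ under these bijections; this is straightforward from the explicit prescriptions in Definition \ref{globalPSTAlg} once the data have been matched, since both composition laws are defined componentwise from $M_{\mathpzc{L}}$ and the constraints, and the interchange-cell identification ensures agreement on generating $1$-cells of the Gray product and on $\Sigma_{(\alpha,A),(\beta,B)}$. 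This yields the desired isomorphism of $\mathpzc{Gray}$-categories.
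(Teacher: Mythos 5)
Your proposal follows essentially the same route as the paper's own argument: unpacking the data layer by layer, using Theorem \ref{internal2isobicatcmulti} (via the universal cubical functors) to transport the adjoint equivalences and modifications between the Gray-product and cartesian-product descriptions, isolating the comparison of interchange cells (whose components must be matched against the constraint and naturality $2$-cells in the tricategorical axioms, exactly as in the paper's component computations with $\mathrm{Ten}$ and $M_{\mathpzc{Gray}}$) as the only delicate point, and finally checking the composition laws. No substantive differences or gaps.
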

Again, we just indicate how the proof works,
but we want to stress that the steps of the proof not displayed
have been explicitly checked and they 
are indeed entirely analogous to the situations we discuss in the lemmata below.

\begin{lemma}
  Given $\mathpzc{Gray}$-categories $\mathpzc{P}$ and $\mathpzc{L}$,
  there is a one-to-one correspondence between locally strict
  trihomomorphisms $\mathpzc{P}\rightarrow \mathpzc{L}$ and Gray
  homomorphisms $\mathpzc{P}\rightarrow \mathpzc{L}$.
\end{lemma}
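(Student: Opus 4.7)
The plan is to obtain the correspondence by systematically applying Theorem \ref{internal2isobicatcmulti} in order to translate between cubical functors on cartesian products (which appear in the tricategorical definitions) and strict functors on Gray products (which appear in the Gray definitions from \secref{homomorphismsofgraycategories}). A locally strict trihomomorphism and a Gray homomorphism are by construction given on the same underlying object data and the same local strict hom functors $A_{PQ}\co \mathpzc{P}(P,Q)\rightarrow \mathpzc{L}(AP,AQ)$, so it suffices to match up the remaining higher data and the axioms.

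First I would match the adjoint equivalences. In the tricategorical setting $\chi_{PQR}$ is an adjoint equivalence inside the $2$-category $\mathpzc{Bicat}_c(\mathpzc{P}(Q,R),\mathpzc{P}(P,Q);\mathpzc{L}(AP,AR))$ of cubical functors in two variables, while in the Gray setting it is an adjoint equivalence inside $[\mathpzc{P}(Q,R)\otimes \mathpzc{P}(P,Q),\mathpzc{L}(AP,AR)]$. By Theorem \ref{internal2isobicatcmulti} these two $2$-categories are isomorphic, via precomposition with the universal cubical functor $C$, and an isomorphism of $2$-categories restricts to a bijection between adjoint equivalences and hence gives the required correspondence. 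The same argument in one variable handles $\iota_P$ (where the cubical and strict sides are literally the same, the domain being $I$), and the $n=3$ and $n=4$ cases of Theorem \ref{internal2isobicatcmulti} handle the $3$-cell data $\omega$, $\gamma$, $\delta$, since these live inside hom $2$-categories of functors in up to three variables.

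Next I would verify the axioms. Each axiom is an equation of modifications obtained as a vertical composite of whiskered horizontal factors, and by the identification above each such $3$-cell in a tricategorical axiom translates into the analogous $3$-cell in the corresponding Gray axiom. Because the isomorphism of Theorem \ref{internal2isobicatcmulti} is a $\mathpzc{Cat}$-iso\-mor\-phism, it strictly preserves identities, vertical composites, horizontal composites, whiskerings and images of modifications under strict hom functors, so the translation preserves equations term by term. The hard part, as flagged in the introduction, is the comparison of interchange cells. In the Gray axioms images $M_{\mathpzc{L}}(\Sigma_{f,g})$ of interchange $2$-cells in the Gray product appear explicitly, whereas in the tricategorical axioms the analogous cells arise as constraint cells $\hat F_{(f,1),(1,g)}$ of cubical functors. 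Here the identity $F(\Sigma_{f,g})=\hat F_{(f,1),(1,g)}$ established in the proof of Proposition \ref{internal2isobicatc} is exactly what is needed; checking it against each occurrence of a $\Sigma$ or a cubical constraint in the two axiom schemes produces a matching that turns one axiom into the other.

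Finally, I note that the number of axioms already matches: Definition \ref{Grayhomomorphism} has the two axioms \textbf{(GHA1)}, \textbf{(GHA2)}, and the tricategorical definition of a trihomomorphism between $\mathpzc{Gray}$-categories likewise reduces to two independent axioms, the other two potential axioms being automatic (by coherence on the tricategorical side, and by the analogue of Proposition \ref{kelly2axioms} on the pseudo-algebra side). Assembling the data and axiom correspondences gives mutually inverse assignments between locally strict trihomomorphisms and Gray homomorphisms $\mathpzc{P}\rightarrow \mathpzc{L}$, establishing the claimed bijection.
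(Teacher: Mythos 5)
Your proposal is correct and follows essentially the same route as the paper: translate the cubical data ($\chi$, $\iota$, $\omega$, $\gamma$, $\delta$) into strict data on Gray products via Proposition \ref{internal2isobicatc} and Theorem \ref{internal2isobicatcmulti}, then compare the axioms componentwise, with the only delicate point being the interchange cells, which the paper handles by computing the component of $[a^{-1},M_{\mathpzc{L}}]\mathrm{Ten}(\Sigma_{\chi_{RST},\chi_{PQR}})$ via \eqref{TenSigma} and \eqref{Tenhom} to recover $M_{\mathpzc{L}}(\Sigma_{\chi_{gh},\chi_{ij}})$, i.e.\ the constraint cell of the cubical composition functor, exactly as you indicate. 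The only cosmetic difference is that your final remark about redundant axioms is not needed here, since both definitions already have exactly two axioms.
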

\begin{proof}
  Comparing the definitions, the first thing to be noticed is that
  Definitions \ref{Grayhomomorphism}-\ref{Grayperturbation} from
  \secref{Grayperturbation} involve
  considerably less cell data than the tricategorical definitions
  cf. \cite[4.3]{gurskicoherencein}. Since $\mathpzc{P}$ and
  $\mathpzc{L}$ are $\mathpzc{Gray}$-categories, these supernumerary
  cells are all trivial. 

Consider, for example, a locally strict trihomomorphism $A\co
\mathpzc{P}\rightarrow \mathpzc{L}$.
Recall that this is given by
\begin{enumerate*}
 [label=(\roman*),ref= Def. \ref{defcreate}~(\roman*)]
\item a function on the objects $P\mapsto AP$;
\item for objects $P,Q\in \mathpzc{P}$, a strict
 functor  $A_{PQ}\co \mathpzc{P}(P,Q)\rightarrow
  \mathpzc{L}(AP,AQ)$;
\item  for objects $P,Q,R\in \mathpzc{P}$, an adjoint equivalence
\end{enumerate*}
  \begin{align*}
    (\chi_{PQR},\chi^\bullet_{PQR})\co M_{\mathpzc{L}}C(A_{QR}\times
    A_{PQ})\Rightarrow A_{PR}M_{\mathpzc{P}}C
\puncteq{,}
  \end{align*}
  where $C$ is again the universal cubical functor,
  and an adjoint
  equivalence
  \begin{align*}
    (\iota_P,\iota_P^\bullet)\co j_{AP}\Rightarrow A_{PP}j_{P}
  \end{align*}
  if $P=Q=R$;
 \begin{enumerate*}
 [label=(\roman*),ref= Def. \ref{defcreate}~(\roman*),resume]
\item and three families 
  $\omega,\gamma,\delta$ of invertible modifications subject to two axioms.
\end{enumerate*}
Up to this point, this looks very similar to Definition
\ref{Grayhomomorphism} from \secref{Grayhomomorphism}, the difference being in the form of domain and
codomain of the adjoint equivalence $(\chi_{PQR},\chi^\bullet_{PQR})$. However, observing
that $C(A_{QR}\times A_{PQ})=(A_{QR}\otimes A_{PQ})C$ by naturality of
$C$, it is clear
from Proposition \ref{internal2isobicatc} from \secref{internal2isobicatc} that this corresponds to an
adjoint equivalence $(\hat\chi_{PQR},\hat\chi_{PQR}^\bullet)$ as in
the definition of a Gray homomorphism such that
$C^\ast(\hat\chi_{PQR},\hat\chi_{PQR}^\bullet)=(\chi_{PQR},\chi_{PQR}^\bullet)$.

Next, given objects $P,Q,R,S\in\mathpzc{P}$, the modification
$\omega_{PQRS}$ has the form\footnote{We again remark that we here and
  in fact always use   a different bracketing than the one employed in
  \cite{gurskicoherencein}. Thus, specifying a modification as the one
  displayed is equivalent to  specifying a modification as in \cite{gurskicoherencein}.}
\begin{align*}
  \omega_{PQRS}\co \quad\quad &(((M_{\mathpzc{P}}C)\times 1)a^{-1}_{\times})^\ast(\chi_{PQS})\ast
  (a^{-1}_\times)^\ast (M_{\mathpzc{L}}C)_\ast( \chi_{QRS} \times 1_{ A_{PQ}}) \\ 
\Rrightarrow \quad & 
(1\times (M_{\mathpzc{P}}C))^{\ast}(\chi_{PRS}) \ast
(M_{\mathpzc{L}}C)_\ast (1_{ A_{RS}}\times \chi_{PQR})
\puncteq{}
\end{align*}
where we used strictness of the local functors, and where we made the
monoidal structure of the cartesian product explicit, e.g. $a_\times$
denotes the corresponding associator.
% %%%%%%%%%

% -----------

% %%%%%%%
% Here we have used that $A_{PS}$ is strict and that the adjoint equivalences 
% \begin{align*}
%   (a',a'^\bullet)\co M_{\mathpzc{L}}(M_{\mathpzc{L}}\otimes 1)C(C\times 1)= M_{\mathpzc{L}}C((M_{\mathpzc{L}}C)\times 1)
%   \Rightarrow M_{\mathpzc{L}}C(1 \times
%   (M_{\mathpzc{L}}C))a_{\times}=M_{\mathpzc{L}}(1\otimes
%   M_{\mathpzc{L}}) a C(C\times 1)
% \end{align*}
% and similarly $(a,a^\bullet)$ for $\mathpzc{L}$ are  the identity adjoint equivalences when the
% $\mathpzc{Gray}$-categories are considered as strict, cubical
% tricategories.
% %%%%%%%%%

% -----------

% %%%%%%%

This is the same as
\begin{align*}
  \omega_{PQRS}\co \quad\quad & (C(1\times C))^\ast([(M_{\mathpzc{P}}\otimes 1)a^{-1},1](\hat\chi_{PQS})\ast
  [a^{-1},M_{\mathpzc{L}}]( \mathrm{Ten}( \hat\chi_{QRS}, 1_{ A_{PQ}}))) \\ 
\Rrightarrow \quad & 
(C(1\times C))^\ast([1\otimes M_{\mathpzc{P}},1](\hat\chi_{PRS}) \ast
[1,M_{\mathpzc{L}}](\mathrm{Ten}(1_{ A_{RS}}, \hat\chi_{PQR})))
\puncteq{.}
\end{align*}
Here we have used that $aC(C\times 1)a_{\times}^{-1}=C(1\times C)$ on the
left hand side, that $C$ commutes with the hom functors of $\mathrm{Ten}$ and
$\times$ i.e.
\begin{align*}
 C_\ast
  \times_{(X,Y),(X',Y')}=C^\ast\mathrm{Ten}_{(X,Y),(X',Y')}C\co
  [X,X']\times [Y,Y']\rightarrow [X\times Y,X'\otimes Y']
\end{align*}
where
$\times_{(X,Y),(X',Y')}\co [X,X']\times [Y,Y']\rightarrow [X\times
Y,X'\times Y']$ (on objects, this is naturality of $C$), that $(FG)_\ast=F_\ast G_\ast$ and $(FG)^\ast=G^\ast
F^\ast$, that $(C(1\times C))^\ast$ is strict, that $(-)^\ast$ and $(-)_\ast$ coincide with the partial
functors of $[-,-]$ for strict functors, and that apart from the
cubical functor $C$, all functors are strict.

By Theorem \ref{internal2isobicatcmulti} from \secref{internal2isobicatcmulti}, $\omega_{PQRS}$ corresponds to an
invertible modification $\hat\omega_{PQRS}$ as in the definition of a Gray homomorphism
such that $(C(1\times C))^\ast\hat\omega_{PQRS}=\omega_{PQRS}$.

Given objects $P,Q\in \mathpzc{P}$, the modifications $\gamma_{PQ}$ and $\delta_{PQ}$ are of the same form
as in the definition of a Gray homomorphism:
Using strictness of the local functors, $\gamma_{PQ}$
is seen to be of
the form
\begin{align*}
  \gamma_{PQ}\co & ((j_Q\times
    1)l^{-1}_{\times})^\ast(\chi_{PQQ}) \ast
    (l^{-1}_{\times})^\ast (M_{\mathpzc{L}}C)_\ast(\iota_Q
    \times 1_{A_{PQ}})
    \Rrightarrow 1_{A_{PQ}}
\puncteq{,}
\end{align*}
and this is clearly the same as
\begin{align*}
  \gamma_{PQ}\co & [(j_Q\otimes
    1)l^{-1}_{\mathpzc{P}(P,Q)},1](\hat\chi_{PQQ}) \ast
    [l^{-1}_{\mathpzc{P}(P,Q)},M_{\mathpzc{L}}](\mathrm{Ten}(\iota_Q,1_{A_{PQ}}))
    \Rrightarrow 1_{A_{PQ}}
\puncteq{.}
\end{align*}
Similarly, it is shown that $\delta_{PQ}$ is of the form required in
the definition of a Gray homomorphism.

Finally, we have to compare the axioms. 
By Theorem \ref{internal2isobicatcmulti} from \secref{internal2isobicatcmulti}, the axioms of a
 trihomomorphism correspond to equations involving the components of the modifications
 $\hat\omega_{PQRS}$, $\gamma_{PQ}$,  and $\delta_{PQ}$. On the other
 hand, the axioms of a Gray homomorphism are equations
for the modifications $\hat\omega_{PQRS}$, $\gamma_{PQ}$,  and
$\delta_{PQ}$ themselves (involving an interchange modification in the
case of the pentagon-like axiom).
In fact, apart from the interchange cell in the pentagon-like axiom, it is obvious that the
components of the nontrivial modifications in the Gray homomorphism axioms are
precisely the nontrivial $2$-cells in the axioms of the corresponding
trihomomorphism axioms. Note here that the correspondence of Theorem
\ref{internal2isobicatcmulti} from \secref{internal2isobicatcmulti} is
trivial on components.

Recall that the interchange cell in the pentagon-like axiom of a
Gray-homomorphism is given by 
$[a^{-1},M_{\mathpzc{L}}]\mathrm{Ten}_{,}
(\Sigma_{\chi_{RST},\chi_{PQR}})$.
We maintain that at the object
\begin{align*}
  (g,(h,(i,j)))\in\mathpzc{P}(S,T)\otimes(\mathpzc{P}(R,S)\otimes
  (\mathpzc{P}(Q,R)\otimes \mathpzc{P}(P,Q)))
\puncteq{,}
\end{align*}
the component $([a^{-1},M_{\mathpzc{L}}]\mathrm{Ten}_{,}
(\Sigma_{\chi_{RST},\chi_{PQR}}))_{ghij}$
is given by $M_{\mathpzc{L}}(\Sigma^{-1}_{\chi_{gh},\chi_{ij}})$.
This is because evaluation in $\mathpzc{Gray}$ is in this case given by taking
components and now equation
\eqref{Tenhom} from \secref{Tenhom} for the strict hom functor
$\mathrm{Ten}_,$ implies that
\begin{align*}
  ([a^{-1},M_{\mathpzc{L}}]\mathrm{Ten}_{,}
(\Sigma_{\chi_{RST},\chi_{PQR}}))_{g(h(ij))}=M_{\mathpzc{L}}(\mathrm{Ten}_{,}(\Sigma_{\chi_{RST},\chi_{PQR}}))_{(gh)(ij)}=M_{\mathpzc{L}}(\Sigma_{\chi_{gh},\chi_{ij}})
\puncteq{.}
\end{align*}
This is exactly the interchange cell on the right hand side of the
corresponding axiom for a trihomomorphism, cf. \cite[p. 68]{gurskicoherencein}.
\end{proof}

As noted above, in the proof of Theorem \ref{graygleichls}, there appear
additional classes of interchange cells of which the components have
to be compared to the interchange cells appearing in the definitions
of the data and the $\mathpzc{Gray}$-category structure of
$\mathpzc{Tricat}(\mathpzc{P},\mathpzc{L})$.
To give examples for these classes, we skip the proof for the
correspondence of Gray transformations and tritransformations and for
the correspondence of the data of Gray modifications and
trimodifications, and we 
 come back  to the second axiom of a Gray modification
cf. Lemma \ref{secondaxiomGraymod} from \secref{secondaxiomGraymod}:

\begin{lemma}
  The components of the interchange cells in the second axiom of a
  Gray modification correspond precisely to the interchange cells
  appearing in the second axiom of a trimodification.
\end{lemma}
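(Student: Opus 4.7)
The plan is to extract the components of both interchange cells appearing in the Gray modification axiom (GMA2) via equation \eqref{MGraySigma} and identify them with the interchange cells in the corresponding trimodification axiom. The axiom (GMA2) is an equation of modifications between pseudonatural transformations $I \rightarrow \mathpzc{L}(AP,BP)$, so all relevant components are taken at the unique object $\ast \in I$, and I can simply apply \eqref{MGraySigma} to each interchange cell.

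First I would examine the interchange cell $M_{\mathpzc{Gray}}(\Sigma_{\mathpzc{L}(AP,\alpha_P),\iota^A_P})$. Here $\iota^A_P\co j_{AP} \Rightarrow A_{PP}j_P$ is a transformation of strict functors $I \rightarrow \mathpzc{L}(AP,AP)$, so its component at $\ast$ is a $1$-cell $\iota^A_P$ of $\mathpzc{L}(AP,AP)$, i.e.\ a $2$-cell in $\mathpzc{L}$. On the other hand, $\mathpzc{L}(AP,\alpha_P)$ is the pseudonatural transformation in the $2$-category $[\mathpzc{L}(AP,AP),\mathpzc{L}(AP,BP)]$ whose naturality $2$-cell at a $1$-cell $h$ in $\mathpzc{L}(AP,AP)$ is precisely the interchange $2$-cell $\Sigma_{\alpha_P,h}$ in $\mathpzc{L}(AP,BP)$, by the closed structure of $\mathpzc{Gray}$ (cf. the description of partial hom functors and of the composition law in \secref{MGraySigma}). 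Applying formula \eqref{MGraySigma} with $\theta = \mathpzc{L}(AP,\alpha_P)$ and $\sigma = \iota^A_P$ thus yields component $\Sigma_{\alpha_P,\iota^A_P}$, an ordinary interchange cell in $\mathpzc{L}$. An entirely symmetric argument for the other interchange cell $M_{\mathpzc{Gray}}(\Sigma_{\mathpzc{L}(\alpha_P,BP),\iota^B_P})$ yields $\Sigma_{\iota^B_P,\alpha_P}$; here we use that $\mathpzc{L}(\alpha_P,BP)$ has naturality $2$-cells given by interchange with $\alpha_P$ on the \emph{other} side, as recorded in the description of the closed structure of $\mathpzc{Gray}$, together with equation \eqref{cSigma} for the symmetry.

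Next I would compare with the trimodification axiom. The trimodification axiom analogous to (GMA2), obtained from the tricategorical unit constraint data, contains precisely two interchange $2$-cells in $\mathpzc{L}$: namely $\Sigma_{\alpha_P,\iota^A_P}$ (whiskered with components coming from $M^g$) and $\Sigma_{\iota^B_P,\alpha_P}$ (whiskered with components coming from $M^f$), mediating the middle-four interchange in the horizontal composition of the unit adjoint equivalences with the modification component $\alpha_P$. All other $2$-cells in both axioms are either identity $2$-cells or components of the nontrivial modifications $M^f, M^g, \alpha_{PP}, \iota^A_P, \iota^B_P$, and for these the correspondence of components is obvious from Theorem \ref{internal2isobicatcmulti} since the correspondence between Gray modifications and modifications of the underlying cubical data is trivial on components.

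The main obstacle is purely bookkeeping: making sure that the subscripts and directions of the interchange cells really line up, in particular that the variance in the pseudonatural transformations $\mathpzc{L}(AP,\alpha_P)$ and $\mathpzc{L}(\alpha_P,BP)$ produces the correct signs (i.e.\ that one yields $\Sigma$ and the other the inverse, or $\Sigma$ with swapped arguments). This is handled by tracking the symmetry $c$ of the Gray product as in the proof of Lemma \ref{secondaxiomGraymod}, using \eqref{cSigma} to convert $\Sigma^{-1}_{g,f}$ into $\Sigma_{f,g}$ when comparing the two sides. With this bookkeeping complete, no further calculation is required and the equality of axioms follows.
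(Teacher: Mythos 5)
Your computation is correct and follows essentially the paper's own route: the paper likewise takes components at the single object of $I$ (via the defining equation of $M_{\mathpzc{Gray}}$, which is where \eqref{MGraySigma} comes from) and identifies the two cells as $M_{\mathpzc{L}}(\Sigma_{\alpha_P,(\iota^A_P)_\ast})$ and $M_{\mathpzc{L}}(\Sigma^{-1}_{(\iota^B_P)_\ast,\alpha_P})$ using \eqref{covapartialhom}, \eqref{contrapartialhom}, \eqref{FGSigma} and \eqref{cSigma}. The one slip is that your second cell must carry the inverse, $\Sigma^{-1}_{(\iota^B_P)_\ast,\alpha_P}$ rather than $\Sigma_{\iota^B_P,\alpha_P}$, which is exactly what the symmetry relation \eqref{cSigma} that you invoke produces; with that correction the match with the interchange cells in Gurski's trimodification axiom is as you describe.
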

\begin{proof}
  The interchange cell appearing on the left hand side of the Gray
  modification axiom \textbf{(GMA2)} is
$M_{\mathpzc{Gray}}(\Sigma_{\mathpzc{L}(AP,\alpha_P),\iota^A_P})$.
  Note that $\Sigma_{\mathpzc{L}(AP,\alpha_P),\iota^A_P}$ is an
  interchange $2$-cell in the Gray product
  \begin{align*}
    [\mathpzc{L}(AP,AP),\mathpzc{L}(AP,BP)]\otimes
    [I,\mathpzc{L}(AP,AP)] \puncteq{,}
  \end{align*}
  and now we have to determine how $M_{\mathpzc{Gray}}$ acts on such
  an interchange cell. Recall that $M_{\mathpzc{Gray}}\co [Y,Z]\otimes
  [X,Y]\rightarrow [X,Z]$ is defined by
  \begin{align*}
    e_Z^X(M_{\mathpzc{Gray}}\otimes 1_X)= e^Y_Z(1\otimes e^X_Y)a^{-1}
    \puncteq{.}
  \end{align*}
  For the component of
  $M_{\mathpzc{Gray}}(\Sigma_{\mathpzc{L}(AP,\alpha_P),\iota^A_P})$ at
  the single object $\ast\in I$ this implies that
  \begin{align*}
    (M_{\mathpzc{Gray}}\Sigma_{\mathpzc{L}(AP,\alpha_P),\iota^A_P})_\ast
    & = (e^{\mathpzc{L}(AP,AP)}_{\mathpzc{L}(AP,BP)}
    (\mathpzc{L}(AP,-)_{AP,BP}\otimes 1)) (\Sigma_{\alpha_P,
      (\iota^A_P)_\ast}) \\
    & \quad\mbox{(by equation \eqref{FGSigma} from \secref{FGSigma})}\\
    & = M_{\mathpzc{L}} (\Sigma_{\alpha_P,
      (\iota^A_P)_\ast}) \\
    & \quad\mbox{(by definition of $\mathpzc{L}(AP,-)$ see
      \eqref{covapartialhom} from \secref{covapartialhom})}
  \end{align*}
 In fact, this is exactly the interchange cell for the left hand side
  of the axiom in \cite[p. 77 ]{gurskicoherencein}.

  Similarly, the component of the interchange $2$-cell
  $M_{\mathpzc{Gray}}(\Sigma_{\mathpzc{L}(\alpha_P,BP),\iota^B_P})$ at
  the single object $\ast\in I$, is given by
  \begin{align*}
    (M_{\mathpzc{Gray}}(\Sigma_{\mathpzc{L}(\alpha_P,BP),\iota^B_P}))_\ast
    & =
    e^{\mathpzc{L}(BP,BP)}_{\mathpzc{L}(AP,BP)}(\mathpzc{L}(-,BP)_{AP,BP}\otimes
    1)(\Sigma_{\alpha_P,(\iota^B_P)_\ast})\\
    & =  M_{\mathpzc{L}}c(\Sigma_{\alpha_P,(\iota^B_P)_\ast}) \\
    & \quad\mbox{(by definition of $\mathpzc{L}(-BP)$ see
      \eqref{contrapartialhom} from \secref{contrapartialhom})} \\
    & =  M_{\mathpzc{L}}(\Sigma^{-1}_{(\iota^B_P)_\ast,\alpha_P}) \\
    & \quad\mbox{(by equation \eqref{cSigma} from \secref{cSigma})}
    \puncteq{.}
  \end{align*}
  In fact, this is exactly the interchange cell for the right hand
  side of the axiom in \cite[p. 77]{gurskicoherencein}.
\end{proof}

\begin{remark}
  It is entirely analogous to show that the composition laws as given
  in \cite[Th. 9.1 and 9.3]{gurskicoherencein} and Definitions
  \ref{localPSTAlg} and \ref{globalPSTAlg} from \secref{globalPSTAlg} of the two
  $\mathpzc{Gray}$-categories coincide under the correspondence.
This concludes our exhibition of the critical ingredients of the proof
of Theorem \ref{graygleichls}.
\end{remark}

Combining Corollary \ref{graygleichps} from \secref{graygleichps} and
Theorem \ref{graygleichls}, we have proved our main theorem:
\begin{theorem}\label{psgleichls}
 Let  $\mathpzc{P}$ be a small $\mathpzc{Gray}$-category and
  $\mathpzc{L}$ be a cocomplete $\mathpzc{Gray}$-category, and let $T$
  be the monad corresponding to the Kan adjunction.
  Then the $\mathpzc{Gray}$-category
  $\mathrm{Ps}\text{-}T\text{-}\mathrm{Alg}$  is isomorphic to the
  full sub-$\mathpzc{Gray}$-category of
  $\mathpzc{Tricat}(\mathpzc{P},\mathpzc{L})$ determined by the
  locally strict trihomomorphisms. \qed
\end{theorem}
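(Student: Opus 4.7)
The plan is to obtain the statement as a direct composition of two isomorphisms of $\mathpzc{Gray}$-categories already established earlier in the paper: the first coming from Corollary \ref{graygleichps}, and the second from Theorem \ref{graygleichls}. I would simply invoke these two results and observe that, under the hypotheses on $\mathpzc{P}$ and $\mathpzc{L}$, both sides of their composition make sense, so that one obtains an honest isomorphism
\begin{equation*}
\mathrm{Ps}\text{-}T\text{-}\mathrm{Alg} \;\cong\; \mathrm{Gray}(\mathpzc{P},\mathpzc{L}) \;\cong\; \mathpzc{Tricat}_{\mathrm{ls}}(\mathpzc{P},\mathpzc{L})
\end{equation*}
of $\mathpzc{Gray}$-categories, which is precisely the claim.

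In more detail, I would first recall that the smallness of $\mathpzc{P}$ and cocompleteness of $\mathpzc{L}$ guarantee that the Kan adjunction $\mathrm{Lan}_H\dashv [H,1]$ exists and gives rise to the $\mathpzc{Gray}$-monad $T=[H,1]\mathrm{Lan}_H$ on $[\mathrm{ob}\mathpzc{P},\mathpzc{L}]$, together with the explicit coend description of $T$ developed in \ref{subsectionexplicit}; this is exactly the setting required to apply Corollary \ref{graygleichps}, yielding the first isomorphism above. Next, I would note that Theorem \ref{graygleichls} is formulated for arbitrary $\mathpzc{Gray}$-categories $\mathpzc{P}$ and $\mathpzc{L}$ (no smallness or cocompleteness being required on that side), so it applies without further hypotheses and provides the second isomorphism. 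Composing the two gives an isomorphism of $\mathpzc{Gray}$-categories of the desired form.

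Since all the substantive work has been done in the two cited results, there is essentially nothing left to verify; the only minor point worth making explicit is that the composite isomorphism agrees on objects, hom $2$-categories, and the composition law, which is immediate from the fact that both constituents do so. The only step that could be regarded as an obstacle is ensuring that the $\mathpzc{Gray}$-category structure on $\mathrm{Gray}(\mathpzc{P},\mathpzc{L})$ transported from $\mathrm{Ps}\text{-}T\text{-}\mathrm{Alg}$ in Corollary \ref{graygleichps} coincides with the one inherited from $\mathpzc{Tricat}_{\mathrm{ls}}(\mathpzc{P},\mathpzc{L})$ via Theorem \ref{graygleichls}, but this is precisely the content of the remark following Corollary \ref{graygleichps} together with the local discussion of composition laws at the end of \ref{subsectionls}. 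Thus the proof reduces to the single line: combine Corollary \ref{graygleichps} and Theorem \ref{graygleichls}.
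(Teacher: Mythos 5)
Your proposal is correct and coincides with the paper's own argument: the theorem is obtained precisely by composing the isomorphism $\mathrm{Ps}\text{-}T\text{-}\mathrm{Alg}\cong\mathrm{Gray}(\mathpzc{P},\mathpzc{L})$ of Corollary \ref{graygleichps} with the isomorphism $\mathrm{Gray}(\mathpzc{P},\mathpzc{L})\cong\mathpzc{Tricat}_{\mathrm{ls}}(\mathpzc{P},\mathpzc{L})$ of Theorem \ref{graygleichls}. Your remark on the compatibility of the two $\mathpzc{Gray}$-category structures on $\mathrm{Gray}(\mathpzc{P},\mathpzc{L})$ is the right point to flag and is handled exactly as you indicate.
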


The identification of the functor category $[\mathpzc{P},\mathpzc{L}]$
with $[\mathrm{ob}\mathpzc{P},\mathpzc{L}]^T$ in Theorem
  \ref{strictmonadicitytheorem} from \ref{sectionstrict}., and the
  coherence result for $\mathrm{Ps}\text{-}T\text{-}\mathrm{Alg}$
  given in Corollary \ref{corollarycoherence}
  from \secref{corollarycoherence}, then prove the following coherence
  theorem for $\mathpzc{Tricat}_{\mathrm{ls}}(\mathpzc{P},\mathpzc{L})$:
\begin{theorem}\label{theoremleftadjls} Let  $\mathpzc{P}$ be a small $\mathpzc{Gray}$-category and
  $\mathpzc{L}$ be a cocomplete $\mathpzc{Gray}$-category.
  Then the inclusion $i\co [\mathpzc{P},\mathpzc{L}]\rightarrow \mathpzc{Tricat}_{\mathrm{ls}}(\mathpzc{P},\mathpzc{L})$ of the functor $\mathpzc{Gray}$-category
  $[\mathpzc{P},\mathpzc{L}]$ into the $\mathpzc{Gray}$-category
  $\mathpzc{Tricat}_{\mathrm{ls}}(\mathpzc{P},\mathpzc{L})$ of locally
  strict trihomomorphisms has a left
  adjoint such that the components $\eta_A\co A\rightarrow  iLA$ for
   objects $A\in \mathpzc{Tricat}_{\mathrm{ls}}(\mathpzc{P},\mathpzc{L}) $ of
  the unit of this adjunction are internal biequivalences. \qed
\end{theorem}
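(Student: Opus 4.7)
The plan is to combine the three major results already established in the paper: the identification of pseudo $T$-algebras with locally strict trihomomorphisms (Theorem \ref{psgleichls}), the strict monadicity of $[H,1]$ (Theorem \ref{strictmonadicitytheorem}), and the codescent-based coherence result (Corollary \ref{corollarycoherence}). First I would invoke Theorem \ref{psgleichls} to obtain an isomorphism of $\mathpzc{Gray}$-categories $\mathrm{Ps}\text{-}T\text{-}\mathrm{Alg} \cong \mathpzc{Tricat}_{\mathrm{ls}}(\mathpzc{P},\mathpzc{L})$ for the monad $T = [H,1]\mathrm{Lan}_H$ on $[\mathrm{ob}\mathpzc{P},\mathpzc{L}]$, and Theorem \ref{strictmonadicitytheorem} to obtain an isomorphism $[\mathrm{ob}\mathpzc{P},\mathpzc{L}]^T \cong [\mathpzc{P},\mathpzc{L}]$ over $[\mathrm{ob}\mathpzc{P},\mathpzc{L}]$.

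Next I would check that the inclusion $i\co [\mathpzc{P},\mathpzc{L}]\hookrightarrow \mathpzc{Tricat}_{\mathrm{ls}}(\mathpzc{P},\mathpzc{L})$ corresponds, under these two isomorphisms, precisely to the canonical inclusion of the Eilenberg-Moore object $[\mathrm{ob}\mathpzc{P},\mathpzc{L}]^T\hookrightarrow \mathrm{Ps}\text{-}T\text{-}\mathrm{Alg}$ from Corollary \ref{corollarycoherence}. This is where one must be careful: one traces through the correspondences to see that a $\mathpzc{Gray}$-functor $A\co \mathpzc{P}\rightarrow \mathpzc{L}$, viewed as a locally strict trihomomorphism with identity constraint data, transforms under the hom $\mathpzc{Gray}$-adjunction of the tensor product into a strict $T$-algebra with identity adjoint equivalences $\chi, \iota$ and identity modifications $\omega,\gamma,\delta$. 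On $1$-, $2$-, and $3$-cells a similar direct inspection of the correspondences in Theorem \ref{graygleichtransform} and Theorem \ref{graygleichls} identifies the strict $T$-algebra morphisms (as in Proposition \ref{propstrictalgebras}) with $\mathpzc{Gray}$-natural transformations and their higher analogues.

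Having established that $i$ is, up to the isomorphisms above, the Eilenberg-Moore inclusion, I would then apply Corollary \ref{corollarycoherence}, whose hypotheses are already in force since $\mathpzc{P}$ is small and $\mathpzc{L}$ is cocomplete, to obtain a left adjoint $L$ and a unit $\eta$ whose components are internal biequivalences in $\mathrm{Ps}\text{-}T\text{-}\mathrm{Alg}$. Transporting $L$ and $\eta$ along the isomorphisms of $\mathpzc{Gray}$-categories yields a left adjoint to $i$ whose unit components are internal biequivalences in $\mathpzc{Tricat}_{\mathrm{ls}}(\mathpzc{P},\mathpzc{L})$, because the property of being an internal biequivalence is intrinsic to the $\mathpzc{Gray}$-category and is therefore preserved by any $\mathpzc{Gray}$-isomorphism.

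The main obstacle is the identification of $i$ with the Eilenberg-Moore inclusion: both Theorem \ref{strictmonadicitytheorem} and Theorem \ref{psgleichls} give their isomorphisms by transforming data under the hom $\mathpzc{Gray}$-adjunction of the tensor product, so one has to verify compatibility of the two transforms on the common subclass of strict algebras. This compatibility is essentially automatic from the construction, since in both cases the correspondence sends a component $a_{PQ}$ to its adjoint transform $A_{PQ}$ via \eqref{homVadjtensor}, and a strict algebra is characterized by identity constraint data on both sides; nevertheless this is the one step worth writing out carefully to make the composition of isomorphisms and the transport of the adjunction fully rigorous.
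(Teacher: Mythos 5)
Your proposal is correct and takes essentially the same route as the paper, which derives the theorem in one step by combining Theorem \ref{strictmonadicitytheorem}, Theorem \ref{psgleichls}, and Corollary \ref{corollarycoherence} and transporting the adjunction along the resulting isomorphisms of $\mathpzc{Gray}$-categories. The compatibility check you single out---that the inclusion $i$ corresponds to the Eilenberg--Moore inclusion under the two transforms, both being given by the hom $\mathpzc{Gray}$-adjunction of the tensor product with strictness on both sides characterized by identity constraint data---is left implicit in the paper but is indeed the only point requiring verification.
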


\begin{example*}
  Let $\mathpzc{P}$ be a small $\mathpzc{Gray}$-category. Recall that
  $\mathpzc{Gray}$ considered as a $\mathpzc{Gray}$-category is
  complete and cocomplete cf. Lemma \ref{graycocomplete} from \secref{graycocomplete}.
Thus Theorem \ref{theoremleftadjls} applies for
$\mathpzc{L}=\mathpzc{Gray}$.  As a consequence, a locally strict
trihomomorphism $\mathpzc{P}\rightarrow \mathpzc{Gray}$ which is
nothing else than a locally strict
$\mathpzc{Gray}$-valued presheaf is biequivalent to a
$\mathpzc{Gray}$-functor $\mathpzc{P}\rightarrow \mathpzc{L}$.

In particular, let $\mathpzc{P}$ be a category $C$ considered as a discrete
$\mathpzc{Gray}$-category. Then locally strict trihomomorphisms
$C\rightarrow \mathpzc{L}$ are the homomorphisms of interest, and we
have proved that any such homomorphism is biequivalent to a
$\mathpzc{Gray}$-functor $C\rightarrow \mathpzc{L}$.
\end{example*}

\bibliographystyle{plain}
\bibliography{references}

\end{document}